\def\theequation{\@arabic{\c@section}.\@arabic{\c@equation}}
\newtheorem{theorem}{Theorem}[section] %The theorems will be numbered section-wise
\newtheorem{lemma}{Lemma}[section] %The lemmas will be numbered section-wise
\newtheorem{definition}{Definition}[section] %The definitions will be numbered section-wise
\newcommand{\R}{\mathbb{R}}
\newcommand{\al} {\alpha}
\newcommand{\de} {\delta}
\newcommand{\om} {\Omega}
\newcommand{\la} {\lambda}
\newcommand{\La} {\Lambda}
\newcommand{\noi} {\noindent}
\newcommand{\real}{\mathbb{R}}
\newcommand{\rnn}{\mathbb{R}^{N}}
\newcommand{\rtwon}{\mathbb{R}^{2N}}
\newcommand{\lv}{\lVert}
\newcommand{\rv}{\rVert}
\newcommand{\grad}{\nabla}
\newcommand{\ntrl}{\mathbb{N}}
\newcommand{\ka}{\kappa}
\newcommand{\X}{\mathcal{X}_0} % space short hand notation
\newcommand{\Xe}{\mathcal{X}_{0},\varepsilon} % space short hand notation
\newcommand{\J}{\mathcal{J}_{\mu,\lambda}} %functional short hand notation
\newcommand{\Je}{\mathcal{J}_{\mu,\lambda}^{\varepsilon}} %functional short hand notation
\newcommand{\Ie}{\mathcal{I}_{\mu,\lambda}^{\varepsilon}} %functional with second solution
\newcommand{\Jplus}{\mathcal{J}_{\mu,\lambda}^{+}} %functional short hand notation for positive solutions
\newcommand{\Jep}{\mathcal{J}_{\mu, \lambda}^{\varepsilon,+}}
\newcommand{\Jepo}{\mathcal{J}_{\mu, \lambda_0}^{\varepsilon,+}}
\newcommand{\Q}{\mathcal{Q}_{\la,\mu}}
\title{Asymptotic behaviour and existence of positive solutions for mixed local nonlocal elliptic equations with Hardy potential}
\author{Shammi Malhotra\footnote{Department of Mathematics, Indian Institute of Technology Delhi, Hauz Khas New Delhi 110016,  India, shammi22malhotra@gmail.com}, Sarika Goyal\footnote{Department of Mathematics, Netaji Subhas University of Technology,
		Dwarka Sector-3, Dwarka, Delhi, 110078, India, sarika1.iitd@gmail.com, sarika@nsut.ac.in},  and K. Sreenadh\footnote{Department of Mathematics, Indian Institute of Technology Delhi, Hauz Khas New Delhi 110016,  India, sreenadh@maths.iitd.ac.in}}
\date{}
\begin{document}
\maketitle

\begin{abstract}
\noindent We investigate the existence and multiplicity of positive solutions to the following problem driven by the superposition of the Laplacian and the fractional Laplacian with Hardy potential
\begin{equation*}
% \label{eq:abs_main_equation}\tag{$\mathcal{P}_{\mu,\lambda}^{\varepsilon}$}
\left\{
\begin{aligned}
    -\Delta u + (-\Delta)^s u - \mu \frac{u}{|x|^2} &= \lambda |u|^{p-2} u + |u|^{2^*-2} u \quad \text{in } \Omega \subset \mathbb{R}^N, \\
    u &= 0 \quad \text{in } \mathbb{R}^N \setminus \Omega,
\end{aligned}
\right.
\end{equation*}
where \( \Omega \subset \mathbb{R}^N \) is a bounded domain with smooth boundary, \( 0 < s < 1 \), \( 1 < p < 2^* \), with \( 2^* = \frac{2N}{N-2} \), \( \lambda > 0 \), and \( \mu \in (0, \bar{\mu}) \) where $\bar \mu = \left( \frac{N-2}{2} \right)^2$. 

\noindent The aim of this paper is twofold. First, we establish uniform asymptotic estimates for solutions of the problem by means of a suitable transformation. Then, according to the value of the exponent \(p\), we analyze three distinct cases and prove the existence of a positive solution. Moreover, in the sublinear regime \(1 < p < 2\), we demonstrate the existence of multiple positive solutions for small perturbations of the fractional Laplacian.\\

\medskip
\noi\textbf{Keywords:} Mixed local nonlocal operator; Hardy potential; critical exponent; multiplicity of positive solutions; asymptotic estimates.  \\
\medskip
\noi\textbf{Mathematics Subject Classification:}  35A21, 35B09, 35B33, 35J20, 35M12.
\end{abstract}

\section{Introduction}
In this paper, we are concerned with the following problem
\begin{equation}\label{eq:main_problem} \tag{$\mathcal{P}_{\mu,\lambda}$}
\left\{
\begin{aligned}
    -\Delta u + (-\Delta)^s u - \mu \frac{u}{|x|^2} &= \lambda |u|^{p-2} u + |u|^{2^*-2} u \quad \text{in } \Omega \subset \mathbb{R}^N, \\
    u &= 0 \quad \text{in } \mathbb{R}^N \setminus \Omega,
\end{aligned}
\right.
\end{equation}
where $\om \subset \rnn$ is a bounded open set of class $C^{1,\al}$ for some $\al \in (0,1)$ such that $0 \in \om$, $\varepsilon \in (0,1]$, $0<s<1<p<2^*$ with $2^* = \frac{2N}{N-2}$, $\la >0$ is a parameter, $(-\Delta)^s$ is the fractional Laplacian operator defined as 
\begin{equation*}
    (-\Delta )^s u (x) = \text{P.V.} \int_{\rnn} \frac{u(x) - u(y)}{|x-y|^{N+2s}} dy, 
\end{equation*}
where P.V. is the Cauchy principle value and $\mu \in (0, \bar \mu)$ where $\bar \mu = \left( \frac{N-2}{2} \right)^2$ is the optimal constant in the Hardy's inequality \cite{adimurthi_hardy_inequality} which is given by
\begin{equation*}
     \int_{\om} \frac{|u|^2}{|x|^2}~ dx \leq \frac{1}{\bar \mu} \int_{\om} |\grad u |^2 ~ dx \qquad \text{for all } u \in C_c^{\infty}(\om).
\end{equation*}
The combination of local and nonlocal operators has recently emerged as a prominent area of research because of its wide-ranging and increasingly recognized applications. In fields such as finance and control theory, modelling often requires incorporating both diffusion and jump components in the underlying Markov processes. This dual nature introduces significant challenges, as the process operates on two distinct scales: the diffusion component dominates at small scales, while the jump component becomes more influential at larger scales. Such operators also play a crucial role in population dynamics. A rigorous mathematical treatment of these models can be found in~\cite{valdinoci_mixed_application}. 
% Moreover, in the presence of strong magnetic fields, mixed operators are used to analyze anisotropic heat transfer, as discussed in~\cite{blazevski_anisotropic_heat_transport}.

The analysis of PDEs involving singular potentials is of intrinsic interest. In quantum mechanics, for instance, the Hardy potential characterizes motion and interactive properties such as repulsion and attraction between charged particles (see~\cite{frank_hardy_applications}). For a comprehensive treatment of the Hardy potential and its wide-ranging applications, we refer the interested reader to the monograph~\cite{peral_hardy_applications}. It is worth noting that in the context of mixed operators, the Hardy potential is typically chosen to coincide with that associated with the local operator. This choice is motivated by the fact that, for the mixed operator, all potentials of the form \( |x|^{-t} \) with \( t \in [2s, 2] \) are admissible; see Lemma~3.1 of~\cite{malhotra2025eigenvalues}. However, among these, the local Hardy potential exhibits the strongest singularity, making it the most natural candidate for perturbative analysis. A rigorous discussion on this can be found in~\cite{biagi2024mixed_hardy}.

Motivated by the foundational role of the Hardy potential, the study of Brezis-Nirenberg-type problems for Laplacian operator with Hardy potential was initiated by Jannelli~\cite{jannelli_hardy_starting}, who investigated the following problem:
\begin{equation}\label{eq:local_problem} 
\left\{
\begin{aligned}
    -\Delta u  - \mu \frac{u}{|x|^2} &= \lambda  u + |u|^{2^*-2} u \quad \text{in } \Omega \subset \mathbb{R}^N, \\
    u &= 0 \quad \text{on } \partial\Omega.
\end{aligned}
\right.
\end{equation}
The author proved that if \( 0 < \mu \leq \bar{\mu} - 1 \), then problem~\eqref{eq:local_problem} admits a positive solution for all \( \lambda \in (0, \lambda_1) \). In contrast, if \( \bar{\mu} - 1 < \mu < \bar{\mu} \) and \( \Omega = B_1(0) \) is the unit ball, there exists a threshold \( \lambda_* \) such that problem~\eqref{eq:local_problem} admits a positive solution if and only if \( \lambda \in (\lambda_*, \lambda_1) \), where \( \lambda_1 \) denotes the first eigenvalue of the Laplacian operator with the Hardy potential. This result highlights that every dimension can become critical when \( \mu \) is close to \( \bar{\mu} \), specifically when \( \mu \in (\bar{\mu} - 1, \bar{\mu}) \). This stands in stark contrast to the classical Brezis-Nirenberg problem with \( \mu = 0 \) (see \cite{brezis_1983}), where only \( N = 3 \) is critical. This phenomenon is explained by the guiding principle in~\cite{jannelli_hardy_starting}, which asserts that a spatial dimension is critical for a linear elliptic operator \( \mathcal{L} \) if and only if \( \mathcal{L} \) admits at least one Green function \( G(x_0, x) \in L^2_{\text{loc}}(\mathbb{R}^N) \). Subsequently, Cao and Peng~\cite{cao_peng2003signchanging} established the existence of sign-changing solutions to problem~\eqref{eq:local_problem} for all \( \lambda \in (0, \lambda_1) \). 
% They constructed a sequence of sign-changing solutions to subcritical problems approximating the critical exponent problem~\eqref{eq:local_problem}, and using a compactness argument, showed that a subsequence converges to a sign-changing solution of the original problem.
In a related direction, Ferrero and Gazzola~\cite{ferrero2001existence} replaced the linear perturbation term \( \lambda u \) with a more general subcritical nonlinearity \( g(x,u) \) and established the existence of solutions under suitable conditions. For further results in the local case, we refer the reader to~\cite{pigong_local_p_hardy} and references therein. \\
Transitioning to the purely nonlocal case, various studies have been conducted to understand perturbations of the Hardy potential in the context of the fractional Laplacian. Dipierro et al.~\cite{dipierro2016fractionalwithHardy} proved the existence of extremals for the fractional Hardy-Sobolev inequality and investigated several of their qualitative properties. In particular, they derived asymptotic estimates via a suitable change of variables and analysis of the resulting transformed equation. Ghoussoub et al.~\cite{ghoussoub_fractional_hardy} considered the following problem involving the fractional Hardy-Schrödinger operator:
\begin{equation}\label{eq:nonlocal_problem}
\left\{
\begin{aligned}
    (-\Delta)^s u  - \mu \frac{u}{|x|^{2s}} &= \lambda u + \frac{|u|^{2^*_s(\alpha)-2} u}{|x|^{\alpha}} \quad \text{in } \Omega \subset \mathbb{R}^N, \\
    u &= 0 \quad \text{on } \mathbb{R}^N \setminus \Omega,
\end{aligned}
\right.
\end{equation}
where \( \mu \in [0, \Lambda_{N,s}] \), \( \lambda \in (0, \lambda_s(\mu)) \) with \( \lambda_{s,\mu} \) denoting the first eigenvalue of the Hardy-Schrödinger operator, \( 0 \leq \alpha < 2s < N \), and \( 2^*_s(\alpha) = \frac{2(N - \alpha)}{N - 2s} \). By introducing the concept of the \emph{internal mass} of the domain, they addressed the critical case and established the existence of least energy solutions to problem~\eqref{eq:nonlocal_problem} under various conditions on the Hardy term. Later on, Shang et al.~\cite{shang_zhang_fractional_laplacian} incorporated weights into the nonlinear terms and obtained results concerning the existence and multiplicity of solutions.\\
Unlike purely local or purely nonlocal cases, the study of mixed local–nonlocal operators involving the Hardy potential is still in the developmental stage. Biagi et al.~\cite{biagi2024mixed_hardy} investigated the existence, uniqueness, and optimal summability of solutions. Malhotra~\cite{malhotra2025eigenvalues} studied the Fučík spectrum and shape optimization problems for the first two eigenvalues of such mixed operators. 

On the other hand, in the absence of the Hardy potential, Brezis–Nirenberg-type results for the mixed linear operator were first established in~\cite{biagi2022brezis} and subsequently generalized to mixed quasilinear operator by Silva et al. in~\cite{silva2024mixed}. 
% To further understand the interaction between the classical Laplacian and the fractional Laplacian, the profile decomposition of Palais–Smale sequences associated with the mixed operator was also analyzed in~\cite{chakraborty2025global}.

Inspired by the aforementioned work, we started our study with the analysis of the following problem
\begin{equation}\label{eq:main_eq}
\left\{
\begin{aligned}
    -\Delta u + \varepsilon(-\Delta)^s u - \mu \frac{u}{|x|^2} &=|u|^{2^*-2} u \quad \text{in } \Omega \subset \mathbb{R}^N, \\
    u &= 0 \quad \text{in } \mathbb{R}^N \setminus \Omega.
\end{aligned}
\right.
\end{equation}
The solutions of the equation \eqref{eq:main_eq} are intimately connected with the minimizers of the following ratio
\begin{equation} \label{eq:spectral_ratio_frac}
S_{\mu, s,\varepsilon}(\Omega) := \inf_{u \neq 0} 
\frac{\displaystyle\int_{\Omega} |\nabla u|^2 - \mu \int_{\Omega} \frac{|u|^2}{|x|^2} + \varepsilon[u]_{s}^2}
{\left(\displaystyle\int_{\Omega} |u|^{2^*} \right)^{\frac{2}{2^*}}}.
\end{equation}
Thus, the study of the problem \eqref{eq:main_eq} is completed once we find the minimizers. Unlike the purely local \cite{terracini_minimizers} or nonlocal case \cite{cotsiolis2004best_fractional}, the minimizers for combination of local and nonlocal terms does not exist due to the lack of common scaling invariance. 
%However, unlike the purely local or nonlocal case where such minimizers exists \cite{terracini_minimizers}, the presence of both terms exhibits lack of scaling invariance that leads to lack of existence of minimizers. 

This can be seen in the following theorem
\begin{theorem} Let $s \in (0,1)$, $\varepsilon \in (0,1]$ be fixed and let $\Omega \subseteq \mathbb{R}^N$ be an open set. Then
\begin{equation}\label{eq:best_constant_equality}
S_{\mu, s, \varepsilon}(\Omega) = S_{\mu}(\Omega) = S_{\mu},
\end{equation}
where
\begin{equation*} 
S_{\mu}(\Omega) := \inf_{u \neq 0} 
\frac{\displaystyle\int_{\Omega} |\nabla u|^2 - \mu \int_{\Omega} \frac{|u|^2}{|x|^2}}
{\left(\displaystyle\int_{\Omega} |u|^{2^*} \right)^{\frac{2}{2^*}}}.
\end{equation*}
Moreover, the optimal constant $S_{\mu, s, \varepsilon}(\Omega)$ in equation \eqref{eq:spectral_ratio_frac} is never attained and independent of $\om$.
\end{theorem}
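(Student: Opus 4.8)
The plan is to prove the chain $S_{\mu,s,\varepsilon}(\Omega)=S_\mu(\Omega)=S_\mu$ by establishing the two nontrivial inequalities for the first equality, treating $S_\mu(\Omega)=S_\mu$ as the classical Hardy--Sobolev fact, and then deriving non-attainment as a byproduct. First, recall that $S_\mu(\Omega)=S_\mu$: extending $u\in C_c^\infty(\Omega)$ by zero to $\mathbb{R}^N$ leaves the quotient unchanged, so $S_\mu(\Omega)\ge S_\mu$; conversely, testing with the Terracini extremal $U$ of $S_\mu$ on $\mathbb{R}^N$ \cite{terracini_minimizers}, rescaled as $U_\delta(x)=\delta^{-(N-2)/2}U(x/\delta)$ (which preserves both $\int|\nabla\cdot|^2-\mu\int|\cdot|^2/|x|^2$ and $\int|\cdot|^{2^*}$) and cut off by $\phi\in C_c^\infty(\Omega)$ with $\phi\equiv1$ near $0$, gives $S_\mu(\Omega)\le S_\mu$ upon letting $\delta\to0^+$. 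Hence the only thing to prove is that the nonlocal term $\varepsilon[\,\cdot\,]_s^2$ affects neither the value of the constant nor its attainment.

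The inequality $S_{\mu,s,\varepsilon}(\Omega)\ge S_\mu$ is immediate, since $\varepsilon[u]_s^2\ge0$ makes the Rayleigh quotient in \eqref{eq:spectral_ratio_frac} at least as large as the one for $S_\mu(\Omega)$. For the reverse inequality I would reuse the truncated concentrating extremals $w_\delta:=\phi\,U_\delta$. The local part of the quotient behaves as in Jannelli's analysis \cite{jannelli_hardy_starting}: $\int_\Omega|\nabla w_\delta|^2-\mu\int_\Omega|w_\delta|^2/|x|^2=S_\mu\big(\int_\Omega|w_\delta|^{2^*}\big)^{2/2^*}+o(1)$ with $\big(\int_\Omega|w_\delta|^{2^*}\big)^{2/2^*}\to\big(\int_{\mathbb{R}^N}U^{2^*}\big)^{2/2^*}>0$. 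The decisive new claim is $[w_\delta]_s\to0$ as $\delta\to0^+$. I would prove it via the interpolation bound $[w_\delta]_s^2\le C_{N,s}\,\|w_\delta\|_{L^2(\mathbb{R}^N)}^{2(1-s)}\,\|\nabla w_\delta\|_{L^2(\mathbb{R}^N)}^{2s}$ (Plancherel and Hölder, legitimate because $w_\delta\in H^1(\mathbb{R}^N)$): the gradient norm stays bounded (it equals $S_\mu(\int U^{2^*})^{2/2^*}+\mu\int_{\mathbb{R}^N}w_\delta^2/|x|^2+o(1)$, and $\int_{\mathbb{R}^N}w_\delta^2/|x|^2\le\int_{\mathbb{R}^N}U^2/|x|^2<\infty$ by scale invariance of the Hardy integral and finiteness of the Hardy norm of $U$), while $\|w_\delta\|_{L^2}^2\le\|\phi\|_\infty^2\,\delta^2\int_{B_{R/\delta}}U^2\to0$, the vanishing being a short computation using the decay $U(\xi)\sim|\xi|^{-(\frac{N-2}{2}+\sqrt{\bar\mu-\mu})}$ at infinity (so the integral is of order $\delta^{2\sqrt{\bar\mu-\mu}}$, $\delta^2|\log\delta|$ or $\delta^2$ according to whether $\mu$ is $>$, $=$ or $<\bar\mu-1$, and $\sqrt{\bar\mu-\mu}>0$ in all cases). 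Passing to the limit in the quotient of $w_\delta$ then gives $S_{\mu,s,\varepsilon}(\Omega)\le S_\mu$, hence equality, and since $S_\mu$ does not depend on $\Omega$, neither does $S_{\mu,s,\varepsilon}(\Omega)$.

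For non-attainment, suppose some $u_0\neq0$ realizes $S_{\mu,s,\varepsilon}(\Omega)=S_\mu$. Then $\int_\Omega|\nabla u_0|^2-\mu\int_\Omega|u_0|^2/|x|^2+\varepsilon[u_0]_s^2=S_\mu\big(\int_\Omega|u_0|^{2^*}\big)^{2/2^*}$, whereas $\int_\Omega|\nabla u_0|^2-\mu\int_\Omega|u_0|^2/|x|^2\ge S_\mu(\Omega)\big(\int_\Omega|u_0|^{2^*}\big)^{2/2^*}=S_\mu\big(\int_\Omega|u_0|^{2^*}\big)^{2/2^*}$ by the already-established equality; subtracting forces $\varepsilon[u_0]_s^2\le0$, so $[u_0]_s=0$, i.e.\ $u_0$ is a.e.\ constant on $\mathbb{R}^N$. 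Since $u_0=0$ on the positive-measure set $\mathbb{R}^N\setminus\Omega$, we get $u_0\equiv0$, a contradiction; so $S_{\mu,s,\varepsilon}(\Omega)$ is never attained.

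The hard part is the estimate $[w_\delta]_s\to0$. Morally it holds because, for $s<1$, the Gagliardo seminorm is subcritical with respect to the scaling that leaves $\int|\nabla\cdot|^2$ and $\int|\cdot|^{2^*}$ invariant --- formally $[U_\delta]_s^2=\delta^{2-2s}[U]_s^2$ --- but since the extremal $U$ need not belong to $L^2(\mathbb{R}^N)$ (nor to $\dot H^s(\mathbb{R}^N)$) once $\mu$ is close to $\bar\mu$, one must truncate first and then control the growth of $\int_{B_{R/\delta}}U^2$; combining this with the classical but somewhat delicate Hardy--Sobolev expansion of the local part of the Rayleigh quotient is where the bulk of the technical work lies.
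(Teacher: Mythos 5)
Your proposal is correct and follows essentially the same route as the argument the paper defers to (Theorems 1.1--1.2 of the cited work of Biagi et al.): test with truncated concentrating Hardy--Sobolev extremals, show the Gagliardo seminorm vanishes via the $L^2$--$\dot H^1$ interpolation bound (the very same bound the paper itself invokes later in \eqref{eq:fractional_norm_bound}), and deduce non-attainment since a minimizer would force $\varepsilon[u_0]_s^2\le 0$, hence $u_0$ constant and therefore zero. The only implicit hypothesis you use is $0\in\Omega$ (needed to place the cut-off around the singularity), which is the paper's standing assumption, so this is not a gap.
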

% We note that the quantity $S_{\mu}(\Omega)$ is independent of $\Omega$. 
The proof of the above theorem follows in the same spirit of Theorem $1.1$ and $1.2$ of \cite[Theorem $1.1$]{biagi2024mixed_hardy}. \\
In fact, a complete classification of positive solutions of the minimization problem for $S_{\mu}(\om)$ is done in \cite{olivia_classfication_localp} via the moving plane method. They proved that all the solutions are radial and radially decreasing about the origin.\\
Since the best constant in the minimization problem \eqref{eq:spectral_ratio_frac} is not achieved. The corresponding problem \eqref{eq:main_eq} does not possesses the groundstate solutions. This motivates the study of the following perturbed problem \eqref{eq:main_problem_ep}
\begin{equation}\label{eq:main_problem_ep} \tag{$\mathcal{P}_{\mu,\lambda}^{\varepsilon}$}
\left\{
\begin{aligned}
    -\Delta u + \varepsilon(-\Delta)^s u - \mu \frac{u}{|x|^2} &= \lambda |u|^{p-2} u + |u|^{2^*-2} u \quad \text{in } \Omega \subset \mathbb{R}^N, \\
    u &= 0 \quad \text{in } \mathbb{R}^N \setminus \Omega,
\end{aligned}
\right.
\end{equation}
where $\varepsilon \in (0,1]$. \\
% However, the study of the perturbed problem greatly varies with the nature of the nonlinearity involved and we study each of the cases separately.\\
We emphasize that introducing the parameter \( \varepsilon \) in front of the fractional Laplacian term in problem~\eqref{eq:main_problem_ep} is essential (at least in the sublinear case) in order to establish the existence of solutions via the standard variational approach, as adapted from~\cite{brezis_1983}. The key idea is to convert the existence question to the analysis of the following minimization problem
\begin{equation*} 
S_{\mu, s,\lambda}(\Omega) := \inf_{u \neq 0} 
\frac{\displaystyle\int_{\Omega} |\nabla u|^2 \, dx - \mu \int_{\Omega} \frac{|u|^2}{|x|^2} \, dx +  [u]_{s}^2 - \lambda \int_{\Omega} |u|^2 \, dx}
{\left(\displaystyle\int_{\Omega} |u|^{2^*} \, dx \right)^{\frac{2}{2^*}}}.
\end{equation*}
A solution to problem~\eqref{eq:main_problem} exists provided that $S_{\mu, s,\lambda}(\Omega) < S_{\mu, s}$, where $ S_{\mu, s}:=S_{\mu, s,1}$. The standard method to verify this inequality involves selecting a suitable test function and evaluating the quotient to show it lies strictly below the threshold. Since \( S_{\mu, s} = S_\mu \), a natural choice is to choose a minimizer for $S_{\mu}$ which is attained by a family of functions \( U_{\varepsilon} \)(defined explicity later). However, one encounters the following asymptotic estimates
\begin{equation*}
    [U_{\varepsilon}]_s^2 = O\left(\varepsilon^{2(1-s)\frac{\sqrt{\bar\mu}}{\sqrt{\bar\mu - \mu}}}\right), \quad 
    \int_{\Omega} |U_{\varepsilon}|^2 \, dx = O\left(\varepsilon^{2\frac{\sqrt{\bar\mu}}{\sqrt{\bar\mu - \mu}}}\right) 
    \quad \text{as } \varepsilon \to 0^+.
\end{equation*}
These estimates reveal that the contribution of the Gagliardo seminorm $[U_{\varepsilon}]_s$ (defined in Section \ref{sec:notations}) becomes non-negligible as \( \varepsilon \to 0^+ \), thereby obstructing the inequality \( S_{\mu, s,\lambda} < S_{\mu, s} \) and making the existence of a minimizer via this approach appear infeasible.

The aim of this paper is twofold. Firstly, the equality in \eqref{eq:best_constant_equality} naturally raises the question of the nature of minimizers of \(S_{\mu, s, \varepsilon}(\Omega)\) with a perturbation, and how they differ from their local counterparts. This motivates the study of the exact asymptotic behavior of solutions to \eqref{eq:main_problem_ep} near the origin.\\
To establish the asymptotic estimates, the key idea is to transform the original problem \eqref{eq:main_problem_ep} into the reformulated problem \eqref{eq:transformed_equation}(defined later), which involves working within radial Sobolev spaces. For this transformed problem, a Harnack inequality and uniform estimates are derived, which play a crucial role in obtaining the lower and upper asymptotics, respectively.

\begin{theorem}\label{thm:asympototic_estimates}
Let $0 < \mu < \bar\mu$ and $1<p<2^*$. Then for any weak solution $u$ of \eqref{eq:main_problem_ep} there admits two positive constants $M_1$ and $M_2$ independent of $\varepsilon$ such that
\begin{equation*}
M_1 |x|^{- \left( \sqrt{\bar{\mu}} - \sqrt{\bar{\mu} - \mu} \right)} \leq u(x) \leq M_2 |x|^{- \left( \sqrt{\bar{\mu}} - \sqrt{\bar{\mu} - \mu} \right)} \quad \text{for all } x \in B_{r_0}(0) \subset \Omega,
\end{equation*}
with some $r_0 > 0$ sufficiently small.
\end{theorem}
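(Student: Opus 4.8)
\textbf{Proof proposal for Theorem~\ref{thm:asympototic_estimates}.}

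The plan is to follow the strategy pioneered in~\cite{dipierro2016fractionalwithHardy} for the purely fractional case, suitably adapted to the mixed operator: transform the equation into a radial-weighted problem via a change of variables that absorbs the Hardy singularity, and then exploit a Harnack inequality on the transformed problem to pin down the blow-up rate at the origin. Set $\gamma_{-} := \sqrt{\bar\mu} - \sqrt{\bar\mu - \mu}$ and $\gamma_{+} := \sqrt{\bar\mu} + \sqrt{\bar\mu - \mu}$, the two roots of $\gamma(\gamma - (N-2)) + \mu = 0$, which govern the two possible homogeneities of solutions to $-\Delta u - \mu u/|x|^2 = 0$. The exponent claimed in the theorem is exactly the smaller root $\gamma_{-}$, i.e.\ the one compatible with finite energy (since $u \in H^1_0$ forces the less singular behaviour). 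First I would record the local regularity of weak solutions of~\eqref{eq:main_problem_ep}: away from the origin the operator is uniformly elliptic with a bounded lower-order nonlocal term, so by standard elliptic theory $u \in C^2_{loc}(\Omega \setminus \{0\}) \cap L^\infty_{loc}(\Omega\setminus\{0\})$, and since $1 < p < 2^*$ and the critical term is also locally subcritical away from $0$, we get $u$ continuous and strictly positive on $\overline{B_{r_0}}\setminus\{0\}$ by the strong maximum principle (the fractional term only helps, as it is order-preserving).

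The core step is the change of variables. Writing $u(x) = |x|^{-\gamma_{-}} v(x)$, a direct computation gives
\begin{equation*}
-\Delta u - \mu \frac{u}{|x|^2} = -|x|^{-\gamma_{-}}\left( \Delta v + \frac{2\gamma_{-} - (N-2)}{|x|^2}\, x\cdot\nabla v \cdot \frac{1}{?}\right),
\end{equation*}
which, after introducing the logarithmic radial variable $t = \log|x|$ and $\theta = x/|x|$, becomes a uniformly elliptic equation on the cylinder $(-\infty, \log r_0) \times \mathbb{S}^{N-1}$ with no zeroth-order singular coefficient; the fractional Laplacian, under the same substitution, contributes a nonlocal term that is subordinate on large negative $t$ (this is where the estimates $[U_\varepsilon]_s^2 = O(\varepsilon^{2(1-s)\sqrt{\bar\mu}/\sqrt{\bar\mu-\mu}})$ quoted in the introduction come in — they quantify that the fractional contribution is a genuine perturbation). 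Let $w$ denote the transformed unknown. The right-hand side, expressed in the new variables, carries factors $|x|^{\gamma_{-}(p-1) - \gamma_{-} + 2} = e^{t(\gamma_{-}(p-2)+2)}$ and $|x|^{\gamma_{-}(2^*-1)-\gamma_{-}+2}$ which I must check decay (or at least stay bounded) as $t \to -\infty$; this uses $\gamma_{-} < \frac{N-2}{2}$ together with $p < 2^*$, and is the verification that the nonlinearity does not spoil the leading-order ODE behaviour $w'' \sim 0$.

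Next I would invoke a Harnack inequality for the transformed (degenerate-free, uniformly elliptic, with controlled nonlocal lower-order term) equation on unit-length slices of the cylinder: there is $C > 0$, independent of $t$ and of $\varepsilon \in (0,1]$, with $\sup_{[t-1,t+1]\times\mathbb{S}^{N-1}} w \le C \inf_{[t-1,t+1]\times\mathbb{S}^{N-1}} w$. Combined with a lower bound coming from the positivity and a barrier/ODE comparison argument — comparing $w$ with constants using that the transformed operator applied to a constant is (up to exponentially small nonlocal corrections) zero and the nonlinear terms have a sign — this forces $w$ to be bounded above and below by positive constants uniformly as $t \to -\infty$. Translating back through $u = |x|^{-\gamma_{-}} v$ and $v(x) = w(\log|x|, x/|x|)$ yields exactly $M_1 |x|^{-\gamma_{-}} \le u(x) \le M_2 |x|^{-\gamma_{-}}$ on $B_{r_0}(0)$. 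The $\varepsilon$-independence of $M_1, M_2$ follows because every constant entering (ellipticity bounds of the transformed local part, the Harnack constant, the bound on the nonlocal tail) can be taken uniform for $\varepsilon \in (0,1]$ — the nonlocal term is handled as a perturbation whose size is controlled by $\varepsilon \le 1$, never from below.

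The main obstacle I anticipate is controlling the nonlocal term after the change of variables: the fractional Laplacian is not invariant under the substitution $u \mapsto |x|^{-\gamma_-} v$ nor under the logarithmic change of radial variable, so $(-\Delta)^s u$ does not transform into $(-\Delta)^s$ of the new unknown plus something clean. One must expand $(-\Delta)^s u$ in the Gagliardo integral, split the integration domain into the region near $x$ (where the local ellipticity dominates and the nonlocal piece is a lower-order perturbation absorbable into the uniformly elliptic part) and the far region (whose contribution is bounded using the global $H^1_0 \hookrightarrow L^{2^*}$ control on $u$ and the already-known decay, giving an exponentially small — in $t$ — and $\varepsilon$-uniformly bounded term). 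Making this splitting rigorous while keeping all constants independent of $\varepsilon$, and ensuring the resulting perturbed operator still satisfies a Harnack inequality with $\varepsilon$-uniform constant, is the technical heart of the argument; this is precisely the point where the reformulated problem~\eqref{eq:transformed_equation} and its Harnack inequality, announced in the introduction, do the work.
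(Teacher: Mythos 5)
Your overall strategy (substitute $u=|x|^{-\gamma_-}w$ and run a Harnack-type argument on the transformed problem) matches the paper's announced plan, but as written the proposal has two genuine gaps. First, the upper bound. The paper obtains $u\le M_2|x|^{-\gamma_-}$ by proving that the transformed unknown $w$ is in $L^\infty$ (Theorem \ref{thm:uniform_boundedness}), and this requires real work because the transformed right-hand side contains the critical term $w^{2^*-1}|x|^{-2^*d}$: one first needs a Brezis--Kato type truncation argument to get $w\in L^q$ for all $q$ (Lemma \ref{lem:moser_iter_lem2}, where the critical piece is absorbed by splitting $\{|w|<K\}$ and $\{|w|\ge K\}$ with $K$ large), and only then a Moser iteration with a source in $L^q$, $q>N/2$ (Lemma \ref{lem:moser_iter_lem1}). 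In your sketch the upper bound is supposed to come from the slice Harnack inequality plus ``comparison with constants, since the nonlinear terms have a sign'' --- but the nonlinearity sits on the source side, so constants are not supersolutions, and your observation that the weights $e^{t(2-\gamma_-(p-2))}$ decay as $t\to-\infty$ only turns the nonlinearity into a small coefficient \emph{after} you already know $w$ is bounded; as stated the argument is circular and the critical exponent is never confronted.

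Second, the nonlocal term. You correctly identify that $(-\Delta)^s$ does not transform cleanly under either the substitution or the logarithmic change of variables, but you then defer exactly this point (``this is precisely where the reformulated problem and its Harnack inequality do the work''), i.e.\ the technical heart is not carried out. The paper's mechanism is different and concrete: the \emph{fractional ground state representation} converts $\varepsilon(-\Delta)^s u$ into a weighted Gagliardo form in $w$ plus a nonnegative Hardy-type potential $\varepsilon C_0 w|x|^{-2s-2d}$, yielding \eqref{eq:transformed_equation} in weighted Euclidean (not cylindrical) variables; the Harnack inequality (Theorem \ref{thm:harnack_inequality_transformed}) is then proved by a De Giorgi/Moser scheme in the measure $d\mu=|x|^{-2d}dx$, with the weighted Sobolev/Poincar\'e inequalities of Lemmas \ref{lem:weighted_sobolev_inequality} and \ref{lem:weighted_poincare_inequality} and with the Stein--Weiss estimate (Lemma \ref{lem:stein_strong_lemma}) used precisely to control the singular weights near the origin; $\varepsilon$-uniformity comes from the good sign of the $\varepsilon$-terms, exactly as you guessed. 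Finally, your appeal to the estimates $[U_\varepsilon]_s^2=O(\varepsilon^{2(1-s)\sqrt{\bar\mu}/\sqrt{\bar\mu-\mu}})$ is a misreading: those concern the concentration parameter of the Aubin--Talenti-type minimizers $U_\varepsilon$ used later as test functions, not the coefficient $\varepsilon$ in \eqref{eq:main_problem_ep} nor an arbitrary weak solution, so they give no information about the size of the nonlocal term along your cylinder.
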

% \textcolor{red}{check if the condition $0< p < 1$ can be relaxed for the lower bound and upper bound}
% These asymptotic estimates can be seen as generalisation of asymptotic estimates proved in \cite{terracini_minimizers} for the local case and \cite[Theorem $1.7$]{dipierro2016fractionalwithHardy} in the nonlocal case. 

% Multiplicity results like these are proved in the Laplacian and fractional Laplacian case in \cite{chen2004multiple} and \cite{shen_multiplicity_fractional} respectively.\\

% If we dropped the requirement for solutions to be non-negative we can prove the existence of infinitely many solutions with the help of topological arguments based on Krasnoselskii genus theory. Without the Hardy potential, this type of result was proved in \cite[Theorem $1.1$]{silva2024mixed}.
% \begin{theorem}\label{thm:sublinear_infinite_solutions}
%  Let $0<s<1$, $0<q<1$. There exists $\Lambda > 0$ such that for any $\la \in (0,\La)$ the problem \eqref{eq:main_problem_ep} admits infinitely many solutions with negative energy.
% \end{theorem}

Secondly, we turn our analysis to the effect of perturbations on the existence of solutions. We begin with the case of a linear perturbation, which requires knowledge of the first eigenvalues of the fractional operator and the mixed operator with Hardy potential, defined respectively as
\begin{equation} \label{eq:first_eigenvalue_frac}
   \lambda_{1,s} := 
  \inf\big\{[u]^2_s:\,\text{$u\in C_c^\infty(\Omega)$ and $\|u\|_{L^2(\rnn)} = 1$}\big\};
\end{equation}
\begin{equation} \label{eq:first_eigenvalue}
\lambda_{1} := 
  \inf\left\{ \int_{\om} |\grad u|^2 dx + [u]_s^2 - \mu \int_{\om}\frac{|u|^2}{|x|^2}dx:\,\text{$u\in C_c^\infty(\Omega)$ and $\|u\|_{L^2(\rnn)} = 1$}\right\}.
\end{equation}
For a comprehensive results related with these eigenvalues we refer to \cite[Remark $4.4$]{biagi2022brezis}. Then we have the following theorem concerning the solutions of the linear problem \eqref{eq:main_problem}
\begin{theorem}\label{thm:existence_linear_case}
Let $\Omega \subseteq \mathbb{R}^N$, $\varepsilon = 1$ and $p = 2$. Then the following holds
\begin{enumerate}
    \item For every $0 < \lambda \leq \lambda_{1, s}$, there does not exists a solution $u \in \mathcal{B} \subset L^{2^*}(\mathbb{R}^N)$ of \eqref{eq:main_problem}, where
    \begin{equation}\label{eq:linear_ball_NE}
    \mathcal{B} := \left\{ u \in L^{2^*}(\mathbb{R}^N) : \| u \|_{L^{2^*}} \leq  S_{\mu}^{\frac{N-2}{4}} \right\}.
    \end{equation}
    \item There exists a parameter $\lambda^* \in [\lambda_{1, s}, \lambda_1)$ such that the problem \eqref{eq:main_problem} possesses at least one solution if $\lambda \in (\lambda^*, \lambda_1)$. 
    \item There does not exist a solution to the problem \eqref{eq:main_problem} if $\lambda > \lambda_1$.
\end{enumerate}
\end{theorem}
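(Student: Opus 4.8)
The plan is to prove the three parts of Theorem~\ref{thm:existence_linear_case} separately, following the variational strategy of Jannelli~\cite{jannelli_hardy_starting} and Brezis--Nirenberg~\cite{brezis_1983}, but carefully tracking the contribution of the fractional seminorm. Throughout, $\varepsilon=1$ and $p=2$, so the energy functional associated with \eqref{eq:main_problem} is
\begin{equation*}
\mathcal{I}_\lambda(u) = \frac12\left(\int_\Omega |\nabla u|^2 + [u]_s^2 - \mu \int_\Omega \frac{|u|^2}{|x|^2} - \lambda \int_\Omega |u|^2\right) - \frac{1}{2^*}\int_\Omega |u|^{2^*},
\end{equation*}
defined on $\mathcal{X}_0$, and critical points are precisely the weak solutions.

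\textbf{Part (1) (nonexistence for small $\lambda$).} I would argue by contradiction. If $u \in \mathcal{B}$ solves \eqref{eq:main_problem} with $0<\lambda\le\lambda_{1,s}$, test the equation against $u$ itself to get
\begin{equation*}
\int_\Omega |\nabla u|^2 + [u]_s^2 - \mu\int_\Omega \frac{|u|^2}{|x|^2} - \lambda \int_\Omega |u|^2 = \int_\Omega |u|^{2^*}.
\end{equation*}
Now bound the left side from below: using the definition of $S_\mu$ for the purely local Hardy--Sobolev part, $\int_\Omega |\nabla u|^2 - \mu\int_\Omega |u|^2/|x|^2 \ge S_\mu \|u\|_{2^*}^2$, while by \eqref{eq:first_eigenvalue_frac} one has $[u]_s^2 \ge \lambda_{1,s}\|u\|_{L^2}^2 \ge \lambda\|u\|_{L^2}^2$, so the $-\lambda\int_\Omega |u|^2$ term is absorbed. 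Hence $\int_\Omega |u|^{2^*} \ge S_\mu \|u\|_{2^*}^2$, i.e.\ $\|u\|_{2^*}^{2^*-2} \ge S_\mu$, which forces $\|u\|_{2^*} \ge S_\mu^{\frac{N-2}{4}}$. Combined with $u\in\mathcal{B}$ this gives $\|u\|_{2^*} = S_\mu^{(N-2)/4}$ exactly, so all the inequalities above are equalities; in particular $u$ would be an extremal for $S_\mu$ on $\Omega$ (bounded), contradicting that $S_\mu$ is never attained on a bounded domain (or, if $\lambda<\lambda_{1,s}$ strictly, the contradiction is immediate since the $\lambda$-term is strictly dominated). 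One must be slightly careful about the density/regularity needed to use $u$ as a test function and to apply the local Hardy inequality; this is where the hypothesis $u\in L^{2^*}(\mathbb{R}^N)$ together with $u\in\mathcal{X}_0$ is used.

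\textbf{Part (2) (existence for $\lambda$ near $\lambda_1$).} The strategy is the mountain-pass / constrained-minimization scheme. Define
\begin{equation*}
\lambda^* := \inf\{\lambda>0 : S_{\mu,s,\lambda}(\Omega) < S_{\mu,s}\}.
\end{equation*}
By the paper's remarks, a solution exists whenever $S_{\mu,s,\lambda}(\Omega) < S_{\mu,s}=S_\mu$: the deficit below the critical Sobolev threshold restores the Palais--Smale compactness at the relevant level $c_\lambda = \frac{1}{N} S_{\mu,s,\lambda}(\Omega)^{N/2}$, and a minimizing/mountain-pass sequence converges to a nontrivial critical point. Thus it suffices to show (a) $\lambda^* < \lambda_1$ and (b) $\lambda^* \ge \lambda_{1,s}$. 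For (a): as $\lambda \uparrow \lambda_1$, test $S_{\mu,s,\lambda}$ with (an approximation of) the first eigenfunction $\varphi_1$ of the mixed Hardy operator; the numerator $\int|\nabla\varphi_1|^2+[\varphi_1]_s^2-\mu\int|\varphi_1|^2/|x|^2-\lambda\int|\varphi_1|^2 = (\lambda_1-\lambda)\|\varphi_1\|_{L^2}^2 \to 0^+$ while the denominator stays bounded below, so $S_{\mu,s,\lambda}(\Omega)\to 0 < S_\mu$, giving $\lambda^*<\lambda_1$. For (b): for $\lambda<\lambda_{1,s}$, the computation in Part~(1) shows the numerator of the quotient is $\ge S_\mu\|u\|_{2^*}^2$ for every admissible $u$ (the fractional seminorm soaks up $-\lambda\int|u|^2$), hence $S_{\mu,s,\lambda}(\Omega)\ge S_\mu = S_{\mu,s}$, so no $\lambda<\lambda_{1,s}$ lies in the set defining $\lambda^*$; therefore $\lambda^*\ge\lambda_{1,s}$. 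Finally, for $\lambda\in(\lambda^*,\lambda_1)$ one has $S_{\mu,s,\lambda}(\Omega)<S_\mu$ (monotonicity of $\lambda\mapsto S_{\mu,s,\lambda}$) and the compactness argument yields a solution; positivity follows by the usual trick of working with $u^+$ and invoking the strong maximum principle for the mixed operator.

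\textbf{Part (3) (nonexistence for $\lambda>\lambda_1$).} Suppose $u$ solves \eqref{eq:main_problem} with $\lambda>\lambda_1$. Test against the first eigenfunction $\varphi_1>0$ of the mixed Hardy operator (which satisfies $-\Delta\varphi_1 + (-\Delta)^s\varphi_1 - \mu\varphi_1/|x|^2 = \lambda_1\varphi_1$). Using $\varphi_1$ as test function in the weak formulation for $u$ and $u$ as test function for $\varphi_1$, and subtracting, the bilinear mixed-Hardy form cancels and one obtains
\begin{equation*}
(\lambda-\lambda_1)\int_\Omega u\,\varphi_1 = -\int_\Omega |u|^{2^*-2}u\,\varphi_1.
\end{equation*}
If $u>0$ both sides have opposite signs unless $u\equiv 0$, a contradiction; to make this rigorous one first establishes $u>0$ in $\Omega$ (again by the maximum principle, assuming $u\ge0$; for sign-changing $u$ a bit more care with $\varphi_1>0$ and the strict positivity of the Hardy form is needed, or one restricts, as the statement implicitly does, to the natural solution class).

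\textbf{Main obstacle.} The delicate point is Part~(2): verifying the strict inequality $S_{\mu,s,\lambda}(\Omega)<S_{\mu,s}$ for $\lambda\in(\lambda^*,\lambda_1)$ together with the Palais--Smale compactness below the threshold. Unlike the purely local problem, here the fractional seminorm $[U_\varepsilon]_s^2 = O(\varepsilon^{2(1-s)\sqrt{\bar\mu}/\sqrt{\bar\mu-\mu}})$ does \emph{not} decay faster than the gain $-\lambda\int|U_\varepsilon|^2 = O(\varepsilon^{2\sqrt{\bar\mu}/\sqrt{\bar\mu-\mu}})$ coming from the linear term (as the paper itself notes), so the standard Brezis--Nirenberg test-function computation with the Hardy bubbles $U_\varepsilon$ fails to directly produce the strict inequality for $\lambda$ near $\lambda_{1,s}$. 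This is precisely why the theorem only claims existence for $\lambda$ in a possibly smaller interval $(\lambda^*,\lambda_1)$ with $\lambda^*\in[\lambda_{1,s},\lambda_1)$: the abstract definition of $\lambda^*$ as the infimum of $\lambda$'s with $S_{\mu,s,\lambda}<S_{\mu,s}$ sidesteps the need for an explicit test-function estimate, and one only has to prove the two-sided bound $\lambda_{1,s}\le\lambda^*<\lambda_1$, which follows from the eigenvalue comparison and the coercivity estimate above. Ensuring the compactness of the minimizing sequence for $S_{\mu,s,\lambda}(\Omega)$ (a concentration-compactness / Brezis--Lieb splitting argument handling both the local Hardy--Sobolev term and the fractional term simultaneously) is the other technically substantial ingredient.
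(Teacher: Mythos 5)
Your proposal is correct and follows essentially the same route as the paper: nonexistence in $\mathcal{B}$ by forcing equality in the Hardy--Sobolev quotient (absorbing $-\lambda\int|u|^2$ with $[u]_s^2\ge\lambda_{1,s}\|u\|_{L^2}^2$) and contradicting the non-attainment of $S_\mu$ on bounded domains, existence via the threshold $\lambda^*$ (your infimum definition coincides with the paper's supremum definition by monotonicity) together with Brezis--Nirenberg-type compactness when $0\le S_{\mu,s,\lambda}(\Omega)<S_\mu$, and nonexistence for $\lambda>\lambda_1$ by testing with the positive first eigenfunction. The only cosmetic differences are that the paper normalizes $v=u/\|u\|_{L^{2^*}}$ in Part (1) and deduces $\lambda^*<\lambda_1$ from the continuity and sign of $\lambda\mapsto S_\mu(\lambda)$ rather than a direct eigenfunction evaluation.
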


% Even though there are no groundstate solutions of the linear problem for $\la > \la_1$, we can look for high energy solutions. With the help of the linking theorem \cite[Theorem $2.5$]{cerami_multiplicity}, the existence of high energy solutions are proved for suitable values of $\la$.
% \begin{theorem}\label{thm:high_energy_linear}
% Let $\varepsilon =1$ and $p=2$. Also, assume that $\mu \in \left( \bar{\mu} - 1, \bar{\mu} \right)$, and there exists $\lambda_k \in \sigma_\mu$ such that
% \begin{equation} \label{eq:lambda-range}
% \lambda \in \left( \lambda_k - S_{\mu} |\Omega|^{-2/N}, \, \lambda_k \right),
% \end{equation}
% where $\sigma_\mu$ denotes the spectrum associated with the operator $ -\Delta  + (-\Delta)^s  - \mu \frac{1}{|x|^2}$, and $S_{\mu}$ is the best constant as in Theorem \ref{thm:optimal_frac_constant}.\\
% Then, the corresponding linear problem admits $\eta_k$ pairs of nontrivial solutions, where $\eta_k$ is the multiplicity of the eigenvalue $\lambda_k$.
% \end{theorem}
For the superlinear case, i.e., $p \in (2,2^*)$, we are interested in the existence of a positive solution of the problem \eqref{eq:main_problem}. We apply the mountain pass theorem to prove the existence of a solution. However, the presence of nonlocal term added another realm of difficulties. A perturbation close to linear power is not sufficient to guarantee the existence of the solution. In fact we require a perturbation with higher exponent to tackle the presence of fractional term. For this purpose we define the following crucial parameter
\begin{equation}\label{eq:superlinear_parameter}
    \beta_{\mu,N,s} = \min \left\{ N-2, \frac{2(1-s)\sqrt{\bar\mu}}{\sqrt{\bar\mu - \mu}} \right\}.
\end{equation}
Thus, we have the following existence result.
\begin{theorem}\label{thm:superlinear_pb}
Let $p \in (2,2^*)$. Then there exists a $\la_0 >0$ such that the problem \eqref{eq:main_problem} has a positive solution in the following cases
\begin{enumerate}
    \item $ 0 < \la < \la_0$ and $\beta_{\mu,N,s} > \frac{\sqrt{\bar\mu}(N - p \sqrt{\bar\mu})}{\sqrt{\bar\mu - \mu}}$,
    \item $\la \geq \la_0$,
\end{enumerate}
where $\beta_{\mu,N,s}$ is defined as in \eqref{eq:superlinear_parameter}.
\end{theorem}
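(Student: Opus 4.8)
The statement is a classical Brezis–Nirenberg-type existence result in the superlinear regime, so the natural route is the mountain pass theorem applied to the energy functional
$$
\mathcal{J}_{\mu,\lambda}(u) = \frac12\Bigl(\int_\Omega |\nabla u|^2 + [u]_s^2 - \mu\int_\Omega \frac{|u|^2}{|x|^2}\Bigr) - \frac{\lambda}{p}\int_\Omega |u|^p - \frac{1}{2^*}\int_\Omega |u|^{2^*}
$$
on $\mathcal{X}_0$ (the mixed Sobolev space, with $\varepsilon=1$). First I would check the mountain pass geometry: coercivity near $0$ follows since $2<p<2^*$ and $\mu<\bar\mu$ (the Hardy term keeps the quadratic form positive and equivalent to $\|\nabla u\|_2^2 + [u]_s^2$ by the Hardy inequality), and the "valley" at infinity is standard because the $|u|^{2^*}$ term dominates. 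This produces a Palais–Smale sequence at the mountain pass level $c_{\mu,\lambda}$. The heart of the matter is then to show
$$
c_{\mu,\lambda} < \frac{1}{N}\, S_\mu^{N/2},
$$
the sub-threshold level below which Palais–Smale sequences are precompact (here using $S_{\mu,s}=S_\mu$ from the first theorem, together with a concentration-compactness / Brezis–Lieb splitting argument to recover strong convergence and rule out bubbling).

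**The test-function estimate.** To verify $c_{\mu,\lambda} < \frac1N S_\mu^{N/2}$ I would plug the extremal family $U_\varepsilon$ for $S_\mu$ (radial, with the known decay $U_\varepsilon(x)\sim |x|^{-(\sqrt{\bar\mu}-\sqrt{\bar\mu-\mu})}$ near $0$ and $|x|^{-(\sqrt{\bar\mu}+\sqrt{\bar\mu-\mu})}$ near $\infty$, truncated by a cutoff $\varphi$) into $\mathcal{J}_{\mu,\lambda}$ and estimate $\max_{t\ge 0}\mathcal{J}_{\mu,\lambda}(tU_\varepsilon)$ as $\varepsilon\to 0^+$. The local part reproduces Jannelli's computation: $\int|\nabla U_\varepsilon|^2 - \mu\int |U_\varepsilon|^2/|x|^2 = S_\mu^{N/2} + O(\varepsilon^{N-2})$ and $\int |U_\varepsilon|^{2^*} = S_\mu^{N/2} + O(\varepsilon^N)$. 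The two genuinely new ingredients are (i) the Gagliardo seminorm contribution $[U_\varepsilon]_s^2 = O\bigl(\varepsilon^{2(1-s)\sqrt{\bar\mu}/\sqrt{\bar\mu-\mu}}\bigr)$, which is a \emph{positive} error that must be beaten by the gain from the $\lambda$-term, and (ii) the gain $\lambda\int |U_\varepsilon|^p$, whose order in $\varepsilon$ depends on the competition between the near-origin singularity and the decay at infinity — this is exactly where the exponent $\tfrac{\sqrt{\bar\mu}(N-p\sqrt{\bar\mu})}{\sqrt{\bar\mu-\mu}}$ in hypothesis (1) comes from: one computes $\int |U_\varepsilon|^p = O\bigl(\varepsilon^{\gamma}\bigr)$ with $\gamma$ that quantity (for $p$ in the relevant range), and the condition $\beta_{\mu,N,s} > \gamma$ ensures that the fractional error $\varepsilon^{\beta_{\mu,N,s}}$ (recall $\beta_{\mu,N,s}=\min\{N-2, 2(1-s)\sqrt{\bar\mu}/\sqrt{\bar\mu-\mu}\}$, the worst of the two positive errors) is negligible compared to the gain $\lambda\varepsilon^{\gamma}$, yielding $c_{\mu,\lambda} < \frac1N S_\mu^{N/2}$ for $\varepsilon$ small. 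For case (2), $\lambda\ge\lambda_0$ large, the $\lambda$-term alone pushes the level below the threshold without a delicate balance, by the standard argument that $\max_t \mathcal{J}_{\mu,\lambda}(t\varphi)$ is small for a fixed nonzero $\varphi$ once $\lambda$ is large — here one should fix $\varepsilon$ and let $\lambda$ grow, choosing $\lambda_0$ accordingly.

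**Compactness and positivity.** Once the sub-threshold level is established, I would run the Brezis–Lieb / concentration-compactness dichotomy on the Palais–Smale sequence $\{u_n\}$: it is bounded (using $p<2^*$ and the coercivity of the quadratic form), so $u_n \weak u$ weakly in $\mathcal{X}_0$, and setting $v_n = u_n - u$ one shows $\mathcal{J}_{\mu,\lambda}(u_n) = \mathcal{J}_{\mu,\lambda}(u) + \frac12\bigl(\|\nabla v_n\|_2^2+[v_n]_s^2 - \mu\|v_n/|x|\|_2^2\bigr) - \frac1{2^*}\|v_n\|_{2^*}^{2^*} + o(1)$, with $\|v_n\|_{2^*}^{2^*}\to \ell$ and the quadratic part $\to S_\mu \ell^{2/2^*}$; the inequality $c_{\mu,\lambda}<\frac1N S_\mu^{N/2}$ forces $\ell = 0$, hence $u_n\to u$ strongly and $u$ is a nontrivial critical point. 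Finally, to get a \emph{positive} solution I would work with the truncated functional having $\lambda (u^+)^{p-1}+(u^+)^{2^*-1}$ as nonlinearity, so that any critical point satisfies $-\Delta u + (-\Delta)^s u - \mu u/|x|^2 \ge 0$; then the strong maximum principle for the mixed operator (available since $\mu<\bar\mu$, cf.\ the results referenced in \cite{biagi2024mixed_hardy}) gives $u>0$ in $\Omega$. The main obstacle is unquestionably step two: pinning down the precise $\varepsilon$-order of $\int_\Omega |U_\varepsilon|^p$ (which changes form across subranges of $p\in(2,2^*)$ depending on whether the integral is dominated by the core, the tail, or is borderline) and verifying that hypothesis (1) is exactly the condition making the gain dominate the fractional remainder $\varepsilon^{\beta_{\mu,N,s}}$.
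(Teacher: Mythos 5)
Your proposal follows essentially the same route as the paper: a truncated functional with $(u^+)$-nonlinearities plus a maximum principle for positivity, mountain pass geometry, the Palais--Smale condition below $\tfrac1N S_\mu^{N/2}$ via a Brezis--Lieb splitting, and the test-function computation with the extremals $U_\varepsilon$ of $S_\mu$, where the fractional seminorm enters as a positive error of order $\varepsilon^{2(1-s)\sqrt{\bar\mu}/\sqrt{\bar\mu-\mu}}$ (obtained in the paper by interpolating $[u]_s^2$ between the $L^2$-norm and the Dirichlet energy), beaten in case (1) by the gain $\lambda\varepsilon^{\sqrt{\bar\mu}(N-p\sqrt{\bar\mu})/\sqrt{\bar\mu-\mu}}$ exactly under the stated condition on $\beta_{\mu,N,s}$, and in case (2) by fixing $\varepsilon$ and letting $\lambda$ grow so that the maximizing parameter $t_{\varepsilon,\lambda}\to 0$. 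This matches the paper's argument in both structure and the key threshold comparison, so the proposal is correct.
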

As we note that the first case in the above only occurs when the exponent $p$ is sufficiently large enough. For perturbations with lower order exponent, the existence of solutions are proved only for large values of $\la$.\\

We end our study by adding the perturbation of sublinear nature, i.e., $p \in (1,2)$. This nonlinearity greatly influences the topology of the associated functional of the problem \eqref{eq:main_problem_ep} and leads to the existence of two nonnegative multiple solutions in the spirit of \cite{abc_laplacian}. However, the solutions are not expected to be bounded in a small neighbourhood of zero. This requires us to work with minimal solutions. Once the minimal solutions are obtained, we are able to prove the following multiplicity result with the help of the Mountain pass theorem. 
\begin{theorem}\label{thm:sublinear_two_solutions}
Suppose \(0 < \mu < \bar{\mu} - 1\). Then there exists \(\Lambda > 0\) such that  
\begin{enumerate}
    \item for all \(\lambda \in (0, \Lambda)\), the problem \eqref{eq:main_problem_ep} admits a minimal solution \(u_{\lambda}\), and these minimal solutions are increasing with respect to \(\lambda\);  
    \item for \(\lambda = \Lambda\), the problem \eqref{eq:main_problem_ep} admits at least one weak solution;  
    \item for \(\lambda > \Lambda\), the problem \eqref{eq:main_problem_ep} does not admit any solution.  
\end{enumerate}
Moreover, if \(\lambda \in (0, \Lambda)\), the problem admits a second positive solution distinct from the first one.
\end{theorem}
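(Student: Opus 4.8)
The plan is to study the energy functional $\Je$ associated with \eqref{eq:main_problem_ep} on the space $\X$ and to exploit the sublinear nature of the term $\lambda|u|^{p-2}u$ together with the critical term $|u|^{2^*-2}u$. I will treat only nonnegative functions by replacing $|u|^{p-2}u$ and $|u|^{2^*-2}u$ with their positive truncations $(u^+)^{p-1}$, $(u^+)^{2^*-1}$; by the maximum principle (for the mixed operator with Hardy potential, which is available since $\mu<\bar\mu$) any nonnegative solution of the truncated problem is a solution of the original one. The starting point is to define, following \cite{abc_laplacian},
\[
\Lambda := \sup\{\lambda>0 : \text{\eqref{eq:main_problem_ep} has a solution}\},
\]
and to show $0<\Lambda<\infty$. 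Finiteness of $\Lambda$ comes from testing the equation against the first eigenfunction $\varphi_1$ of the mixed Hardy operator \eqref{eq:first_eigenvalue}: since $\lambda t^{p-1}+t^{2^*-1}\geq \theta_\lambda t$ for a constant $\theta_\lambda\to\infty$ as $\lambda\to\infty$ (minimize $\lambda t^{p-2}+t^{2^*-2}$ over $t>0$, which is a positive power of $\lambda$), one gets $\lambda_1\geq\theta_\lambda$, forcing $\lambda$ bounded. Positivity of $\Lambda$ and existence of a minimal solution for small $\lambda$ follows from the method of sub- and supersolutions: $0$ is a subsolution, and for $\lambda$ small a supersolution is obtained by solving the linear problem $-\Delta w+\varepsilon(-\Delta)^s w-\mu w/|x|^2 = 1$ in $\Omega$ (positive by the maximum principle since $\mu<\bar\mu-1$ ensures coercivity with room to spare and the needed summability of the solution, cf. \cite{biagi2024mixed_hardy}) and scaling it appropriately; monotone iteration between $0$ and this supersolution yields the minimal solution $u_\lambda$, and monotonicity in $\lambda$ is immediate from the iteration scheme. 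That the minimal solutions are increasing and the solution set is an interval $(0,\Lambda)$ (plus possibly $\Lambda$) follows by a standard sub/supersolution comparison: a solution at $\lambda'$ is a supersolution at any $\lambda<\lambda'$.

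For item (2), the existence at $\lambda=\Lambda$, I would take an increasing sequence $\lambda_n\uparrow\Lambda$ with minimal solutions $u_{\lambda_n}$; the hard part is a uniform bound. Here the hypothesis $\mu<\bar\mu-1$ is essential: it is exactly the Jannelli-type condition guaranteeing that the relevant Hardy operator is "subcritical" so that one has an $L^2_{\mathrm{loc}}$ Green function and hence good a priori estimates. Using that $u_{\lambda_n}$ is minimal, one shows that the linearized operator at $u_{\lambda_n}$ has nonnegative first eigenvalue, i.e. $\int|\nabla v|^2+\varepsilon[v]_s^2-\mu\int v^2/|x|^2\geq \int\big((p-1)\lambda_n u_{\lambda_n}^{p-2}+(2^*-1)u_{\lambda_n}^{2^*-2}\big)v^2$ for all $v$; testing with $v=u_{\lambda_n}$ and combining with the equation gives a bound on $\int u_{\lambda_n}^{2^*}$, hence on $\|u_{\lambda_n}\|_{\X}$. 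Then $u_{\lambda_n}\weak u_\Lambda$ weakly, and one passes to the limit in the weak formulation; the possible loss of compactness at the critical exponent is ruled out because the uniform bound obtained is strictly below the threshold coming from $S_\mu$ (this is where the stability inequality pays off, as in \cite{abc_laplacian}). This gives a (possibly non-minimal, but at least weak) solution at $\Lambda$, and $\lambda>\Lambda$ has no solution by definition of $\Lambda$.

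For the second solution when $\lambda\in(0,\Lambda)$, the strategy is a translated mountain-pass argument. Writing $u=u_\lambda+v$ with $v\geq 0$, one obtains a new functional $\widetilde{\mathcal J}(v)=\Je(u_\lambda+v)-\Je(u_\lambda)$ whose critical points with $v\not\equiv0$ give a second solution $u_\lambda+v>u_\lambda$. Since $u_\lambda$ is a strict local minimum of $\Je$ (a consequence of minimality and the strict stability, as $\lambda<\Lambda$), $\widetilde{\mathcal J}$ has the mountain-pass geometry: $0$ is a strict local minimum and $\widetilde{\mathcal J}(t\psi)\to-\infty$ as $t\to\infty$ for a fixed positive $\psi$ because the critical term dominates. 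The key obstacle — and the reason the perturbation parameter $\varepsilon$ was inserted — is verifying the Palais–Smale condition below the critical level: one needs the mountain-pass level $c$ to satisfy $c<\frac{1}{N}S_\mu^{N/2}+\Je(u_\lambda)$ (recall $S_{\mu,s,\varepsilon}=S_\mu$ by \eqref{eq:best_constant_equality}). This requires estimating $\sup_{t\geq0}\widetilde{\mathcal J}(t U_\varepsilon)$ with the Hardy-Sobolev extremals $U_\varepsilon$, and precisely here the asymptotic estimates — the $O(\varepsilon^{2(1-s)\sqrt{\bar\mu}/\sqrt{\bar\mu-\mu}})$ bound on $[U_\varepsilon]_s^2$ and Theorem \ref{thm:asympototic_estimates} for the cross terms with $u_\lambda$ — must be fed in to show the seminorm contribution is a lower-order error; the sublinear term $\lambda\int|U_\varepsilon|^p$, in contrast to a linear perturbation, produces a gain of the right order to push the level strictly down. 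Below this level a Brezis–Lieb / concentration-compactness analysis yields a nonzero PS limit $v$, and the maximum principle gives $u_\lambda+v>0$ distinct from $u_\lambda$. I expect this last level estimate, balancing the sublinear gain against the fractional seminorm loss using the explicit exponents, to be the technical heart of the proof.
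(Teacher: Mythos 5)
Your skeleton (define $\Lambda$ as the supremum of admissible $\lambda$, show $0<\Lambda<\infty$, build minimal solutions by monotone sub/supersolution iteration, pass to the limit at $\lambda=\Lambda$, then a translated mountain pass below $\tfrac1N S_\mu^{N/2}$) is the same as the paper's, but the two steps you treat as routine are exactly where the Hardy singularity bites, and your shortcuts do not work. The most serious gap is the claim that $u_\lambda$ is a strict local minimum of $\Je$ ``as a consequence of minimality and strict stability.'' Semistability only gives nonnegativity of the second variation; the concave term contributes a second-order remainder of size $u_\lambda^{p-2}v^2$, which is not controlled where $u_\lambda$ vanishes; and the usual Brezis--Nirenberg $C^1$-versus-$H^1$ local-minimizer equivalence is unavailable here because solutions are unbounded at the origin (Theorem~\ref{thm:asympototic_estimates}) — the paper explicitly flags this loss as one of its main difficulties. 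The paper's route is to fix $\lambda_0<\lambda_1<\Lambda$, minimize $\Jepo$ over the order interval $Z=\{0\le u\le u_{\lambda_1}\}$, and then, in the case the minimizer equals $u_{\lambda_0}$, run a delicate truncation/energy-comparison argument (splitting $u_n$ via $w_n=(u_n-u_{\lambda_1})^+$, using Picone's inequality and $|\widetilde B_n|\to0$) to conclude that $u_{\lambda_0}$ is a local minimum in the $\X$-topology, or else a second solution is already in hand. Without an argument of this type your mountain-pass geometry around $u_\lambda$ is unjustified.

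The second gap is in the level estimate. In the translated functional the sublinear term enters only through the remainder $(u_\lambda+v)^p-u_\lambda^p-p\,u_\lambda^{p-1}v$, which is quadratic in $v$, so there is no full term $\lambda\int U_\varepsilon^p$ to ``push the level down''; and even if one used it, the exponent count from \eqref{eq:lower_bound_norm_r} shows a gain of order $\varepsilon^{\alpha(N-p\sqrt{\bar\mu})\sqrt{\bar\mu}/\sqrt{\bar\mu-\mu}}$, which beats the error $\varepsilon^{\alpha(N-2)}$ only when $p$ is large enough (roughly $p>4/(N-2)$ for small $\mu$), so the argument would fail for small $p$ or low $N$. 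The paper instead extracts the gain from the quadratic interaction with the first solution: $g(x,t)\ge t^{2^*-1}+(2^*-1)u_\lambda^{2^*-2}t$ together with $u_\lambda\ge M_0>0$ near the origin produces a negative term of order $-\,\varepsilon^{2\alpha\sqrt{\bar\mu}/\sqrt{\bar\mu-\mu}}$ via $\int u_{\varepsilon,\alpha}^2$; this is precisely where the hypothesis $\mu<\bar\mu-1$ enters (so that $2\sqrt{\bar\mu}/\sqrt{\bar\mu-\mu}<N-2$), and the free concentration exponent $\alpha$ is then chosen small so that the gain also beats the operator-weighted seminorm contribution $\varepsilon^{\,1+2\alpha(1-s)\sqrt{\bar\mu}/\sqrt{\bar\mu-\mu}}$. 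Two further points: your supersolution $M w$ with $-\Delta w+\varepsilon(-\Delta)^s w-\mu w/|x|^2=1$ cannot dominate $(Mw)^{2^*-1}$ near the origin, since $w$ blows up like $|x|^{-(\sqrt{\bar\mu}-\sqrt{\bar\mu-\mu})}$ while the left-hand side stays constant; the paper obtains the first solution for small $\lambda$ variationally (Ekeland's principle plus the (PS) lemma at a negative level) and uses solutions at larger $\lambda$ as supersolutions, with $t\varphi_1$ (eigenfunction of the mixed operator without Hardy term) as subsolution. Finally, at $\lambda=\Lambda$ the paper does not use stability at all: it derives uniform $L^1$-type bounds by testing with the first eigenfunction and boundary barriers and passes to the limit by monotone convergence; your stability route would additionally require proving semistability of the minimal solutions in this mixed, singular setting, which you have not done.
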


One of the main novelties of this paper is the derivation of blow-up estimates for the Gagliardo norm of minimizers of \(S_{\mu}\). These estimates are obtained by employing certain inequalities from \cite{giovanni_fractional_book}. Another difficulty arises in establishing the Harnack inequality, where obtaining estimates for the Hardy term proves challenging; this issue is addressed by applying a crucial lemma from \cite{stein_weiss1957fractional}. Finally, the presence of the Hardy term results in the lack of equivalence of minimizers in the \(C^1\) topology, which necessitates a delicate analysis in order to prove the existence of local minima of solutions in the sublinear case.

The paper is organized as follows. In Section~\ref{sec:notations}, we begin by introducing the basic notations and preliminaries required throughout the paper. In addition, several auxiliary lemmas are stated and proved to support the main results. Then we derive asymptotic estimates for solutions of problem~\eqref{eq:main_problem_ep} by employing a suitable transformation in Section~\ref{sec:asymptotics} and end it by proving a strong maximum principle. 
% This section concludes with a proof of the existence of infinitely many solutions.
In Section~\ref{sec:linear_sublinear}, we start with examining the case of linear perturbations and establish both existence and nonexistence results, depending on the value of the parameter relative to the first eigenvalue of the underlying operator. Then we consider the case of superlinear perturbations. Applying the Mountain Pass Theorem, we show the existence of solutions under appropriate conditions on the exponent of the nonlinearity. Finally the section~\ref{sec:sublinear} is devoted to the analysis of sublinear perturbations, where the existence of two positive solutions is established.

%%%%%%%%%%%%%%%%%%%%%%%%%%%%%%%%%%%%%%%%%%%%%%%%%%%%%%%%%%%%%%%%%%
%%%%%%%%%%%%%%%%   Notations and Preliminaries          %%%%%%%%%%%%%%%%%
%%%%%%%%%%%%%%%%%%%%%%%%%%%%%%%%%%%%%%%%%%%%%%%%%%%%%%%%%%%%%%%%%%
\section{Notations and Preliminaries}\label{sec:notations}
%%%% Functional Formulation}

In this section we setup the function spaces which are used to study our problem \eqref{eq:main_problem_ep} and state some lemmas and prove some propositions required in our analysis. First of all, let us fix some notations
\begin{enumerate}
    \item[$(i)$] The constants will be denoted by $C$ and they are allowed to vary within a single line or formula. 
    \item[$(ii)$] If $A$ is any measurable set in $\rnn$, then $|A|$ denotes the $N$-dimensional Lebesgue measure.
    \item[$(iii)$] $B_R(x)$ denotes the $N$-dimensional ball in $\rnn$ centered at $x$ having radius $R$. If it is centered at $0$, then we denote it by $B_R$.
    \item[$(iv)$] For $1 \leq p < \infty$, we denote and define the $L^p(\om)$ norm as $\lv u \rv_{L^p(\om)}^p := \int_{\om} |u|^p dx$.
    \item[$(v)$] $C_c^\infty(\om):= \{ u \in C^{\infty}(\om) : Supp(u) \text{ is compact in } \om \}$ and $C_{c,rad}^\infty(\om) := \{ u \in C_c^\infty(\om) : u \text{ is radial}\}$.
\end{enumerate}

Now, let $\om \subset \rnn$ be a bounded domain and $\varepsilon \in (0,1]$. Then we define the following function space
\begin{equation*}
\X(\Omega) := \left\{ u \in H^1(\mathbb{R}^N) \,:\, u_{|\Omega} \in H_0^1(\Omega) \text{ and } u \equiv 0 \text{ a.e. in } \mathbb{R}^N \setminus \Omega \right\}.
\end{equation*}
It is a Hilbert space endowed with the norm 
\begin{equation*}
    \lv u \rv_{\X,\varepsilon} := \left( \int_{\om} |\grad u|^2 dx +  \varepsilon [ u ]_s^2\right)^{\frac{1}{2}},
\end{equation*}
where $[u]_s^2 = \iint_{\rtwon} \frac{|u(x) - u(y)|^2}{|x-y|^{N +2s}}dy dx$ is known as Gagliardo seminorm induced from the inner product $\langle u, v\rangle_s = \iint_{\rtwon} \frac{(u(x) - u(y))(v(x) - v(y))}{|x - y|^{N  + 2s}}dy dx$. We note that $\lv \cdot \rv_{\X,\varepsilon}$ is equivalent to the usual norm $\lv \cdot \rv_{\X} \left( := \lv \cdot \rv_{\X,1} \right)$. Moreover, $\X(\om)$ is reflexive and separable with respect to the norm $\lv \cdot \rv_{\X,\varepsilon}$. \\
A function $u \in \X(\om)$ is said to be a weak solution of the problem \eqref{eq:main_problem_ep} if for every $v \in \X(\om)$, it holds 
\begin{equation*}
    \int_{\om} \grad u \grad v \, dx + \varepsilon \langle u , v\rangle_s - \mu \int_{\om} \frac{u v}{|x|^2}dx = \int_{\om} |u|^{2^* -2}u v dx + \int_{\om} |u|^{p-2}u v dx.
\end{equation*}
In addition, if the equality is replaced with $\geq(\leq)$ then it is known as weak supersolution(subsolution). 
The weak solution to problem \eqref{eq:main_problem_ep} corresponds to a critical point of the energy functional $\Je: \X(\Omega) \to \mathbb{R}$ given by
\begin{equation*}
\Je(u) = \frac{1}{2} \int_{\Omega} |\nabla u|^2 - \frac{\mu}{2} \int_{\Omega} \frac{|u|^2}{|x|^2} dx
+ \frac{\varepsilon}{2} [u]_{s}^2 
- \frac{1}{2^*} \int_{\Omega} |u|^{2^*} dx
- \frac{\lambda}{p} \int_{\Omega} |u|^{p}dx.
\end{equation*}
It is clear that $\Je \in C^1(\X,\real)$. 

\begin{definition}
    A sequence $\{ u_n \} \subset \X(\Omega)$ is said to be a Palais-Smale $(PS)$ sequence for a functional $\mathcal{J}$ at level $\beta$, if $\mathcal{J}(u_n) \to \beta$ in $\R$ and $\mathcal{J}^{\prime}(u_n) \to 0$ in $(\X(\Omega))'$ as $n \to \infty$. The function $\mathcal{J}$ is said to satisfy $(PS)$ condition at level $\beta$, if every $(PS)$ sequence for $\mathcal{J}$ at level $\beta$ admits a convergent subsequence.
\end{definition}

Also, we would like to mention a very crucial lemma from \cite{stein_weiss1957fractional}.
\begin{lemma}[Lemma $3.5$ of \cite{stein_weiss1957fractional}]\label{lem:stein_strong_lemma}
Let 
\begin{equation*}
    V_{\de}(f) = |x|^{-N + \delta} \int_{\{|y| < |x|\}} |y|^{- \delta} f(y) \, dy
\end{equation*}
where $1< q < \infty$ and $\de < \frac{N(q-1)}{q}$. Then $|V_{\de}(f)| \leq C |x|^{-\frac{N}{q}} \lv f \rv_{L^q}$ for some $C>0$.
\end{lemma}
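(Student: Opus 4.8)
\textbf{Proof proposal for Lemma~\ref{lem:stein_strong_lemma}.}

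The plan is to estimate $|V_\delta(f)(x)|$ at a fixed $x$ by applying H\"older's inequality on the ball $\{|y|<|x|\}$, splitting the integrand as $|y|^{-\delta}f(y) = \left(|y|^{-\delta}\right)\cdot f(y)$, and then computing the resulting power-weight integral explicitly in polar coordinates. First I would write, with $q'$ the conjugate exponent of $q$,
\begin{equation*}
|V_\delta(f)(x)| \leq |x|^{-N+\delta} \left( \int_{\{|y|<|x|\}} |y|^{-\delta q'}\, dy \right)^{1/q'} \lv f \rv_{L^q}.
\end{equation*}
The weight integral converges near the origin precisely when $\delta q' < N$, i.e. $\delta < N/q' = N(q-1)/q$, which is exactly the hypothesis; this is the one place the restriction on $\delta$ is used. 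Passing to polar coordinates, $\int_{\{|y|<|x|\}} |y|^{-\delta q'}\,dy = \omega_{N-1}\int_0^{|x|} r^{N-1-\delta q'}\,dr = \frac{\omega_{N-1}}{N-\delta q'}\,|x|^{N-\delta q'}$, where $\omega_{N-1}$ is the surface measure of the unit sphere. Therefore the H\"older factor equals $\left(\frac{\omega_{N-1}}{N-\delta q'}\right)^{1/q'} |x|^{(N-\delta q')/q'}$.

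Next I would combine the exponents of $|x|$. The total power is $-N+\delta + \frac{N-\delta q'}{q'} = -N+\delta + \frac{N}{q'} - \delta = \frac{N}{q'} - N = -\frac{N}{q}$, using $1/q'+1/q=1$. Hence
\begin{equation*}
|V_\delta(f)(x)| \leq \left(\frac{\omega_{N-1}}{N-\delta q'}\right)^{1/q'} |x|^{-N/q}\, \lv f \rv_{L^q},
\end{equation*}
so the claim holds with $C = \left(\frac{\omega_{N-1}}{N-\delta q'}\right)^{1/q'}$, a constant depending only on $N$, $q$ and $\delta$ (and finite precisely because $\delta < N(q-1)/q$). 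One should note for completeness that when $1<q<\infty$ the value $\delta$ may be negative, zero, or positive; in all these subcases $-\delta q'$ could be negative, but since we integrate over a bounded ball the integral is still finite as long as $N-1-\delta q' > -1$, which is again $\delta q' < N$, so no separate argument is needed.

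There is essentially no serious obstacle here: the lemma is a direct computation, and the only subtlety is verifying that the exponent arithmetic produces exactly $-N/q$ and that the integrability threshold matches the stated hypothesis on $\delta$. If one wants to avoid invoking $\omega_{N-1}$ as an undefined quantity one can instead bound $\int_{\{|y|<|x|\}}|y|^{-\delta q'}\,dy$ by writing it as $\sum_{k\geq 0} \int_{\{|x|2^{-k-1}\leq |y|<|x|2^{-k}\}} |y|^{-\delta q'}\,dy$ and summing a geometric series, but the polar-coordinate computation is cleaner. The result is then recorded as stated, with $C$ independent of $f$ and of $x$.
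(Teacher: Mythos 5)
Your proof is correct. Note that the paper itself gives no argument for this lemma at all: it is imported verbatim as Lemma $3.5$ of the cited Stein--Weiss paper, so there is no in-paper proof to compare against. Your H\"older-plus-polar-coordinates computation is the standard self-contained argument: the exponent arithmetic $-N+\delta+\frac{N-\delta q'}{q'}=-\frac{N}{q}$ is right, the integrability of $|y|^{-\delta q'}$ over the ball is exactly equivalent to $\delta<\frac{N(q-1)}{q}$ (including the trivial case $\delta\le 0$, which you correctly note needs no separate treatment), and the constant $C=\bigl(\tfrac{\omega_{N-1}}{N-\delta q'}\bigr)^{1/q'}$ is independent of $f$ and $x$ as required.
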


Finally, we end this section by mentioning some useful inequalities borrowed from \cite{brasco_second_eigenvalue}
\begin{lemma}\label{ineq:A1}
    Let $f$ be a convex function, then 
    \begin{equation*}
        (a-b)[A f^{\prime}(a) - B f^{\prime}(b)] \geq (f(a) - f(b))(A - B)
    \end{equation*}
for all $a, b \in \real$ and $A, B \geq 0$.
\end{lemma}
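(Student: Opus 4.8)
\textbf{Proof proposal for Lemma \ref{ineq:A1}.}

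The plan is to reduce the claimed inequality to two applications of the first–order characterisation of convexity. Write
\[
D := (a-b)\big[A f'(a) - B f'(b)\big] - (A - B)\big(f(a) - f(b)\big),
\]
so that the assertion is exactly $D \ge 0$. I would first regroup $D$ according to the nonnegative coefficients $A$ and $B$:
\[
D = A\Big[(a-b)f'(a) - \big(f(a) - f(b)\big)\Big] + B\Big[\big(f(a) - f(b)\big) - (a-b)f'(b)\Big].
\]
Since $A \ge 0$ and $B \ge 0$, it suffices to prove that each of the two bracketed quantities is nonnegative.

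For the first bracket, the convexity of $f$ gives the supporting–line inequality at the point $a$, namely $f(b) \ge f(a) + f'(a)(b-a)$, which rearranges to $(a-b)f'(a) \ge f(a) - f(b)$; hence the first bracket is $\ge 0$. For the second bracket, the supporting–line inequality of $f$ at the point $b$ reads $f(a) \ge f(b) + f'(b)(a-b)$, i.e.\ $f(a) - f(b) \ge (a-b) f'(b)$, so the second bracket is $\ge 0$ as well. Adding the two nonnegative terms yields $D \ge 0$, which is the claim. The case $a = b$ is trivial since then both sides of the asserted inequality vanish.

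There is essentially no serious obstacle here; the only point requiring a word of care is the interpretation of $f'$ when $f$ is merely convex rather than differentiable. In that case one replaces $f'(a)$ and $f'(b)$ by arbitrary elements of the subdifferentials $\partial f(a)$ and $\partial f(b)$, and the two supporting–line inequalities used above remain valid verbatim by definition of the subdifferential, so the same computation applies. One-sided derivatives (which exist everywhere for a convex function on $\R$) work equally well. Thus the proof is a direct two-line consequence of convexity combined with the sign hypothesis $A, B \ge 0$.
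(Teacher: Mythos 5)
Your proof is correct: the decomposition of the difference into $A$-weighted and $B$-weighted brackets, each handled by the supporting-line inequality of convexity, is exactly the standard argument, and your remark about subdifferentials/one-sided derivatives covers the nondifferentiable case. The paper itself does not prove this lemma but simply quotes it from \cite{brasco_second_eigenvalue}, where the proof is essentially the same computation, so your argument fills that gap faithfully.
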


\begin{lemma}\label{ineq:A2}
Let $g : \real \to \real $ be an increasing function and $G(t) = \int_0^t g^{\prime} (\tau)^{\frac{1}{2}} d\tau$. Then
\begin{equation*}
    (a - b)(g(a) - g(b)) \geq |G(a) - G(b)|^2.
\end{equation*}
\end{lemma}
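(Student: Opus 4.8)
\textbf{Proof proposal for Lemma~\ref{ineq:A2}.}

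The plan is to reduce the inequality to a one-variable statement by symmetry and then prove it by a direct integration argument. Since the claimed inequality is symmetric under swapping $a$ and $b$ (both sides are unchanged), I may assume without loss of generality that $a \geq b$. Under this assumption $g(a) - g(b) \geq 0$ because $g$ is increasing, and $G(a) - G(b) = \int_b^a g'(\tau)^{1/2}\, d\tau \geq 0$ as well (here I take for granted, as the statement implicitly does, that $g$ is differentiable with $g' \geq 0$ so that the square root and the integral defining $G$ make sense). Thus it suffices to show
\begin{equation*}
(a-b)\bigl(g(a) - g(b)\bigr) \;\geq\; \left( \int_b^a g'(\tau)^{1/2}\, d\tau \right)^{2}.
\end{equation*}

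The key step is to write both factors on the left as integrals over $[b,a]$: namely $a - b = \int_b^a 1\, d\tau$ and $g(a) - g(b) = \int_b^a g'(\tau)\, d\tau$. Then the left-hand side is the product
\begin{equation*}
\left( \int_b^a 1 \, d\tau \right)\left( \int_b^a g'(\tau)\, d\tau \right),
\end{equation*}
and applying the Cauchy--Schwarz inequality in $L^2([b,a])$ to the pair of functions $1$ and $g'(\tau)^{1/2}$ gives exactly
\begin{equation*}
\left( \int_b^a g'(\tau)^{1/2}\, d\tau \right)^{2} \;=\; \left( \int_b^a 1 \cdot g'(\tau)^{1/2}\, d\tau \right)^{2} \;\leq\; \left( \int_b^a 1\, d\tau \right)\left( \int_b^a g'(\tau)\, d\tau \right),
\end{equation*}
which is the desired inequality. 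The case $a = b$ is trivial since both sides vanish.

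I do not expect a serious obstacle here; the only points requiring a little care are the regularity hypotheses under which the expressions are meaningful (differentiability of $g$ and integrability of $g'$ and $g'^{1/2}$ on bounded intervals, which are implicit in the statement as borrowed from~\cite{brasco_second_eigenvalue}) and the bookkeeping of signs to justify that proving the inequality for $a \geq b$ with nonnegative integrands suffices to conclude it for all $a, b \in \mathbb{R}$. Once the problem is set up as the product of the two integrals over $[b,a]$, the result is an immediate application of Cauchy--Schwarz.
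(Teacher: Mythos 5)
Your proof is correct: reducing by symmetry to $a\geq b$, writing $a-b=\int_b^a 1\,d\tau$ and $g(a)-g(b)=\int_b^a g'(\tau)\,d\tau$, and applying Cauchy--Schwarz to $1$ and $g'^{1/2}$ is exactly the standard argument, and it is the one used in the cited source \cite{brasco_second_eigenvalue}; the paper itself states the lemma without proof. The only refinement worth noting is that if $g$ is merely increasing (hence differentiable a.e.\ but not necessarily absolutely continuous) one only has $\int_b^a g'\leq g(a)-g(b)$, but this inequality goes in the favorable direction, so your chain of estimates still closes.
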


\begin{lemma}\label{ineq:A3}
Let $g : \real \to \real $ be an decreasing function and $H(t) = -\int_0^t g^{\prime} (\tau)^{\frac{1}{2}} d\tau$. Then
\begin{equation*}
    (a - b)(g(b) - g(a)) \geq |H(a) - H(b)|^2.
\end{equation*}
\end{lemma}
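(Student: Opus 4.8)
The plan is to deduce Lemma~\ref{ineq:A3} immediately from Lemma~\ref{ineq:A2} by the substitution $\tilde g := -g$. Since $g$ is decreasing, $\tilde g$ is increasing, so Lemma~\ref{ineq:A2} applies to $\tilde g$, and its associated function is $\tilde G(t) = \int_0^t \tilde g'(\tau)^{1/2}\, d\tau = \int_0^t (-g'(\tau))^{1/2}\, d\tau$. Interpreting the symbol $g'(\tau)^{1/2}$ appearing in the statement as $(-g'(\tau))^{1/2}$ (the only meaningful choice when $g'\le 0$), we have $\tilde G = -H$. Lemma~\ref{ineq:A2} then gives $(a-b)(\tilde g(a)-\tilde g(b))\ge |\tilde G(a)-\tilde G(b)|^2$; rewriting $\tilde g(a)-\tilde g(b) = g(b)-g(a)$ and noting $|\tilde G(a)-\tilde G(b)|^2 = |H(a)-H(b)|^2$ (the overall sign is immaterial after taking modulus and squaring) yields exactly the claimed inequality.

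To keep the argument self-contained, I would alternatively give a direct proof mirroring the one-line derivation of Lemma~\ref{ineq:A2}. Assuming $g$ locally absolutely continuous — the standing regularity under which these inequalities are used — and taking without loss of generality $a \ge b$, the fact that $g$ is decreasing gives $g(b)-g(a) = \int_b^a (-g'(\tau))\, d\tau \ge 0$. Writing $a-b = \int_b^a 1\, d\tau$ and applying the Cauchy--Schwarz inequality,
\[
(a-b)\,(g(b)-g(a)) = \left(\int_b^a 1\, d\tau\right)\left(\int_b^a (-g'(\tau))\, d\tau\right) \ge \left(\int_b^a (-g'(\tau))^{1/2}\, d\tau\right)^{2} = |H(a)-H(b)|^2,
\]
which is the assertion. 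The case $a<b$ is symmetric and $a=b$ is trivial.

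The only genuinely delicate point is the interpretation of $g'(\tau)^{1/2}$ together with the regularity of $g$: for a merely monotone $g$ the derivative exists only a.e. and $g$ need not be absolutely continuous, so the identity $g(b)-g(a) = -\int_b^a g'$ may be only an inequality (fortunately of the sign that preserves the conclusion). Since Lemma~\ref{ineq:A3} is invoked only for the specific smooth test functions appearing later, I would adopt the same regularity hypothesis as in~\cite{brasco_second_eigenvalue} (e.g.\ $g \in C^1$, or Lipschitz) so that all the integrals above are classical, and remark that this suffices for every application in the paper.
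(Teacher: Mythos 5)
Your proof is correct. Note that the paper itself does not prove this lemma at all: Lemmas \ref{ineq:A1}--\ref{ineq:A3} are simply quoted from \cite{brasco_second_eigenvalue}, so there is no internal argument to compare against. Both of your routes are sound: the reduction $\tilde g=-g$, $\tilde G=-H$ turns the statement into an instance of Lemma \ref{ineq:A2} verbatim, and the direct argument is the standard one-line Cauchy--Schwarz computation
\[
(a-b)\bigl(g(b)-g(a)\bigr)=\Bigl(\int_b^a 1\,d\tau\Bigr)\Bigl(\int_b^a(-g'(\tau))\,d\tau\Bigr)\ \ge\ \Bigl(\int_b^a(-g'(\tau))^{1/2}\,d\tau\Bigr)^2=|H(a)-H(b)|^2
\]
for $a\ge b$, which is exactly how such inequalities are obtained in \cite{brasco_second_eigenvalue}. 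You are also right to flag the two points the paper glosses over: since $g$ is decreasing, the symbol $g'(\tau)^{1/2}$ in the statement must be read as $(-g'(\tau))^{1/2}$ (equivalently $|g'(\tau)|^{1/2}$), and some regularity (e.g.\ $g\in C^1$ or locally Lipschitz, as in the applications where $g_\beta(t)=\mathrm{sgn}(t)\,|t|\,|t_k|^\beta$-type truncations are used) is needed so that $g(b)-g(a)=\int_a^b g'$ holds; for merely monotone $g$ the fundamental theorem of calculus degrades to an inequality, and, as you observe, it degrades in the favorable direction, so the conclusion survives. Your write-up is therefore not only correct but supplies the justification the paper leaves implicit.
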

%%%%%%%%%%%%%%%%{Boundedness and Positivity Result}

%%%%%%%%%%%%%%%%%%%%%%%%%%%%%%%%%%%%%%%%%%%%%%%%%%%%%%%%%%%%%%%%%%
%%%%%%%%%%%%%%%%    Asymptotic Estimates         %%%%%%%%%%%%%%%%%
%%%%%%%%%%%%%%%%%%%%%%%%%%%%%%%%%%%%%%%%%%%%%%%%%%%%%%%%%%%%%%%%%%
\section{Qualitative Properties}\label{sec:asymptotics}

In this section we study the behaviour of solutions of \eqref{eq:main_problem_ep} near the origin. We start with transforming the problem \eqref{eq:main_problem_ep} into a suitable problem. After that Harnack inequality and uniform estimates are obtained. Finally, we end the section by proving a strong maximum principle.\\
We start this section by introducing the Sobolev inequality and Poincar\'{e} inequality in the radial case.

% Moreover we will be working with radial weights to get asymptotic estimates, for that we require Sobolev inequality and Poincar\'{e} inequality. 
\begin{lemma}\label{lem:weighted_sobolev_inequality}
Let $d \in  (0, \frac{N-2}{2})$. Then there exists a positive constant $C>0$ such that for all $u \in C_c^{\infty}(B_R)$ the following inequality holds
\begin{equation*}
        \left( \frac{1}{|B_R|_{d\mu}}\int_{B_R} \frac{|u|^q}{|x|^{2d}} dx\right)^{\frac{1}{q}} \leq C \left( \frac{R^2}{|B_R|_{d \mu}} \int_{B_R} \frac{|\grad u |^2}{|x|^{2d}}dx\right)^{\frac{1}{2}},
\end{equation*}
where $q = \frac{2 (N- 2d)}{N - 2d - 2}$ and $|B_R|_{d \mu} = \int_{B_R} \frac{dx}{|x|^{2d}}$.
\end{lemma}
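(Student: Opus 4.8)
The plan is to reduce the weighted inequality to the classical (unweighted) Sobolev inequality in a fractional dimension by means of an Emden--Fowler type change of variables, exploiting the fact that for radial functions the weight $|x|^{-2d}$ behaves like a shift in the dimension. Concretely, write $N_d := N - 2d$, so that by hypothesis $d \in (0, \tfrac{N-2}{2})$ forces $N_d > 2$, and the claimed exponent is exactly $q = \tfrac{2N_d}{N_d - 2}$, i.e. the critical Sobolev exponent for dimension $N_d$. This is the structural observation that makes the whole thing work.

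First I would pass to radial variables. It suffices to prove the inequality for nonnegative radial $u \in C_c^\infty(B_R)$, since the general case follows by the usual symmetric-decreasing rearrangement argument: rearrangement does not increase $\int_{B_R} |\nabla u|^2 |x|^{-2d}\,dx$ (here one uses a weighted Pólya--Szegő inequality, or one can instead observe that the inequality for general $u$ reduces via a density/truncation argument to the radial model). Writing $u(x) = v(|x|)$ and using $\int_{B_R} f(|x|)|x|^{-2d}\,dx = \omega_{N-1}\int_0^R f(r) r^{N-1-2d}\,dr$, the inequality becomes a one-dimensional weighted inequality
\begin{equation*}
\left( \int_0^R |v(r)|^q\, r^{N_d - 1}\,dr \right)^{\frac{1}{q}} \leq C\, R\, \left( \int_0^R |v'(r)|^2\, r^{N_d - 1}\,dr \right)^{\frac{1}{2}},
\end{equation*}
where I have absorbed the constant $\omega_{N-1}$ and the normalization $|B_R|_{d\mu} = \omega_{N-1}\int_0^R r^{N_d-1}\,dr = \tfrac{\omega_{N-1}}{N_d} R^{N_d}$ into the constant $C$ (note that $|B_R|_{d\mu}$ appears on both sides with matching powers precisely because $q \cdot \tfrac{1}{q} \cdot$ the scaling works out — this is why the inequality is stated in the scale-invariant normalized form). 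Then I would recognize the left and right sides as, up to the constant $\omega_{N-1}$, the $L^q$ norm of the gradient and the $L^2$ norm of a radial function on the ball $B_R \subset \mathbb{R}^{N_d}$ in an auxiliary (possibly non-integer) dimension $N_d$; the Sobolev inequality in dimension $N_d$ with the critical exponent $q$ then gives exactly the bound, and the factor $R$ on the right comes from the scaling/Poincaré step needed to replace the full $H^1$ norm by the Dirichlet term alone (using $u = 0$ on $\partial B_R$).

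To make the ``fractional dimension'' step rigorous without appealing to a theory of Sobolev spaces in non-integer dimension, I would instead do the Emden--Fowler substitution $r = e^{-t}$, $w(t) = r^{\frac{N_d - 2}{2}} v(r)$, which converts $\int_0^R |v'|^2 r^{N_d-1}\,dr$ into $\int_{\log(1/R)}^\infty \big( |w'|^2 + \big(\tfrac{N_d-2}{2}\big)^2 |w|^2 \big)\,dt$ and $\int_0^R |v|^q r^{N_d - 1}\,dr$ into $\int_{\log(1/R)}^\infty |w|^q\,dt$ (the exponents match precisely because $q = \tfrac{2N_d}{N_d-2}$), and then apply the one-dimensional Gagliardo--Nirenberg--Sobolev inequality $\|w\|_{L^q(\mathbb{R})} \le C\|w\|_{L^2}^{1-\theta}\|w'\|_{L^2}^{\theta}$ on the half-line, together with the mass term $\big(\tfrac{N_d-2}{2}\big)^2\int |w|^2$ (which is strictly positive since $N_d > 2$) to control $\|w\|_{L^2}$; tracking the $R$-dependence through the substitution gives the claimed power of $R$. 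The main obstacle I anticipate is bookkeeping: correctly tracking the scaling in $R$ and the normalization constants $|B_R|_{d\mu}$ through the two changes of variables, and justifying the reduction to radial functions (the weighted Pólya--Szegő inequality with weight $|x|^{-2d}$). The underlying analytic content — Sobolev on the half-line plus a positive zeroth-order term — is standard once the correct substitutions are in place.
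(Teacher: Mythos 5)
Your proposal is correct in substance, and its first step coincides with the paper's: the reduction from general $u\in C_c^\infty(B_R)$ to radial functions is done in the paper exactly as you suggest, via the Hardy--Littlewood rearrangement inequality on the left-hand side (the weight $|x|^{-2d}$ is symmetric decreasing) together with a weighted P\'olya--Szeg\H{o} inequality (Theorem $8.1$ of Alvino et al.\ with $l=2d$, $k=0$) on the gradient side; note that your parenthetical alternative ``density/truncation'' would not achieve this reduction, so the rearrangement route is the one to keep. Where you genuinely diverge is the radial case: the paper proves it by an elementary one-dimensional argument (Lemma \ref{lem:radial_estimate}) --- a pointwise decay bound $|u(r)|\le C\,r^{-(N-2d-2)/2}\big(\int |\nabla u|^2|y|^{-2d}\,dy\big)^{1/2}$ obtained from the fundamental theorem of calculus and Cauchy--Schwarz, followed by an integration-by-parts/Cauchy--Schwarz absorption to get the inequality on $B_1$, and then scaling $v(x)=u(Rx)$ to pass to $B_R$ --- whereas you exploit the ``fractional dimension'' $N_d=N-2d$ structure and the Emden--Fowler substitution $r=e^{-t}$, $w=r^{(N_d-2)/2}v$, reducing to the one-dimensional Gagliardo--Nirenberg inequality on a half-line with the positive mass term $\big(\tfrac{N_d-2}{2}\big)^2\int w^2$. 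Your computation checks out (the exponents $-1$ in $dr/r$ appear on both sides precisely because $q$ is critical for $N_d$, and the cross term integrates to a vanishing boundary term), so both proofs work; the paper's is more self-contained and elementary, while yours is structurally cleaner and makes transparent why $q=\tfrac{2N_d}{N_d-2}$ is the right exponent, at the price of importing the 1-D GNS inequality.

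One bookkeeping correction: the displayed one-dimensional inequality in your proposal, with the factor $CR$ on the right and ``$|B_R|_{d\mu}$ absorbed into $C$,'' is not the correct reduced statement, since $|B_R|_{d\mu}=c_{N,d}R^{N_d}$ depends on $R$. Because $1+\tfrac{N_d}{q}-\tfrac{N_d}{2}=0$, the lemma is exactly equivalent to the scale-invariant inequality
\begin{equation*}
\Big(\int_0^R |v|^q\,r^{N_d-1}\,dr\Big)^{1/q}\le C\Big(\int_0^R |v'|^2\,r^{N_d-1}\,dr\Big)^{1/2}
\end{equation*}
with $C=C(N,d)$ and no factor of $R$; there is also no Poincar\'e step needed, since $q$ is the critical exponent. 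This is harmless for you, because the Emden--Fowler argument you describe proves precisely this $R$-free form, but the intermediate display should be corrected accordingly.
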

We note that the above inequality also holds for lower exponents \( 1 \leq q_1 < q \), which is a direct consequence of Jensen's inequality. \\
% \begin{proof}
% This can be deduced from by taking following choice of parameters in \cite{caffarelli1984interpolation_sobolev}
% \begin{equation*}
%     \sigma =  \gamma = \beta = \frac{- \delta(N- \delta-2)}{2(N- \delta)}, \al = - \frac{\delta}{2}, r = q = \frac{2(N- \delta)}{N- \delta-2}, a = \frac{1}{2}, \text{ and } p=2,
% \end{equation*}
% for any $\delta \in \left(0, \frac{N-2}{2}\right)$.
% \end{proof}
To prove the above inequality we require the following lemma.
\begin{lemma}\label{lem:radial_estimate}
For all $u \in C_{c, \mathrm{rad}}^{\infty}(B_1)$, the following holds:
\begin{enumerate}
    \item[$(i)$]  $
|u(x)| \;\leq\; C \left( \int_{B_1} \frac{|\nabla u|^2}{|y|^{2d}} \, dy \right)^{\tfrac{1}{2}}
\, r^{-\tfrac{(N-2d-2)}{2}},$
\item[$(ii)$] $
\left( \int_{B_1} \frac{|u|^q}{|x|^{2d}} \, dx \right)^{1/q} 
\;\leq\; C \left( \int_{B_1} \frac{|\nabla u|^2}{|x|^{2d}} \, dx \right)^{1/2},$
\end{enumerate}
where $d = \sqrt{\bar{\mu}} - \sqrt{\bar{\mu} - \mu}, \, r = |x| < 1$ and $q = \frac{2(N-2d)}{N-2d-2}$.
\end{lemma}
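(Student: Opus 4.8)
\textbf{Proof proposal for Lemma \ref{lem:radial_estimate}.}

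The plan is to exploit the fact that a radial function is a one–dimensional object in the radial variable, so that both claims reduce to weighted one–dimensional Hardy–Sobolev estimates. Write $u(x) = v(r)$ with $r = |x|$ and $v \in C_c^\infty((0,1])$ in the relevant sense (with $v$ smooth up to $r=0$). Then $|\nabla u(x)| = |v'(r)|$, and passing to polar coordinates,
\begin{equation*}
\int_{B_1} \frac{|\nabla u|^2}{|y|^{2d}}\, dy = \omega_{N-1} \int_0^1 |v'(r)|^2 r^{N-1-2d}\, dr,
\end{equation*}
where $\omega_{N-1} = |\mathbb{S}^{N-1}|$. The exponent $N-1-2d$ is $> -1$ precisely because $d < \tfrac{N-2}{2} = \sqrt{\bar\mu}$, and one checks directly that $d = \sqrt{\bar\mu} - \sqrt{\bar\mu-\mu} \in (0,\tfrac{N-2}{2})$ for $\mu \in (0,\bar\mu)$, so all the weighted integrals below converge and Lemma \ref{lem:weighted_sobolev_inequality} is applicable with this value of $d$.

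For part $(i)$, the first step is a pointwise bound via the fundamental theorem of calculus: since $v(1)=0$,
\begin{equation*}
|v(r)| = \left| \int_r^1 v'(t)\, dt \right| \leq \int_r^1 |v'(t)|\, t^{-\frac{N-1-2d}{2}}\, t^{\frac{N-1-2d}{2}}\, dt.
\end{equation*}
Applying the Cauchy–Schwarz inequality splits this as $\left(\int_r^1 |v'(t)|^2 t^{N-1-2d}\, dt\right)^{1/2} \left(\int_r^1 t^{-(N-1-2d)}\, dt\right)^{1/2}$. The first factor is controlled by $C\left(\int_{B_1}\frac{|\nabla u|^2}{|y|^{2d}}\,dy\right)^{1/2}$, and the second is an explicit integral: since $N-1-2d > 1$ (equivalently $N-2d-2>0$, which holds in the stated range), $\int_r^1 t^{-(N-1-2d)}\,dt = \frac{1}{N-2d-2}\left(r^{-(N-2d-2)} - 1\right) \leq C\, r^{-(N-2d-2)}$. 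Taking square roots gives the exponent $-\tfrac{N-2d-2}{2}$ claimed in $(i)$. (One should note that the hypothesis $N - 2d - 2 > 0$ is exactly what makes the boundary term at $t=1$ negligible and produces a genuine decay estimate rather than a logarithm; in the borderline case a log would appear, but that case is excluded.)

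For part $(ii)$, this is simply Lemma \ref{lem:weighted_sobolev_inequality} specialized to $R=1$ and to this particular $d = \sqrt{\bar\mu}-\sqrt{\bar\mu-\mu}$: the exponent there, $q = \frac{2(N-2d)}{N-2d-2}$, matches the one in the statement, and for $R=1$ the constants $|B_1|_{d\mu}$ are absorbed into $C$. So $(ii)$ follows at once; alternatively one can give a self-contained proof by interpolating the pointwise bound from $(i)$ with the radial $L^2$-type information, but invoking the already-stated lemma is cleanest. I do not anticipate a serious obstacle here: the only points requiring care are verifying $d \in (0,\tfrac{N-2}{2})$ so that the weight exponents lie in the convergent range, and checking that $N-2d-2 > 0$ so that the integral $\int_r^1 t^{-(N-1-2d)}\,dt$ produces the stated power of $r$ without a logarithmic correction — both are elementary consequences of $0 < \mu < \bar\mu$.
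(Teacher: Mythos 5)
Your argument for part $(i)$ is exactly the paper's: write $u(r) = -\int_r^1 u'(t)\,dt$, split the weight, apply Cauchy--Schwarz, and compute $\int_r^1 t^{-(N-1-2d)}\,dt \leq C\,r^{-(N-2d-2)}$, using $d\in(0,\tfrac{N-2}{2})$. That part is fine.

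For part $(ii)$, however, your main route is circular within the paper's logical structure. Although Lemma \ref{lem:weighted_sobolev_inequality} is \emph{stated} before Lemma \ref{lem:radial_estimate}, its proof comes afterwards and is built precisely on part $(ii)$ of the present lemma: the paper first establishes the radial case via Lemma \ref{lem:radial_estimate}$(ii)$ and then extends to general $u\in C_c^\infty(B_R)$ by symmetric rearrangement and a P\'olya--Szeg\H{o}-type inequality. So you cannot ``simply specialize Lemma \ref{lem:weighted_sobolev_inequality} to $R=1$'' to obtain $(ii)$; that is the statement you are being asked to supply an independent proof of (in the radial class). Your fallback remark about ``interpolating the pointwise bound from $(i)$ with the radial $L^2$-type information'' points in the right direction but is not carried out, and the actual argument requires a specific mechanism: in the paper one writes $\int_{B_1}\frac{|u|^q}{|x|^{2d}}\,dx = \omega_N\int_0^1 |u(r)|^q r^{N-1-2d}\,dr$, integrates by parts in $r$ to trade one power of $u$ for $u'$ against the weight $\frac{r^{N-2d}}{N-2d}$, applies Cauchy--Schwarz, and then uses part $(i)$ to convert $\int_0^1 |u|^{2(q-1)} r^{N-2d+1}\,dr$ into $\bigl(\int_{B_1}\frac{|\nabla u|^2}{|x|^{2d}}\,dx\bigr)^{(q-2)/2}\int_0^1 |u|^q r^{N-2d-1}\,dr$ (here the specific value $q=\tfrac{2(N-2d)}{N-2d-2}$ makes the exponents match), after which one absorbs the half power of $\int\frac{|u|^q}{|x|^{2d}}$ into the left-hand side. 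Without this (or an equally explicit substitute that does not presuppose the weighted Sobolev inequality), your proof of $(ii)$ has a genuine gap.
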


\begin{proof}
Since $u \in C_{c,\mathrm{rad}}^{\infty}(B_1)$, we have
\begin{equation*}
-u(x) = - u(r) = u(1) - u(r) = \int_{r}^{1} u'(s) \, ds .
\end{equation*}
This together with the H\"{o}lder inequality yields
\begin{equation*}
\begin{aligned}
|u(x)| 
&\leq \int_{r}^{1} |u'(s)| \, ds \\
&\leq \left( \int_{r}^{1} |u'(s)|^2 \, s^{N-2d-1} \, ds \right)^{\tfrac{1}{2}}
\left( \int_{r}^{1} s^{-(N-2d-1)} \, ds \right)^{\tfrac{1}{2}} \\
& \leq \omega_N^{-1/2} 
\left( \int_{B_1} \frac{|\nabla u|^2}{|y|^{2d}} \, dy \right)^{\tfrac{1}{2}}
\left( \frac{r^{-(N-2-2d)}}{N-2-2d} \right)^{\tfrac{1}{2}}\\
& =\; C 
\left( \int_{B_1} \frac{|\nabla u|^2}{|y|^{2d}} \, dy \right)^{\tfrac{1}{2}} 
r^{-\tfrac{(N-2-2d)}{2}} .
\end{aligned}
% \label{eq:radial_splitting}
\end{equation*}
For the second part, we apply Cauchy-Schwarz inequality and part $(i)$ of Lemma~\ref{lem:radial_estimate} to obtain
\begin{equation*}
\begin{aligned}
\int_{B_1} \frac{|u|^q}{|x|^{2d}} \, dx 
&= \omega_N \int_{0}^{1} |u(r)|^q \, r^{N-1-2d} \, dr \\
&= -\omega_N \int_{0}^{1} q |u(r)|^{q-2} u(r) u'(r) \, \frac{r^{N-2d}}{N-2d} \, dr \\
&\leq \frac{q \, \omega_N}{N-2d} 
\int_{0}^{1} \big( |u'(r)| r^{\tfrac{N-2d-1}{2}} \big)
\big( |u(r)|^{q-1} r^{\tfrac{N-2d+1}{2}} \big) \, dr \\
& \leq \frac{q \, \omega_N}{N-2d} 
\left( \int_{0}^{1} |u'(r)|^2 r^{N-2d-1} \, dr \right)^{1/2}
\left( \int_{0}^{1} |u(r)|^{2(q-1)} r^{(N-2d+1)} \, dr \right)^{1/2}\\
& \leq \frac{C \, q }{N-2d} 
\left( \int_{B_1} \frac{|\nabla u|^2}{|x|^{2d}} \, dx \right)^{q/4}
\left( \int_{B_1} \frac{|u|^q}{|x|^{2d}} \, dx \right)^{1/2}.
\end{aligned}
% \label{eq:radial_q_integral}
\end{equation*}
Rearranging terms, we finally deduce
\begin{equation*}
\left( \int_{B_1} \frac{|u|^q}{|x|^{2d}} \, dx \right)^{1/q}
\;\leq\; \left( \frac{C \, q}{N-2d} \right)^{2/q}
\left( \int_{B_1} \frac{|\nabla u|^2}{|x|^{2d}} \, dx \right)^{1/2}. \qedhere
% \label{eq:final_weighted_sobolev}
\end{equation*}

\end{proof}

\begin{proof}[Proof of Lemma \ref{lem:weighted_sobolev_inequality}]
Let $u \in C_{c, \mathrm{rad}}^{\infty}(B_R)$ be radial. Take $v(x) = u(Rx)$ and apply part $(ii)$ of Lemma \ref{lem:radial_estimate} to get 
\begin{equation*}
    \left( \int_{B_R} \frac{|u|^q}{|x|^{2d}} \, dx \right)^{1/q} 
\;\leq\; C \left( \int_{B_R} \frac{|\nabla u|^2}{|x|^{2d}} \, dx \right)^{1/2}.
\end{equation*}
Now, let $  u \in C_c^{\infty}(B_R)$ and $u^*$ denotes its symmetric rearrangement, then using the Rearrangement inequality (see Theorem $3.4$ and property $(v)$ in section $3.3$ of \cite{liebloss2001analysis}) and Theorem $8.1$ of \cite{alvino2017radial_polya} with parameters $l = 2d$ and $ k = 0$ (this choice satisfies condition $(ii)$ of Theorem $1.1$), we deduce
\begin{equation*}
\begin{aligned}
 \left( \int_{B_R} \frac{|u|^q}{|x|^{2d}} \, dx \right)^{1/q}  & \leq    \left( \int_{B_R} (|u|^q)^* \left(\frac{1}{|x|^{2d}} \right)^* \, dx \right)^{1/q}  = \left( \int_{B_R} |u^*|^q \frac{1}{|x|^{2d}}  \, dx \right)^{1/q} \\  &\leq C \left( \int_{B_R} \frac{|\nabla u^*|^2}{|x|^{2d}} \, dx \right)^{1/2} \leq C \left( \int_{B_R} \frac{|\nabla u|^2}{|x|^{2d}} \, dx \right)^{1/2}.
\end{aligned}
\end{equation*}
From this the result follows.
\end{proof}

Further, since the weights $|x|^{-\delta}$ are admissible weights in the sense of \cite{heinonen_potential_theory}, we state the following Poincar\'{e} inequality
\begin{lemma}\label{lem:weighted_poincare_inequality}
 Let $\delta > 0$. Then there exists a constant $C > 0$ such that for all $u \in C^{\infty}(B_R)$ the following inequality holds
 \begin{equation*}
     \int_{B_R} | u - (u)_{B_R}|^2 \frac{dx}{|x|^\delta} \leq C R^2 \int_{B_R} |\grad u|^2 \frac{dx}{|x|^{\delta}},
 \end{equation*}
 where $(u)_{B_R} = \frac{1}{|B_{R}|_{d\delta}} \int_{B_{R}} u \frac{dx}{|x|^{\delta}}$ with $|B_R|_{d \delta} = \int_{B_R} \frac{dx}{|x|^{\delta}}$.
\end{lemma}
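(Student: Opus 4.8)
The statement to prove is the weighted Poincaré inequality of Lemma~\ref{lem:weighted_poincare_inequality}. Here is a plan.

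\medskip

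The plan is to reduce to the case $R=1$ by a scaling argument and then prove the inequality on the unit ball by a compactness/contradiction argument, exactly in the spirit of the classical proof of the Poincaré--Wirtinger inequality. First I would set $v(x)=u(Rx)$ for $x\in B_1$; a change of variables shows that both sides of the claimed inequality for $u$ on $B_R$ transform into the corresponding quantities for $v$ on $B_1$ with the scale factor $R^2$ appearing exactly as stated, so it suffices to establish $\int_{B_1}|v-(v)_{B_1}|^2\,|x|^{-\delta}\,dx \le C\int_{B_1}|\nabla v|^2\,|x|^{-\delta}\,dx$. The natural functional-analytic framework is the weighted space $W^{1,2}(B_1;|x|^{-\delta}dx)$; since $|x|^{-\delta}$ is an admissible (Muckenhoupt-type) weight in the sense of \cite{heinonen_potential_theory}, this is a Hilbert space in which $C^\infty(B_1)$ is dense, the measure $d\nu=|x|^{-\delta}dx$ is finite on $B_1$, and — crucially — the embedding $W^{1,2}(B_1;d\nu)\hookrightarrow L^2(B_1;d\nu)$ is compact (this Rellich-type compactness for admissible weights is part of the theory in \cite{heinonen_potential_theory}).

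\medskip

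With these tools in place, the contradiction argument runs as usual. Suppose the inequality with $R=1$ fails for every constant; then there is a sequence $v_n\in C^\infty(B_1)$ with $(v_n)_{B_1}=0$, $\int_{B_1}|v_n|^2\,d\nu=1$, and $\int_{B_1}|\nabla v_n|^2\,d\nu\to 0$. Then $\{v_n\}$ is bounded in $W^{1,2}(B_1;d\nu)$, so up to a subsequence $v_n\rightharpoonup v$ weakly there and, by the compact embedding, $v_n\to v$ strongly in $L^2(B_1;d\nu)$. Strong convergence gives $\int_{B_1}|v|^2\,d\nu=1$ and $(v)_{B_1}=0$; weak lower semicontinuity of the Dirichlet energy gives $\int_{B_1}|\nabla v|^2\,d\nu\le\liminf\int_{B_1}|\nabla v_n|^2\,d\nu=0$, so $\nabla v=0$ $\nu$-a.e., hence (since $B_1$ is connected and $|x|^{-\delta}$ is positive a.e.) $v$ is a.e.\ equal to a constant. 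But that constant must be $0$ because $(v)_{B_1}=0$, contradicting $\|v\|_{L^2(d\nu)}=1$. This proves the inequality for some constant $C$ depending only on $N$ and $\delta$ when $R=1$, and the scaling step then delivers the general statement.

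\medskip

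The main obstacle is not the contradiction scheme itself — that is entirely routine once the right ingredients are available — but the verification that $|x|^{-\delta}$ genuinely falls within the class of admissible weights for which the Hilbert-space structure, density of smooth functions, and (above all) the compact Rellich embedding $W^{1,2}(B_1;d\nu)\hookrightarrow\hookrightarrow L^2(B_1;d\nu)$ hold. One should note that admissibility in \cite{heinonen_potential_theory} (local integrability of $w$ and $w^{-1/(p-1)}$, doubling, a Poincaré inequality for the weight, etc.) is precisely what is invoked in the sentence preceding the lemma, so this is assumed; the only point requiring a word of care is that the singularity is at the center of the ball, which is in the interior, but since $|x|^{-\delta}\in L^1_{\mathrm{loc}}$ and $|x|^{\delta}\in L^1_{\mathrm{loc}}$ for $\delta>0$ with the dimensional range implicit here, the Muckenhoupt $A_2$ condition is satisfied and no boundary-blowup issues arise. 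A minor subsidiary point is justifying that $v_n\in C^\infty(B_1)$ may be replaced by general $W^{1,2}(B_1;d\nu)$ elements in the limit, which follows from density; and that the inequality, once proved for $v\in W^{1,2}(B_1;d\nu)$, in particular holds for the smooth functions in the statement.
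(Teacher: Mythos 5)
Your argument is essentially correct, but it is a genuinely different route from the paper's: the paper does not prove this lemma at all, it simply records it as a known fact, the point being that $|x|^{-\delta}$ (in the range actually used, $\delta=2d<N-2$) is an admissible weight in the sense of \cite{heinonen_potential_theory}, and for $p$-admissible (in particular Muckenhoupt $A_2$) weights the Poincar\'e inequality is part of the standard package of that theory. You instead rescale to $B_1$ (which works cleanly here precisely because $|x|^{-\delta}$ is homogeneous, so the weighted averages match and the factor $R^2$ comes out correctly) and then run the classical compactness/contradiction proof. Two caveats temper what this buys you. First, the key ingredient, the compact embedding $W^{1,2}(B_1;|x|^{-\delta}dx)\hookrightarrow\hookrightarrow L^2(B_1;|x|^{-\delta}dx)$, is not elementary and is not something you can simply absorb into ``the theory in \cite{heinonen_potential_theory}''; it is a Fabes--Kenig--Serapioni-type result for $A_2$-weighted Sobolev spaces. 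If you are willing to cite machinery at that level, you might as well cite the weighted Poincar\'e inequality directly, which is exactly what the paper does; so your proof is not really more self-contained, though it does make the logical structure explicit. Second, both the lemma as stated ($\delta>0$ arbitrary) and your verification of $A_2$ need the restriction $\delta<N$: for $\delta\ge N$ the weight is not locally integrable, $|B_R|_{d\delta}=\infty$, and the weighted average is not even defined. You acknowledge this only implicitly (``dimensional range implicit here''); it deserves to be said outright, even though in the paper's application $\delta=2d<N-2$ so no harm is done. With those two points made precise, your scaling-plus-compactness argument is a valid alternative proof: the limit function has vanishing gradient a.e., hence is constant on the connected ball (the weight being positive and mutually absolutely continuous with Lebesgue measure), and the normalization contradiction goes through as you describe.
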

For general domains, we have the following Sobolev inequality with weights
\begin{lemma}
Let $d \in \left(0, \frac{N-2}{2} \right)$. Then there exists a positive constant $C>0$ such that for all $u \in C_c^{\infty}(B_R)$ the following inequality holds  
\begin{equation}\label{ineq:sobolev_inequality_weight}
 \left( \int_{\om} |u |^{2^*} \frac{dx}{|x|^{2^*d}}\right)^{2/2^*}   \leq  C \int_{\om}  |\grad u |^2 \frac{dx}{|x|^{2d}}.
\end{equation}
\end{lemma}
\begin{proof}
The proof follows from picking the following parameters in \cite{caffarelli1984interpolation_sobolev}
\begin{equation*}
a = \frac{1}{2}, \; \gamma = \beta = \alpha = \sigma = - d, \; r = q = 2^*, \text{ and } p=2. \qedhere
\end{equation*}
\end{proof}

Now we state the ground state representation borrowed from \cite[Formula $(4.3)$]{frank2008hardy} and refined in \cite[Lemma $3.2$]{ghoussoub_fractional_hardy}.\\
\textbf{Ground state representation:} For $0 < s < 1$, $N > 2$, and $0 < d < \frac{N - 2}{2}$, for $u \in C_c^\infty(\mathbb{R}^N \setminus \{0\})$, we define
\begin{equation*}
    w(x) = |x|^{d} u(x).
\end{equation*}
\begin{equation*}
\begin{aligned}
    \frac{C_{N,2s}}{2} \iint_{\rtwon} \frac{|u(x) - u(y)|^2}{|x - y|^{N + 2s}} \,dx\,dy 
    &= \psi_{N,2s}(d) \int_{\mathbb{R}^N} \frac{u^2(x)}{|x|^{2s}} \,dx + \frac{C_{N,2s}}{2} \iint_{\rtwon} \frac{|w(x) - w(y)|^2}{|x - y|^{N + 2s}} \frac{dx}{|x|^{d}} \frac{dy}{|y|^{d}},
\end{aligned}
\end{equation*}
where
\begin{equation*}
    \psi_{N,2s}(d) = 2^{2s} \frac{
        \Gamma\left( \frac{N  {-d}}{2} \right) 
        \Gamma\left( \frac{2s +{d}}{2} \right)
    }{
        \Gamma\left( \frac{N  {-d} - 2s}{2} \right)
        \Gamma\left( \frac{d}{2} \right)
    }, \text{ and } \quad 
     C_{N,2s} = \frac{
        2^{2s} \Gamma\left( \frac{N + 2s}{2} \right)
    }{
        \pi^{N/2} \left| \Gamma\left( -\frac{2s}{2} \right) \right|
    }.
\end{equation*}
Now, from \cite{chen2004multiple}, we have
\begin{equation*}
    \int_{\Omega}  |\nabla w(x)|^2 \,\frac{dx}{|x|^{2 d}} = \int_{\Omega} |\nabla u|^2 \, dx - \mu \int_{\Omega} \frac{u^2}{|x|^2} \,dx.
\end{equation*}
Substituting these into the identity obtained in \eqref{eq:main_problem_ep} after testing with $u$, we obtain the following ground state representation
\begin{equation*}
\begin{aligned}
    \int_{\Omega}  \frac{|\nabla w|^2}{|x|^{2d}} \,dx 
    &+ \varepsilon\frac{2 \psi_{N,2s}(d)}{C_{N,2s}} \int_{\Omega} \frac{ w^2(x)}{|x|^{2s}} \,\frac{dx}{|x|^{2 d}} \\
    &+ \varepsilon\iint_{\mathbb{R}^{2N}} \frac{|w(x) - w(y)|^2}{|x - y|^{N + 2s}} \frac{dx}{|x|^{d}} \frac{dy}{|y|^{d}}
    = \int_{\Omega} \frac{ w^{2^*}}{|x|^{2^* d}} \,dx + \lambda \int_{\Omega}  \frac{w^{q+1}}{|x|^{(q+1) d}} \,dx.
\end{aligned}
\end{equation*}
This holds for all $w(x) = |x|^{d} u(x)$ with $u \in C_c^\infty(\mathbb{R}^N)$. In fact, by Lemma $4.4$ of \cite{dipierro2016fractionalwithHardy}, it also holds for all $u \in H^1(\mathbb{R}^N) \cap H^s(\mathbb{R}^N)$.\\

In view of this ground state representation, if $u$ is a solution of \eqref{eq:main_problem_ep} then we make a change of variable defined by
\begin{equation*}
    u(x) = |x|^{-d} w(x) \quad \text{ with } d = \sqrt{\bar\mu} - \sqrt{\bar\mu - \mu},
\end{equation*}
where $\bar\mu= \left( \frac{N-2}{2} \right)^2$ being the critical Hardy constant. Then we deduce that $w$ satisfies the following equation as a weak solution
\begin{equation}\label{eq:transformed_equation}\tag{$\mathcal{T_{\mu,\la}^{\varepsilon}}$}
\left\{
\begin{aligned}
    -\mathrm{div}\left( \frac{\nabla w(x)}{|x|^{2 d}}\right) + \varepsilon C_c \frac{w}{|x|^{2s + 2d}} + \varepsilon (-\Delta_{d})^s w &= \frac{w^{2^* - 1}}{|x|^{2^* d}} + \lambda  \frac{w^q}{|x|^{(q+1) d}} \quad \text{in } \Omega,\\
    w &= 0 \quad \text{ in } \rnn \setminus \om,
\end{aligned}
\right.
\end{equation}
where the constant \( C_0 = \frac{2 \psi_{N,2s}(d)}{C_{N,2s}} \). \\
The operator \( (-\Delta_{d})^s \) is defined via the duality pairing
\begin{equation*}
    \langle (-\Delta_{d})^s v, \varphi \rangle_{s,d} = 
    \iint_{\rtwon} \frac{(v(x) - v(y))(\varphi(x) - \varphi(y))}{|x - y|^{N + 2s}}  \frac{dx}{|x|^{d}}  \frac{dy}{|y|^{d}}
\end{equation*}
for any \( \varphi \in \dot{H}^{s,d}(\mathbb{R}^N) \), where \( \dot{H}^{s,d}(\mathbb{R}^N) \) is the closure of \( C_c^\infty(\mathbb{R}^N) \) with respect to the norm
\begin{equation*}
    \|\varphi\|_{\dot{H}^{s,d}(\mathbb{R}^N)} = 
    \left( 
        \int_{\mathbb{R}^N} \frac{|\varphi(x)|^{2^*_s}}{|x|^{d 2^*_s}} \,dx 
    \right)^{\frac{1}{2^*}} 
    + 
    \left( 
        \iint_{\rtwon} \frac{|\varphi(x) - \varphi(y)|^2}{|x - y|^{N + 2s}}  \frac{dx}{|x|^{d}}  \frac{dy}{|y|^{d}} 
    \right)^{1/2},
\end{equation*}
where $2^*_s = \frac{2N}{N - 2s}$. Moreover, we need the space $ \dot{H}(\rnn,|x|^{-2d})$ to work with the transformed equation. It is defined as the closure of \( C_c^\infty(\mathbb{R}^N) \) with respect to the norm 
\begin{equation*}
    \|\varphi\|_{\dot{H}(\rnn,|x|^{-2d})} = 
    \left( 
        \int_{\mathbb{R}^N} \frac{|\varphi(x)|^{2^*}}{|x|^{d 2^*}} \,dx 
    \right)^{\frac{1}{2^*}} 
    + 
    \left( \int_{\rnn} | \grad \varphi |^2 dx \right)^2.
\end{equation*}
Having transformed the original equation, we now analyze the qualitative properties of its solutions. One of the key advantages of the transformed formulation is that boundedness of solutions becomes accessible. In order to establish this, we derive a weak Harnack inequality for solutions of the transformed problem~\eqref{eq:transformed_equation}. To simplify notations, we define
\begin{equation*}
    d\mu := \frac{dx}{|x|^{2d}}, \quad 
    d\nu := \frac{dx\,dy}{|x - y|^{N + 2s} |x|^{d} |y|^{d}},
\end{equation*}
and $|E|_{d\mu}$ is the measure of set $E$ with respect to measure $\mu$ given as $|E|_{d\mu} = \int_{E} \frac{1}{|x|^{2 d}} dx$.
\begin{theorem}\label{thm:harnack_inequality_transformed}
Let $d \in (0,\sqrt{\bar{\mu}})$ and $w \in \dot{H}^{s,d}(\mathbb{R}^N) \cap \dot{H}(\rnn,|x|^{-2d})$ be a weak solution of \eqref{eq:transformed_equation}. Then, there exist $r_0 > 0$ such that for $\eta \in [1,\frac{N-2}{2})$ the following inequality holds
\begin{equation*}
        \left( \frac{1}{|B_r|_{d\mu}}\int_{B_r} w^{\eta} \, d\mu \right)^{1/\eta} \leq C \inf_{B_{3r/2}} w,
\end{equation*}
for all $r < r_0$.
\end{theorem}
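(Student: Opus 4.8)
The plan is to run the classical De Giorgi--Nash--Moser iteration scheme, adapted to the weighted measure $d\mu = |x|^{-2d}\,dx$, exploiting that the transformed equation~\eqref{eq:transformed_equation} has a genuinely \emph{local} principal part $-\mathrm{div}\!\left(|x|^{-2d}\nabla w\right)$ for which the weighted Sobolev inequality of Lemma~\ref{lem:weighted_sobolev_inequality} and the weighted Poincar\'e inequality of Lemma~\ref{lem:weighted_poincare_inequality} are available. The nonlocal term $\varepsilon(-\Delta_d)^s w$, the Hardy-type zeroth order term $\varepsilon C_0 |x|^{-2s-2d} w$, and the two right-hand side terms will be treated as lower-order perturbations; the key point is that near the origin all of them are dominated, in the relevant function-space norms, by the weighted Dirichlet energy, so they only perturb the constants in the Caccioppoli-type inequalities. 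Throughout I restrict to a small ball $B_{r_0}(0)\subset\Omega$ so that everything is localized and the ``good'' sign of the principal term controls the error terms; since $w\ge 0$ is a solution (by Theorem~\ref{thm:asympototic_estimates}, or rather one first proves $w>0$ via the strong maximum principle referenced in the introduction), one may also assume $w\ge 0$.

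\textbf{Key steps, in order.} First I would establish a \emph{Caccioppoli inequality for negative powers}: testing~\eqref{eq:transformed_equation} with $\varphi = \zeta^2 (w+\delta)^{-\gamma}$ for $\gamma>0$, $\delta>0$ a regularizing parameter, and $\zeta$ a cutoff, one obtains
\begin{equation*}
\int_{B_r} \zeta^2 |\nabla (w+\delta)^{\frac{1-\gamma}{2}}|^2 \, d\mu \leq C\!\left(\gamma\right) \int_{B_r} |\nabla \zeta|^2 (w+\delta)^{1-\gamma} \, d\mu + (\text{perturbation terms}),
\end{equation*}
where the perturbation terms coming from $\varepsilon C_0 |x|^{-2s-2d}w$, from the right-hand sides $|x|^{-2^*d}w^{2^*-1} + \lambda|x|^{-(q+1)d}w^q$, and from the nonlocal bilinear form all have a favourable sign or are absorbable after shrinking $r_0$: the nonlocal contribution $\iint (w(x)-w(y))(\varphi(x)-\varphi(y))\,d\nu$ with the chosen monotone-in-$w$ test function is nonnegative by the elementary inequality that $(a-b)(h(a)-h(b))\le 0$ when $h$ is decreasing applied to $h(t)=(t+\delta)^{-\gamma}$, giving a good-sign term that can be discarded (one must be a little careful that $\zeta^2$ is not constant, but the extra cross-terms are controlled by $[w]$-bounds and $\|\nabla\zeta\|_\infty$). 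Second, combining this Caccioppoli estimate with Lemma~\ref{lem:weighted_sobolev_inequality} on concentric balls yields the reverse-H\"older / Moser iteration inequality for negative exponents, which upon iterating over a sequence of shrinking radii between $3r/2$ and, say, $2r$ produces
\begin{equation*}
\left(\frac{1}{|B_r|_{d\mu}}\int_{B_r} w^{\eta}\, d\mu\right)^{1/\eta} \leq C \left(\frac{1}{|B_{3r/2}|_{d\mu}}\int_{B_{3r/2}} w^{-\kappa}\, d\mu\right)^{-1/\kappa}
\end{equation*}
for some small $\kappa>0$ and every $\eta\in[1,\frac{N-2}{2})$ (the upper bound $\frac{N-2}{2}$ on $\eta$ being exactly the restriction coming from the weighted Sobolev exponent $q = \frac{2(N-2d)}{N-2d-2}$ after accounting for the admissible range of $d$). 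Third, I would prove the \emph{crossover lemma}: the quantity $\left(\frac{1}{|B_\rho|_{d\mu}}\int_{B_\rho} w^{-\kappa}\,d\mu\right)^{-1/\kappa}$ is comparable to $\inf_{B_\rho} w$ for small enough $\kappa$; this is the logarithmic estimate, obtained by testing with $\varphi=\zeta^2(w+\delta)^{-1}$ to get a BMO-type bound on $\log(w+\delta)$ via the weighted Poincar\'e inequality (Lemma~\ref{lem:weighted_poincare_inequality}), then invoking a John--Nirenberg argument valid for the doubling measure $d\mu$. Chaining the second and third steps and letting $\delta\to 0^+$ gives the claimed weak Harnack inequality.

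\textbf{Main obstacle.} The delicate point is controlling the \emph{nonlocal and Hardy error terms uniformly in $\varepsilon$ and on balls of shrinking radius}, so that the final constant $C$ and radius $r_0$ are genuinely independent of $\varepsilon\in(0,1]$ (as needed for the application to asymptotics). The zeroth-order term $\varepsilon C_0 |x|^{-2s-2d}w$ has a singular weight $|x|^{-2s-2d}$, and when tested against $\zeta^2(w+\delta)^{-\gamma}$ it produces $\int \zeta^2 (w+\delta)^{-\gamma} w\, |x|^{-2s-2d}\,dx$; since $2d<2$ and $2s<2$, one has $|x|^{-2s-2d} = |x|^{2-2s}\cdot|x|^{-2d-2}$, wait — more carefully, $|x|^{-2s-2d}\le r_0^{-2s}|x|^{-2d}$ on $B_{r_0}$, so the term is bounded by $\varepsilon C_0 r_0^{-2s}\int\zeta^2(w+\delta)^{1-\gamma}\,d\mu$, which is of lower order relative to the Sobolev gain and can be absorbed once $r_0$ is small — but the bound depends on $\varepsilon r_0^{-2s}$, so one must instead keep $\varepsilon\le 1$ and simply note the constant worsens monotonically as $\varepsilon\to 1$, giving the $\varepsilon$-independent bound. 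The genuinely nonlocal term requires splitting $\mathbb{R}^N = B_{2r_0}\cup B_{2r_0}^c$: the near-diagonal part is handled by the sign argument above, while the tail $\iint_{x\in B_r, y\in B_{2r_0}^c}$ must be estimated using the a priori bound $w\in\dot H^{s,d}$ together with Lemma~\ref{lem:stein_strong_lemma} to control the resulting Riesz-potential-type integral $|x|^{-d}\int_{|y|<|x|}|y|^{-d}(\cdots)$ — this is exactly where the Stein--Weiss lemma quoted in the preliminaries enters, and getting this tail bound small uniformly in $\varepsilon$ and $r<r_0$ is the technical heart of the proof.
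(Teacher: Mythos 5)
There is a genuine gap, and it sits exactly at the point the paper singles out as the technical heart of the argument: your treatment of the zeroth-order term $\varepsilon C_0\,|x|^{-2s-2d}\,w$. You claim the pointwise bound $|x|^{-2s-2d}\le r_0^{-2s}|x|^{-2d}$ on $B_{r_0}$, but for $|x|\le r_0$ one has $|x|^{-2s}\ge r_0^{-2s}$, so the inequality runs in the opposite direction; the proposed absorption of this term into the Caccioppoli right-hand side therefore fails, and with it every iteration step in your scheme. This is not a cosmetic slip: the singular weight $|x|^{-2s-2d}$ is much larger than $r^{-2s}|x|^{-2d}$ near the origin, and controlling its integral over the sets arising in the iteration is precisely where the paper brings in the decomposition into the regions of \eqref{eq:region_breaking} together with the Stein--Weiss estimate of Lemma~\ref{lem:stein_strong_lemma} (and an HLS-type bound), both in the propagation-of-positivity lemma and in the De Giorgi and reverse-H\"older lemmas. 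You invoke Stein--Weiss only for the nonlocal tail, leaving the Hardy-type term of \eqref{eq:transformed_equation} unhandled; until the false pointwise bound is replaced by an integral estimate of the form $\int_E |x|^{-2s}\,d\mu\le C\,r^{-2s}|E|_{d\mu}$ for the specific sets appearing in the iteration (or by the paper's region-splitting argument keeping the same power of $w$ on both sides), the proof does not close, and the claimed uniformity in $\varepsilon$ and $r$ is moot.

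Beyond this, note that your architecture is the classical Moser weak-Harnack scheme (negative-power Caccioppoli and iteration, a John--Nirenberg crossover on $\log w$, then a positive-power reverse H\"older), whereas the paper runs a De Giorgi-type route: a logarithmic estimate combined with the weighted Poincar\'e inequality (propagation of positivity), a level-set iteration giving $\inf_{B_{4r}}w\ge\delta k$, a Krylov--Safonov-type covering lemma imported from Dipierro et al.\ to obtain the $L^{\eta}$--infimum bound for some small $\eta$, and finally the reverse H\"older inequality to reach the stated exponent range. Your route is viable in principle, but as written the roles of the three steps are permuted: iterating negative exponents cannot by itself yield your displayed bound of a positive-power mean by a negative-power mean (that is exactly what the crossover provides), and the comparability of the negative-power mean with $\inf w$ comes from the negative-exponent iteration, not from the log/BMO step. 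These attributions would have to be untangled, and in either route the uniform control of the singular Hardy weight and of the weighted nonlocal kernel must be carried out with the Stein--Weiss-type machinery rather than a pointwise comparison of weights.
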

To prove this Harnack inequality, we first prove a series of lemmas. The first result in the direction is the following lemma, known as the \textit{propagation of positivity}

\begin{lemma}[Propagation of Positivity]\label{lem:propagation_of_positivity}
Let \( w > 0 \) in \( B_R(0) \), with \( 0 < R \leq 1 \), be a supersolution to equation~\eqref{eq:transformed_equation}. Let \( k > 0 \), and suppose that for some \( \sigma \in (0,1] \), we have
\[
    |B_r \cap \{w \geq k\}|_{d\mu} \geq \sigma |B_r|_{d\mu}
\]
with \( 0 < r < \frac{R}{16} \). Then, there exists a constant \( C = C(N, s) \) such that
\[
    |B_{6r} \cap \{w \leq 2\delta k\}|_{d\mu} \leq \frac{C}{\sigma \log\left( \frac{1}{2\delta} \right)} |B_{6r}|_{d\mu}
\]
for all \( \delta \in \left(0, \frac{1}{4}\right) \).
\end{lemma}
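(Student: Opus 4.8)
The plan is to adapt the classical De Giorgi–Moser "measure-to-pointwise" lemma to the weighted, mixed local–nonlocal setting. The natural test function is the logarithmic cutoff $\varphi = \psi^2 w^{-1}$, where $\psi$ is a smooth cutoff with $\psi \equiv 1$ on $B_{6r}$, $\operatorname{supp}\psi \subset B_{8r}$, and $|\nabla\psi| \leq C/r$. Since $w>0$ on $B_R$ and $8r < R/2$, this test function is admissible. Plugging it into the weak supersolution inequality for~\eqref{eq:transformed_equation} and discarding the nonnegative right-hand side (the terms $w^{2^*-1}|x|^{-2^*d}$ and $\lambda w^q|x|^{-(q+1)d}$ are nonnegative), I would estimate the local diffusion term $-\operatorname{div}(|x|^{-2d}\nabla w)$ and the weighted zeroth-order term $\varepsilon C_0 w|x|^{-2s-2d}$ in the standard way, using the weighted Caccioppoli/Poincaré inequality from Lemma~\ref{lem:weighted_poincare_inequality}; the zeroth-order term actually has a favorable sign after testing with $\psi^2 w^{-1}$ and so only helps. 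The outcome of the local part is the bound
\begin{equation*}
\int_{B_{6r}} \left|\nabla \log w\right|^2 \, d\mu \;\leq\; \frac{C}{r^2}\,|B_{8r}|_{d\mu} \;+\; (\text{nonlocal contribution}).
\end{equation*}

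Next I would control the nonlocal contribution $\varepsilon\iint (w(x)-w(y))(\varphi(x)-\varphi(y))\,d\nu$. This is the step I expect to be the main obstacle, because the kernel $d\nu$ carries the singular weights $|x|^{-d}|y|^{-d}$ and the test function involves $w^{-1}$, so the usual "tail term" estimates for the fractional Laplacian must be redone with these weights; the singularity of $|x|^{-d}$ at the origin is exactly where the Hardy term lives, and controlling $\iint_{|x|<|y|}$-type contributions is delicate. Here I would invoke Lemma~\ref{lem:stein_strong_lemma} (Lemma 3.5 of~\cite{stein_weiss1957fractional}) to bound the weighted convolution-type integral that appears, exactly as the introduction flags ("obtaining estimates for the Hardy term proves challenging; this issue is addressed by applying a crucial lemma from~\cite{stein_weiss1957fractional}"). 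The algebraic inequality $(a-b)(a^{-1}\psi(x)^2 - b^{-1}\psi(y)^2) \geq (\log a - \log b)^2 \psi(x)\psi(y)^{?} - C(\psi(x)-\psi(y))^2$ (a weighted variant of the standard pointwise inequality used in Moser's proof) lets me split off a good term $|\log w(x) - \log w(y)|^2$ against a controllable error depending only on $|\nabla\psi|$; since $w$ is globally in $\dot H^{s,d}(\mathbb{R}^N)$, the far tails are absorbed into a constant times $\|w\|_{\dot H^{s,d}}$, which after scaling contributes at the same order $r^{-2}|B_{8r}|_{d\mu}$ (using $\eta\in[1,\frac{N-2}{2})$ and the constraint $d<\sqrt{\bar\mu}$ to keep exponents subcritical).

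Having established that $v := \log w$ satisfies $\int_{B_{6r}} |\nabla v|^2\,d\mu \leq C r^{-2}|B_{8r}|_{d\mu}$ and hence, by the weighted Poincaré inequality (Lemma~\ref{lem:weighted_poincare_inequality}) applied on $B_{6r}$, that $\int_{B_{6r}} |v - (v)_{B_{6r}}|^2\,d\mu \leq C|B_{6r}|_{d\mu}$, I would finish by a purely measure-theoretic argument. Set $w_\sigma := \log(1/(2\delta))$-type level: the hypothesis $|B_r\cap\{w\geq k\}|_{d\mu}\geq\sigma|B_r|_{d\mu}$ together with a covering/density argument (comparing $B_r$ and $B_{6r}$ through the doubling property of $d\mu$, which holds since $2d<N$) forces the average $(v)_{B_{6r}}$ to lie above $\log k - C(\sigma)$. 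Then on the bad set $\{w\leq 2\delta k\}\cap B_{6r}$ one has $v - (v)_{B_{6r}} \leq \log(2\delta k) - \log k + C(\sigma) = -\log\frac{1}{2\delta} + C(\sigma)$, so for $\delta$ small,
\begin{equation*}
\left(\tfrac12\log\tfrac{1}{2\delta}\right)^2 \, |B_{6r}\cap\{w\leq 2\delta k\}|_{d\mu} \;\leq\; \int_{B_{6r}} |v - (v)_{B_{6r}}|^2 \, d\mu \;\leq\; C|B_{6r}|_{d\mu},
\end{equation*}
which rearranges to the claimed bound $|B_{6r}\cap\{w\leq 2\delta k\}|_{d\mu} \leq \frac{C}{\sigma\log(1/(2\delta))}|B_{6r}|_{d\mu}$ after absorbing the lower-order term in the logarithm. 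The dependence of the constant on $\sigma$ enters precisely through the step that pins down $(v)_{B_{6r}}$; tracking it carefully gives the stated form $C/\sigma$.
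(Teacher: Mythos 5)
Your overall architecture matches the paper's: test the supersolution inequality with $w^{-1}\psi^2$, extract a weighted Caccioppoli bound for $\log w$, apply the weighted Poincar\'e inequality of Lemma~\ref{lem:weighted_poincare_inequality}, and finish with a density/measure argument. Your final step (pinning the average of $\log w$ via the hypothesis on $|B_r\cap\{w\ge k\}|_{d\mu}$ and then using Chebyshev on the $L^2$ oscillation) is a workable variant of the paper's argument, which instead works with the bounded truncation $v=\big[\min\{\log\tfrac{1}{2\delta},\log\tfrac{k}{w}\}\big]_+$ and an $L^1$ oscillation bound; your route even gives a $(\log\tfrac{1}{2\delta})^{-2}$ decay in the main regime, though you should truncate to justify that $\log w$ belongs to the weighted Sobolev space before invoking Poincar\'e (the paper's truncated $v$ sidesteps this).

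Two concrete points in your sketch would fail as written. First, the zeroth-order term does not have a favorable sign: testing $\varepsilon C_0 w|x|^{-2s-2d}$ with $w^{-1}\psi^2$ produces $+\varepsilon C_0\int\psi^2|x|^{-2s}\,d\mu$ on the operator side, and after rearranging the supersolution inequality this lands on the majorizing side of the Caccioppoli estimate; it must be bounded (easily, since $2s+2d<N$ gives $\int_{B_{7r}}|x|^{-2s}\,d\mu\le Cr^{-2s}|B_{6r}|_{d\mu}$), not discarded. Second, and more seriously, absorbing the nonlocal tails into a constant times $\|w\|_{\dot H^{s,d}}$ would make the final constant depend on the solution (and, once tracked, on $\varepsilon$), which is incompatible with the statement $C=C(N,s)$ and with the $\varepsilon$-uniformity needed later for the Harnack inequality. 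The correct tail bound uses the structure of the test function: for $x\in \mathrm{supp}\,\psi$ and $y\notin B_{8r}$ one has $(w(x)-w(y))\,w(x)^{-1}\psi^2(x)\le 1$ since $w\ge 0$, so the tail is controlled by the pure kernel integral
\begin{equation*}
\iint_{B_{7r}\times(\mathbb{R}^N\setminus B_{8r})}\frac{1}{|x-y|^{N+2s}}\,\frac{dx}{|x|^{d}}\,\frac{dy}{|y|^{d}}\;\le\;C\,r^{-2s}\,|B_{6r}|_{d\mu},
\end{equation*}
with no norm of $w$ appearing; this is how the paper proceeds, while the diagonal block $B_{8r}\times B_{8r}$ is handled by the pointwise logarithmic inequality of Di Castro--Kuusi--Palatucci (the inequality you left with an unresolved exponent is exactly this one). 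Lemma~\ref{lem:stein_strong_lemma} is not needed in this lemma; the paper invokes it only in the subsequent iteration step for the Hardy-type terms.
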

\begin{proof}
Choose a cut-off function \( \varphi \in C_c^{\infty}(\rnn)\) such that 
\[
    \text{Supp}(\varphi) \subset B_{7r}, \quad 0 \leq \varphi \leq 1 \quad \text{and} \quad \varphi \equiv 1 \text{ in } B_{6r}, \quad |\nabla \varphi| \leq \frac{C}{r}.
\]
Choosing test function \( \eta = w^{-1} \varphi^2 \) in \eqref{eq:transformed_equation} and Young's inequality give
\begin{align*}
    0 \leq & \int_{\om} \grad w \nabla \left( w^{-1} \varphi^2 \right) d\mu + \varepsilon C_0 \int_{\om} \frac{\varphi^2}{|x|^{2s}} d\mu   +\varepsilon \iint_{\rtwon} \left( w(x) - w(y) \right) \left(  \frac{\varphi^2(x)}{w(x)} - \frac{\varphi^2(y)}{w(y)} \right) d\nu \\
    \leq & -\frac{1}{2} \int_{\om}  \varphi^2 w^{-2} |\nabla w|^2\, d\mu + 2 \int_{\om} |\nabla \varphi|^2 \,d\mu + \varepsilon C_0 \int_{\om} \frac{\varphi^2}{|x|^{2s}} d\mu   +\varepsilon I_3 \\
    \leq & -\frac{1}{2} \int_{\om}  \varphi^2 w^{-2} |\nabla w|^2\, d\mu + C  |B_{6r}|_{d\mu} r^{-2} + C |B_{6r}|_{d\mu} r^{-2s} + I_3.
\end{align*}
For \( I_3 \), we break the integral as follows
\begin{align*}
    I_3 = \iint_{B_{8r} \times B_{8r}} d\nu + \iint_{B_{8r} \times \mathbb{R}^N \setminus B_{8r}} d\nu + \iint_{\mathbb{R}^N \setminus B_{8r} \times B_{8r}} d\nu = I_{3,1} + I_{3,2} + I_{3,3}.
\end{align*}
Using the same idea of \cite[Lemma $1.3$]{palatucci_castro2016local}, we obtain
\begin{align*}
    I_{3,1} & \leq -\frac{1}{C} \iint_{B_{8r} \times B_{8r}} \left| \log \left(\frac{w(x)}{w(y)}\right) \right|^2 \varphi^2(y) d\nu + C \iint_{B_{8r} \times B_{8r}} \left| \varphi(x) - \varphi(y) \right|^2 d\nu \\
    & \leq -\frac{1}{C} \iint_{B_{6r} \times B_{6r}} \left| \log \left(\frac{w(x)}{w(y)}\right) \right|^2 d\nu + C \iint_{B_{8r} \times B_{8r}} \left| \varphi(x) - \varphi(y) \right|^2 d\nu \\
    & \leq -\frac{1}{C} \iint_{B_{6r} \times B_{6r}} \left| \log \left(\frac{w(x)}{w(y)}\right) \right|^2 d\nu + C \lv \grad \varphi \rv_{L^{\infty}}^2 \iint_{B_{8r} \times B_{8r}} \frac{|x-y|^{2-N-2s}}{|x|^{d}|y|^{d}} \, dy \, dx \\
    & \leq -\frac{1}{C} \iint_{B_{6r} \times B_{6r}} \left| \log \left(\frac{w(x)}{w(y)}\right) \right|^2 d\nu + \frac{C}{r^2} \left( \int_{B_{8r}} |x|^{-2d N/(N+2-2s)}dx\right)^{(N+2-2s)/N} \\
    & \leq -\frac{1}{C} \iint_{B_{6r} \times B_{6r}} \left| \log \left(\frac{w(x)}{w(y)}\right) \right|^2 d\nu  +  C |B_{6r}|_{d\mu} r^{-2s},
\end{align*}
where the inequality in the penultimate line follows from the HLS inequality \cite{liebloss2001analysis}. Now
\begin{align*}
I_{3,2} = I_{3,3} &= \iint_{ B_{8r} \times \mathbb{R}^N \setminus B_{8r} } (w(x) - w(y)) w^{-1}(x) \varphi^2(x) \, d\nu \\
& \leq \iint_{B_{7r} \times \mathbb{R}^N \setminus B_{8r}} \frac{1}{|x - y|^{N + 2s}} \frac{1}{|x|^{d}}  \frac{1}{|y|^{d}} \, dx \, dy \\
&\leq \int_{B_{7r}} \frac{1}{|x|^{d}} \, dx \int_{\mathbb{R}^N \setminus B_{8r}} \frac{1}{|y|^{d}}  \frac{1}{( |y|-7r)^{N + 2s}} \, dy\\
& \leq C |B_{6r}|_{d\mu} r^{-2s}.
\end{align*}
Combining all estimates and the fact that \( r^{-2s} < r^{-2} \) for $r < 1$, we deduce that
% \begin{align*}
% 0 \leq &  ~I_1 +  I_2 +  I_3 \\
% \leq &~  \frac{-1}{2} \int_{\om}  \varphi^2 w^{-2} |\nabla w|^2\, d\mu + 2 \int_{\om} |\nabla \varphi|^2 \,d\mu +  C \, |B_{6r}|_{d\mu} \, r^{-2s}  - \frac{1}{C} \iint_{B_{6r} \times B_{6r}} \left| \log \left( \frac{w(x)}{w(y)} \right) \right|^2 d\nu
% \end{align*}
% which can be rewritten as
\begin{align*}
\int_{B_{6r}} |\nabla(\log w)|^2 \, d\mu  &\leq \int_{\Omega}  \frac{\varphi^2}{w^2} |\nabla w|^2 \, d\mu +  \iint_{B_{6r} \times B_{6r}} \left| \log \left( \frac{w(x)}{w(y)} \right) \right|^2 d\nu \\
&\leq C \left[ |B_{6r}|_{d\mu} r^{-2} +  |B_{6r}|_{d\mu} r^{-2s} \right] \leq C \, r^{-2} \, |B_{6r}|_{d\mu}.
\end{align*}
% Using above and the fact that \( r^{-2s} < r^{-2} \) for $r < 1$, we get
% \[
% \int_{B_{6r}} |\nabla(\log w)|^2 \, d\mu \leq C \, r^{-2} \, |B_{6r}|_{d\mu}.
% \]
For any \( \delta \in (0, \tfrac{1}{4}) \), define $v = \left[ \min \left\{ \log \left( \frac{1}{2\delta} \right),~ \log \left( \frac{k}{w} \right) \right\} \right]_+$ and
% \[
% v = \left[ \min \left\{ \log \left( \frac{1}{2\delta} \right),~ \log \left( \frac{k}{w} \right) \right\} \right]_+
% \]
denote $(v)_{B_{6r}} = \frac{1}{|B_{6_r}|_{d\mu}} \int_{B_{6r}}v \, dx$. Then Lemma \ref{lem:weighted_poincare_inequality} yields
\begin{align*}
\int_{B_{6r}} |v(x) - (v)_{B_{6r}}| \, d\mu & \leq \left( \int_{B_{6r}} |v(x) - (v)_{B_{6r}}|^2 \, d\mu \right)^{1/2}   |B_{6r}|_{d\mu}^{1/2} \\
&\leq  C \, r \left( \int_{B_{6r}} |\nabla v|^2 \, d\mu \right)^{1/2}   |B_{6r}|_{d\mu}^{1/2} \\
& \leq C \, r \left( \int_{B_{6r}} |\nabla \log w|^2 \, d\mu \right)^{1/2}   |B_{6r}|_{d\mu}^{1/2} \\
& \leq  C \, |B_{6r}|_{d\mu}.
\end{align*}
Now, we know $ \{ v = 0 \} =  \{ w \geq k \}$ then $    |B_{6r} \cap \{ v = 0 \}|_{d\mu} \geq \frac{\sigma}{6^{N+2d}} |B_{6r}|_{d\mu}$.  Moreover,
% \begin{equation*} 
%     |B_{6r} \cap \{ v = 0 \}|_{d\mu} \geq \frac{\sigma}{6^{N+2d}} |B_{6r}|_{d\mu}.
% \end{equation*}
% Now,

\small{\begin{align*}
    \log \left( \frac{1}{2\delta} \right) = \frac{1}{|B_{6r} \cap \{ v = 0 \}|} \int_{B_{6r} \cap \{ v = 0 \}} \left( \log \left( \frac{1}{2\delta} \right) - v(x) \right) \, dx \leq \frac{6^{N+2d}}{\sigma} \left[ \log \left( \frac{1}{2\delta} \right) - (v)_{B_{6r}} \right].
\end{align*}}

Integrating the above inequality with respect to the measure $\mu$, we have
\begin{equation*}
    \left|\{ v = \log \left( \tfrac{1}{2\delta} \right) \} \cap B_{6r}\right|_{d\mu}   \log \left( \tfrac{1}{2\delta} \right)
    \leq \frac{6^{N+2d}}{\sigma} \int_{B_{6r}} |v(x) - (v)_{B_{6r}}| \, d\mu
    \leq \frac{C}{\sigma} |B_{6r}|_{d\mu}.
\end{equation*}
Therefore, for all $\delta \in \left( 0, \tfrac{1}{4} \right)$, we obtain the estimate
\begin{equation*} 
    |B_{6r} \cap \{ w \leq 2\delta k \}|_{d\mu} 
    \leq \frac{C}{\sigma \log \left( \frac{1}{2\delta} \right)} |B_{6r}|_{d\mu}. \qedhere
\end{equation*}
\end{proof}
\medskip

\begin{lemma}\label{lem:lower_bound_Harnack}
Assuming the hypothesis of Lemma \ref{lem:propagation_of_positivity}, there exists $\delta \in (0, \tfrac{1}{4})$ such that
\begin{equation*} 
    \inf_{B_{4r}} w \geq \delta k.
\end{equation*}
\end{lemma}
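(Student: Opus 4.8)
\textbf{Proof plan for Lemma \ref{lem:lower_bound_Harnack}.} The strategy is the classical De Giorgi–Moser iteration that converts the measure-decay estimate of Lemma \ref{lem:propagation_of_positivity} into a pointwise lower bound, adapted to the weighted measures $d\mu$ and $d\nu$. First I would fix the value of $\sigma$ coming from the hypothesis of Lemma \ref{lem:propagation_of_positivity} (the level set $\{w\ge k\}$ occupies a fixed fraction of $B_r$), and then apply Lemma \ref{lem:propagation_of_positivity} on the ball $B_{6r}$ to obtain, for every $\delta\in(0,\tfrac14)$,
\[
    |B_{6r}\cap\{w\le 2\delta k\}|_{d\mu}\le\frac{C}{\sigma\log(1/(2\delta))}|B_{6r}|_{d\mu}.
\]
The point is that the right-hand side can be made an arbitrarily small fraction of $|B_{6r}|_{d\mu}$ — in particular smaller than the threshold $\varepsilon_0|B_{4r}|_{d\mu}$ required to start a Moser-type iteration on the truncated function — by choosing $\delta$ small depending only on $N$, $s$ and $\sigma$. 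Since $\sigma$ is itself determined by the hypotheses (ultimately traceable to a normalization like $w(x_0)\ge k$ at some point $x_0$ that will be used when this lemma is invoked inside the proof of Theorem \ref{thm:harnack_inequality_transformed}), $\delta$ depends only on the structural constants.

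Next I would run the iteration on the function $v=(2\delta k - w)_+$ (or equivalently work with $\log$ of a suitable truncation), which is a nonnegative \emph{subsolution} of the transformed equation on $B_{6r}$ because $w$ is a supersolution: for any convex increasing $\Phi$, $\Phi(v)$ obeys a Caccioppoli inequality with the weighted Dirichlet form, and the nonlocal tail terms are controlled exactly as in the estimate of $I_3$ in the previous lemma (splitting into near and far contributions, the far part bounded by $C|B_{6r}|_{d\mu}r^{-2s}$, which is dominated by $r^{-2}$ for $r<1$). Combining the Caccioppoli inequality with the weighted Sobolev inequality of Lemma \ref{lem:weighted_sobolev_inequality} on the ball of radius $d\mu$-measure (using the gain exponent $q=\tfrac{2(N-2d)}{N-2d-2}>2$) produces the reverse-Hölder / energy-improvement step on shrinking radii $r_j=4r+2^{-j}\cdot 2r$, and iterating in the standard De Giorgi fashion yields
\[
    \sup_{B_{4r}} v \le C\Bigl(\frac{1}{|B_{6r}|_{d\mu}}\int_{B_{6r}}v^2\,d\mu\Bigr)^{1/2}
    \quad\text{or}\quad
    |B_{4r}\cap\{v>0\}|_{d\mu}=0,
\]
the dichotomy being the usual one: once the super-level set of $v$ at height $2\delta k$ (i.e. $\{w<2\delta k\}\cap B_{6r}$) has $d\mu$-measure below the critical fraction, the iteration forces $v\equiv 0$ on $B_{4r}$, i.e. $w\ge 2\delta k\ge\delta k$ there.

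I would then simply read off $\inf_{B_{4r}}w\ge\delta k$ with this chosen $\delta\in(0,\tfrac14)$, which is the claim. The main obstacle I anticipate is technical rather than conceptual: verifying that the weighted structure (the measures $d\mu$, $d\nu$ with weights $|x|^{-2d}$, and the lower-order Hardy-type term $\varepsilon C_0 w|x|^{-2s-2d}$ appearing in \eqref{eq:transformed_equation}) does not spoil either the Caccioppoli estimate or the doubling property $|B_{2\rho}|_{d\mu}\le C|B_\rho|_{d\mu}$ needed for Moser iteration — here the constraint $d\in(0,\sqrt{\bar\mu})\subset(0,\tfrac{N-2}{2})$ keeps $|x|^{-2d}$ locally integrable and an admissible (doubling, supporting Poincaré) weight, and the extra nonnegative term $\varepsilon C_0\int\varphi^2|x|^{-2s}d\mu$ only helps the supersolution inequality (it can be absorbed or discarded with the correct sign). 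A secondary point requiring care is keeping all constants independent of $\varepsilon\in(0,1]$, which is automatic since every $\varepsilon$-weighted term carries a favorable sign or is bounded above using $\varepsilon\le 1$.
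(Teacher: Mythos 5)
Your proposal follows essentially the same route as the paper: use Lemma \ref{lem:propagation_of_positivity} to make the relative $d\mu$-measure of $\{w\le 2\delta k\}$ in $B_{6r}$ small by choosing $\delta$, then run a De Giorgi-type iteration on the truncations $(\ell-w)_+$ with shrinking radii between $6r$ and $4r$, combining a weighted Caccioppoli estimate, the weighted Sobolev inequality, region-splitting for the nonlocal tail and Hardy terms, and an iteration lemma to force the sublevel-set measure to zero, with all constants independent of $\varepsilon$. This matches the paper's argument, so the proposal is correct and not a genuinely different approach.
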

\begin{proof}
Take a smooth function $\varphi \in C_c^{\infty}(\om)$ with $\text{supp}(\varphi) \subseteq B_{\rho}$ with $\rho \in [r,6r]$. Testing the equation with the function $\eta = v_{\ell} \varphi^2$, where $v_{\ell} = (\ell - w)_+$. Then taking $\Omega_\ell = \Omega \cap \{x :  w(x) < \ell \}$ and using Young's inequality, we obtain
\begin{align*}
    0 &\leq \int_{\Omega} \nabla w   \nabla \eta  \, d\mu +\varepsilon C_0 \int_{\Omega} \frac{w \eta}{|x|^{2s}} \, d\mu 
    + \varepsilon\iint_{\mathbb{R}^{2N}} (w(x) - w(y)) (\eta(x) - \eta(y)) \, d\nu \\
    & \leq  -\frac{1}{2} \int_{\om_{\ell}} |\nabla v_{\ell}|^2 \varphi^2 \, d\mu 
+ 2 \int_{\om_{\ell}} |\nabla \varphi|^2 v_{\ell}^2 \, d\mu  -C_0 \int_{\om_{\ell}} \frac{v_{\ell}^2 \varphi^2}{|x|^{2s}} \, d\mu 
+ C_0 \int_{\om_{\ell}} \frac{\ell v_{\ell} \varphi^2}{|x|^{2s}} \, d\mu + \varepsilon I_3 \\
& \leq  -\frac{1}{2} \int_{\om_{\ell}} |\nabla v_{\ell}|^2 \varphi^2 \, d\mu 
+ 2 \int_{\om_{\ell}} |\nabla \varphi|^2 v_{\ell}^2 \, d\mu -\frac{C_0}{2} \int_{\om_{\ell}} \frac{v_{\ell}^2 \varphi^2}{|x|^{2s}} \, d\mu 
+ \frac{C_0}{2} \int_{\om_{\ell}} \frac{\ell^2 \varphi^2}{|x|^{2s}} \, d\mu + I_3.
\end{align*}
% Let $\Omega_\ell = \Omega \cap \{x :  w(x) < \ell \}$. Then following as in previous lemma, we get
% \begin{equation*}
%     I_1 = \int_{\Omega} \nabla w   \nabla (v_{\ell} \varphi^2) \, d\mu \leq  -\frac{1}{2} \int_{\om_{\ell}} |\nabla v_{\ell}|^2 \varphi^2 \, d\mu 
% + 2 \int_{\om_{\ell}} |\nabla \varphi|^2 (v_{\ell})^2 \, d\mu
% \end{equation*}
% Using Young's inequality in $I_2$ yields
% \begin{equation*}
% \begin{aligned}
% I_2 = -C_0 \int_{\om_{\ell}} \frac{v_{\ell}^2 \varphi^2}{|x|^{2s}} \, d\mu 
% + C_0 \int_{\om_{\ell}} \frac{\ell v_{\ell} \varphi^2}{|x|^{2s}} \, d\mu \leq  -\frac{C_0}{2} \int_{\om_{\ell}} \frac{v_{\ell}^2 \varphi^2}{|x|^{2s}} \, d\mu 
% + \frac{C_0}{2} \int_{\om_{\ell}} \frac{\ell^2 \varphi^2}{|x|^{2s}} \, d\mu.
% \end{aligned}
% \end{equation*}
For $I_3$, we proceed as in \cite[Lemma $3.2$]{castro2014nonlocal_harnack}, leading to
\begin{equation*} 
\begin{aligned}
2 I_3 &\leq  \iint_{B_\rho \times B_\rho} \left( - \left| v_{\ell}(x) \varphi(x) - v_{\ell}(y) \varphi(y) \right|^2 + C \left( \max\{ v_{\ell}(x), v_{\ell}(y) \} \right)^2 \left| \varphi(x) - \varphi(y) \right|^2  \right)\, d\nu \\
&\quad + C \left( \sup_{x \in \text{supp}(\varphi)} \int_{\mathbb{R}^N \setminus B_{\rho}} \frac{v_{\ell}(y)}{|x - y|^{N+2s}} \, \frac{dy}{|y|^{d}} \right)\int_{B_{\rho}} \frac{v_{\ell}(x) \varphi(x)^2}{|x|^{d}} \, dx .
\end{aligned}
\end{equation*}
Combining all, we obtain
\begin{align}
  &\int_{\om_{\ell}} |\nabla v_{\ell}|^2 \varphi^2 \, d\mu +  C_0 \int_{\om_{\ell}} \frac{v_{\ell}^2 \varphi^2}{|x|^{2s}} \, d\mu + \iint_{B_\rho \times B_\rho} \left| v_{\ell}(x) \varphi(x) - v_{\ell}(y) \varphi(y) \right|^2\, d\nu \notag\\
  &\leq  2 \int_{\om_{\ell}} |\nabla \varphi|^2 v_{\ell}^2 \, d\mu 
+ \frac{C_0}{2} \int_{\om_{\ell}} \frac{\ell^2 \varphi^2}{|x|^{2s}} \, d\mu +
C \iint_{B_\rho \times B_\rho}  \left( \max\{ v_{\ell}(x), v_{\ell}(y) \} \right)^2 \left| \varphi(x) - \varphi(y) \right|^2  d\nu \notag\\
&\quad + C \left( \sup_{x \in \text{supp}(\varphi)} \int_{\mathbb{R}^N \setminus B_{\rho}} \frac{v_{\ell}(y)}{|x - y|^{N+2s}} \, \frac{dy}{|y|^{d}} \right)\int_{B_{\rho}} \frac{v_{\ell}(x) \varphi(x)^2}{|x|^{d}} \, dx \notag\\
& \leq C(K_1 + K_2 + K_3 + K_4). \label{eq:lower_bd_harnack_eq1}
\end{align}
Now, we recast the inequality \eqref{eq:lower_bd_harnack_eq1} into a format suitable for applying a iteration lemma. To this end, let
\begin{equation*}
\begin{aligned}
\ell \equiv \ell_j &= \delta k + 2^{-j} \delta k, \qquad
\rho = \rho_j :=4r +  2^{1-j}r,& \qquad 
\tilde{\rho}_j := \frac{\rho_{j+1} + \rho_j}{2}.
\end{aligned}
\end{equation*}
We have $\ell_j - \ell_{j+1} = 2^{-j-1} \delta k$ which implies $\ell_j - \ell_{j+1} \geq 2^{-j-2} \ell_j$. Define the function
\begin{equation}\label{eq:vj_lower_bound}
v_{\ell} \equiv v_j := (\ell_j - w)_+ \geq (\ell_j - \ell_{j+1}) \chi_{\{ w < \ell_{j+1} \}} \geq 2^{-j-2} \ell_j \chi_{\{ w < \ell_{j+1} \}}.
\end{equation}
Define the balls $B_j$ and cut-off functions $\varphi_j$ for all \( j = 0, 1, \dots \) as
\begin{equation*}
B_j := B_{\rho_j}(0), \quad 
\varphi_j \in C_c^\infty(B_{\tilde{\rho}_j}), 
\quad \text{with} \quad 
0 \leq \varphi_j \leq 1, \quad 
\varphi_j \equiv 1 \text{ in } B_{{j+1}}, \quad 
|\nabla \varphi_j| \leq C\frac{2^{j}}{r},
\end{equation*}
and denote $\widetilde{B}_j = B_j \cap \{ w < \ell_j \}$.
Now, estimate $K_1$ in terms of $\ell_j$ as
\begin{equation*}
K_1 = \int_{B_j} |\nabla \varphi_j|^2 v_j^2 \, d\mu 
\leq C \ell_j^2   \frac{2^{2j}}{r^2} 
\left| B_j \cap \{ w < \ell_j \} \right|_{d\mu} = C \ell_j^2   \frac{2^{2j}}{r^2} \widetilde{B}_j.
\end{equation*}
Next, we estimate the Hardy term by noting that
\begin{align*}
\int_{\widetilde{B}_j} \frac{1}{|x|^{2s}} \, d\mu 
&= \frac{1}{|B_j|} \int_{B_j} \left( \int_{\widetilde{B}_j} 
\frac{1}{|x |^{2s + 2d}} \, dx \right) dy = \frac{1}{|B_j|} (J_1 + J_2 + J_3),
\end{align*}
where $J_i = \iint_{R_i \cap (\widetilde{B_j} \times B_j)} \frac{1}{|x|^{2s + 2d}} dx dy$ for $i = 1,2,3$ with regions are defined as follows
\begin{equation}\label{eq:region_breaking}
R_1 = \left\{ \frac{1}{2} \leq  \frac{|x|}{|y|} < 2 \right\}, \quad
R_2 = \left\{ |x| < \frac{1}{2}|y| \right\}, \quad
R_3 = \left\{ |y| \leq \frac{1}{2}|x| \right\}.
\end{equation}
For the first term
\begin{equation*}
J_1 \leq C \iint_{R_1 \cap (\widetilde{B_j} \times B_j)} \frac{1}{|y|^{2s}} 
  \frac{1}{|x|^{2d}} \, dx \, dy 
\leq C \left| B_j \cap \{ w < \ell_j \} \right|_{d\mu}   r^{N - 2s}.
\end{equation*}
Similarly, for the third term
\begin{equation*}
J_3 \leq C \left| B_j \cap \{ w < \ell_j \} \right|_{d\mu}   r^{N - 2s}.
\end{equation*}
Finally, for $J_2$, Lemma \ref{lem:stein_strong_lemma} with parameters $\delta = -2s + \frac{N}{2}$ and $p=2$ gives
\begin{align*} 
J_2 
&\leq C \int_{\widetilde{B}_j} \frac{1}{|x|^{2 s  + 2 d - N + \delta}}   |x|^{-N + \delta} 
\left( \int_{|y| \leq |x|} |y|^{-\delta}   \frac{1}{|y|^{-\delta}} \, dy \right) dx\\
&\leq C \int_{\widetilde{B}_j} \frac{1}{|x|^{2 s  + 2 d - N + \delta}}   |x|^{-N/2}
\left( \int_{B_j} \frac{1}{|y|^{-2\delta}} \, dy \right)^{1/2} dx\\
&\leq C   r^{N - 2s}   \left| B_j \cap \{ w < \ell_j \} \right|_{d\mu}.
\end{align*}
Thus we obtain that
\begin{equation*}
K_2
\leq C   r^{ - 2s}   \left| B_j \cap \{ w < \ell_j \} \right|_{d\mu}.
\end{equation*}
For the fractional term $K_3$, we have
\begin{align*} 
K_3 &
\leq  \ell_j^2 \iint_{B_j \times \widetilde{B}_j} 
\frac{\| D\varphi_j \|^2_\infty}{|x - y|^{N + 2s-2}} 
\frac{dy}{|y|^{d}}   \frac{dx}{|x|^{d}}
\leq C \ell_j^2 2^{2j} r^{-2} 
\iint_{B_j \times \widetilde{B}_j} 
\frac{1}{|x - y|^{N + 2s-2}} 
  \frac{dy}{|y|^{d}}   \frac{dx}{|x|^{d}} \\
&= C \ell_j^2 2^{2j} r^{-2} \left( L_1 + L_3 + L_2 \right),
\end{align*}
where $L_i = \iint_{R_i \cap [B_j \times (B_j \cap \{ w < \ell_j \})]}$ with regions $R_1, R_2$ and $R_3$ are same as in \eqref{eq:region_breaking}.

We now estimate each region. For \( L_1 \), we obtain
\begin{equation*}
L_1 \leq C \iint_{R_1 \cap [B_j \times (B_j \cap \{ w < \ell_j \})]} \frac{1}{|x - y|^{N + 2s - 2}}   \frac{dy}{|y|^{2d}}   dx
\leq C | B_j \cap \{ w < \ell_j \} |_{d\mu}   r^{2 - 2s}.
\end{equation*}
Similarly, for \( L_2 \), we have
\begin{equation*}
L_2 \leq C |B_j \cap \{ w < \ell_j \} |_{d\mu}   r^{2 - 2s}.
\end{equation*}
For \( L_3 \), we invoke the Lemma \ref{lem:stein_strong_lemma} with parameters \( q \) and \( \de \) satisfying the relations $q = \frac{N}{N+d - \delta}$, $ \delta < \frac{N(q-1)}{q}$, and $q > \frac{N}{N-d}$ and noting that in $R_2$ we have $|x-y| \geq |y| - |x| \geq |x|$, we estimate
\begin{align*} 
L_3 &\leq C \int_{B_j \cap \{ w < \ell_j \}} \frac{1}{|y|^{d}} 
\left( \int_{\{ 2|x| \leq |y| \}} \frac{1}{|x|^{d}}   \frac{1}{|x |^{N + 2s - 2}} \, dx \right) dy\\
& \leq C \int_{\widetilde{B}_j} \frac{1}{|y|^{d - N + \delta}} |y|^{-N+\delta}
\left( \int_{ \{|x| \leq |y|\} } \frac{|x|^{-\delta}}{|x |^{- \delta + d+ N + 2s - 2}} \, dx \right) dy\\
& \leq C \int_{\widetilde{B}_j} \frac{1}{|y|^{2d}} 
\left( \int_{B_j} \left( \frac{1}{|x|^{- \delta + d + N + 2s - 2}} \right)^q \, dx \right)^{1/q} dy\\
& \leq C \left| B_j \cap \{ w < \ell_j \} \right|_{d\mu}   r^{2 - 2s}.
\end{align*}
Therefore, we have 
\begin{equation*}
    K_3 \leq C \ell_j^2   2^{2j}   r^{-2s} |\widetilde{B}_j|_{d\mu}.
\end{equation*}
% \begin{equation*}
% \iint_{B_j \times B_j} 
% \left( \max \left\{ v_j(x), v_j(y) \right\} \right)^2 
% \left( \varphi_j(x) - \varphi_j(y) \right)^2 \, d\nu 
% \leq C \ell_j^2   2^{2j}   r^{-2s} 
% \left| B_j \cap \{ w < \ell_j \} \right|_{d\mu}.
% \end{equation*}
We now estimate $K_4$,
\begin{align*}
K_4 
% \int_{B_j} v_j(x) \varphi_j^2(x) \frac{dx}{|x|^{d}} 
% &\left( \sup_{x \in \text{supp}(\varphi_j)} \int_{\rnn \setminus B_j} 
% \frac{v_j(y)}{|x - y|^{N+2s}}   \frac{dy}{|y|^{d}} \right) \\
\leq C \ell_j^2 \int_{\widetilde{B}_j} \int_{\rnn \setminus B_j} 
\frac{2^{j(N+2s)}}{|y|^{N+2s}}   
\frac{dy}{|y|^{d}}   \frac{dx}{|x|^{d}}
\leq C \ell_j^2   2^{j(N+2s)}   r^{-2s} \left| \widetilde{B}_j \right|_{d\mu}.
\end{align*}

Using values obtained of $K_i$ in \eqref{eq:lower_bd_harnack_eq1} and \eqref{eq:vj_lower_bound}, we obtain the following estimate
\begin{align*}
 (\ell_j -  \ell_{j+1})^2 
&\left( \frac{| B_{j+1} \cap \{ w < \ell_{j+1}  \}|_{d\mu}}{|B_{j+1}|_{d\mu}}   \right)^{\frac{2}{2^*}} \leq \left( \frac{1}{|B_{j+1}|_{d\mu}} \int_{B_{j+1}} (v_j \varphi_j)^{2^*} \, d\mu \right)^{\frac{2}{2^*}} \\
&   \leq  C\left( \frac{1}{|B_{j}|_{d\mu}} \int_{B_{j}} (v_j \varphi_j)^{2^*} \, d\mu \right)^{\frac{2}{2^*}} \\
& \leq C \left( \frac{r^2}{|B_j|_{d\mu}} \int_{B_j} |\nabla(v_j \varphi_j)|^2 \, d\mu \right)\\
&\leq \frac{C r^2}{|B_j|_{d\mu}} \left[ 
\int_{B_j} |\nabla(v_j \varphi_j)|^2 \, d\mu 
+   \int_{\widetilde{B}_j} \frac{(v_j \varphi_j)^2}{|x|^{2s}} \, d\mu 
\right. \\
& \quad \left.+ \iint_{B_j \times B_j} \frac{|v_j(x) \varphi_j(x) - v_j(y) \varphi_j(y)|^2}
{|x - y|^{N+2s}} \, \frac{dy}{|y|^{d}} \frac{dx}{|x|^{d}}
\right]\\
& \leq \frac{C r^2}{|B_j|_{d\mu}} \left[ 
\ell_j^2   \frac{2^{2j}}{r^2}   |\widetilde{B}_j|_{d\mu} 
+  \ell_j^2   \frac{|\widetilde{B}_j|_{d\mu}}{r^{2s}} 
+  \ell_j^2   2^{j(N+2s)}   \frac{|\widetilde{B}_j|_{d\mu}}{  r^{2s}} 
\right]\\
& \leq C   2^{j(N+2s + 2)}   \ell_j^2   
\frac{\left| \widetilde{B}_j  \right|_{d\mu}}{|B_j|_{d\mu}},
\end{align*}
where $C$ is a constant independent of $\varepsilon$.
Now, denote $A_j = \frac{\left| \widetilde{B}_j  \right|_{d\mu}}{|B_j|_{d\mu}}$, then we obtain
\begin{equation*}
A_{j+1}^{\frac{2}{2^*}} \leq C   \frac{\ell_j^2   2^{j(N+2s + 2)}}{(\ell_j - \ell_{j+1})^2}   A_j \leq C 2^{j(N+2s + 4)}   A_j.
\end{equation*}
Following the iteration scheme(\cite[Lemma $7.1$]{giusti2003iteration_lemma}), we conclude $\lim_{j \to \infty} A_j = 0$. Using this completes the proof.
\end{proof}
% Reverse H\"{o}lder Inequality
Next, we derive a reverse H\"{o}lder inequality for the solutions of \eqref{eq:transformed_equation}.

\begin{lemma}[Reverse H\"{o}lder Inequality]
Let \( w \) be a supersolution to \eqref{eq:transformed_equation}. Then, for all \( 0 < \gamma_1 < \gamma_2 < \frac{N}{N - 2} \), we have:
\begin{equation} \label{lem:reverse_holder}
\left( \frac{1}{|B_r|_{d\mu}} \int_{B_r} w^{\gamma_2} \, d\mu \right)^{1/\gamma_2}
\leq C \left( \frac{1}{|B_{3r/2}|_{d\mu}} \int_{B_{3r/2}} w^{\gamma_1} \, d\mu \right)^{1/\gamma_1}.
\end{equation}
\end{lemma}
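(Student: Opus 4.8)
# Proof Proposal for the Reverse Hölder Inequality

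\textbf{Overall strategy.} The plan is to adapt the classical Moser iteration for supersolutions (see, e.g., \cite{castro2014nonlocal_harnack, palatucci_castro2016local}) to the weighted setting of the transformed equation \eqref{eq:transformed_equation}, replacing the Lebesgue measure by $d\mu = |x|^{-2d}\,dx$ and the standard Sobolev inequality by the weighted Sobolev inequality of Lemma~\ref{lem:weighted_sobolev_inequality}. The key point is that $w$ is a positive supersolution, so negative powers $w^{-\beta}$ (equivalently, powers of $w$ with negative exponent) are admissible after a truncation, and testing \eqref{eq:transformed_equation} against $w^{\beta}\varphi^2$ for suitable $\beta<0$ produces, via the convexity inequalities in Lemmas~\ref{ineq:A1}–\ref{ineq:A3}, a Caccioppoli-type estimate that upgrades integrability across dyadic radii.

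\textbf{Key steps, in order.}
\begin{enumerate}
    \item \textbf{Truncation and test function.} For $\epsilon_0>0$ small set $\bar w = w + \epsilon_0$, which is still a supersolution of the corresponding inequality with a harmless shift in the zeroth-order term. Fix $\beta \in \mathbb{R}\setminus\{-1\}$ and a cutoff $\varphi$ supported in a ball, and test with $\eta = \bar w^{\beta}\varphi^2$ (for $\beta<0$ this is bounded since $\bar w \geq \epsilon_0$). As in Lemma~\ref{lem:lower_bound_Harnack}, split the fractional contribution $I_3$ into a local double-integral part over $B_\rho\times B_\rho$ and two tail terms over $B_\rho\times(\mathbb{R}^N\setminus B_\rho)$.
    \item \textbf{Weighted Caccioppoli estimate.} Using Young's inequality on the local Laplacian term, Lemma~\ref{ineq:A2}/Lemma~\ref{ineq:A3} (with $g(t)=\bar w^{\beta}$) on the nonlocal term to produce the seminorm of $\bar w^{(\beta+1)/2}$, and absorbing the Hardy term exactly as the terms $K_2,K_3,K_4$ were controlled in the proof of Lemma~\ref{lem:lower_bound_Harnack} (invoking Lemma~\ref{lem:stein_strong_lemma} to handle the off-diagonal regions $R_2,R_3$ from \eqref{eq:region_breaking}), derive
    \[
    \int_{B_{\rho'}}\big|\nabla \bar w^{\frac{\beta+1}{2}}\big|^2 d\mu + \Big[\bar w^{\frac{\beta+1}{2}}\Big]_{s,d,B_{\rho'}}^2 \leq \frac{C(\beta)}{(\rho-\rho')^2}\int_{B_\rho}\bar w^{\beta+1}\,d\mu,
    \]
    valid for $r \leq \rho' < \rho \leq 3r/2$, with $C(\beta)$ blowing up like $|\beta+1|^{-2}$ as $\beta\to-1$ and polynomially for $\beta$ bounded away from $-1$.
    \item \textbf{Sobolev and iteration.} Apply the weighted Sobolev inequality (Lemma~\ref{lem:weighted_sobolev_inequality}, with exponent $q$ there playing the role of the gain $\chi := \frac{N}{N-2}$ after identification of the relevant dimension $N-2d$; more precisely one uses the version for the measure $d\mu$, which raises the integrability exponent by a fixed factor $\chi>1$) to obtain, writing $t = \beta+1$,
    \[
    \Big(\frac{1}{|B_{\rho'}|_{d\mu}}\int_{B_{\rho'}} \bar w^{\chi t}\,d\mu\Big)^{1/(\chi t)} \leq \Big(\frac{C(\beta)\,(3r/2)^2}{(\rho-\rho')^2}\Big)^{1/t}\Big(\frac{1}{|B_\rho|_{d\mu}}\int_{B_\rho}\bar w^{t}\,d\mu\Big)^{1/t}
    \]
    for $t>0$. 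Iterating over the dyadic radii $\rho_i = r(1 + 2^{-i})\cdot\frac{3}{2}\cdot\frac{1}{?}$ (a geometric sequence between $r$ and $3r/2$) and the exponents $t_i = \chi^i t_0$, the constants multiply to a convergent product, yielding control of any finite $\gamma_2 < \chi^\infty$... but since each step only gains the factor $\chi$, to reach an arbitrary $\gamma_2 < \frac{N}{N-2}$ one needs only \emph{finitely many} steps starting from $\gamma_1$; choose the number of iterations $m$ with $\chi^{m}\gamma_1 \geq \gamma_2$ and interpolate. Finally let $\epsilon_0 \to 0^+$ and use monotone convergence to remove the shift.
\end{enumerate}

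\textbf{Main obstacle.} The delicate point is \emph{not} the Moser machinery itself but the control of the Hardy-type term $\int \bar w^{t}|x|^{-2s}\,d\mu$ and the nonlocal tail/off-diagonal contributions in the weighted measure: the weight $|x|^{-2d}$ interacts with the singularity at the origin, and one cannot simply bound $|x|^{-2s}$ by a constant on $B_r$. As in the proof of Lemma~\ref{lem:lower_bound_Harnack}, this is resolved by decomposing into the regions $R_1,R_2,R_3$ of \eqref{eq:region_breaking} and applying the Stein--Weiss-type Lemma~\ref{lem:stein_strong_lemma} with an appropriate choice of $\delta$ and $q$ (the admissibility conditions $\delta < \frac{N(q-1)}{q}$ dictating the allowed range, which is exactly why the exponent is restricted to $\gamma_2 < \frac{N}{N-2}$). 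A secondary technical nuisance is tracking the dependence of the Caccioppoli constant $C(\beta)$ on $\beta$ near the critical value $\beta = -1$; since here we only need positive powers $t=\beta+1 \in (\gamma_1,\gamma_2)$ bounded away from $0$, this degeneracy is harmless and the iteration terminates after finitely many steps, so no summability-of-constants subtlety arises beyond the standard one.
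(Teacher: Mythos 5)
Your overall scheme --- truncate $w$ by adding a small constant, test with a power of the truncation times a squared cutoff, control the Hardy term and the nonlocal/tail contributions via the region decomposition \eqref{eq:region_breaking} and Lemma~\ref{lem:stein_strong_lemma}, apply the weighted Sobolev inequality of Lemma~\ref{lem:weighted_sobolev_inequality}, and bridge exponents at the end --- is essentially the paper's proof, which takes $\widetilde w = w + \tfrac1n$, tests with $\widetilde w^{1-q}\psi^2$ for $q\in(1,2)$, performs a single Caccioppoli--Sobolev step (treating the fractional term as in \cite{peral2016CZ}), lets $n\to\infty$, and then uses the arbitrariness of $q$ together with H\"older's inequality. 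The only structural difference is that you run a finite dyadic Moser iteration instead of one step with $q$ arbitrary, which is harmless bookkeeping.

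One point, however, is wrong as written and would make your iteration fail if followed literally. For a \emph{supersolution}, the Caccioppoli estimate of your Step 2 is available only for $\beta<0$, i.e.\ for starting exponents $t=\beta+1<1$: testing with $\bar w^{\beta}\varphi^2$ when $\beta\ge 0$ places the gradient term with the favorable sign on the wrong side of the supersolution inequality, and no energy estimate follows. Your remark that ``we only need positive powers $t=\beta+1\in(\gamma_1,\gamma_2)$'' (which may exceed $1$, since $\gamma_2$ can be close to $\tfrac{N}{N-2}$) and your exponent sequence $t_i=\chi^i\gamma_1$ overlook this; likewise, the restriction $\gamma_2<\tfrac{N}{N-2}$ does \emph{not} come from the admissibility conditions in the Stein--Weiss lemma, but precisely from the fact that only starting exponents below $1$ are admissible for supersolutions, so a single Sobolev step can reach at most $\chi\cdot 1=\tfrac{N}{N-2}$. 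The repair is standard and is what the paper's single-step argument exploits: keep every starting exponent strictly below $1$ (for instance, finish with starting exponent $\gamma_2\tfrac{N-2}{N}<1$, reached from the previous output exponent by Jensen's inequality on the normalized measure), which is always possible exactly because $\gamma_2<\tfrac{N}{N-2}$. With that adjustment your argument goes through and coincides in substance with the paper's.
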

\begin{proof}
Let \( q \in (1,2) \) and \( n \in \ntrl\). Define \( \widetilde{w} = w + \frac{1}{n} \), and let \( \psi \) be a cut-off function with \( \operatorname{Supp}(\psi) \subseteq B_{\tau r} \), such that
\begin{equation*}
    \psi \equiv 1 \text{ in } B_{\tau' r} \quad \text{and} \quad |\nabla \psi| \leq \frac{C}{(\tau - \tau')r}, \quad \text{where } \frac{1}{2} \leq \tau' < \tau < \frac{3}{2}.
\end{equation*}
Taking the test function \( \eta = \widetilde{w}^{1-q} \psi^2 \), we deduce the following inequality
\begin{equation}\label{eq:rev_holder_eq1}
\begin{aligned}
0 \leq 
& \int_{B_{\tau r}} \nabla w  \nabla\left( \widetilde{w}^{1-q} \psi^2 \right) \, d\mu 
+ \varepsilon C_0 \int_{B_{\tau r}} \frac{w\widetilde{w}^{1-q} \psi^2}{|x|^{2s}} \, d\mu \\
&+ \varepsilon \iint_{\mathbb{R}^{2N}} \left( \widetilde{w}(x) - \widetilde{w}(y) \right) 
\left( \frac{\psi^2(x)}{\widetilde{w}^{q-1}(x)} - \frac{\psi^2(y)}{\widetilde{w}^{q-1}(y)} \right) \, d\nu\\
= &~ I_1 + \varepsilon C_0 I_2 + \varepsilon I_3 \leq I_1 + C_0 I_2 + I_3.
\end{aligned}
\end{equation}
To estimate \( I_1 \), we apply Young's inequality with \( \epsilon = \frac{(q - 1)}{2} \), yielding
\begin{equation}
\begin{aligned}
I_1 
&= - (q - 1) \int_{B_{\tau r}} |\nabla w|^2 \, \widetilde{w}^{-q} \psi^2 \, d\mu 
+ \int_{B_{\tau r}} \nabla w  \nabla \psi \, \widetilde{w}^{1-q} \psi \, d\mu \\
&\leq -\frac{(q - 1)}{2} \int_{B_{\tau r}} |\nabla w|^2 \, \widetilde{w}^{-q} \psi^2 \, d\mu 
+ \frac{4}{(q - 1)} \int_{B_{\tau r}} |\nabla \psi|^2 \, \widetilde{w}^{2 - q} \, d\mu \\
&\leq -\frac{(q - 1)}{2} \int_{B_{\tau r}} |\nabla w|^2 \, \widetilde{w}^{-q} \psi^2 \, d\mu 
+ \frac{C}{(\tau - \tau')^2 r^2} \int_{B_{\tau r}} \widetilde{w}^{2 - q} \, d\mu.
\end{aligned}
\label{eq:I1_estimate}
\end{equation}
For \( I_2 \), breaking the region into subregions as in \eqref{eq:region_breaking}, and estimating, we obtain
\begin{equation}
    I_2 \leq C \, r^{-2s} \int_{B_{\tau r}} w \widetilde{w}^{1 - q} \, d\mu.
    \label{eq:I2_estimate}
\end{equation}
For \( I_3 \), we follow the approach used in \cite[Lemma $3.7$]{peral2016CZ}, to get
\begin{equation}
\begin{aligned}
    I_3 \leq & -C \iint_{B_{\tau r} \times B_{\tau r}} 
    \left( \frac{\psi}{\widetilde{w}^{(q - 2)/2}}(x) - \frac{\psi}{\widetilde{w}^{(q - 2)/2}}(y) \right)^2 \, d\nu + C \, \frac{r^{-2s}}{(\tau - \tau')^2} \int_{B_{\tau r}} \widetilde{w}^{2 - q} \, d\mu.
\end{aligned}
\label{eq:I3_estimate}
\end{equation}

Using estimates \eqref{eq:I1_estimate}, \eqref{eq:I2_estimate}, and \eqref{eq:I3_estimate} in \eqref{eq:rev_holder_eq1}, and $r^{-2s} \leq r^{-s}$ for $r \leq 1$, we deduce the inequality
\small{\begin{equation*}
\begin{aligned}
    \int_{B_{\tau r}} \left| \nabla\left( \widetilde{w}^{(2 - q)/2} \right) \right|^2 \psi^2 \, d\mu + \iint_{B_{\tau r} \times B_{\tau r}} 
    \left( \frac{\psi}{\widetilde{w}^{(q - 2)/2}}(x) - \frac{\psi}{\widetilde{w}^{(q - 2)/2}}(y) \right)^2 \, d\nu \leq \frac{C r^{-2}}{(\tau - \tau')^2} \int_{B_{\tau r}} \widetilde{w}^{2 - q} \, d\mu.
    % & \leq \frac{C r^{-2}}{(\tau - \tau')^2} \int_{B_{\tau r}} \widetilde{w}^{2 - q} \, d\mu 
    % + \frac{C r^{-2s}}{(\tau - \tau')^2} \int_{B_{\tau r}} \widetilde{w}^{2 - q} \, d\mu \\
    % & \leq \frac{C r^{-2}}{(\tau - \tau')^2} \int_{B_{\tau r}} \widetilde{w}^{2 - q} \, d\mu.
\end{aligned}
\end{equation*}}

Now, from the above and applying the weighted Sobolev inequality in Lemma \ref{lem:weighted_sobolev_inequality}, we deduce
\begin{equation*}
\begin{aligned}
\left( \frac{1}{|B_{\tau' r}|_{d\mu}} \int_{B_{\tau' r}} \widetilde{w}^{(2 - q)2^*/2 } \, d\mu \right)^{\frac{N - 2}{N}}
&\leq C \left( \frac{1}{|B_{\tau r}|_{d\mu}} \int_{B_{\tau r}} \left( \widetilde{w}^{(2 - q)/2} \psi \right)^{2^*} \, d\mu \right)^{\frac{N - 2}{N}}\\
&\leq \frac{C \tau^2 r^2}{|B_{\tau r}|_{d\mu}} \int_{B_{\tau r}} 
\left| \nabla\left( \widetilde{w}^{(2 - q)/2} \psi \right) \right|^2 \, d\mu \\
&\leq \frac{C}{|B_{\tau r}|_{d\mu} (\tau - \tau')^2} \int_{B_{\tau r}} \widetilde{w}^{2 - q} \, d\mu.
\end{aligned}
\end{equation*}
Applying the Monotone Convergence Theorem and letting \( n \to \infty \). Since \( 1 < q < 2 \) is arbitrary, H\"{o}lder's inequality implies the desired estimate.
\end{proof}

The next two lemmas follows directly from \cite[Lemma $4.10$ and $4.11$]{dipierro2016fractionalwithHardy}.
\begin{lemma} \label{lemma:measure_enlargement}
Assume that \( E \subset B_r(x_0) \) is a measurable set. For \( \bar{\delta} \in (0,1) \), we define the \emph{enlargement} of \( E \) as
\begin{equation*} 
[E]_{\bar{\delta}} := \bigcup_{\rho > 0} \left\{ B_{3\rho}(x) \cap B_r(x_0) \,:\, x \in B_r(x_0),\; |E \cap B_{3\rho}(x)|_{d\mu} > \bar{\delta} |B_{\rho}(x)|_{d\mu} \right\}.
\end{equation*}
Then, there exists a constant \( \tilde{C} \), depending only on \( N \), such that one of the following holds
\begin{enumerate}
    \item \( |[E]_{\bar{\delta}}|_{d\mu} \geq \dfrac{\tilde{C}}{\bar{\delta}} |E|_{d\mu}, \) or
    \item \( [E]_{\bar{\delta}} = B_r(x_0). \)
\end{enumerate}
\end{lemma}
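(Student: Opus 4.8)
\medskip
\noindent\textbf{Proof plan.}
The idea is to view this as the Krylov--Safonov ``growth lemma'' (a Vitali covering / Calder\'on--Zygmund type statement) transported from Lebesgue measure to the weighted measure $d\mu=|x|^{-2d}\,dx$, exactly as in \cite[Lemmas $4.10$--$4.11$]{dipierro2016fractionalwithHardy}. The only structural ingredient is that $d\mu$ is a \emph{doubling} measure on $\rnn$: since $0<2d<N-2<N$, the weight $|x|^{-2d}$ is a Muckenhoupt $A_1$ weight, so there is a constant $C_D=C_D(N)$ (uniform for $d$ bounded away from $\tfrac{N-2}{2}$) with $|B_{2\rho}(y)|_{d\mu}\le C_D\,|B_\rho(y)|_{d\mu}$ for every ball. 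Consequently $(\rnn,|\cdot|,d\mu)$ supports the Vitali $5r$-covering lemma and the Lebesgue differentiation theorem, and one gets the elementary ``fatness'' bound $|B_\rho(y)\cap B_r(x_0)|_{d\mu}\ge c_0\,|B_\rho(y)|_{d\mu}$ whenever $y\in B_r(x_0)$ and $\rho\le 2r$, with $c_0=c_0(N)$ --- it suffices to place a ball $B_{\rho/4}(w)\subset B_\rho(y)\cap B_r(x_0)$ and compare via doubling.

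\medskip
Assume $[E]_{\bar\delta}\neq B_r(x_0)$, otherwise alternative~(2) holds and there is nothing to prove. For $\mu$-a.e.\ $x\in E$ (the $\mu$-density points of $E$) the inequality $|E\cap B_{3\rho}(x)|_{d\mu}>\bar\delta\,|B_\rho(x)|_{d\mu}$ holds for all small $\rho>0$, so
\[
\rho_x:=\sup\bigl\{\rho>0:\ |E\cap B_{3\rho}(x)|_{d\mu}>\bar\delta\,|B_\rho(x)|_{d\mu}\bigr\}
\]
is strictly positive; moreover $3\rho_x\le 2r$, because an admissible radius $\rho$ with $3\rho>2r$ would give $B_{3\rho}(x)\supset B_r(x_0)$ and hence $[E]_{\bar\delta}=B_r(x_0)$, contradicting our assumption. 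Taking admissible radii $\rho_n\uparrow\rho_x$ and using that open balls increase to their union, $B_{3\rho_x}(x)\cap B_r(x_0)\subset[E]_{\bar\delta}$. On the other hand every $\rho>\rho_x$ is not admissible, i.e.
\[
|E\cap B_{3\rho}(x)|_{d\mu}\le \bar\delta\,|B_\rho(x)|_{d\mu}\qquad\text{for all }\rho>\rho_x .
\]

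\medskip
Now apply the $5r$-covering lemma to the family $\{B_{3\rho_x}(x):x\ \text{a }\mu\text{-density point of }E\}$, whose radii are $\le 2r$: extract a countable, pairwise disjoint subfamily $\{B_{3\rho_{x_i}}(x_i)\}_i$ such that $E\subset\bigcup_i B_{15\rho_{x_i}}(x_i)$ up to a $\mu$-null set. Applying the non-admissibility bound at $\rho=5\rho_{x_i}>\rho_{x_i}$ and then doubling,
\[
|E\cap B_{15\rho_{x_i}}(x_i)|_{d\mu}\le \bar\delta\,|B_{5\rho_{x_i}}(x_i)|_{d\mu}\le C_D\,\bar\delta\,|B_{3\rho_{x_i}}(x_i)|_{d\mu},
\]
and summing over the disjoint cover yields $|E|_{d\mu}\le C_D\,\bar\delta\sum_i|B_{3\rho_{x_i}}(x_i)|_{d\mu}$. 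Since each $B_{3\rho_{x_i}}(x_i)\cap B_r(x_0)\subset[E]_{\bar\delta}$, these sets are disjoint, $x_i\in B_r(x_0)$ and $3\rho_{x_i}\le 2r$, the fatness bound gives
\[
|[E]_{\bar\delta}|_{d\mu}\ \ge\ \sum_i |B_{3\rho_{x_i}}(x_i)\cap B_r(x_0)|_{d\mu}\ \ge\ c_0\sum_i |B_{3\rho_{x_i}}(x_i)|_{d\mu}\ \ge\ \frac{c_0}{C_D}\cdot\frac{1}{\bar\delta}\,|E|_{d\mu},
\]
which is alternative~(1) with $\tilde C=c_0/C_D$, depending only on $N$.

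\medskip
The only non-routine step is the first one: establishing the doubling property of $d\mu$ and the fatness bound with constants independent of $\varepsilon$ and locally uniform in $d$ --- this is handled by splitting a ball $B_\rho(y)$ according to whether $|y|\lesssim\rho$ or $|y|\gtrsim\rho$ and using $2d<N$. Once this is available, the statement is precisely the classical Vitali/Krylov--Safonov argument sketched above, which is why it transfers directly from \cite{dipierro2016fractionalwithHardy}.
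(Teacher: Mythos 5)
Your proof is correct, and it is worth noting that the paper itself gives no argument for this lemma: it simply states that it "follows directly from" Lemma 4.10 of the cited reference \cite{dipierro2016fractionalwithHardy}. Your Vitali/Krylov--Safonov covering argument in the doubling measure $d\mu=|x|^{-2d}\,dx$ is precisely the standard mechanism behind that cited lemma, so in substance you have reproduced (self-containedly) the proof the paper outsources: the dichotomy via $3\rho_x\le 2r$ versus $[E]_{\bar\delta}=B_r(x_0)$, the non-admissibility bound at radius $5\rho_{x_i}$, disjointness of the selected balls inside $[E]_{\bar\delta}$, and the fatness estimate $|B_\rho(y)\cap B_r(x_0)|_{d\mu}\ge c_0|B_\rho(y)|_{d\mu}$ are all in order. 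One small remark: your caveat that the doubling constant is "uniform for $d$ bounded away from $\frac{N-2}{2}$" is unnecessary --- since $d<\sqrt{\bar\mu}=\frac{N-2}{2}$ one has $2d< N-2$, so the exponent of the weight stays a fixed distance (namely $2$) below $N$ and the $A_1$/doubling constants of $|x|^{-2d}$ are bounded in terms of $N$ alone; hence $\tilde C=\tilde C(N)$ exactly as the statement claims, with no extra restriction on $d$.
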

\begin{lemma}\label{lemma:main_supersolution}
Assume that \( w \) is a nonnegative supersolution to \eqref{eq:transformed_equation}. Then, there exists \( \eta \in (0,1) \), depending only on \( N \), such that the following inequality holds
\begin{equation*} 
\left( \frac{1}{|B_r|_{d\mu}} \int_{B_r} w^{\eta} \, d\mu(x) \right)^{1/\eta} \leq C \inf_{B_r} w.
\end{equation*}
\end{lemma}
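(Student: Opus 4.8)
\emph{Proof proposal.} The plan is to deduce a weak Harnack inequality for nonnegative supersolutions by a De~Giorgi--Krylov--Safonov measure-decay iteration on the super-level sets of $w$, fed by the growth lemmas (Lemmas~\ref{lem:propagation_of_positivity}--\ref{lem:lower_bound_Harnack}) and the enlargement Lemma~\ref{lemma:measure_enlargement}; this is the scheme of \cite[Lemma~4.11]{dipierro2016fractionalwithHardy} transplanted to the mixed operator. Two reductions come first. If $w\equiv 0$ the claim is trivial, so assume $w\not\equiv 0$; replacing $w$ by $w+\tfrac1n$ (which is a supersolution of the linear part $-\mathrm{div}(|x|^{-2d}\nabla\,\cdot\,)+\varepsilon C_0|x|^{-2s-2d}\,\cdot\,+\varepsilon(-\Delta_d)^s$ of \eqref{eq:transformed_equation} tested against nonnegative functions --- the only feature used in Lemmas~\ref{lem:propagation_of_positivity}--\ref{lem:lower_bound_Harnack}) and letting $n\to\infty$ via monotone convergence at the end, I may assume $M:=\inf_{B_r}w>0$. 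I would also fix $r_0$ small enough that every auxiliary ball appearing below lies in $\Omega$ and below the admissible radii of those lemmas, and use throughout that $d\mu=|x|^{-2d}dx$ is doubling (since $2d<N$), with doubling constant $C_D=C_D(N)$.

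The heart is the density-decay estimate. Fix $\bar\delta\in(0,\tilde C)$ small, set $\sigma_1:=\bar\delta/C_D$, let $\theta:=\delta(\sigma_1)\in(0,\tfrac14)$ be the constant produced by Lemma~\ref{lem:lower_bound_Harnack} at density $\sigma_1$, and put $\tilde c:=1-\bar\delta/\tilde C\in(0,1)$. I claim
\[
\big|B_r\cap\{w>\theta^{-j}M\}\big|_{d\mu}\ \le\ (1-\tilde c)^{\,j}\,|B_r|_{d\mu},\qquad j=0,1,2,\dots
\]
The case $j=0$ is trivial. For the step, apply Lemma~\ref{lemma:measure_enlargement} to $E_{j+1}:=B_r\cap\{w>\theta^{-(j+1)}M\}$ with parameter $\bar\delta$. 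On each ball $B_{3\rho}(x)$ contributing to $[E_{j+1}]_{\bar\delta}$, doubling upgrades $|E_{j+1}\cap B_{3\rho}(x)|_{d\mu}>\bar\delta|B_\rho(x)|_{d\mu}$ to density $\ge\sigma_1$ for $\{w\ge\theta^{-(j+1)}M\}$, so Lemma~\ref{lem:lower_bound_Harnack} yields $\inf_{B_{12\rho}(x)}w\ge\theta\cdot\theta^{-(j+1)}M=\theta^{-j}M$; hence $B_{3\rho}(x)\cap B_r\subseteq\{w\ge\theta^{-j}M\}$, and therefore $[E_{j+1}]_{\bar\delta}\subseteq B_r\cap\{w\ge\theta^{-j}M\}$ up to a null set. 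Alternative~(2) of Lemma~\ref{lemma:measure_enlargement}, i.e.\ $[E_{j+1}]_{\bar\delta}=B_r$, would force $\inf_{B_r}w\ge\theta^{-j}M=\theta^{-j}\inf_{B_r}w$, impossible for $j\ge1$ because $\theta<1$; so alternative~(1) holds, giving $|B_r\cap\{w>\theta^{-j}M\}|_{d\mu}\ge\frac{\tilde C}{\bar\delta}|E_{j+1}|_{d\mu}=\frac1{1-\tilde c}\,|B_r\cap\{w>\theta^{-(j+1)}M\}|_{d\mu}$, which closes the induction.

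The conclusion then follows by layer-cake integration. Choosing $\eta\in(0,1)$ with $\theta^{-\eta}(1-\tilde c)<1$, that is $\eta<\log\!\frac1{1-\tilde c}\big/\log\!\frac1\theta$, and writing $\int_{B_r}w^\eta\,d\mu=\eta\int_0^\infty t^{\eta-1}|\{w>t\}\cap B_r|_{d\mu}\,dt$, I split at $t=M$: the part $t\le M$ contributes $\le\tfrac1\eta M^\eta|B_r|_{d\mu}$, while on the shells $\theta^{-j}M<t\le\theta^{-(j+1)}M$ the decay estimate gives a geometric series $\sum_j\big(\theta^{-\eta}(1-\tilde c)\big)^j$ which converges by the choice of $\eta$. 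Hence $\int_{B_r}w^\eta\,d\mu\le C\,M^\eta|B_r|_{d\mu}$, i.e.\ $\big(\frac1{|B_r|_{d\mu}}\int_{B_r}w^\eta\,d\mu\big)^{1/\eta}\le C\inf_{B_r}w$; finally undo the $w\mapsto w+\tfrac1n$ reduction. Note that both sides live on the same ball $B_r$, as required, because $M$ was taken to be $\inf_{B_r}w$ itself.

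The main obstacle is the geometric bookkeeping in the inductive step: one must check that the $\bar\delta$-enlargement of the level set at height $\theta^{-(j+1)}M$ lands inside the level set at the \emph{lower} height $\theta^{-j}M$, which hinges on pairing the radius dilation $B_{3\rho}(x)\rightsquigarrow B_{12\rho}(x)$ of Lemma~\ref{lem:lower_bound_Harnack} with the calibrated choice $\theta=\delta(\sigma_1)$, and on controlling the density loss when passing between concentric balls of different radii via doubling. A further subtlety, caused by the singularity of the Hardy weight at the origin, is that Lemmas~\ref{lem:propagation_of_positivity}--\ref{lem:lower_bound_Harnack} are stated for origin-centered balls, whereas the enlargement produces off-center balls $B_{3\rho}(x)$: these must be handled by a dichotomy --- when $0$ is far from $B_{3\rho}(x)$ the weight $|x|^{-2d}$ is a bounded, bounded-below factor there and classical De~Giorgi--Nash--Moser applies, while when $0$ is close one absorbs $B_{3\rho}(x)$ into a concentric origin-centered ball of comparable $d\mu$-measure and invokes the stated lemmas on it.
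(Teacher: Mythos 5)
The paper gives no proof of this lemma: it simply states that Lemmas~\ref{lemma:measure_enlargement} and~\ref{lemma:main_supersolution} ``follow directly'' from Lemmas~4.10 and~4.11 of \cite{dipierro2016fractionalwithHardy}, plus the one-line remark that the constants inherit $\varepsilon$-independence from Lemmas~\ref{lem:propagation_of_positivity}--\ref{lem:lower_bound_Harnack}. Your proposal therefore does not compete with an argument in the paper; rather it reconstructs the De~Giorgi--Krylov--Safonov measure-decay iteration underlying the citation, which is indeed the scheme in the cited source. The skeleton --- shift to $w+\tfrac1n$ to make the infimum positive, iterate the growth Lemma~\ref{lem:lower_bound_Harnack} against the enlargement Lemma~\ref{lemma:measure_enlargement} to get geometric decay of the superlevel sets $|B_r\cap\{w>\theta^{-j}M\}|_{d\mu}$, then layer-cake to produce the $L^\eta$ bound with $\eta$ small enough to beat the decay rate --- is the correct blueprint and matches what a reader would need to supply to verify the ``follows directly.''

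Two remarks on precision. First, a minor index offset: the inductive step from $j$ to $j+1$ requires that alternative~(2) of Lemma~\ref{lemma:measure_enlargement} be ruled out by noting that it would force $\inf_{B_r}w\geq\theta^{-j}M>M$, which is a contradiction only for $j\geq1$; so the correct chain is $|E_{j}|_{d\mu}\leq(1-\tilde c)^{\,j-1}|E_1|_{d\mu}\leq(1-\tilde c)^{\,j-1}|B_r|_{d\mu}$ for $j\geq1$, with $|E_1|\leq|B_r|$ as the (trivial) base case. This shift is harmless in the layer-cake estimate and only changes the constant $C$. Second, and more seriously, the ``far from origin'' branch of your dichotomy is not quite right as stated: even when $|x|^{-2d}$ is bounded above and below on $B_{3\rho}(x)$, the equation still carries the nonlocal term $\varepsilon(-\Delta_d)^s w$, so ``classical De~Giorgi--Nash--Moser'' does not directly apply. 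What one actually needs there is the off-center analogue of Lemma~\ref{lem:lower_bound_Harnack} for the mixed operator, whose proof simplifies (the weight is locally comparable to a constant and the Stein--Weiss lemma is no longer needed to tame the Hardy term), but which still requires the treatment of the tail term $\sup_x\int_{\mathbb{R}^N\setminus B_\rho}v_\ell(y)|x-y|^{-N-2s}|y|^{-d}\,dy$. This is a genuine gap in the paper's ``follows directly'' as well, and your dichotomy correctly isolates the only place where the weight singularity actually interferes; the fix should simply invoke a translated version of the growth lemma rather than the classical local theory.
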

Note that the constant in the above lemma is independent of $\varepsilon$, as it follows from Lemmas~\ref{lem:lower_bound_Harnack} and~\ref{lemma:measure_enlargement}, whose constants are also independent of $\varepsilon$. Consequently, the constant in the subsequent proof of the Harnack inequality is likewise independent of $\varepsilon$.
% Harnack Inequality
\begin{proof}[Proof of Theorem \ref{thm:harnack_inequality_transformed}(Harnack Inequality):]
Applying Lemma \ref{lem:reverse_holder} with $\gamma_1 = \eta$ and $\gamma_2 = q$ and Lemma \ref{lemma:main_supersolution}, we get 
\begin{equation*}
    \left( \frac{1}{|B_r|_{d\mu}} \int_{B_r} w^{q} \, d\mu(x) \right)^{1/q} \leq C  \left( \frac{1}{|B_{3r/2}|_{d\mu}} \int_{B_{3r/2}} w^{\eta} \, d\mu(x) \right)^{1/\eta} \leq C \inf_{B_{3r/2}} u.
\end{equation*}
This concludes the proof. 
\end{proof}

% Moser Iteration

Next, we obtain a uniform estimate for the solutions of \eqref{eq:transformed_equation} using the Moser iteration technique. For this we require the following two lemmas

\begin{lemma}\label{lem:moser_iter_lem1}
 If $w$ satisfies 
 \begin{equation*}
      -\mathrm{div}\left( \frac{\nabla w(x)}{|x|^{2 d}}\right) + \varepsilon C_0 \frac{w}{|x|^{2s + 2d}} + \varepsilon (-\Delta_{d})^s w  = F(x,w(x))
 \end{equation*}
  where $F \in L^q(\om)$ for $q > N/2$, then $w \in L^{\infty}(\om)$.   
\end{lemma}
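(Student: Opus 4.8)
\textbf{Proof plan for Lemma \ref{lem:moser_iter_lem1}.}
The plan is to run a Moser iteration adapted to the weighted setting of the transformed equation~\eqref{eq:transformed_equation}. First I would work with the truncation $w_M = \min\{|w|,M\}$ for $M>0$ and, for a parameter $\beta \geq 1$, use the test function $\eta = w\, w_M^{2(\beta-1)}\varphi^2$ (suitably regularized, e.g. applied to $\widetilde w = |w|+\varepsilon_0$ to avoid degeneracy), where $\varphi \in C_c^\infty(B_{2\rho})$ is a standard cut-off with $\varphi \equiv 1$ on $B_\rho$ and $|\nabla\varphi| \leq C/\rho$. Plugging this into the weak formulation produces three contributions: the local divergence term, the Hardy-type term $\varepsilon C_0 \int w\eta\,|x|^{-2s}\,d\mu$, and the nonlocal term $\varepsilon\langle(-\Delta_d)^s w,\eta\rangle_{s,d}$; on the right-hand side we get $\int F(x,w)\eta\,d\mu$.

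The key steps, in order: (i) For the local term, Young's inequality gives the standard lower bound $\int |\nabla w|^2 w_M^{2(\beta-1)}\varphi^2\,d\mu$ up to an error of the form $C\beta^2\rho^{-2}\int w^2 w_M^{2(\beta-1)}\,d\mu$, so that after absorbing we control $\int |\nabla(w\,w_M^{\beta-1}\varphi)|^2\,d\mu$ by $C\beta^2\rho^{-2}\int w^2 w_M^{2(\beta-1)}\,d\mu$ (using that $\eta$ is increasing-type so the truncated nonlinearity is handled via the elementary inequality relating $\nabla(w w_M^{\beta-1})$ and $\nabla w\, w_M^{\beta-1}$). (ii) The Hardy term: since $2s<2$ and $r<1$, bound $\int \varphi^2 w^2 w_M^{2(\beta-1)}|x|^{-2s}\,d\mu$ by splitting into the regions $R_1,R_2,R_3$ of~\eqref{eq:region_breaking} exactly as in the proofs of Lemma~\ref{lem:propagation_of_positivity} and Lemma~\ref{lem:lower_bound_Harnack}, invoking Lemma~\ref{lem:stein_strong_lemma} for the off-diagonal region, to obtain a bound of the form $C\rho^{-2s}\int w^2 w_M^{2(\beta-1)}\,d\mu \leq C\rho^{-2}\int w^2 w_M^{2(\beta-1)}\,d\mu$; note $\varepsilon \leq 1$ makes this uniform in $\varepsilon$. (iii) The nonlocal term: following the standard Moser-type commutator estimate for the fractional operator (as in the references \cite{palatucci_castro2016local,castro2014nonlocal_harnack} already used above), one gets $\varepsilon\langle(-\Delta_d)^s w,\eta\rangle_{s,d} \geq -C\beta^2\rho^{-2s}\int w^2 w_M^{2(\beta-1)}\,d\mu$, again absorbed into the local gradient term. (iv) The right-hand side: since $F(\cdot,w)\in L^q(\Omega)$ with $q>N/2$, apply H\"older with exponents $q$, $\frac{2^*}{2}\cdot$ (the conjugate), and interpolation, to bound $\int F\,\eta\,d\mu$ by $\|F\|_{L^q}\big(\int (w w_M^{\beta-1}\varphi)^{2^*}d\mu\big)^{2/2^*}$ times a small constant plus lower-order terms; here one uses $q>N/2 \iff \frac{1}{q}+\frac{2}{2^*}<1$, so that a small portion of the Sobolev norm can be absorbed into the left-hand side after applying Lemma~\ref{lem:weighted_sobolev_inequality}. (v) Combining, and writing $\chi = 2^*/2 = N/(N-2)$, one arrives at the iterative inequality
\begin{equation*}
\left( \frac{1}{|B_\rho|_{d\mu}}\int_{B_\rho} \widetilde w^{2\beta\chi}\,d\mu \right)^{1/\chi} \leq \frac{C\beta^2}{(\rho'-\rho)^2}\, \frac{1}{|B_{\rho'}|_{d\mu}}\int_{B_{\rho'}} \widetilde w^{2\beta}\,d\mu,
\end{equation*}
letting $M\to\infty$ by monotone convergence. (vi) Iterate this with $\beta_k = \chi^k$ and a geometric sequence of radii $\rho_k \downarrow r_0$; the product $\prod_k (C\beta_k^2)^{\chi^{-k}}$ converges, yielding $\|w\|_{L^\infty(B_{r_0})} \leq C\big(\frac{1}{|B_{2r_0}|_{d\mu}}\int_{B_{2r_0}}\widetilde w^2\,d\mu\big)^{1/2}$, and a finite covering of $\Omega$ (away from $0$ the weights are harmless, near $0$ the above estimate applies) gives $w\in L^\infty(\Omega)$.

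The main obstacle I expect is step~(ii)--(iii): controlling the Hardy term and the nonlocal term with constants that are \emph{independent of $\varepsilon$} and that scale correctly in $\rho$ so the iteration closes. The region-splitting~\eqref{eq:region_breaking} together with Lemma~\ref{lem:stein_strong_lemma} is precisely the tool that makes this work (as in Lemma~\ref{lem:lower_bound_Harnack}), but one must be careful that the exponents of $\rho$ that come out ($\rho^{-2s}$ versus the needed $\rho^{-2}$) are compatible, which is guaranteed only because $r<1$; the bookkeeping of the powers of $\beta$ through the nonlocal commutator estimate is the other delicate point, since a suboptimal power of $\beta$ would break the convergence of $\prod_k(C\beta_k^2)^{\chi^{-k}}$.
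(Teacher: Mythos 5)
Your overall strategy (a ball-by-ball Caccioppoli estimate, the weighted Sobolev inequality, and a Moser iteration over shrinking balls) is a viable route, but it is genuinely different from, and considerably heavier than, the paper's argument, and as written it has concrete gaps. The paper never estimates the Hardy term or the nonlocal term at all: it first derives a Kato-type inequality for $|w|$ (regularizing with $f_\varepsilon(t)=(\varepsilon^2+t^2)^{1/2}$, using Lemma \ref{ineq:A1} and Fatou), in which the Hardy term is discarded because it has a favorable sign; it then tests with the globally defined monotone functions $\psi=(w_M+\delta)^\beta-\delta^\beta$ (no cut-off), so that the fractional term is nonnegative by monotonicity and is discarded as well. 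The iteration is run globally on $\Omega$ using the weighted Sobolev inequality \eqref{ineq:sobolev_inequality_weight}, with $\delta$ chosen proportional to $\|F\,|x|^{2^*d/q'}\|_{L^q}$ to absorb the right-hand side, and is closed by the scheme of \cite{brasco_second_eigenvalue}. In particular your steps (ii)--(iii), the region splitting \eqref{eq:region_breaking} and Lemma \ref{lem:stein_strong_lemma}, are unnecessary for this lemma: with your own test function $\eta=w\,w_M^{2(\beta-1)}\varphi^2$ one has $w\eta\ge 0$, so the Hardy term sits on the left-hand side with the correct sign and can simply be dropped.

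The genuine gaps are the following. First, your nonlocal estimate in (iii) is not correct as stated: once a cut-off is present, the commutator estimate for $\varepsilon\langle(-\Delta_d)^s w,\eta\rangle_{s,d}$ produces, besides the local error $C\beta^2\rho^{-2s}\int_{B_\rho}w^2w_M^{2(\beta-1)}\,d\mu$, a tail term of the form $\sup_{x\in\operatorname{supp}\varphi}\int_{\rnn\setminus B_\rho}\frac{w_+(y)}{|x-y|^{N+2s}}\frac{dy}{|y|^{d}}$ multiplying a local integral of $w\,w_M^{2(\beta-1)}\varphi^2$ (this is exactly the term $K_4$ in the proof of Lemma \ref{lem:lower_bound_Harnack}); it cannot be absorbed into $\int_{B_\rho}w^2w_M^{2(\beta-1)}\,d\mu$ with constants depending only on $\beta$ and $\rho$. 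It is finite here because $w\equiv 0$ outside $\Omega$ and $w$ carries the global weighted $L^{2^*}$ bound, but it must be carried through the iteration, which your scheme omits. Second, the measure and exponent bookkeeping is off: the right-hand side pairs $F$ against $dx$, not $d\mu$, and Lemma \ref{lem:weighted_sobolev_inequality} yields the exponent $2(N-2d)/(N-2d-2)$ with respect to $d\mu$, not $2^*$; the numerology still works out (that exponent exceeds $2^*$, and $q>N/2>(N-2d)/2$), but this has to be tracked, and it is precisely what the paper's use of the weight $|x|^{-2^*d}$ and its choice of $\delta$ take care of. Third, an interior covering "away from $0$" only gives bounds on compact subsets of $\Omega$; to conclude $w\in L^\infty(\om)$ you must also run the estimate on balls meeting $\partial\Omega$ (possible, since $w\equiv 0$ outside $\Omega$), whereas the paper's global iteration avoids boundary issues altogether.
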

\begin{proof}
For \( 0 < \varepsilon < 1 \), consider the regularized function $f_\varepsilon(t) = \left( \varepsilon^2 + t^2 \right)^{1/2}$. Now, take the test function
\begin{equation*}
    \varphi = \psi \, f_\varepsilon'(w),
\end{equation*}
where \( \psi \in C_c^\infty(\Omega) \) is a positive function.
Then, using the weak formulation, we obtain
\begin{equation*}
\begin{aligned}
    \int_\Omega \frac{\nabla w  \nabla \varphi}{|x|^{2d}} \, dx 
    + \varepsilon C_0 \int_\Omega \frac{w \varphi}{|x|^{2s + 2d}} \, dx 
    + \varepsilon \iint_{\rtwon} (w(x) - w(y))(\varphi(x) - \varphi(y)) \, d\nu 
    \leq \int_\Omega |F| \left| f_\varepsilon'(w) \right| |\psi| \, dx.
\end{aligned}
\end{equation*}
By using Lemma \ref{ineq:A1} and invoking Fatou's Lemma, we deduce the inequality
\begin{equation*}
\begin{aligned}
    \int_\Omega \frac{\nabla |w| \,  \nabla \psi}{|x|^{2d}} \, dx 
    + \varepsilon \iint_{\mathbb{R}^{2N}} \left( |w(x)| - |w(y)| \right) ( \psi(x) - \psi(y) ) \, d\nu 
    \leq \int_\Omega |F| \, \psi \, dx,
\end{aligned}
\end{equation*}
for all positive test functions \( \psi \in C_c^\infty(\Omega) \). Hence, by density, the above inequality holds for all $ \psi \in \dot{H}^{s,d}(\Omega) \cap \dot{H}^1(\Omega, |x|^{-2d})$. \\

Let us define \( w_M = \min\{ |w|, M \} \) for some \( M > 0 \). Let \( \beta > 0 \) and \( \delta > 0 \), and consider the test function
\begin{equation*}
    \psi = (w_M + \delta)^\beta - \delta^\beta.
\end{equation*}
Using Lemma \eqref{ineq:A3}, we obtain the following estimate
\begin{equation}
\begin{aligned}
    \frac{4\beta}{(\beta + 1)^2} \int_{\om}  | \grad  (w_M + \delta)^{\frac{\beta + 1}{2}}|^2 &\frac{dx}{|x|^{2d}} 
    \leq 
    \beta \int_\Omega (w_M + \delta)^{\beta - 1} |\nabla w_M|^2 \, \frac{dx}{|x|^{2d}} \\
    &\quad + \varepsilon \iint_{\mathbb{R}^{2N}} (|w(x)| - |w(y)|) \left[ (w_M(x) + \delta)^\beta - (w_M(y) + \delta)^\beta \right] \, d\nu \\
    &\leq \int_\Omega |F| (w_M + \delta)^\beta \, dx.
\end{aligned}
\label{eq:main_energy_estimate}
\end{equation}
Now, invoking the Sobolev inequality with weights \eqref{ineq:sobolev_inequality_weight} and Young's inequality
% \begin{equation*}
%     \left( a^{1/2} - b^{1/2} \right)^2 \geq \frac{a}{2} - b,
% \end{equation*}
% we derive
\begin{equation*}
\begin{aligned}
S\int_{\om}  | \grad  (w_M + \delta)^{\frac{\beta + 1}{2}}|^2 \frac{dx}{|x|^{2d}} 
&\geq 
\left( \int_\Omega \left[ (w_M + \delta)^{\frac{\beta + 1}{2}} - \delta^{\frac{\beta + 1}{2}} \right]^{2^*} \frac{dx}{|x|^{ 2^* d}} \right)^{\frac{2}{2^*}}, \\
    &\geq  \left( \int_\Omega \left( \frac{(w_M + \delta)^{\beta + 1}}{2} - \delta^{{\beta + 1}} \right)^{\frac{2^*}{2}} \frac{dx}{|x|^{2^* d}} \right)^{\frac{2}{2^*}} \\
    &\geq \frac{1}{2} \left( \int_\Omega (w_M + \delta)^{\frac{(\beta + 1) 2^*}{2}} \frac{dx}{|x|^{2^* d}} \right)^{\frac{2}{2^*}} - \delta^{{\beta + 1}} \left( \int_\Omega \frac{dx}{|x|^{ 2^* d}} \right)^{\frac{2}{2^*}},
\end{aligned}
\end{equation*}
where \( S \) is the best constant in the Sobolev embedding.
Substituting the above estimate into inequality \eqref{eq:main_energy_estimate}, we get
\begin{equation}
\begin{aligned}
    \frac{\delta}{2} \left( \int_\Omega (w_M + \delta)^{\frac{\beta 2^*}{2}} \frac{dx}{|x|^{2^* d}} \right)^{\frac{2}{2^*}}
    &\leq \frac{1}{2} \left( \int_\Omega (w_M + \delta)^{\frac{(\beta + 1) 2^*}{2}} \frac{dx}{|x|^{2^* d}} \right)^{2/2^*} \\
    &\leq \delta^{\beta + 1} \left( \int_\Omega \frac{dx}{|x|^{2^* d}} \right)^{\frac{2}{2^*}} + \frac{(\beta + 1)^2}{4\beta S} \int_\Omega |F| (w_M + \delta)^\beta dx.
\end{aligned}
\label{eq:improved_estimate_main}
\end{equation}
Moreover, we also have the following inequality
\begin{equation}
\begin{aligned}
    \delta^\beta \left( \int_\Omega \frac{dx}{|x|^{2^* d}} \right)^{2/2^*}
    &\leq \frac{1}{\beta} \left( \frac{\beta + 1}{2} \right)^2 
    \left( \int_\Omega \frac{dx}{|x|^{2^* d}} \right)^{\frac{2}{2^*} - \frac{1}{q'}} 
    \left( \int_\Omega \frac{(w_M + \delta)^{\beta q'}}{|x|^{2^* d}} dx \right)^{1/q'}
\end{aligned}
\label{eq:holder_embedding_estimate}
\end{equation}
where the exponents satisfy the usual duality relation $\frac{1}{q} + \frac{1}{q'} = 1$.

Substituting the estimate \eqref{eq:holder_embedding_estimate} into \eqref{eq:improved_estimate_main}, we deduce the following refined inequality
\begin{equation*}
\begin{aligned}
    \left( \int_\Omega (w_M + \delta)^{\frac{\beta 2^*}{2}} \frac{dx}{|x|^{2^* d}} \right)^{\frac{2}{2^*}}
    &\leq C \left( \frac{\beta + 1}{2} \right)^2 \frac{1}{\beta}
    \left[ \left( \int_\Omega \frac{(w_M + \delta)^{\beta q'}}{|x|^{2^* d}} dx \right)^{\frac{1}{q'}} \right. \\
    &\quad \left. \left( \int_\Omega \frac{dx}{|x|^{2^* d}} \right)^{\frac{2}{2^*} - \frac{1}{q'}} + \frac{2}{\delta} \left\| F  |x|^{2^* d/q'} \right\|_{L^q} \right]
\end{aligned}
\end{equation*}
We now choose the parameter \( \delta \) to balance the terms as follows
\begin{equation*}
    \delta = 2 \left\| F  |x|^{2^* d/q'} \right\|_{L^q} 
    \left( \int_\Omega \frac{dx}{|x|^{2^* d}} \right)^{\frac{1}{q'} - \frac{2}{2^*}}
\end{equation*}
Letting \( \ell = \beta q' \) and $ \chi = \frac{N}{N - 2}  \frac{1}{q'} > 1$, we obtain
\begin{equation*}
    \left( \int_\Omega (w_M + \delta)^{\chi \ell} \frac{dx}{|x|^{2^* d}} \right)^{\frac{1}{\chi \ell}}
    \leq \left( C \left[ \int_\Omega \frac{dx}{|x|^{2^* d}} \right]^{\frac{2}{2^*} - \frac{1}{q'}} \right)^{\frac{q'}{\ell}}
    \left( \frac{q'}{\ell} \right)^{\frac{q'}{\ell}}
    \left( \frac{q' + \ell}{2\ell} \right)^{\frac{2}{\ell}}  \left( \int_\Omega \frac{(w_M + \delta)^{\ell}}{|x|^{2^* d}} dx \right)^{\frac{1}{\ell}}.
\end{equation*}
Now, applying the Moser-type iteration as detailed in \cite[Theorem $3.1$]{brasco_second_eigenvalue}, we obtain the desired result.
\end{proof}

The following lemma is motivated from Theorem $3.3$ of \cite{squassina_regularity}.
\begin{lemma}\label{lem:moser_iter_lem2}
If $w$ satisfies equation \eqref{eq:transformed_equation} then $w \in L^q(\om)$ for any $q \geq 1$.    
\end{lemma}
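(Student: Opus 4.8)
The plan is to run a Brezis--Kato type bootstrap in the weighted framework, in the spirit of the cited Theorem~3.3 of \cite{squassina_regularity} and parallel to the Moser scheme of Lemma~\ref{lem:moser_iter_lem1}, but now with a right-hand side $F(x,w)=\dfrac{w^{2^*-1}}{|x|^{2^*d}}+\lambda\dfrac{w^{p-1}}{|x|^{pd}}$ depending on $w$ itself. For $M>0$ put $w_M=\min\{w,M\}$ and, for a parameter $\beta\ge1$, test \eqref{eq:transformed_equation} with $\varphi=w\,w_M^{2(\beta-1)}$, a bounded perturbation of $w$ which is a nondecreasing function of $w$. Because $w>0$, the Hardy term $\varepsilon C_0\int_\Omega \frac{w\varphi}{|x|^{2s+2d}}\,dx$ is nonnegative, and, $\varphi$ being nondecreasing in $w$, the fractional bilinear form $\varepsilon\iint(w(x)-w(y))(\varphi(x)-\varphi(y))\,d\nu$ is nonnegative as well (cf.\ Lemma~\ref{ineq:A2}); both may therefore be dropped. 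On $\{w\le M\}$ one has $\nabla w\cdot\nabla\varphi\ge\frac1{\beta^2}|\nabla(w\,w_M^{\beta-1})|^2$, and on $\{w>M\}$ the two coincide, so the local term bounds $\frac1{\beta^2}\int_\Omega\frac{|\nabla(w\,w_M^{\beta-1})|^2}{|x|^{2d}}\,dx$ from below. This leaves the core inequality
\begin{equation*}
\int_\Omega \frac{|\nabla(w\,w_M^{\beta-1})|^2}{|x|^{2d}}\,dx\;\le\;C\beta^2\int_\Omega F(x,w)\,w\,w_M^{2(\beta-1)}\,dx .
\end{equation*}

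Next I would apply the weighted Sobolev inequality~\eqref{ineq:sobolev_inequality_weight} to the left-hand side and estimate the right-hand side. The critical contribution equals $\int_\Omega \frac{w^{2^*-2}\,(w\,w_M^{\beta-1})^2}{|x|^{2^*d}}\,dx$; splitting into $\{w\le K\}$ and $\{w>K\}$, on the first set $w^{2^*-2}\le K^{2^*-2}$, while on the second Hölder's inequality with exponents $\frac{2^*}{2^*-2}$ and $\frac{2^*}{2}$ produces the factor $\big(\int_{\{w>K\}}w^{2^*}|x|^{-2^*d}\,dx\big)^{(2^*-2)/2^*}$, which tends to $0$ as $K\to\infty$ by absolute continuity of the integral (recall $w\in\dot H^{s,d}(\rnn)\cap\dot H(\rnn,|x|^{-2d})$, so $w^{2^*}|x|^{-2^*d}\in L^1(\Omega)$). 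Choosing $K=K(\beta)$ so that $C\beta^2$ times this factor is at most $\tfrac12$, that term is absorbed into the left-hand side. The subcritical contribution $\lambda\int_\Omega \frac{w^{p-2}(w\,w_M^{\beta-1})^2}{|x|^{pd}}\,dx$ is of strictly lower order: since $pd<2^*d$ on the bounded set $\Omega$ and $p-2<2^*-2$, the very same truncation/Hölder argument (and, when $p<2$, already the elementary bound $w^{p-2}\le1$ on $\{w\ge1\}$) produces a term absorbed into the left-hand side plus one controlled by $\int_\Omega (w\,w_M^{\beta-1})^2|x|^{-2^*d}\,dx$ and a constant.

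Collecting everything, for each fixed $\beta\ge1$ one obtains
\begin{equation*}
\Big(\int_\Omega (w\,w_M^{\beta-1})^{2^*}\,\frac{dx}{|x|^{2^*d}}\Big)^{2/2^*}\;\le\;C(\beta)\int_\Omega (w\,w_M^{\beta-1})^{2}\,\frac{dx}{|x|^{2^*d}}+C(\beta),
\end{equation*}
with constants independent of $\varepsilon$ (only the nonnegative $\varepsilon$-terms were discarded). The truncation keeps the right-hand side finite, so letting $M\to\infty$ by monotone convergence shows that $w\in L^{2\beta}(|x|^{-2^*d}dx)$ implies $w\in L^{2^*\beta}(|x|^{-2^*d}dx)$. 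Starting from $w\in L^{2^*}(|x|^{-2^*d}dx)$ and iterating with $2\beta_{n+1}=2^*\beta_n$ — the exponent grows by the fixed factor $2^*/2>1$ — we get $w\in L^{Q}(|x|^{-2^*d}dx)$ for every $Q\ge1$. Finally, since $2^*d<N$ the weight $|x|^{-2^*d}$ is locally integrable; splitting $\int_\Omega|w|^q\,dx$ over $B_\rho(0)$ and $\Omega\setminus B_\rho(0)$ — on the latter the weight is bounded below, on the former Hölder's inequality pairs $w\in L^{Q}(|x|^{-2^*d}dx)$ (for $Q$ large) with an integrable power of $|x|^{2^*d}$ — gives $w\in L^q(\Omega)$ for all $q\ge1$.

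The step I expect to be the main obstacle is the treatment of the critical term $\frac{w^{2^*-1}}{|x|^{2^*d}}$: the truncation level $K$ has to be re-chosen at every iteration step, and one must check that the scheme still closes and that no constant acquires dependence on $\varepsilon$ — which is exactly why the Hardy and fractional contributions are shown to be nonnegative and simply dropped rather than estimated. The subcritical term and the passage from the weighted to the unweighted Lebesgue space are routine.
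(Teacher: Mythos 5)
Your proposal is correct and follows essentially the same route as the paper: truncated Moser-type test functions, dropping/using only the nonnegativity of the Hardy and fractional contributions, the weighted Sobolev inequality \eqref{ineq:sobolev_inequality_weight}, a level-$K$ splitting of the critical term with absorption via absolute continuity of $\int w^{2^*}|x|^{-2^*d}\,dx$, and a limit in the truncation parameter. The only differences are bookkeeping: the paper bounds the low part by $K^\beta$ times fixed a priori finite norms, so it gets all exponents in one shot (and uses the sign-aware test function $\mathrm{sgn}(w)|w||w_k|^\beta$ together with Lemma~\ref{ineq:A2} instead of assuming $w>0$ and discarding the fractional term by monotonicity), whereas you run a genuine Brezis--Kato iteration in $\beta$ — both are fine, and your final passage to unweighted $L^q(\Omega)$ could be shortened by simply noting that $|x|^{-2^*d}$ is bounded below on the bounded set $\Omega$.
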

\begin{proof}
Let $g_\beta(t) = \text{sgn}(t)\,|t|\,|t_k|^\beta$, where $\beta > 0$,  and $t_k = \min\{t, k\}$. Also, define the auxiliary function
\begin{equation*}
G_\beta(t) = \int_0^t g_\beta'(t)^{1/2} \, dt \geq \frac{2(\beta + 1)^{1/2}}{\beta + 2} g_{\beta/2}(t).
\end{equation*}
Then, by Lemma \ref{ineq:A2}, we have
\begin{equation*}
\langle (-\Delta)^s_d w, g_\beta(w) \rangle_{s,d} \geq \left[ G_\beta(w) \right]_{s,d}^2.
\end{equation*}
Moreover, we have the following identity
\begin{equation*}
\int_\Omega \frac{w |w|\, |w_k|^\beta \, \text{sgn}(w)}{|x|^{2s + 2 d}}\, dx = \int_\Omega \frac{|w|^2\, |w_k|^\beta}{|x|^{2s + 2 d}}\, dx \geq 0.
\end{equation*}
Finally, considering the energy estimate with \( g_\beta(w) \), we compute
\begin{equation*}
\begin{aligned}
\int_\Omega \nabla w  \nabla \big( g_\beta(w) \big)\, d\mu 
&= \int_\Omega |\nabla w|^2 |w_k|^\beta\, d\mu + \beta \int_\Omega |w_k|^\beta |\nabla w_k|^2 \, d\mu \\
&\geq \left(1 + \frac{\beta}{4}\right)^{-1} \int_\Omega \left( |\nabla w|^2 |w_k|^\beta + \left( \frac{\beta^2}{4} + \beta \right) |w_k|^\beta |\nabla w_k|^2 \right) d\mu \\
&= \left(1 + \frac{\beta}{4} \right)^{-1} \int_\Omega |\nabla (w |w_k|^{\beta/2})|^2\, d\mu.
\end{aligned}
\end{equation*}
Combining all previous results and using \( g_\beta(w) \) as a test function along with Sobolev inequality, we deduce
\begin{equation}\label{eq:moser_estimate1}
\begin{aligned}
S \left( 1 + \frac{\beta}{4} \right)^{-1}
\left( \int_\Omega \frac{|w|^{2^*} |w_k|^{\beta \frac{2^*}{2}}}{|x|^{2^* d}} \, dx \right)^{\frac{2}{2^*}}
&\leq \int_\Omega \nabla w  \nabla (g_\beta(w)) \, d\mu + \varepsilon \left[ G_\beta(w) \right]_{s,d}^2 + \varepsilon C_0\int_\Omega \frac{|w|^2 |w_k|^\beta}{|x|^{2s + 2 d}}\, dx \\
&\leq \int_\Omega \frac{|w|^{2^*} |w_k|^\beta}{|x|^{2^* d}} \, dx 
+ \lambda \int_\Omega \frac{|w|^{q+1} |w_k|^\beta}{|x|^{(p+1)d}}\, dx \\
&= I_1 + I_2.
\end{aligned}
\end{equation}
We now estimate each term separately. First, we handle \( I_1 \)
\begin{equation}\label{eq:I_1_estimate_moser}
\begin{aligned}
I_1 &\leq \int_{\{|w| < K\}} \frac{|w|^{2^*} |w_k|^\beta}{|x|^{2^* d}} \, dx 
+ \int_{\{|w| \geq K\}} \frac{|w|^{2^*} |w_k|^\beta}{|x|^{2^*d}} \, dx \\
&\leq K^\beta \int_{\{|w| < K\}} \frac{|w|^{2^*}}{|x|^{2^* d}} \, dx 
+ \left( \int_{\{|w| > K\}} \frac{|w|^{2^*}}{|x|^{2^* d}} \, dx \right)^{1 - \frac{2}{2^*}} 
\left( \int_{\{|w| > K\}} \frac{|w|^{2^*}|w_k|^{\beta 2^*/2}}{|x|^{2^* d}} \, dx \right)^{\frac{2}{2^*}}.
\end{aligned}
\end{equation}

Similarly, for \( I_2 \), we obtain
\begin{equation}\label{eq:I_2_estimate_moser}
\begin{aligned}
I_2 &\leq K^\beta \int_{\{|w| < K\}} \frac{|w|^{p + 1}}{|x|^{(p+1)d}} \, dx 
+ \left( \int_{\{|w| > K\}} \frac{|w_k|^{\beta 2^*/2}}{|x|^{2^* d}} \, dx \right)^{2/2^*}
\left( \int_{\{|w| > K\}} \left(\frac{|w|^{q + 1}}{|x|^{(q-1)d}} \right)^{N/2} \, dx \right)^{2/N}.
\end{aligned}
\end{equation}
Choosing \( K \) sufficiently large so that the terms in \eqref{eq:I_1_estimate_moser} and \eqref{eq:I_2_estimate_moser} are absorbed into the right-hand side of \eqref{eq:moser_estimate1}, we deduce the following estimate
\begin{equation}
\left( \int_{\Omega} \frac{|w|^{2^*} |w_k|^{\beta \frac{2^*}{2}}}{|x|^{2^* d}} \, dx \right)^{\frac{2}{2^*}}
\leq 
C \left[
K^\beta \int_{\Omega} \frac{|w|^{2^*}}{|x|^{2^* d}} \, dx 
+ K^\beta \int_{\om} \frac{|w|^{p + 1}}{|x|^{(p+1)d}} \, dx 
\right].
\label{eq:estimate_absorbed}
\end{equation}
Now, applying Fatou's Lemma to the inequality \eqref{eq:estimate_absorbed}, we conclude the following for any \( \beta \geq 1 \)

\begin{equation*}
\int_{\Omega} \frac{|w|^{2^*(1 + \beta/2)}}{|x|^{2^* d}} \, dx < \infty. \qedhere
\end{equation*}
\end{proof}

\begin{theorem}[Uniform Boundedness] \label{thm:uniform_boundedness}
If \( w \) is a positive solution of \eqref{eq:transformed_equation}, then $w \in L^\infty(\om)$.
\end{theorem}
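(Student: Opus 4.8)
The plan is to combine the two preparatory lemmas, Lemma~\ref{lem:moser_iter_lem1} and Lemma~\ref{lem:moser_iter_lem2}, in a bootstrap argument. Recall that $w$ solves
\[
    -\mathrm{div}\left( \frac{\nabla w}{|x|^{2 d}}\right) + \varepsilon C_0 \frac{w}{|x|^{2s + 2d}} + \varepsilon (-\Delta_{d})^s w = \frac{w^{2^* - 1}}{|x|^{2^* d}} + \lambda  \frac{w^q}{|x|^{(q+1) d}} =: F(x, w(x)),
\]
so to invoke Lemma~\ref{lem:moser_iter_lem1} it suffices to check that the right-hand side lies in $L^t(\Omega)$ for some $t > N/2$. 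First I would absorb the singular weights into the unknown: since $d < \tfrac{N-2}{2}$, the weight $|x|^{-2^* d}$ and $|x|^{-(q+1)d}$ are locally integrable with a margin, so by Hölder the membership $F(\cdot, w) \in L^t$ reduces to high integrability of $w$ itself near the origin — precisely the kind of statement produced by Lemma~\ref{lem:moser_iter_lem2}.

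Concretely, the steps are as follows. First, by Lemma~\ref{lem:moser_iter_lem2}, $w \in L^m(\Omega)$ for every $m \geq 1$; more precisely that lemma gives $\int_\Omega \frac{|w|^{2^*(1+\beta/2)}}{|x|^{2^* d}}\,dx < \infty$ for all $\beta \geq 1$, which is the weighted integrability actually needed. Second, I would estimate each piece of $F(\cdot,w)$: for the critical term, $\left\||x|^{-2^*d} w^{2^*-1}\right\|_{L^t}^t = \int_\Omega |x|^{-2^* d t} w^{(2^*-1)t}\,dx$, and choosing $t$ slightly larger than $N/2$ one checks $2^* d t < N$ (using $d < \tfrac{N-2}{2}$, equivalently $2^* d < 2^* \tfrac{N-2}{2} = N$, so there is room) so that a Hölder split isolates a finite power of $|x|^{-1}$ against an arbitrarily high but finite power of $w$, which is controlled by step one. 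The subcritical term $\lambda |x|^{-(q+1)d} w^q$ is handled identically and is in fact easier since $q < 2^*-1$. Third, having placed $F(\cdot,w)$ in $L^t(\Omega)$ with $t > N/2$, Lemma~\ref{lem:moser_iter_lem1} yields $w \in L^\infty(\Omega)$ directly, with the constant independent of $\varepsilon$ because the constants in both preceding lemmas are.

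The main obstacle is verifying the exponent bookkeeping in the Hölder splitting — making sure that one can simultaneously keep $t > N/2$, keep the weight exponent $2^* d t$ strictly below $N$ (so that $|x|^{-2^* d t} \in L^1_{loc}$), and absorb the residual power of $w$ into the $L^m$ bounds of step one. This is where the hypothesis $d = \sqrt{\bar\mu} - \sqrt{\bar\mu - \mu} < \sqrt{\bar\mu} = \tfrac{N-2}{2}$ is used decisively: it is exactly the strict inequality $2^* d < N$ that gives the needed slack. A secondary technical point is that Lemma~\ref{lem:moser_iter_lem1} is stated for a generic $F \in L^q$ with $q > N/2$ and the implicit $\varepsilon$-uniformity of its constant must be tracked through; this has already been noted in the remark following Lemma~\ref{lemma:main_supersolution}, so I would simply invoke it. Once these are in place the proof is short: it is essentially "Lemma~\ref{lem:moser_iter_lem2} $\Rightarrow$ $F(\cdot,w) \in L^t$, $t>N/2$ $\Rightarrow$ Lemma~\ref{lem:moser_iter_lem1} $\Rightarrow$ $w \in L^\infty$."
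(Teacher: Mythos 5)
Your overall architecture is exactly the paper's: Theorem~\ref{thm:uniform_boundedness} is obtained by feeding Lemma~\ref{lem:moser_iter_lem2} into Lemma~\ref{lem:moser_iter_lem1}. The gap is in the step where you verify the \emph{unweighted} hypothesis $F(\cdot,w)\in L^t(\Omega)$ for some $t>N/2$. You argue that for $t$ slightly larger than $N/2$ one has $2^*dt<N$ ``because $2^*d<N$''; this is an arithmetic slip: $2^*d<N$ only gives $2^*dt<Nt$, while $2^*dt<N$ together with $t>N/2$ forces $2^*d<2$, i.e.\ $d<\frac{N-2}{N}$, which is strictly stronger than $d<\frac{N-2}{2}$ and fails when $\mu$ is close to $\bar\mu$. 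Moreover the failure is not just in your bookkeeping: by the weak Harnack inequality (Theorem~\ref{thm:harnack_inequality_transformed}, equivalently the lower bound in Theorem~\ref{thm:asympototic_estimates}) a positive solution $w$ is bounded below by a positive constant near the origin, so the critical term of the right-hand side of \eqref{eq:transformed_equation} satisfies $w^{2^*-1}|x|^{-2^*d}\geq c\,|x|^{-2^*d}$ near $0$, and this function belongs to no $L^t(\Omega)$ with $t>N/2$ once $2^*d\geq 2$. Hence in that range the membership you propose to check is simply false, and the reduction to the literal statement of Lemma~\ref{lem:moser_iter_lem1} cannot be carried out by a H\"older split.

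The repair stays within the two lemmas you cite. The proof of Lemma~\ref{lem:moser_iter_lem1} never uses $\|F\|_{L^t}$ itself; it only uses the weighted quantity $\bigl\| F\,|x|^{2^*d/t'}\bigr\|_{L^t}$ with $\frac1t+\frac1{t'}=1$ and $t>N/2$. For $F=w^{2^*-1}|x|^{-2^*d}+\lambda w^{p-1}|x|^{-pd}$ one computes, since $t\bigl(1-\frac{1}{t'}\bigr)=1$,
\begin{equation*}
|F|^t\,|x|^{2^*dt/t'}\;\leq\; C\,\frac{w^{(2^*-1)t}+w^{(p-1)t}}{|x|^{2^*d}}+C \quad\text{on the bounded set }\Omega,
\end{equation*}
the subcritical weight being milder because $p<2^*$. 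This is precisely the weighted integrability furnished by Lemma~\ref{lem:moser_iter_lem2}, namely $\int_\Omega |w|^{m}|x|^{-2^*d}\,dx<\infty$ for all large $m$ (lower powers follow since $\int_\Omega|x|^{-2^*d}dx<\infty$, as $2^*d<N$). So no unweighted $L^t$ membership of $F$ is needed at all; with this substitution your argument coincides with the paper's one-line proof, and your remark on the $\varepsilon$-independence of the constants is correct.
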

\begin{proof}
The proof follows directly from Lemmas \ref{lem:moser_iter_lem1} and \ref{lem:moser_iter_lem2}.
\end{proof}

\begin{proof}[Proof of Theorem \ref{thm:asympototic_estimates}]
 The proof follows directly from Theorems \ref{thm:harnack_inequality_transformed} and \ref{thm:uniform_boundedness}.   
\end{proof}

Finally, we have the following strong maximum principle. 
\begin{theorem}\label{thm:uniform_estimate_and_smp}
Assume that there exists a nonnegative solution $w_0 \in \X(\Omega)$ to the problem \eqref{eq:main_problem_ep} for some $\lambda \geq 0$ and $p \in [1,2^*-1)$. Then $w_0 >0$ almost everywhere in $\Omega$.
\end{theorem}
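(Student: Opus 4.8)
The plan is to reduce the assertion to a \emph{local positivity dichotomy} --- on each small ball $w_0$ is either a.e.\ zero or essentially bounded below by a positive constant --- together with a nonlocal observation forbidding a nontrivial $w_0$ from vanishing on an open set. First I would note that $w_0$ is a nonnegative weak supersolution of the \emph{homogeneous} mixed operator in $\om$: indeed $w_0\ge0$ on all of $\rnn$ since it vanishes outside $\om$, and as $\la\ge0$, $\mu>0$ and $p\ge1$ every term on the right of the weak formulation of \eqref{eq:main_problem_ep} is nonnegative once the Hardy term is moved there, so for all $0\le v\in\X(\om)$,
\begin{equation*}
\int_\om\grad w_0\cdot\grad v\,dx+\varepsilon\langle w_0,v\rangle_s=\int_\om\Big(\mu\tfrac{w_0}{|x|^2}+\la|w_0|^{p-2}w_0+|w_0|^{2^*-2}w_0\Big)v\,dx\ge0,
\end{equation*}
i.e.\ $-\Delta w_0+\varepsilon(-\Delta)^s w_0\ge0$ weakly in $\om$. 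Note the Hardy singularity has been absorbed into the nonnegative right-hand side, so no lower-order coefficient survives.

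Next I would establish the dichotomy: for every $x_0\in\om$ there is $\rho>0$ with $B_{2\rho}(x_0)\subset\om$ such that either $w_0\equiv0$ a.e.\ on $B_{2\rho}(x_0)$, or $\inf_{B_\rho(x_0)}w_0>0$ (essential infimum). Since $w_0$ is a nonnegative supersolution of $-\Delta+\varepsilon(-\Delta)^s$ on all of $\om$, this follows from the weak Harnack inequality for nonnegative supersolutions of mixed local--nonlocal operators (De Giorgi--Nash--Moser theory for such operators); near the origin one may equivalently invoke the in-paper estimate by passing to $\widetilde w_0(x)=|x|^{d}w_0(x)$, $d=\sqrt{\bar\mu}-\sqrt{\bar\mu-\mu}$, which solves \eqref{eq:transformed_equation}: Theorem~\ref{thm:harnack_inequality_transformed} gives, for all $r<r_0$,
\begin{equation*}
\Big(\tfrac{1}{|B_r|_{d\mu}}\int_{B_r}\widetilde w_0^{\,\eta}\,d\mu\Big)^{1/\eta}\le C\inf_{B_{3r/2}}\widetilde w_0,
\end{equation*}
so a vanishing infimum forces $\widetilde w_0\equiv0$ a.e.\ on $B_r$; since $|x|^{d}>0$ on $B_{r_0}\setminus\{0\}$ the dichotomy transfers to $w_0$ near the origin.

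Finally I would rule out interior vanishing. Assume $w_0\not\equiv0$ and, for contradiction, that $w_0\equiv0$ a.e.\ on some nonempty open $V\subset\om$; choose $0\le v\in C_c^\infty(V)$ with $v\not\equiv0$. Testing \eqref{eq:main_problem_ep} with $v$, all local terms and all right-hand side terms vanish (they are integrated over $V$, where $w_0=0$ and hence $\grad w_0=0$ a.e.), leaving
\begin{equation*}
\varepsilon\iint_{\rtwon}\frac{\big(w_0(x)-w_0(y)\big)\big(v(x)-v(y)\big)}{|x-y|^{N+2s}}\,dx\,dy=0.
\end{equation*}
For $x,y\in V$ the integrand vanishes, and on the mixed regions it equals $-w_0(y)v(x)|x-y|^{-N-2s}\le0$ (resp.\ $-w_0(x)v(y)|x-y|^{-N-2s}\le0$); hence the integral is $\le0$, and equals $0$ only if $w_0\equiv0$ a.e.\ in $\rnn$, a contradiction. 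Thus a nontrivial $w_0$ vanishes on no nonempty open subset of $\om$, and combining this with the dichotomy above --- applied at every $x_0\in\om$, with the ``$w_0\equiv0$ on $B_{2\rho}(x_0)$'' alternative excluded --- gives $\inf_{B_\rho(x_0)}w_0>0$ for each $x_0$, whence $w_0>0$ a.e.\ in $\om$.

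I expect the main obstacle to be the local dichotomy near the origin: the singular Hardy potential rules out a direct use of classical regularity, so one must either exploit that $w_0$ remains a supersolution of the \emph{coefficient-free} mixed operator there (invoking a weak Harnack inequality for mixed operators), or go through \eqref{eq:transformed_equation} and Theorem~\ref{thm:harnack_inequality_transformed} and translate the resulting statement about $\widetilde w_0$ back into genuine positivity of $w_0$ --- both routes require some care about function spaces and the nonlocal tail. The nonlocal sign computation in the last step is elementary but must be carried out at the level of the weak formulation rather than pointwise, and it relies crucially on $w_0\ge0$ throughout $\rnn$.
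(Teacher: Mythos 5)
Your argument is correct and begins with the same reduction the paper uses: since $w_0\ge0$ on $\rnn$, $\mu>0$, $\lambda\ge0$, one moves the Hardy, critical and sublinear terms to the right-hand side and observes that $w_0$ is a nonnegative weak supersolution of the coefficient-free mixed operator $-\Delta+\varepsilon(-\Delta)^s$ in $\om$. At that point the paper's proof ends abruptly: it simply cites a strong maximum principle for the mixed local--nonlocal operator (Theorem~8.4 of \cite{garain_higher_holder_regularity}) and concludes $w_0>0$. You instead reconstruct that SMP from scratch, splitting it into (i) a local dichotomy coming from a weak Harnack inequality for nonnegative supersolutions of $-\Delta+\varepsilon(-\Delta)^s$ (either $w_0\equiv0$ on a ball or its essential infimum on a slightly smaller ball is positive), and (ii) a nonlocal ``no interior vanishing'' step: if $w_0\equiv0$ a.e.\ on an open $V\subset\om$, testing with $0\le v\in C_c^\infty(V)$ kills every local term and every right-hand side term, and the remaining Gagliardo pairing decomposes so that only the cross regions $V\times V^c$ and $V^c\times V$ contribute, each with sign $\le0$; equality then forces $w_0\equiv0$ on $\rnn$. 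This chain is sound, and it has the merit of isolating exactly how nonlocality forbids a nontrivial nonnegative (super)solution from having an interior nodal region. Two caveats: the weak Harnack inequality you invoke for the coefficient-free mixed operator is itself external to this paper (it is precisely the kind of De Giorgi--Nash--Moser result contained in the reference the paper cites, so in practice you are still leaning on the same literature, just on an intermediate lemma rather than the finished SMP); and your alternative of going through the transformed equation \eqref{eq:transformed_equation} and Theorem~\ref{thm:harnack_inequality_transformed} only yields information on balls \emph{centered at the origin}, so it cannot replace the Harnack step at generic $x_0\in\om$ --- the coefficient-free route you propose first is the one that actually closes the argument. Finally, both your proof and the paper's tacitly assume $w_0\not\equiv0$; the theorem as stated is vacuous (or false) for $w_0\equiv0$, so this should be read as an implicit nontriviality hypothesis.
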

\begin{proof}
Consider the inner product
\begin{equation*}
\langle w_0, \varphi \rangle + \varepsilon\langle w_0, \varphi \rangle_{s} = \int_{\Omega} \frac{w_0}{|x|^2} \varphi + \int_{\Omega} |w_0|^{2^*-2} w_0 \varphi + \lambda \int_{\Omega} |w_0|^{p-2} w_0 \varphi \geq 0.
\end{equation*}
Since $w_0 \geq 0$, applying Theorem 8.4 of \cite{garain_higher_holder_regularity}, we conclude that $w_0 > 0$.
\end{proof}

%%%%%%%%%%%%%%%%%%%%%%%%%%%%%%%%%%%%%%%%%%%%%%%%%%%%%%%%%%%%%%%%%%
%%%%%%%%%%%%%%%%    End of Asyptotic estimates            %%%%%%%%
%%%%%%%%%%%%%%%%%%%%%%%%%%%%%%%%%%%%%%%%%%%%%%%%%%%%%%%%%%%%%%%%%%

%%%%%%%%%%%%%%%%%%%%%%%%%%%%%%%%%%%%%%%%%%%%%%%%%%%%%
%%%%%%% Linear Problem
%%%%%%%%%%%%%%%%%%%%%%%%%%%%%%%%%%%%%%%%%%%%%%%%%%%%%
\section{Linear and Superlinear Case}\label{sec:linear_sublinear}
\subsection{Linear Case (\texorpdfstring{$p = 2$})}\label{sec:linear}
In this subsection, we consider the case of linear perturbation in problem \eqref{eq:main_problem}. The analysis relies heavily on the spectral properties of the underlying operator, particularly the behaviour of its first eigenvalue. These properties play a crucial role in establishing the existence of solutions.

% \textbf{Proof of Theorem \ref{thm:nonexistence_star_shaped}}
% Suppose, on the contrary, that a solution $u_0$ exists for \eqref{eq:main_problem_ep}. Then, by Theorem \ref{thm:uniform_estimate_and_smp}, we obtain:
% \begin{equation*}
% u_0 \notin L^\infty_{\text{loc}}(\mathbb{R}^N \setminus \{0\}).
% \end{equation*}

% By applying the \textcolor{red}{check this} **Serra-Miranda Pohozaev identity** (which provides **non-existence results of nontrivial bounded solutions**), we conclude that a contradiction arises. Thus, no such solution $u_0$ can exist.

%%%%%%%%%%%%{Spectral Properties and Further Existence Results}
Let $V(\om) = \left\{ u \in \X(\Omega) \mid \int_{\Omega} |u|^{2^*} dx= 1 \right\}$. Then we take the restriction of the functional $\Je$ on $V(\om)$ defined as 
\begin{equation*}
    \Q(u) := \int_{\Omega} |\nabla u|^2dx - \mu \int_{\Omega} \frac{|u|^2}{|x|^2} dx + [u]_s^2 - \lambda \int_{\Omega} |u|^2dx.
\end{equation*}
Now define
\begin{equation*}
S_{\mu}(\lambda) := \inf_{u \in V(\om)} \Q(u).
% \label{eq:infimum_functional}
\end{equation*}
% Let $\lambda_{1, s}$ and $\la_1$ denote the first eigenvalues defined in \eqref{eq:first_eigenvalue} and \eqref{eq:first_eigenvalue_frac} respectively.
Let us state and prove some of the important properties of $S_{\mu}(\lambda)$.
\begin{lemma}[Properties of $S_{\mu}(\lambda)$]\label{lem:best_constant_being_constant}
The function $S_{\mu}(\lambda)$ satisfies the following properties
\begin{enumerate}
    \item It satisfies the lower bound
    \begin{equation*}
    S_{\mu}(\lambda) \geq S_{\mu} - \lambda |\Omega|^{2/N}.
    \end{equation*}
    % Let $u \in V(\om)$, then 
    % $$\mathcal{Q}(u) \geq \inf_{u \in V(\om)} \left( \rho(u)^2 - \mu \int_{\om} \frac{|u|^2}{|x|^2} \right) - \la \int_{\om} |u|^2 \geq S_{\mu} - \la |\om|^{2/N}.$$
    
    \item For every $\lambda > 0$, we have $S_{\mu}(\lambda) \leq S_{\mu}.$

    \item If $0 < t_2 < t_{1}$, then $ S_{\mu}(t_2) \geq S_{\mu}(t_1)$.
    \item $S_{\mu}(\la) \geq 0$ if and only if $0 < \la \leq \la_1$, where $\la_1$ is the first eigenvalue defined in \eqref{eq:first_eigenvalue}.
    \item For $0 < \lambda \leq \lambda_{1, s}$, we have $S_{\mu}(\lambda) = S_{\mu} > 0$, where $\la_{1,s}$ is the first eigenvalue defined in \eqref{eq:first_eigenvalue_frac}.
    \item The function $\lambda \mapsto S_{\mu}(\lambda)$ is continuous on $(0, \infty)$.
\end{enumerate}
\end{lemma}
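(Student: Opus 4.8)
\emph{Proof proposal.} The plan is to reduce everything to the scale-invariant quotient. By $2$-homogeneity of numerator and denominator, $S_\mu(\lambda)=\inf_{u\in\X(\Omega)\setminus\{0\}}\mathcal{R}_\lambda(u)$, where
\[
\mathcal{R}_\lambda(u):=\mathcal{R}_0(u)-\lambda\,b(u),\qquad
\mathcal{R}_0(u):=\frac{\int_\Omega|\nabla u|^2-\mu\int_\Omega\frac{|u|^2}{|x|^2}+[u]_s^2}{\left(\int_\Omega|u|^{2^*}\right)^{2/2^*}},\qquad
b(u):=\frac{\int_\Omega|u|^2}{\left(\int_\Omega|u|^{2^*}\right)^{2/2^*}}\ge 0,
\]
and $\inf_{u\neq 0}\mathcal{R}_0(u)=S_{\mu,s,1}(\Omega)=S_\mu$ by \eqref{eq:best_constant_equality}. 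Item (2) is then immediate, since $\mathcal{R}_\lambda(u)\le\mathcal{R}_0(u)$ for every $u$, hence $S_\mu(\lambda)\le S_\mu$. Item (1) follows from $\mathcal{R}_0(u)\ge S_\mu$ together with $b(u)\le|\Omega|^{2/N}$, which is Hölder's inequality on $\Omega$ (note $1-\tfrac{2}{2^*}=\tfrac2N$): thus $\mathcal{R}_\lambda(u)\ge S_\mu-\lambda|\Omega|^{2/N}$, and one takes the infimum. Item (3) is the algebraic identity $\mathcal{R}_{t_2}(u)=\mathcal{R}_{t_1}(u)+(t_1-t_2)\,b(u)$ with $b(u)\ge0$, again taking infima.

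For item (4) I would invoke the variational characterisation \eqref{eq:first_eigenvalue} of $\lambda_1$, extended from $C_c^\infty(\Omega)$ to all of $\X(\Omega)$ by density, which reads $\int_\Omega|\nabla u|^2+[u]_s^2-\mu\int_\Omega\frac{|u|^2}{|x|^2}\ge\lambda_1\int_\Omega|u|^2$. If $0<\lambda\le\lambda_1$ this makes the numerator of $\mathcal{R}_\lambda$ nonnegative, so $S_\mu(\lambda)\ge0$. If instead $\lambda>\lambda_1$, then picking from a minimising sequence for \eqref{eq:first_eigenvalue} a function $v\in C_c^\infty(\Omega)$ with $\|v\|_{L^2}=1$ and $\int_\Omega|\nabla v|^2+[v]_s^2-\mu\int_\Omega\frac{|v|^2}{|x|^2}<\lambda$ gives $\mathcal{R}_\lambda(v)<0$, hence $S_\mu(\lambda)<0$; this establishes the equivalence. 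For item (5), the characterisation \eqref{eq:first_eigenvalue_frac} of $\lambda_{1,s}$ (again extended by density) gives $[u]_s^2\ge\lambda_{1,s}\int_\Omega|u|^2\ge\lambda\int_\Omega|u|^2$ whenever $0<\lambda\le\lambda_{1,s}$, so for $u\in V(\Omega)$ the numerator of $\mathcal{R}_\lambda(u)$ is at least $\int_\Omega|\nabla u|^2-\mu\int_\Omega\frac{|u|^2}{|x|^2}\ge S_\mu$; with (2) this yields $S_\mu(\lambda)=S_\mu$, while $S_\mu>0$ because Hardy's inequality (as $\mu<\bar\mu$) makes the numerator of $S_\mu$ an equivalent norm and the Sobolev inequality then gives a positive lower bound.

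For item (6): $\lambda\mapsto\mathcal{R}_\lambda(u)=\mathcal{R}_0(u)-\lambda\,b(u)$ is affine with $b(u)\ge0$ for each fixed $u$, so $S_\mu(\lambda)=\inf_u\mathcal{R}_\lambda(u)$ is concave on $(0,\infty)$; by (1) and (2) it is finite there (it lies between $S_\mu-\lambda|\Omega|^{2/N}$ and $S_\mu$), and a finite concave function on an open interval is continuous (in fact locally Lipschitz). None of the six items is individually deep; the step requiring the most care is item (4), both in transferring the $\lambda_1$-inequality from $C_c^\infty(\Omega)$ to $\X(\Omega)$ and in constructing the competitor that pushes $\mathcal{R}_\lambda$ strictly below $0$ when $\lambda>\lambda_1$, and one should keep in mind throughout that the equality $S_{\mu,s,1}(\Omega)=S_\mu$ from the opening theorem is exactly what powers items (2), (5) and (6).
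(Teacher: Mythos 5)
Your proposal is correct and takes essentially the same route as the paper: items (1)--(4) are exactly the ``direct'' consequences of the homogeneous quotient formulation that the paper leaves to the reader, and your proof of (5) coincides with the paper's, namely $[u]_s^2 \geq \lambda_{1,s}\int_{\Omega}|u|^2 \geq \lambda\int_{\Omega}|u|^2$ together with item (2) and the identity $S_{\mu,s,1}(\Omega)=S_\mu$. The only difference is item (6), where the paper defers to Lemma $4.8$ of \cite{biagi2022brezis} (a direct estimate built on the same bound $b(u)\leq|\Omega|^{2/N}$), while you argue that $S_\mu(\lambda)$ is a finite infimum of affine functions of $\lambda$, hence concave and therefore continuous on $(0,\infty)$; both are valid, and your version has the minor advantage of being self-contained.
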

\begin{proof}
Part $1 - 4$ follows directly. \\
Part $5$:  For $u \in  V(\om)$, we estimate
\begin{equation*}
\Q(u) \geq \int_{\Omega} |\nabla u|^2 - \mu \int_{\Omega} \frac{|u|^2}{|x|^2} dx + \lambda_{1,s} \int_{\Omega} |u|^2 - \lambda \int_{\Omega} |u|^2 \geq \int_{\Omega} |\nabla u|^2 - \mu \int_{\Omega} \frac{|u|^2}{|x|^2}d x.
\end{equation*}
This implies $S_{\mu}(\lambda) \geq  S_{\mu}.$ Using this with part $2$, we conclude $S_{\mu}(\lambda) = S_{\mu}$ for $0 < \lambda \leq \lambda_{1, s}$.\\
Finally part $6$ follows similarly to Lemma $4.8$ in \cite{biagi2022brezis}.
\end{proof}

\begin{proof}[Proof of Theorem \ref{thm:existence_linear_case}]
Define
\begin{equation*}
\lambda^* = \sup \left\{ \lambda_0 > 0 \mid S_{\mu}(\lambda) = S_{\mu} \mbox{ for all } 0 < \la \leq \la_0 \right\}.
\end{equation*}
By part $5$ of Lemma \ref{lem:best_constant_being_constant}, it is evident that $\lambda^* \geq \lambda_{1, s}$. Furthermore, since $S_{\mu}(\lambda)$ is a continuous function, we obtain $S_{\mu}(\lambda^*) = S_{\mu}$. Moreover, from the properties of $S_{\mu}(\lambda)$, we deduce
\begin{equation*}
S_{\mu}(\lambda) 
\begin{cases}
    \geq 0, & \text{if } \lambda \leq \lambda_1, \\
    \leq 0, & \text{if } \lambda > \lambda_1.
\end{cases}
\end{equation*}
This implies $S_{\mu}(\lambda_1) = 0$. Thus, we conclude $\lambda^* \in [\lambda_{1, s}, \lambda_1)$.

\noi \textbf{Case 1:} $0 < \lambda \leq \lambda_{1, s}$ \\
In this range, there does not exist a solution to the problem $( \mathcal{P}_{\mu,\la})$ within the closed ball $\mathcal{B}$ defined in \eqref{eq:linear_ball_NE}. On the contrary, $u$ be the solution to $( \mathcal{P}_{\mu,\la})$. Let $ v = \frac{u}{\| u \|_{L^{2^*}(\Omega)}}$. Then
\begin{equation*}
\Q(v)  = \frac{1}{\| u \|_{L^{2^*}}^2} \left( \int_{\om} |\grad u|^2 dx + [u]_s^2 - \mu \int_{\Omega} \frac{u^2}{|x|^2} dx - \lambda \| u \|_{L^2}^2 \right) = \frac{1}{\| u \|_{L^{2^*}}^2} \| u \|_{L^{2^*}}^{2^*}\leq S_{\mu}.
\end{equation*}
However, by part $5$ of Lemma \ref{lem:best_constant_being_constant} yield $\Q(u) \geq S_{\mu}.$ Thus, we conclude that $\Q(v) = S_{\mu}$. Further, we analyze
\begin{equation*}
S_{\mu}  \leq \| \nabla v \|_{L^2}^2 - \mu \int_{\Omega} \frac{v^2}{|x|^2} = \Q(v) - \left( [v]^2_{s} - \lambda \| v \|_{L^2}^{2} \right)\leq \Q(v)-\left( \lambda_{1,s} - \lambda \right) \| v \|_{L^2}^{2} \leq \Q(v) \leq S_{\mu}.
\end{equation*}
But $S_{\mu}$ is never achieved in $\Omega \subsetneq \mathbb{R}^N$ (bounded domain), we arrive at a contradiction.

\noi \textbf{Case 2:} $\lambda^* < \lambda < \lambda_1$ \\
For this range, by definition of $\la^*$, we have $0 \leq S_{\mu}(\lambda) < S_{\mu}$. By Lemma 1.2 of \cite{brezis_1983}, $S_{\mu}(\la)$ is achieved. Thus, there exists a nonzero function $w \in V(\Omega)$ such that $\Q(w) = S_{\mu}(\lambda)$. Since $\lambda < \lambda_1$, we establish $S_{\mu}(\lambda)  \geq (\lambda_1 - \lambda) \| w \|_{L^2}^2 > 0$. Since, $\Q(|w|) \leq  \Q(w)$. Without loss of generality, we assume $w \geq 0$ almost everywhere in $\Omega$.
Applying the method of Lagrange multipliers, there exists a scalar $\theta$ such that
\begin{equation*}
\nabla w  \nabla \varphi + \langle w, \varphi \rangle_{s} - \mu \int_{\Omega} \frac{w \varphi}{|x|^2}  - \lambda \int_{\Omega} w \varphi = \theta \int_{\Omega} |w|^{2^*-2}w \varphi, \quad \forall \varphi \in \X(\Omega).
\end{equation*}
Choosing $\varphi = w$ gives $\theta = S_{\mu}(\lambda)$.
Now, letting $u = S_{\mu}(\lambda)^{(N-2)/4}w$, we obtain
\begin{equation*}
\int_{\Omega} \nabla u \nabla \varphi + \langle u, \varphi \rangle_{s} - \mu \int_{\Omega} \frac{u \varphi}{|x|^2} \varphi = \int_{\Omega} |u|^{2^*-2} u \varphi + \lambda \int_{\Omega} u \varphi.
\end{equation*}
Thus, $u$ is a solution to the problem $( \mathcal{P}_{\mu,\la})$.

% Applying Lemma \ref{lem:existence_of_soln}, we conclude that the solution exists.

\noi \textbf{Case 3:} $\lambda \geq \lambda_1$ where $\la_1$ is the eigenvalue of \eqref{eq:first_eigenvalue} corresponding to the eigenfunction $\phi_0$\\
Since $S_{\mu}(\lambda)$ is a non-increasing function, we obtain $S_{\mu}(\lambda) \leq 0 < S_{\mu}$. By Lemma 1.2 of \cite{brezis_1983}, $S_{\mu}(\lambda)$ is achieved.
Now, on contrary, let $u$ be a positive solution of $(\mathcal{P}_{\mu,\la})$. 
% We know that there exists an eigenvalue $\lambda_1 \in \mathbb{R}$ corresponding to the eigenfunction $\varphi_0$ satisfying the equation
% \begin{equation*}
% \int_{\Omega} \nabla \varphi_0   \nabla v + \langle \varphi_0, v \rangle_s - \mu \int_{\Omega} \frac{\varphi_0 v}{|x|^2} = \lambda_1 \int_{\Omega} \varphi_0 v, \quad \forall v \in \X(\Omega).
% \end{equation*}
Then using the strong maximum principle in Theorem \ref{thm:uniform_estimate_and_smp}, we write
\begin{equation*}
0 < \int_{\Omega} (u^{2^*-1} + \lambda u) \varphi_0 = \int_{\Omega} \nabla \varphi_0   \nabla u + \langle \varphi_0, u \rangle_s - \mu \int_{\Omega} \frac{\varphi_0 u}{|x|^2} =  \lambda_1 \int_{\Omega} \varphi_0 u.
\end{equation*}
So, we conclude that $\lambda_1 > \lambda$, which is a contradiction.
This completes the proof of Theorem \eqref{thm:existence_linear_case}.
\end{proof}

\subsection{Superlinear Case (\texorpdfstring{$2 < p < 2^*$})}\label{sec:superlinear}

Let \( 2 < p < 2^* \). Since we are concerned with positive solutions of \eqref{eq:main_problem}, it is not necessary that \(|u|\) is also a solution whenever \( u \) is a solution. To address this issue, we consider the following modified functional
\begin{equation*}
\Jplus(u) = \frac{1}{2} \int_{\om} |\grad u|^2 + \frac{1}{2} [u]_s^2 - \frac{\mu}{2} \int_{\Omega} \frac{|u|^2}{|x|^2} dx 
- \frac{1}{2^*} \int_{\Omega} (u^+)^{2^*} - \frac{\lambda}{p} \int_{\Omega} (u^+)^{p}.
\end{equation*}
\noi Any critical point of the functional \( \Jplus \) is a solution of the following equation
\begin{equation}\tag{$\mathcal{P}_{\mu, \la}^{+}$}
\begin{cases}
-\Delta u + (-\Delta)^s u - \mu \frac{u}{|x|^2} = (u^+)^{2^*-1} + \lambda (u^+)^{p-1}, & \text{in } \Omega, \\
u = 0, & \text{in } \mathbb{R}^N \setminus \Omega.
\end{cases}
\label{eq:critical_equation}
\end{equation}
To link the solutions of equation \eqref{eq:critical_equation} with the solutions of problem \eqref{eq:main_problem}, we use the following Weak Maximum Principle.
%Theorem 1.2 (Weak Maximum Principle)}
\begin{theorem}[Weak Maximum Principle]\label{thm:weak_maximum_principle}
Let \( u \in \X(\Omega) \) be a supersolution of \eqref{eq:critical_equation}. Then \( u \geq 0 \) almost everywhere in \( \Omega \).
\end{theorem}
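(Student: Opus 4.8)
The plan is to test the (super)solution inequality with the negative part $u^- := \max\{-u,0\}$ and show that $\|u^-\|_{\X}=0$, which forces $u^-\equiv 0$, i.e. $u\ge 0$ a.e. Since $u\in\X(\Omega)$, we have $u^-\in\X(\Omega)$ as well (the space is closed under the truncation $u\mapsto u^-$ and $u^-\equiv 0$ in $\mathbb R^N\setminus\Omega$), so it is an admissible test function. From the weak formulation of the supersolution property for \eqref{eq:critical_equation},
\[
\int_\Omega \nabla u\cdot\nabla u^-\,dx + \langle u,u^-\rangle_s - \mu\int_\Omega \frac{u\,u^-}{|x|^2}\,dx \;\ge\; \int_\Omega \big((u^+)^{2^*-1}+\lambda(u^+)^{p-1}\big)u^-\,dx.
\]
The right-hand side vanishes because $(u^+)^{2^*-1}u^- = 0$ and $(u^+)^{p-1}u^-=0$ pointwise (where $u^+>0$ we have $u^-=0$ and vice versa).

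Next I would evaluate each term on the left in terms of $u^-$ only. For the local gradient term, $\nabla u\cdot\nabla u^- = -|\nabla u^-|^2$ a.e. For the Hardy term, $u\,u^- = -(u^-)^2$, so $-\mu\int_\Omega \frac{u\,u^-}{|x|^2}\,dx = \mu\int_\Omega \frac{(u^-)^2}{|x|^2}\,dx \ge 0$. For the nonlocal bilinear form one uses the elementary pointwise inequality
\[
\big(u(x)-u(y)\big)\big(u^-(x)-u^-(y)\big)\;\le\;-\,\big(u^-(x)-u^-(y)\big)^2,
\]
valid for all real $a,b$ with $a^-=\max\{-a,0\}$ (this is the standard fact that $t\mapsto -t^-$ is... more precisely $(a-b)(a^--b^-)\le -(a^--b^-)^2$; it is checked by splitting into the cases according to signs of $a,b$). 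Integrating against the positive kernel $|x-y|^{-N-2s}$ gives $\langle u,u^-\rangle_s \le -[u^-]_s^2$. Combining these,
\[
-\int_\Omega |\nabla u^-|^2\,dx \;-\;[u^-]_s^2\;+\;\mu\int_\Omega\frac{(u^-)^2}{|x|^2}\,dx\;\ge\;0,
\]
hence
\[
\int_\Omega |\nabla u^-|^2\,dx \;+\;[u^-]_s^2 \;\le\;\mu\int_\Omega\frac{(u^-)^2}{|x|^2}\,dx.
\]

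To close the argument I would invoke Hardy's inequality $\int_\Omega \frac{|v|^2}{|x|^2}\,dx \le \frac1{\bar\mu}\int_\Omega |\nabla v|^2\,dx$ (stated in the introduction), which combined with the above gives
\[
\Big(1-\tfrac{\mu}{\bar\mu}\Big)\int_\Omega |\nabla u^-|^2\,dx \;+\;[u^-]_s^2\;\le\;0.
\]
Since $\mu<\bar\mu$, the coefficient $1-\mu/\bar\mu$ is strictly positive, so $\int_\Omega|\nabla u^-|^2\,dx=0$ and $[u^-]_s^2=0$; therefore $\|u^-\|_{\X,1}=0$, which yields $u^-=0$ a.e., i.e. $u\ge 0$ a.e. in $\Omega$. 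The only genuinely delicate point is the nonlocal sign inequality $(a-b)(a^--b^-)\le -(a^--b^-)^2$ and the verification that $u^-$ is a legitimate element of $\X(\Omega)$ (so that it can be used as a test function); both are routine but should be stated carefully. Note also that if $u$ is merely a supersolution the inequality direction is exactly the one used above, so no strict positivity (which would instead require Theorem~\ref{thm:uniform_estimate_and_smp}) is claimed here.
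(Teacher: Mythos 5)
Your proof is correct and follows essentially the same route as the paper: test the supersolution inequality with $u^-$, note the right-hand side vanishes, use $\nabla u\cdot\nabla u^- = -|\nabla u^-|^2$ and $u\,u^-=-(u^-)^2$, control the Hardy term by Hardy's inequality, and exploit the sign structure of the fractional form (your pointwise inequality is exactly the identity $\langle u,u^-\rangle_s=\langle u^+,u^-\rangle_s-[u^-]_s^2\le -[u^-]_s^2$ used in the paper). The only difference is presentational: you conclude directly that $(1-\mu/\bar\mu)\|\nabla u^-\|_{L^2}^2+[u^-]_s^2\le 0$, whereas the paper argues by contradiction, discarding the gradient term and using strict positivity of $\langle u^-,u^-\rangle_s$ coming from the exterior zero condition; both are valid.
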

\begin{proof}
On contrary assume that $u \not \geq 0 $ everywhere in $\om$. Then there exists a set \( E \) of positive measure such that \( u < 0 \) in \( E \). 
Take \( u^- \) as a test function and Hardy inequality, we have
\begin{equation}\label{eq:nonnegative_ip1}
\begin{aligned}
0  &\leq \int_{\Omega} \nabla u  \nabla u^- + \langle u, u^- \rangle_{s} - \mu \int_{\Omega} \frac{u u^-}{|x|^2}  = -\int_{\Omega} |\nabla u^-|^2 + \mu \int_{\Omega} \frac{(u^-)^2}{|x|^2} + \langle u, u^-\rangle_s \\
& \leq - \left(1 - \frac{\mu}{\bar{\mu}} \right) \int_{\Omega} |\nabla u^-|^2 +  \langle u, u^-\rangle_s \leq \langle u , u^- \rangle_s.
\end{aligned}
\end{equation}
% Noting that
% \begin{equation*}
% (u^+(x) - u^+(y))(u^-(x) - u^-(y)) = -u^+(x)u^-(y) - u^-(y)u^+(x) \leq 0,
% \end{equation*}
Moreover, using inner product properties, we obtain
\begin{equation*}
\langle u^-, u^- \rangle_{s} = \iint_{\mathbb{R}^{2N}} \frac{|u^-(x) - u^-(y)|^2}{|x - y|^{N+2s}} dy\, dx
\geq \iint_{E \times (\mathbb{R}^N \setminus \Omega)} \frac{|u^-(x)|^2}{|x - y|^{N+2s}} dy\, dx > 0.
\end{equation*}
Thus, we deduce
\begin{equation*}
\langle u, u^- \rangle_{s} = \langle u^+, u^- \rangle_{s} - \langle u^-, u^- \rangle_{s}
< \langle u^+, u^- \rangle_{s} \leq 0.
\end{equation*}
Combining this with \eqref{eq:nonnegative_ip1}, we arrive at a contradiction. 
\end{proof}

Applying the Weak Maximum Principle in Theorem \ref{thm:weak_maximum_principle}, we obtain \( u_0 \geq 0 \) almost everywhere in \( \mathbb{R}^N \), with $u_0$ being solution of equation \eqref{eq:critical_equation} holds. This implies $(u_0)_+ \equiv u_0$.
Thus, \( u_0 \) is a solution to problem \eqref{eq:main_problem}.
Hence, it is now sufficient to establish the existence of a nonzero critical point of the functional \( \Jplus \). To this end, we first prove that the functional $\J^+$ satisfies the mountain pass geometry.

%{Mountain Pass Geometry and Palais-Smale Condition}

\begin{lemma}\label{lem:mountain_pass_superlinear}
The functional $\J^+$ satisfies the following
\begin{enumerate}
    \item[$(i)$] There exists $\al, \rho > 0$ such that for any $u \in \X(\om)$ with $\lv u \rv_{\X} = \rho$ we have $\J^+(u) \geq \al$;
    \item[$(ii)$] for any $u \in \X(\om)$ we have $\J^+(tu) \to - \infty$ as $ t \to \infty$.
\end{enumerate}
\end{lemma}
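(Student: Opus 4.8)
The plan is to verify the two Mountain Pass geometric conditions directly, exploiting that $\mu < \bar\mu$ renders the quadratic part of $\Jplus$ coercive — in fact equivalent to $\lv \cdot \rv_{\X}^2$ — and that the lower‐order exponent satisfies $2 < p < 2^*$.

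For part $(i)$, I would first apply Hardy's inequality to bound, for every $u \in \X(\om)$,
\begin{equation*}
\int_{\om} |\grad u|^2\,dx - \mu \int_{\om} \frac{|u|^2}{|x|^2}\,dx \;\geq\; \Big(1 - \frac{\mu}{\bar\mu}\Big)\int_{\om} |\grad u|^2\,dx,
\end{equation*}
so that, together with the nonnegative Gagliardo term $[u]_s^2$, the quadratic part of $\Jplus$ dominates $c_0\,\lv u\rv_{\X}^2$, where $c_0 := \tfrac12\min\{1-\mu/\bar\mu,\,1\} > 0$. Since $(u^+)^{2^*}\le |u|^{2^*}$ and $(u^+)^{p}\le |u|^{p}$ pointwise, I would then invoke the Sobolev embedding $\X(\om)\hookrightarrow L^{2^*}(\om)\cap L^{p}(\om)$ — valid because $1<p<2^*$ and $\om$ is bounded — to produce constants $C_1,C_2>0$ with
\begin{equation*}
\Jplus(u) \;\geq\; c_0\,\lv u\rv_{\X}^2 - C_1\,\lv u\rv_{\X}^{2^*} - C_2\,\la\,\lv u\rv_{\X}^{p}.
\end{equation*}
Because $2 < p < 2^*$, the scalar map $t\mapsto c_0 t^2 - C_1 t^{2^*} - C_2\la t^{p}$ is strictly positive on a punctured right neighbourhood of $0$; choosing $\rho>0$ small enough that this holds at $t=\rho$ and setting $\al := c_0\rho^2 - C_1\rho^{2^*} - C_2\la\rho^{p} > 0$ yields $(i)$.

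For part $(ii)$, it suffices to treat rays generated by $u \in \X(\om)$ with $u^+\not\equiv 0$ — the only case relevant to building the Mountain Pass level, and one should note that for $u\le 0$ one instead has $\Jplus(tu)=\tfrac{t^2}{2}\lv u\rv_{\X}^2\to+\infty$. For such a $u$ one has $\int_{\om}(u^+)^{2^*}\,dx>0$ and, using $(tu)^+ = t\,u^+$ for $t>0$,
\begin{equation*}
\Jplus(tu) = \frac{t^2}{2}\Big(\int_{\om}|\grad u|^2 + [u]_s^2 - \mu\int_{\om}\frac{|u|^2}{|x|^2}\Big) - \frac{t^{2^*}}{2^*}\int_{\om}(u^+)^{2^*} - \frac{\la t^{p}}{p}\int_{\om}(u^+)^{p};
\end{equation*}
since $2^* > \max\{2,p\}$ and the coefficient of $-t^{2^*}$ is a fixed positive constant, the $t^{2^*}$ term dominates as $t\to\infty$, forcing $\Jplus(tu)\to-\infty$.

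I do not expect a genuine obstacle here: this is the classical Brezis–Nirenberg-type verification. The two points deserving care are (a) using $\mu<\bar\mu$ to guarantee the quadratic form is equivalent to $\lv\cdot\rv_{\X}^2$ — precisely what fails in the borderline case $\mu=\bar\mu$ — and (b) restricting $(ii)$ to functions with nontrivial positive part, as dictated by the structure of $\Jplus$.
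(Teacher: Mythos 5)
Your proof is correct and takes essentially the same route as the paper: Hardy's inequality plus the Sobolev embedding give the lower bound $c_0\lv u\rv_{\X}^2 - C\lv u\rv_{\X}^{2^*} - C\la\lv u\rv_{\X}^{p}$, from which $(i)$ follows for small $\rho$ since $2<p<2^*$, and $(ii)$ follows from the obvious upper bound along rays. The one difference is your (correct) caveat in $(ii)$: as literally stated for all $u\in\X(\om)$ the claim fails when $u^+\equiv 0$ (then $\Jplus(tu)\to+\infty$), a point the paper's proof glosses over since its displayed bound tends to $-\infty$ only when $\int_{\om}(u^+)^{2^*}\,dx>0$; restricting to $u^+\not\equiv 0$ is exactly what the subsequent mountain pass construction with the nonnegative functions $u_{\varepsilon}$ requires.
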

\begin{proof}
Sobolev inequality and Hardy inequality yield
\begin{equation*}
    \J^+(u) \geq \frac{1}{2} \left( 1 - \frac{\mu}{\bar \mu} \right) \lv u \rv_{\X}^2 - C \lv u \rv_{\X}^{2^*} - C \la \lv u \rv_{\X}^p.
\end{equation*}
Then $(i)$ follows directly. On the other hand
\begin{equation*}
    \J^+(tu) \leq \frac{t^2}{2} \lv u \rv_{\X}^2 - \frac{t^{2^*}}{2^*} \int_{\om} (u^+)^{2^*} - \la \frac{ t^p}{p} \int_{\om} (u^+)^p \to - \infty \quad \text{as } t \to \infty.
\end{equation*}
This concludes the proof of $(ii)$.
\end{proof}

\begin{lemma}\label{lem:ps_holds_superlinear}
The functional \( \Jplus \) satisfies the (PS)\(_c\) condition for every \( c < \frac{1}{N} S_{\mu}^{N/2} \). 
\end{lemma}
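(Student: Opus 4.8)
The plan is to run the standard Brezis--Nirenberg compactness scheme, with two adjustments forced by the mixed operator and the Hardy potential. Let $\{u_n\}\subset\X(\Omega)$ be a $(PS)_c$ sequence, so $\Jplus(u_n)\to c$ and $\Jplus'(u_n)\to 0$ in $(\X(\Omega))'$, and write $Q(u):=\|u\|_{\X}^2-\mu\int_\Omega |u|^2/|x|^2$, which by Hardy's inequality satisfies $Q(u)\geq (1-\mu/\bar\mu)\|\nabla u\|_{L^2}^2+[u]_s^2\geq c_0\|u\|_{\X}^2$ for some $c_0>0$. Testing with $u_n$, the combination $\Jplus(u_n)-\tfrac1p\langle\Jplus'(u_n),u_n\rangle=\bigl(\tfrac12-\tfrac1p\bigr)Q(u_n)+\bigl(\tfrac1p-\tfrac1{2^*}\bigr)\int_\Omega(u_n^+)^{2^*}$ has both coefficients positive since $2<p<2^*$; as its left-hand side is $c+o(1)+o(1)\|u_n\|_{\X}$, this yields $\sup_n\|u_n\|_{\X}<\infty$.

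Up to a subsequence $u_n\weak u$ in $\X(\Omega)$, $u_n\to u$ in $L^q(\Omega)$ for all $q\in[1,2^*)$ and a.e.\ in $\Omega$, and $u_n/|x|\weak u/|x|$ in $L^2(\Omega)$ by Hardy's inequality. Passing to the limit in $\langle\Jplus'(u_n),\varphi\rangle\to0$ for fixed $\varphi\in\X(\Omega)$ --- using weak convergence for the quadratic and Hardy terms, a.e.\ convergence together with boundedness in $L^{2^*/(2^*-1)}$ for $(u_n^+)^{2^*-1}$, and the compact embedding $\X(\Omega)\hookrightarrow L^p(\Omega)$ for the lower-order term --- shows $\Jplus'(u)=0$; in particular $\langle\Jplus'(u),u\rangle=0$, and the same combination as above gives $\Jplus(u)=\bigl(\tfrac12-\tfrac1p\bigr)Q(u)+\bigl(\tfrac1p-\tfrac1{2^*}\bigr)\int_\Omega(u^+)^{2^*}\geq0$.

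Next I would split $v_n:=u_n-u\weak 0$. Weak convergence gives $\|u_n\|_{\X}^2=\|u\|_{\X}^2+\|v_n\|_{\X}^2+o(1)$; bilinearity of the Hardy form together with $u_n/|x|\weak u/|x|$ in $L^2$ yields the Brezis--Lieb-type identity $\int_\Omega|u_n|^2/|x|^2=\int_\Omega|u|^2/|x|^2+\int_\Omega|v_n|^2/|x|^2+o(1)$; the Brezis--Lieb lemma gives $\int_\Omega(u_n^+)^{2^*}=\int_\Omega(u^+)^{2^*}+\int_\Omega(v_n^+)^{2^*}+o(1)$; and $\int_\Omega(u_n^+)^p\to\int_\Omega(u^+)^p$ by compactness. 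Along a further subsequence let $a:=\lim_n Q(v_n)$ and $b:=\lim_n\int_\Omega(v_n^+)^{2^*}$; inserting the splittings into $\langle\Jplus'(u_n),u_n\rangle\to0$ and $\Jplus(u_n)\to c$ and using $\langle\Jplus'(u),u\rangle=0$ gives $a=b$ and $\Jplus(u)+\tfrac1N a=c$. Since $\Jplus(u)\geq0$ this forces $a\leq Nc<S_\mu^{N/2}$. On the other hand, discarding the nonnegative Gagliardo seminorm and using the defining Hardy--Sobolev inequality of $S_\mu$ (which by Theorem~1.1 is independent of $\Omega$), $Q(v_n)\geq\int_\Omega|\nabla v_n|^2-\mu\int_\Omega|v_n|^2/|x|^2\geq S_\mu\|v_n\|_{L^{2^*}}^2\geq S_\mu\bigl(\int_\Omega(v_n^+)^{2^*}\bigr)^{2/2^*}$, so $a\geq S_\mu a^{2/2^*}$; hence $a=0$ or $a\geq S_\mu^{N/2}$, and the strict inequality forces $a=0$. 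Then $Q(v_n)\to0$, and since $Q(v_n)\geq(1-\mu/\bar\mu)\|\nabla v_n\|_{L^2}^2+[v_n]_s^2$, we conclude $\|v_n\|_{\X}\to0$, i.e.\ $u_n\to u$ strongly in $\X(\Omega)$.

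The main obstacle is the treatment of the non-compact Hardy term: one must verify that it admits a Brezis--Lieb-type splitting (which reduces to bilinearity and the weak convergence $u_n/|x|\weak u/|x|$ in $L^2$) and, more importantly, that the concentrated part $Q(v_n)$ cannot carry energy strictly between $0$ and $S_\mu^{N/2}$ --- here the identity $S_{\mu,s}=S_\mu$ and the nonnegativity of the Gagliardo seminorm of the mixed operator are used in an essential way, replacing the usual role of the Sobolev best constant.
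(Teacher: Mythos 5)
Your proposal is correct and follows essentially the same Brezis--Nirenberg-type compactness scheme as the paper's proof: boundedness of the $(PS)_c$ sequence, a Brezis--Lieb splitting of $u_n-u$ (including the Hardy term), and the dichotomy $a\ge S_\mu a^{2/2^*}$ obtained by discarding the nonnegative Gagliardo seminorm and invoking $S_{\mu,s}=S_\mu$. The differences are only cosmetic: you derive boundedness from the combination $\Jplus(u_n)-\tfrac1p\langle(\Jplus)'(u_n),u_n\rangle$ and close the argument via the energy identity $c=\Jplus(u)+\tfrac1N a$ together with $\Jplus(u)\ge 0$, whereas the paper uses the $\tfrac1{2^*}$ and $\tfrac12$ combinations to reach the same contradiction.
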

\begin{proof}
We begin by evaluating
\begin{equation*}
    \Jplus(u_n) - \frac{1}{2^*} \langle (\Jplus)^{\prime}(u_n), u_n \rangle = \frac{1}{N} \left( \int_{\Omega} |\nabla u_n|^2 \, dx + [u_n]^2_{s}  - \mu \int_{\Omega} \frac{|u_n|^2}{|x|^2} \, dx \right) 
    - \lambda \left( \frac{1}{p} - \frac{1}{2^*} \right) \|u_n\|_{L^p}^p.
\end{equation*}
Since \( \{u_n\} \) is a (PS)\(_c\) sequence, we obtain
\begin{equation*}
    C \left(1 + \|u_n\|_{{\X}}\right) \geq \frac{1}{N} \left(1 - \frac{\mu}{\bar{\mu}} \right) \|u_n\|_{\X}^2 
    - \lambda \left( \frac{1}{p} - \frac{1}{2^*} \right) \|u_n\|_{\X}^p,
\end{equation*}
which implies that the sequence \( \{u_n\} \) is bounded. Thus, there exists \( u \in \X(\Omega) \) such that $u_n \rightharpoonup u$ weakly in $\X(\om)$, $ u_n \rightarrow u$ strongly in $L^r(\om)$ for $r \in [1, 2^*)$, and $u_n(x) \rightarrow u(x)$ pointwise a.e. in $\om$.
\noi Now,
\begin{align*}
    &o_n(1)  = \langle (\Jplus)^{\prime}(u_n), u_n - u \rangle \\
    & = \left( \|\nabla u_n\|^2_2 - \|\nabla u\|^2_2 \right) +  \left( [u_n]^2_{s} - [u]^2_{s} \right)
    - \mu \int_{\Omega} \frac{|u_n|^2 - |u|^2}{|x|^2} \, dx - (\|u_n^+\|_{L^{2^*}}^{2^*} - \|u^+\|_{L^{2^*}}^{2^*}) + o_n(1) \\
    & = \|\nabla u_n - \nabla u\|^2_2 +  [u_n - u]^2_{s} 
    - \mu \int_{\Omega} \frac{|u_n - u|^2}{|x|^2} \, dx - \|u_n^+ - u^+\|_{L^{2^*}}^{2^*} + o_n(1),
\end{align*}
where the final equality follows from the Brezis–Lieb Lemma. Thus, we conclude that
\begin{equation*}
    \lim_{n \to \infty} \left( \|u_n - u\|^2_{\X} - \mu \int_{\Omega} \frac{|u_n - u|^2}{|x|^2} \, dx \right)
    = \lim_{n \to \infty} \|u_n^+ - u^+\|_{L^{2^*}}^{2^*} = \ell  (\text{say}).
\end{equation*}
By \eqref{eq:spectral_ratio_frac}, we also have
\begin{equation*}
   S_{\mu} \|u_n^+ - u^+\|_{L^{2^*}}^2 \leq S_{\mu} \|u_n - u\|_{L^{2^*}}^2 \leq \|u_n - u\|^2_{\X} - \mu \int_{\Omega} \frac{|u_n - u|^2}{|x|^2} \, dx.
\end{equation*}
This implies $ S_\mu \, \ell^{\frac{2}{2^*}} \leq \ell$.  If \( \ell > 0 \), then $\ell \geq S_\mu^{\frac{N}{2}}$. Next, consider
\begin{align*}
    \Jplus(u_n) - \frac{1}{2} \langle (\Jplus)^{\prime}(u_n), u_n \rangle 
    &= \left( \frac{1}{2} - \frac{1}{2^*} \right) \|u_n^+\|_{L^{2^*}}^{2^*} + \lambda \left( \frac{1}{2} - \frac{1}{p} \right)\int_{\Omega} |u_n^+|^p  dx \geq \frac{1}{N}\|u_n^+\|_{L^{2^*}}^{2^*}.
\end{align*}
This gives $c \geq \frac{1}{N}S_\mu^{\frac{N}{2}}$.
\end{proof}

Let us define the following parameters
\begin{equation*}
    \bar{\mu} = \left( \frac{N-2}{2} \right)^2, \quad \gamma = \sqrt{\bar{\mu}} + \sqrt{\bar{\mu} - \mu}, \quad \gamma^{\prime} = \sqrt{\bar{\mu}} - \sqrt{\bar{\mu} - \mu}.
\end{equation*}
For $\varepsilon > 0$, we define the function $U_{\varepsilon,\al}(x)$ as follows
\begin{equation*}
    U_{\varepsilon,\al}(x) =  \frac{B_0 \varepsilon^{ \alpha \sqrt{\bar{\mu}}}}{\left( \varepsilon^{2\alpha} |x|^{\gamma^{\prime}/\sqrt{\bar{\mu}}} + |x|^{\gamma/\sqrt{\bar{\mu}}} \right)^{\sqrt{\bar{\mu}}}},
\end{equation*}
where the constant $B_0 = \left( \frac{4N(\bar{\mu} - \mu)}{N-2} \right)^{(N-2)/4}$ and the parameter $\al > 0$ is chosen later.
The function $U_{\varepsilon,\al}\in D^{1,2}(\mathbb{R}^N)$ satisfies the following partial differential equation
\begin{equation*}
    -\Delta U - \frac{\mu}{|x|^2} U = |U|^{2^*-2} U.
\end{equation*}
Furthermore, $U_{\varepsilon,\al}$ achieves the Sobolev constant $S_{\mu}$, i.e.,
\begin{equation*}
    \int_{\mathbb{R}^N} \left( |\nabla U_{\varepsilon,\al}|^2 - \frac{\mu}{|x|^2} U_{\varepsilon,\al}^2 \right) dx = \int_{\mathbb{R}^N} |U_{\varepsilon,\al}|^{2^*} dx = S_{\mu}^{N/2}.
\end{equation*}
% Next, we define the function $u_{\varepsilon}^{m}(x)$ as follows:
% \begin{equation*}
%     u_{\varepsilon}^{m}(x) =
%     \begin{cases} 
%         U_{\varepsilon}(x) -  \frac{B_0 \varepsilon^{ \sqrt{\bar{\mu}}}}{\left( \varepsilon^2 \left( \frac{1}{m} \right)^{\gamma^{\prime}/\sqrt{\bar{\mu}}} + \left( \frac{1}{m} \right)^{\gamma/\sqrt{\bar{\mu}}} \right)^{\sqrt{\bar{\mu}}}}, & \text{if } x \in B_{1/m} \setminus \{ 0\}, \\ 
%         0, & \text{if } x \in \Omega \setminus B_{1/m}.
%     \end{cases}
% \end{equation*}
Next define $u_{\varepsilon,\al}(x) = \phi (x) U_{\varepsilon,\al}(x)$ where $\phi \in C_{c}^2(\om)$ is a cut-off function satisfying $0 \leq \phi(x) \leq 1$, $\phi \equiv 1$ in $B_{\delta} $, $Supp(\phi) \subset B_{2\delta} \subset \om$, and $|\grad \phi |\leq C$. Then from \cite{chen2003estimates}, we have the following estimates 
\begin{align}
\int_{\Omega} |\nabla u_{\varepsilon,\al}|^2 - \mu \int_{\Omega} \frac{u_{\varepsilon,\al}^2}{|x|^2} dx&= S_{\mu}^{N/2} + O(\varepsilon^{\al (N-2)}),
\label{eq:norm_upper_bound}\\
\| u_{\varepsilon,\al} \|^{2^*}_{L^{2^*}} &\geq S_{\mu}^{N/2} - O( \varepsilon^{\al N} ),
\label{eq:norm_lower_bound}\\
\int_{\om}|u_{\varepsilon,\al}|^{p}dx &= O(\varepsilon^{ \al (N - \sqrt{\bar\mu}p)\frac{\sqrt{\bar\mu}}{\sqrt{\bar\mu - \mu}}}),\label{eq:lower_bound_norm_r}
\end{align}
provided that $\mu < \bar{\mu} -1$ and $p > \frac{N}{\gamma}$. \\
We now estimate the Gagliardo norm of $u_{\varepsilon,\al}(x)$. Using the estimate $(6.12)$ along with Exercise $1.26$ of \cite{giovanni_fractional_book} and \eqref{eq:norm_upper_bound} we deduce
\begin{equation}\label{eq:fractional_norm_bound}
\begin{aligned}
\iint_{\mathbb{R}^{2N}} \frac{| u_{\varepsilon,\al}(x) - u_{\varepsilon,\al}(y)|^2}{|x - y|^{N+2s}} dx \, dy & \leq C \left( \int_{\rnn} |u_{\varepsilon,\al}|^2 \, dx \right)^{1-s}\left( \int_{\rnn} |\grad u_{\varepsilon,\al}|^2\,dx\right)^s\\
& \leq C \varepsilon^{2\al(1-s) \frac{\sqrt{\bar{\mu}}}{\sqrt{\bar \mu - \mu}}} \left( \int_{\rnn} |\grad u_{\varepsilon,\al}|^2\,dx - \mu \int_{\Omega} \frac{u_{\varepsilon,\al}^2}{|x|^2} + \mu \int_{\rnn} \frac{U_{\varepsilon,\al}^2}{|x|^2} \right)^s\\
& \leq C \varepsilon^{2\al(1-s) \frac{\sqrt{\bar{\mu}}}{\sqrt{\bar \mu - \mu}}} \left(  S_{\mu}^{N/2} + O(\varepsilon^{\al (N-2)}) + \mu \int_{\rnn} \frac{U_{1,\al}^2}{|x|^2} \right)^s\\
& \leq C \varepsilon^{2\al(1-s) \frac{\sqrt{\bar{\mu}}}{\sqrt{\bar \mu - \mu}}}
\end{aligned}
\end{equation}
% Applying substitution $x = \varepsilon^{\frac{\sqrt{\bar{\mu}}}{\sqrt{\bar{\mu} - \mu}}}  z, \quad y =\varepsilon^{\frac{\sqrt{\bar{\mu}}}{\sqrt{\bar{\mu} - \mu}}}  w$, we obtain
% \begin{equation}
% [u_{\varepsilon}^{m}]_{s}^{2} \leq \varepsilon ^{ \frac{\sqrt{\bar{\mu}} (N+2 - 4s)}{\sqrt{\bar{\mu} - \mu}}} \textcolor{red}{\varepsilon ^{ \frac{\sqrt{\bar{\mu}} (2-2s)}{\sqrt{\bar{\mu} - \mu}}}} [U_1]_{s}^{2}.
% \label{eq:fractional_norm_bound}
% \end{equation}

To conclude, we use the mountain pass theorem with the help of functions $u_{\varepsilon,\al}$ defined above. Fixing $\al =1$ and defining $u_{\varepsilon} = u_{\varepsilon, \alpha}$, we combine the estimates in \eqref{eq:norm_upper_bound}, \eqref{eq:norm_lower_bound}, \eqref{eq:lower_bound_norm_r} with $r = p$ and \eqref{eq:fractional_norm_bound} to obtain
\begin{equation*}
\begin{aligned}
\Jplus(t u_{\varepsilon}) &  \leq \frac{t^2}{2} \left[ S_{\mu}^{N/2} + C \varepsilon^{(N-2)}  \right] - \frac{t^{2^*}}{2^*}\left[ S_{\mu}^{N/2} - C \varepsilon^{ N}  \right] -\frac{\la t^{p}}{p} C \varepsilon^{\frac{ \sqrt{\bar{\mu}} {(N - p\sqrt{\bar{\mu}})}}{{\sqrt{\bar{\mu} - \mu}}}} + \frac{t^2}{2} C \varepsilon ^{ \frac{2  (1-s)\sqrt{\bar{\mu}}}{\sqrt{\bar{\mu} - \mu}}}\\
& \leq \frac{t^2}{2} \left[ S_{\mu}^{N/2} + C \varepsilon^{\beta_{\mu,N,s}}  \right] - \frac{t^{2^*}}{2^*}\left[ S_{\mu}^{N/2} - C \varepsilon^{ N}  \right] -\frac{\la t^{p}}{p} C \varepsilon^{\frac{ \sqrt{\bar{\mu}} {(N - p\sqrt{\bar{\mu}})}}{{\sqrt{\bar{\mu} - \mu}}}} := g(t)
\end{aligned}
\end{equation*}
where $\beta_{\mu,N,s}$ is defined in \eqref{eq:superlinear_parameter}.
Observing the behavior of \( g(t) \)
\begin{equation*}
g(0) = 0, \quad \text{and} \quad g(t) \to -\infty \text{ as } t \to \infty.
\end{equation*}
Thus, there exists \( t_{\varepsilon, \lambda} > 0 \) such that
\begin{equation*}
\sup_{t > 0} g(t) = g(t_{\varepsilon, \lambda}).
\end{equation*}
If \( t_{\varepsilon, \lambda} = 0 \), then there is nothing to prove. Otherwise, we take \( t_{\varepsilon, \lambda} > 0 \).
Differentiating \( g(t) \) and equating it to zero
\begin{equation*}
0 = g'(t_{\varepsilon, \lambda}) = t_{\varepsilon, \lambda} \left[ S_{\mu}^{N/2} + C \varepsilon^{\beta_{\mu,N,s}} \right] - t_{\varepsilon, \lambda}^{2^*-1} \left[ S_{\mu}^{N/2} - C \varepsilon^{ N} \right] - \lambda t_{\varepsilon, \lambda}^{p-1} C \varepsilon^{\frac{ \sqrt{\bar{\mu}} {(N - p\sqrt{\bar{\mu}})}}{{\sqrt{\bar{\mu} - \mu}}}}.
\end{equation*}
Rearranging terms
\begin{equation}\label{eq:g_derivative_0}
S_{\mu}^{N/2} + C \varepsilon^{ \beta_{\mu,N,s}} = t_{\varepsilon, \lambda}^{2^*-2} \left[ S_{\mu}^{N/2} - C \varepsilon^{ N} \right] + \lambda t_{\varepsilon, \lambda}^{p-2} C \varepsilon^{\frac{ \sqrt{\bar{\mu}} {(N - p\sqrt{\bar{\mu}})}}{{\sqrt{\bar{\mu} - \mu}}}}.
\end{equation}
This gives $\displaystyle\liminf_{\varepsilon \to 0} t_{\varepsilon,\la} > 0$. Indeed if $\displaystyle\liminf_{\varepsilon \to 0} t_{\varepsilon,\la} =0$, we arrive at $S_{\mu}^{N/2} = 0$ which is absurd. In addition, from \eqref{eq:g_derivative_0}, we derive an upper bound for \( t_{\varepsilon, \lambda} \) as
\begin{equation*}
t_{\varepsilon, \lambda} < \left( \frac{S_{\mu}^{N/2} + C \varepsilon^{\beta_{\mu,N,s}}}{S_{\mu}^{N/2} - C \varepsilon^{ N}} \right)^{\frac{1}{2^*-2}} = 1 + C \varepsilon^{\beta_{\mu,N,s}}= h(\varepsilon).
\end{equation*}
As \( \varepsilon \to 0 \), it follows that \( h(\varepsilon) \to 1 \) and $t_{\varepsilon, \lambda} \geq \theta_{\la} > 0$ for some $\theta_\la$. Furthermore, consider the function
\begin{equation*}
h_1(t) = \frac{t^2}{2} \left[ S_{\mu}^{N/2} + C \varepsilon^{\beta_{\mu,N,s}} \right] - \frac{t^{2^*}}{2^*} \left[ S_{\mu}^{N/2} - C \varepsilon^{ N} \right].
\end{equation*}
Then $h_1(t)$ is increasing on \( (0, h(\varepsilon)] \). Thus, we obtain
\begin{align*}
\sup_{t>0} g(t) & = g(t_{\varepsilon,\la})  \\
& \leq   S_{\mu}^{N/2} \left[ \frac{h(\varepsilon)^2}{2} - \frac{h(\varepsilon)^{2^*}}{2^*} \right] + \frac{C}{2} h(\varepsilon)^2 \varepsilon^{\beta_{\mu,N,s}} + \frac{h(\varepsilon)^{2^*} C \varepsilon^{ N}}{2^*}  -\frac{\la t_{\varepsilon,\la}^{p}}{p} C \varepsilon^{\frac{ \sqrt{\bar{\mu}} {(N - p\sqrt{\bar{\mu}})}}{{\sqrt{\bar{\mu} - \mu}}}}\\
& \leq   S_{\mu}^{N/2} \left(1 + \varepsilon^{\beta_{\mu,N,s}} \right)^{\frac{2}{2^*-2}} \left[ \frac{1}{2} - \frac{1 + \varepsilon^{\beta_{\mu,N,s}}}{2^*} \right] + C \varepsilon^{\beta_{\mu,N,s}}  -\la C \theta_{\la}^p \varepsilon^{\frac{ \sqrt{\bar{\mu}} {(N - p\sqrt{\bar{\mu}})}}{{\sqrt{\bar{\mu} - \mu}}}}\\
& = \frac{1}{N} S_{\mu}^{N/2} +  C \varepsilon^{\beta_{\mu,N,s}}  -\la \theta_{\la}^p C \varepsilon^{\frac{ \sqrt{\bar{\mu}} {(N - p\sqrt{\bar{\mu}})}}{{\sqrt{\bar{\mu} - \mu}}}}.
\end{align*}
\textbf{Case 1:} For the case $\beta_{\mu,N,s} > \frac{ \sqrt{\bar{\mu}} {(N - p\sqrt{\bar{\mu}})}}{{\sqrt{\bar{\mu} - \mu}}}$, it follows that for sufficiently small \( \varepsilon > 0 \), we obtain
\begin{equation*}
\sup_{t \geq 0} \Jplus(t u_{\varepsilon}) < \frac{1}{N} S_{\mu}^{N/2}.
\end{equation*}
\textbf{Case 2:} When $\beta_{\mu,N,s} \leq \frac{ \sqrt{\bar{\mu}} {(N - p\sqrt{\bar{\mu}})}}{{\sqrt{\bar{\mu} - \mu}}}$, we claim that $\displaystyle \lim_{\lambda \to \infty} t_{\varepsilon, \lambda} = 0$. On contrary
\begin{equation*}
l = \limsup_{\lambda \to \infty} t_{\varepsilon, \lambda} > 0.
\end{equation*}
Choose a sequence \( \lambda_k \to \infty \) as \( k \to \infty \) such that $t_{\varepsilon, \lambda_k} \to l$. Since \( \lambda_k \to \infty \), the right-hand side (RHS) of equation \eqref{eq:g_derivative_0} approaches infinity, which contradicts the left-hand side being finite. This contradiction establishes the claim.

\begin{proof}[Proof of Theorem \ref{thm:superlinear_pb}]
With the above discussion and Lemma \ref{lem:mountain_pass_superlinear}, the hypotheses of the mountain pass theorem hold. This gives the existence of a $(PS)_c$ sequence and Lemma \ref{lem:ps_holds_superlinear} gives that $(PS)_c$ condition is satisfied with $c < \frac{1}{N} S_{\mu}^{N/2}$. This completes the proof. 
\end{proof}

%%%%%%%%%%%%%%%%%%%%%%%%%%%%%%%%%%%%%%%%%%%%%%%%%%%%%%%%%%%%%%%%%%
%%%%%%%%%%%%%%%%    Sublinear Problem            %%%%%%%%%%%%%%%%%
%%%%%%%%%%%%%%%%%%%%%%%%%%%%%%%%%%%%%%%%%%%%%%%%%%%%%%%%%%%%%%%%%%
\section{Sublinear Case (\texorpdfstring{$1<p<2$})}\label{sec:sublinear}
This section is devoted to the sublinear case, that is, \( 1 < p < 2 \) in problem~\eqref{eq:main_problem_ep}. The presence of both convex and concave terms enriches the variational structure of the associated energy functional, allowing us to exploit its topology to establish the existence of two distinct positive solutions. Since we are concerned with positive solutions of \eqref{eq:main_problem_ep}, we follow in the same way as superlinear case and consider the following modified functional
\begin{equation*}
\Jep(u) = \frac{1}{2} \int_{\om} |\grad u|^2 + \frac{\varepsilon}{2} [u]_s^2 - \frac{\mu}{2} \int_{\Omega} \frac{|u|^2}{|x|^2} dx 
- \frac{1}{2^*} \int_{\Omega} (u^+)^{2^*} - \frac{\lambda}{p} \int_{\Omega} (u^+)^{p}.
\end{equation*}
\noi Any critical point of the functional \( \Jep \) is a solution of the following equation

\begin{equation}\tag{$\mathcal{P}_{\mu, \la}^{\varepsilon,+}$}
\begin{cases}
-\Delta u + \varepsilon(-\Delta)^s u - \mu \frac{u}{|x|^2} = (u^+)^{2^*-1} + \lambda (u^+)^{p-1}, & \text{in } \Omega, \\
u = 0, & \text{in } \mathbb{R}^N \setminus \Omega.
\end{cases}
\label{eq:maineq_plus_epsilon}
\end{equation}

Applying the Weak Maximum Principle, we obtain \( u_0 \geq 0 \) almost everywhere in \( \mathbb{R}^N \), with $u_0$ being solution of equation \eqref{eq:maineq_plus_epsilon} holds. This implies $(u_0)_+ \equiv u_0$. Thus, \( u_0 \) is a solution to problem \eqref{eq:main_problem_ep}.
Hence, it is now sufficient to establish the existence of a nonzero critical point of the functional \( \Jep \).

We begin by proving the existence of the first positive solution to problem~\eqref{eq:main_problem_ep}. First we prove a lemma that gives the convergence of (PS) sequence. 
\begin{lemma}\label{lem:ps_sublinear}
If \( p \in (1, 2) \), then the (PS)\(_c\) condition for the functional \( \Jep \) holds for all    
\begin{equation*}
    c < \frac{1}{N} S^{N/2} 
    - |\Omega| \left(1 - \frac{ p}{2^*} \right) \left( \frac{pN}{2^*} \right)^{-p/(2^* - p)}
    \left[ \lambda \left( \frac{1}{p} - \frac{1}{2} \right) \right]^{2^*/(2^* - p)}.
\end{equation*}
\end{lemma}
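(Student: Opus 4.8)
The plan is to follow the standard Brezis--Nirenberg compactness analysis, adapted to the modified functional $\Jep$ and with the extra care demanded by the sublinear term and the Hardy potential. First I would take a $(PS)_c$ sequence $\{u_n\}\subset\X(\om)$ and establish boundedness: from $\Jep(u_n)\to c$ and $\langle(\Jep)'(u_n),u_n\rangle = o_n(1)\|u_n\|_{\X}$, compute the combination $\Jep(u_n) - \frac{1}{2^*}\langle(\Jep)'(u_n),u_n\rangle$, which eliminates the critical term and leaves $\frac1N$ times the quadratic form plus a sublinear contribution $-\lambda(\frac1p-\frac1{2^*})\|u_n^+\|_{L^p}^p$; using Hardy's inequality on the quadratic part and the inequality $\|u_n^+\|_{L^p}^p \le |\om|^{1-p/2^*}\|u_n\|_{L^{2^*}}^p \le C\|u_n\|_{\X}^p$ with $p<2$, one gets $C(1+\|u_n\|_{\X}) \ge \frac1N(1-\mu/\bar\mu)\|u_n\|_{\X}^2 - C\lambda\|u_n\|_{\X}^p$, hence boundedness. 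Then pass to a subsequence with $u_n\weak u$ in $\X(\om)$, $u_n\to u$ in $L^r(\om)$ for $r\in[1,2^*)$ and a.e.; in particular $u_n^+\to u^+$ in $L^p$ and a.e., and $u$ is a weak solution of \eqref{eq:maineq_plus_epsilon}.

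Next I would run the Brezis--Lieb decomposition. Writing $v_n = u_n - u$, the Brezis--Lieb lemma gives $\|u_n\|_{\X}^2 - \mu\int \frac{|u_n|^2}{|x|^2} = \|u\|_{\X}^2 - \mu\int\frac{|u|^2}{|x|^2} + \|v_n\|_{\X}^2 - \mu\int\frac{|v_n|^2}{|x|^2} + o_n(1)$ (using that $|x|^{-2}$ is controlled by the Hardy inequality so the cross terms vanish), and $\|u_n^+\|_{L^{2^*}}^{2^*} = \|u^+\|_{L^{2^*}}^{2^*} + \|v_n^+\|_{L^{2^*}}^{2^*}+o_n(1)$. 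Testing $(\Jep)'(u_n)$ against $u_n$ and using that $u$ solves the equation and the $L^p$ strong convergence, one obtains $\|v_n\|_{\X}^2 - \mu\int\frac{|v_n|^2}{|x|^2} = \|v_n^+\|_{L^{2^*}}^{2^*} + o_n(1)$; call the common limit $\ell$. From $S_\mu\|v_n^+\|_{L^{2^*}}^2 \le S_\mu\|v_n\|_{L^{2^*}}^2 \le \|v_n\|_{\X}^2 - \mu\int\frac{|v_n|^2}{|x|^2}$ (here using $S_{\mu,s,\varepsilon}=S_\mu$ from the first theorem, or directly that $[v_n]_s^2\ge0$ only strengthens the inequality), we get $S_\mu\ell^{2/2^*}\le\ell$, so either $\ell=0$ (giving $u_n\to u$ strongly, done) or $\ell\ge S_\mu^{N/2}$.

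The heart of the argument is then to rule out $\ell\ge S_\mu^{N/2}$ when $c$ is below the stated threshold. I would compute $c = \lim \big(\Jep(u_n) - \frac12\langle(\Jep)'(u_n),u_n\rangle\big) = \frac1N\|v_n^+\|_{L^{2^*}}^{2^*} + \Jep(u) - \frac12\langle(\Jep)'(u),u\rangle + o_n(1)$, where I have split off the $v_n$ part and used that the $u$-part converges. The first piece tends to $\frac1N\ell\ge\frac1N S_\mu^{N/2}$. For the $u$-part, $\Jep(u) - \frac12\langle(\Jep)'(u),u\rangle = (\frac1N)\|u^+\|_{L^{2^*}}^{2^*} + \lambda(\frac12-\frac1p)\int_\om(u^+)^p \ge \lambda(\frac12-\frac1p)\int_\om(u^+)^p$, which is negative; I would bound it from below by minimizing $t\mapsto \lambda(\frac12-\frac1p)t^p$-type expressions. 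Concretely, pointwise for $a\ge 0$ one has $\lambda(\frac1p-\frac12)a^p - \frac1N a^{2^*}\le \max_{t\ge0}\big(\lambda(\frac1p-\frac12)t^p - \frac1N t^{2^*}\big) = (1-\frac{p}{2^*})(\frac{pN}{2^*})^{-p/(2^*-p)}[\lambda(\frac1p-\frac12)]^{2^*/(2^*-p)}$, and integrating over $\om$ and using $|\{u^+>0\}|\le|\om|$ gives exactly the subtracted quantity in the statement; therefore $c \ge \frac1N S_\mu^{N/2} - |\om|(1-\frac{p}{2^*})(\frac{pN}{2^*})^{-p/(2^*-p)}[\lambda(\frac1p-\frac12)]^{2^*/(2^*-p)}$, contradicting the hypothesis on $c$. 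Hence $\ell=0$ and $u_n\to u$ strongly in $\X(\om)$. The main obstacle I anticipate is handling the Hardy term inside the Brezis--Lieb splitting — one must justify $\int\frac{|u_n|^2 - |u|^2 - |v_n|^2}{|x|^2}\to0$, which follows from the a.e. convergence together with the uniform bound $\int\frac{|u_n|^2}{|x|^2}\le\frac1{\bar\mu}\|u_n\|_{\X}^2$ and a Vitali/Brezis--Lieb argument for the weighted measure $|x|^{-2}dx$; and one should double-check the sign bookkeeping in isolating the pure $v_n$ contribution so that the subtracted constant comes out with the correct exponent $2^*/(2^*-p)$.
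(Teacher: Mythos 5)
Your proposal follows essentially the same line as the paper: take a $(PS)_c$ sequence, show boundedness, apply the Brezis--Lieb lemma to isolate the defect $\ell = \lim\|u_n^+ - u^+\|_{L^{2^*}}^{2^*}$, deduce $\ell=0$ or $\ell\ge S_\mu^{N/2}$ from the Sobolev quotient, and then pass to the limit in $\Jep(u_n) - \tfrac12\langle(\Jep)'(u_n),u_n\rangle$ to bound $c$ below by $\tfrac{\ell}{N}$ minus the stated constant, forcing $\ell=0$ under the level restriction; the paper does the final step via H\"older followed by Young's inequality in the scalar $\|u^+\|_{L^{2^*}}$, while you optimize pointwise in $t$ and integrate, which gives the identical constant. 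The one detail worth re-checking (in both your write-up and in the statement itself): a direct computation of $\max_{t\ge0}\bigl(K t^p - \tfrac1N t^{2^*}\bigr)$ yields $\bigl(1-\tfrac{p}{2^*}\bigr)\bigl(\tfrac{pN}{2^*}\bigr)^{p/(2^*-p)}K^{2^*/(2^*-p)}$, i.e.\ with a \emph{positive} exponent on $\tfrac{pN}{2^*}$, not the negative one appearing in the displayed threshold.
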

\begin{proof}
Following in the same way as in Lemma \ref{lem:ps_holds_superlinear}, we obtain
\begin{align*}
    \Jep(u_n) - \frac{1}{2} \langle (\Jep)^{\prime}(u_n), u_n \rangle 
    &= \left( \frac{1}{2} - \frac{1}{2^*} \right) \|u_n^+\|_{L^{2^*}}^{2^*} - \lambda \left( \frac{1}{p} - \frac{1}{2} \right)\int_{\Omega} |u_n^+|^p  dx.
\end{align*}
By the definition of (PS)\(_c\) sequence, H\"{o}lder's inequality, and Young’s inequality, we obtain
\begin{align*}
    c + o_n(1) &\geq \left( \frac{1}{2} - \frac{1}{2^*} \right) \left[ \ell + \|u^+\|_{L^{2^*}}^{2^*} \right]
    - \lambda \left( \frac{1}{p} - \frac{1}{2} \right) \|u^+\|_{L^p}^p \\
    &\geq \left( \frac{1}{2} - \frac{1}{2^*} \right) \left[ \ell + \|u^+\|_{L^{2^*}}^{2^*} \right]
    - \lambda \left( \frac{1}{p} - \frac{1}{2} \right) |\om|^{(2^*-p)/2^*} \|u^+\|_{L^{2^*}}^p \\
    &\geq \frac{\ell}{N}
    - |\Omega| \left(1 - \frac{ p}{2^*} \right) \left( \frac{pN}{2^*} \right)^{-p/(2^* - p)}
    \left[ \lambda \left( \frac{1}{p} - \frac{1}{2} \right) \right]^{2^*/(2^* - p)}.
\end{align*}
This contradicts the definition of \( c \), so \( \ell = 0 \). Hence, we obtained the desired result.
\end{proof}
Now, let us define 
\begin{equation}\label{eq:Lambda}
    \La = \sup \{ \la > 0 : \eqref{eq:main_problem_ep} \text{ has a weak solution} \}.
\end{equation}
Then we have the following lemma.
\begin{lemma}
Let $\La$ be defined as in \eqref{eq:Lambda}. Then $0 < \La < \infty$.
\end{lemma}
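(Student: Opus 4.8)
The plan is to prove the two bounds separately: $\Lambda>0$ by producing, for every sufficiently small $\lambda>0$, a positive solution of \eqref{eq:main_problem_ep} as a local minimizer of $\Jep$; and $\Lambda<\infty$ by testing a positive solution against the first eigenfunction of the underlying operator.

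\emph{The bound $\Lambda>0$.} I would work with $\Jep$ on a small closed ball $\overline{B}_\rho:=\{u\in\X(\om):\lv u\rv_{\X,\varepsilon}\le\rho\}$. By Hardy's inequality and the Sobolev embedding,
\[
\Jep(u)\ \ge\ \tfrac12\Bigl(1-\tfrac{\mu}{\bar\mu}\Bigr)\lv u\rv_{\X,\varepsilon}^2-C_1\lv u\rv_{\X,\varepsilon}^{2^*}-C_2\lambda\lv u\rv_{\X,\varepsilon}^{p},
\]
so, fixing $\rho$ small (depending only on $N,\mu,\varepsilon$) the first two terms dominate on $\partial B_\rho$, and for $\lambda$ small one gets $\Jep\big|_{\partial B_\rho}\ge c_0\rho^2>0$, while $\Jep$ is (trivially) bounded below on $\overline{B}_\rho$. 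On the other hand, choosing $\phi\in\X(\om)$ with $\phi\ge0$, $\phi\not\equiv0$, and using $1<p<2$, one has $\Jep(t\phi)=\tfrac{t^2}{2}(\cdots)-\tfrac{t^{2^*}}{2^*}\int_\om\phi^{2^*}-\tfrac{\lambda t^p}{p}\int_\om\phi^p<0$ for $t>0$ small, hence $m_\lambda:=\inf_{\overline{B}_\rho}\Jep<0=\Jep(0)$. Since $m_\lambda<\inf_{\partial B_\rho}\Jep$ and $\Jep\in C^1(\X,\R)$, Ekeland's variational principle yields a $(PS)_{m_\lambda}$ sequence lying in the open ball $B_\rho$. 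For $\lambda$ small the right-hand threshold in Lemma \ref{lem:ps_sublinear} is positive, hence strictly above $m_\lambda<0$, so the $(PS)_{m_\lambda}$ condition holds and the sequence converges, up to a subsequence, to a critical point $u_\lambda$ of $\Jep$ with $\Jep(u_\lambda)=m_\lambda<0$, in particular $u_\lambda\not\equiv0$. By the Weak Maximum Principle (Theorem \ref{thm:weak_maximum_principle} and its $\varepsilon$-adaptation), $u_\lambda\ge0$, hence $(u_\lambda)^+\equiv u_\lambda$ solves \eqref{eq:maineq_plus_epsilon} and therefore \eqref{eq:main_problem_ep}; by the strong maximum principle of Theorem \ref{thm:uniform_estimate_and_smp}, $u_\lambda>0$ a.e. Thus \eqref{eq:main_problem_ep} is solvable for all sufficiently small $\lambda>0$, so $\Lambda>0$.

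\emph{The bound $\Lambda<\infty$.} Let $\lambda_1^\varepsilon>0$ denote the first eigenvalue of $-\Delta+\varepsilon(-\Delta)^s-\mu|x|^{-2}$ on $\X(\om)$, with an eigenfunction $\phi_1>0$ (its existence as in \cite[Remark~4.4]{biagi2022brezis}, and its positivity by the strong maximum principle after replacing a minimizer by its modulus). Let $u>0$ be a weak solution of \eqref{eq:main_problem_ep}. Testing the equation for $u$ with $\phi_1$, and the eigenvalue equation with $u$, the symmetry of the bilinear form gives
\[
\lambda_1^\varepsilon\int_\om u\,\phi_1\,dx\ =\ \int_\om u^{2^*-1}\phi_1\,dx+\lambda\int_\om u^{p-1}\phi_1\,dx.
\]
Now for every $\tau>0$ one has $\tau^{2^*-1}+\lambda\tau^{p-1}\ \ge\ \tau\,\min_{\sigma>0}\bigl(\sigma^{2^*-2}+\lambda\sigma^{p-2}\bigr)=c(N,p)\,\lambda^{\frac{2^*-2}{2^*-p}}\,\tau$, where the minimum is computed by elementary one-variable calculus (since $2^*-2>0>p-2$) and $c(N,p)>0$. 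Applying this pointwise with $\tau=u(x)$, multiplying by $\phi_1>0$ and integrating, the displayed identity forces $\lambda_1^\varepsilon\ge c(N,p)\,\lambda^{\frac{2^*-2}{2^*-p}}$, since $0<\int_\om u\phi_1\,dx<\infty$ by Hölder's inequality. As $\tfrac{2^*-2}{2^*-p}>0$, this gives $\lambda\le\bigl(\lambda_1^\varepsilon/c(N,p)\bigr)^{\frac{2^*-p}{2^*-2}}$, hence $\Lambda<\infty$.

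\emph{Main obstacle.} The delicate point is the lower bound: the critical term $-\frac{1}{2^*}\int(u^+)^{2^*}$ destroys the weak lower semicontinuity of $\Jep$, so the direct method cannot be applied to the ball minimization directly. The remedy is to pass through Ekeland's principle and the compactness of Lemma \ref{lem:ps_sublinear}, which is available precisely because the local–minimum level $m_\lambda<0$ remains strictly below the $(PS)$ threshold when $\lambda$ is small; the remaining ingredient, positivity of the first eigenfunction, is routine. The upper bound is comparatively soft, reducing to the elementary fact that $\min_{\sigma>0}(\sigma^{2^*-2}+\lambda\sigma^{p-2})\to\infty$ as $\lambda\to\infty$.
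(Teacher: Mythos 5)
Your proof is correct and follows essentially the same route as the paper: Ekeland's principle on a small ball combined with the $(PS)_c$ compactness of Lemma~\ref{lem:ps_sublinear} for small $\lambda$ gives $\Lambda>0$, and testing a positive solution against the first eigenfunction of $-\Delta+\varepsilon(-\Delta)^s-\mu|x|^{-2}$ together with a one-variable minimization gives $\Lambda<\infty$. You fill in two details the paper leaves implicit — the boundary estimate $\Jep\big|_{\partial B_\rho}\ge c_0\rho^2>0$ (needed so the Ekeland minimizer is interior) and the use of the sublinear term to drive $\Jep(t\phi)<0$ for \emph{small} $t$, whereas the paper invokes the large-$t$ blow-up $\Jep(t\varphi)\to-\infty$, which as stated does not immediately produce an element of $\overline{B_\rho}$ with negative energy when $\rho\le t_0$; your version is the cleaner justification of $c_\lambda<0$.
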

\begin{proof}
Take $\varphi \in \X(\om)$ such that $\|\varphi\|_{\X} = 1$. Since $\Jep(t \varphi) \to -\infty$ as $t \to \infty$, there exists $t_0 > 0$ independent of $\varepsilon$ and $\la$ (as term corresponding to $\la$ can be dropped) such that $\Jep(t_0 \varphi) < 0$. Then, for $0 < \rho \leq t_0$, we must have
\begin{equation*}
c_{\lambda} = \inf_{u \in \overline{B_{\rho}}} \Jep(u) < 0.
\end{equation*}
Then we obtain a (PS) sequence from the minimizing sequence via the Ekeland's Variational Principle \cite[Theorem 8.5]{willem2012minimax}. Thus, by the Lemma \ref{lem:ps_sublinear}, $\Jep$ achieves its minimum $c_{\lambda}$ at some nonnegative function $u_{\lambda}$ for $\la$ sufficiently small. Thus $\La > 0.$\\
Next we prove that $\La  < \infty$. Consider the eigenvalue problem 
\begin{equation*}
\left\{
\begin{aligned}
- \Delta \phi_1 + \varepsilon  (-\Delta)^s \phi_1 - \mu \frac{\phi_1}{|x|^2} &= {\lambda_{1,\varepsilon}} \phi_1, && \text{in } \Omega, \\
\phi_1 &= 0, && \text{in } \mathbb{R}^N \setminus \Omega,
\end{aligned}
\right. \tag{$\mathcal{P}_1$}
% \label{eq:auxillary_eq1} 
\end{equation*}
Let $u$ to be nonnegative solution of the problem \eqref{eq:main_problem_ep}. Then we have
\begin{equation} \label{eq:equality_of_soln}
\int_{\om} u^{2^*-1} \phi_1 dx + \la \int_{\om} u^{p-1} \phi_1 dx = \int_{\om} \grad u  \grad \phi_1 + \varepsilon \langle u , \phi_1 \rangle_s - \mu \int_{\om} \frac{u \phi_1}{|x|^2} dx = \lambda_{1,\varepsilon} \int_{\om} u \phi_1 dx.
\end{equation}
We note that there exists $\la_*$ such that for all $\la > \la_*$ we have 
\begin{equation*}
    t^{2^* - 1} + \la t^{p-1} > \la_{1,\varepsilon} t 
\end{equation*}
for all $t > 0$. Combining this with \eqref{eq:equality_of_soln} we deduce that $\La < \infty$.
\end{proof}
\begin{lemma}\label{lem:existence_each_lambda}
    The problem \eqref{eq:main_problem_ep} has at least one positive solution for each $\la \in (0, \La)$. In fact, the sequence of minimal solutions $\{u_\la \}$ are increasing with respect to $\la$. If $\la = \La$, then the problem \eqref{eq:main_problem_ep} admits at least one weak solution.
\end{lemma}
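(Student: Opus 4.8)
The plan is to run an Ambrosetti–Brezis–Cerami type sub/supersolution argument, realised through constrained minimisation of $\Jep$ over order intervals, and then to obtain the critical value $\lambda=\Lambda$ by a monotone limiting procedure. \textbf{Step 1 (a supersolution).} Fix $\lambda\in(0,\Lambda)$. By the definition \eqref{eq:Lambda} of $\Lambda$ there is $\lambda'\in(\lambda,\Lambda]$ for which \eqref{eq:main_problem_ep} admits a nonnegative weak solution $v$, which then solves \eqref{eq:maineq_plus_epsilon}; by Theorem~\ref{thm:uniform_estimate_and_smp} we may take $v>0$ a.e.\ in $\Omega$, and since $t\mapsto \lambda' t^{p-1}+t^{2^*-1}\ge \lambda t^{p-1}+t^{2^*-1}$ on $[0,\infty)$, $v$ is a supersolution of the $\lambda$–problem \eqref{eq:maineq_plus_epsilon}. \textbf{Step 2 (constrained minimisation).} I would minimise $\Jep$ over the convex, weakly closed order interval $\mathcal{M}_v:=\{u\in\X(\Omega):0\le u\le v\text{ a.e.}\}$. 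On $\mathcal{M}_v$ the critical and sublinear terms are dominated by $v^{2^*},v^p\in L^1(\Omega)$ and the Hardy term by $v^2/|x|^2\in L^1(\Omega)$ (Hardy's inequality), so $\Jep$ is coercive on $\mathcal{M}_v$ and weakly lower semicontinuous there; hence it attains its infimum $c_\lambda$ at some $u_\lambda\in\mathcal{M}_v$. Evaluating the energy at $\varepsilon_0\psi$ with $\psi\in C_c^\infty(\Omega)$, $\psi>0$ and $\varepsilon_0$ small enough that $\varepsilon_0\psi\le v$ (a positive solution being locally bounded below, and Theorem~\ref{thm:asympototic_estimates} controlling the origin), the term $-\tfrac{\lambda}{p}\varepsilon_0^{p}\int_\Omega\psi^{p}$ dominates since $p<2<2^*$, so $c_\lambda<0=\Jep(0)$ and $u_\lambda\not\equiv 0$.

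\textbf{Step 3 (the minimiser solves the equation; minimality).} The heart of the argument is the ABC truncation. For $\varphi\in\X(\Omega)$ and $t>0$ set $\varphi^t=(u_\lambda+t\varphi-v)^+$ and $\varphi_t=(u_\lambda+t\varphi)^-$, so that $w_t:=\min\{(u_\lambda+t\varphi)^+,v\}=u_\lambda+t\varphi-\varphi^t+\varphi_t\in\mathcal{M}_v$. Minimality of $u_\lambda$ gives $\Jep(w_t)-\Jep(u_\lambda)\ge 0$, and since $\|w_t-u_\lambda\|_{\X,\varepsilon}=O(t)$ and $\Jep\in C^1$ this reads
\begin{equation*}
t\,\langle(\Jep)'(u_\lambda),\varphi\rangle\ \ge\ \langle(\Jep)'(u_\lambda),\varphi^t\rangle-\langle(\Jep)'(u_\lambda),\varphi_t\rangle+o(t).
\end{equation*}
Using that $v$ is a supersolution to bound $\langle(\Jep)'(u_\lambda),\varphi^t\rangle\ge o(t)$ on the shrinking set $\{u_\lambda+t\varphi\ge v\}$, and $u_\lambda\ge0$ to bound $\langle(\Jep)'(u_\lambda),\varphi_t\rangle\le o(t)$ on $\{u_\lambda+t\varphi\le0\}$, dividing by $t$ and letting $t\to0^+$ (and then $\varphi\mapsto-\varphi$) yields $\langle(\Jep)'(u_\lambda),\varphi\rangle=0$. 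Thus $u_\lambda$ solves \eqref{eq:maineq_plus_epsilon}, hence \eqref{eq:main_problem_ep}, and $u_\lambda>0$ by Theorem~\ref{thm:uniform_estimate_and_smp}. I would then \emph{define} $u_\lambda$ to be the minimal positive solution, obtained as the pointwise infimum of all positive solutions (equivalently, as the monotone limit of the iteration $w_{n+1}$ solving $(-\Delta+\varepsilon(-\Delta)^s-\mu|x|^{-2})w_{n+1}=\lambda w_n^{p-1}+w_n^{2^*-1}$ from a small positive subsolution $\le v$); the comparison principles needed here are available because $t\mapsto\lambda t^{p-1}+t^{2^*-1}$ is increasing on $(0,\infty)$ and the pointwise minimum of supersolutions is a supersolution. \textbf{This is the main obstacle}: because $v$ and $u_\lambda$ need not be bounded near the origin, the $o(t)$ control of the Hardy and nonlocal contributions in the truncation, as well as the construction of the small positive subsolution, must be carried out using the integrability $v^2/|x|^2\in L^1$ and, crucially, the sharp two-sided estimate $u\simeq|x|^{-(\sqrt{\bar\mu}-\sqrt{\bar\mu-\mu})}$ of Theorem~\ref{thm:asympototic_estimates}.

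\textbf{Step 4 (monotonicity and $\lambda=\Lambda$).} If $0<\lambda_1<\lambda_2<\Lambda$, the minimal solution $u_{\lambda_2}$ is a supersolution of the $\lambda_1$–problem; running Steps~2–3 with $v=u_{\lambda_2}$ produces a positive solution of the $\lambda_1$–problem lying below $u_{\lambda_2}$, and minimality of $u_{\lambda_1}$ forces $u_{\lambda_1}\le u_{\lambda_2}$, so $\lambda\mapsto u_\lambda$ is increasing. For the endpoint, take $\lambda_n\uparrow\Lambda$: from $\Jep(u_{\lambda_n})-\tfrac12\langle(\Jep)'(u_{\lambda_n}),u_{\lambda_n}\rangle=c_{\lambda_n}<0$ together with Hölder's inequality and $p<2^*$ one bounds $\{u_{\lambda_n}\}$ in $L^{2^*}(\Omega)$, and testing the equation with $u_{\lambda_n}$ then bounds it in $\X(\Omega)$. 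Being increasing, $\{u_{\lambda_n}\}$ converges a.e.\ and weakly to some $u_\Lambda\ge u_{\lambda_1}>0$, and monotone/dominated convergence allows passage to the limit in the weak formulation, giving a weak solution of \eqref{eq:main_problem_ep} at $\lambda=\Lambda$. This completes the plan.
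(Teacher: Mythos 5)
Your overall strategy is sound, but it is genuinely different from the paper's. For existence at fixed $\la\in(0,\La)$ the paper also starts from a supersolution $\bar u$ furnished by the definition of $\La$, but instead of minimising $\Jep$ over the order interval it constructs an explicit subsolution $t\varphi_1$ from the first eigenfunction of $-\Delta+\varepsilon(-\Delta)^s$, proves $t\varphi_1\le\bar u$, and runs a monotone iteration of \emph{linear} problems (with the Hardy and nonlinear terms on the right-hand side evaluated at the previous iterate), obtaining the solution as the increasing limit; minimality and monotonicity in $\la$ come from this construction. Your route (coercivity and weak lower semicontinuity of $\Jep$ on $\{0\le u\le v\}$, negativity of the constrained infimum, and the Ambrosetti--Brezis--Cerami truncation showing the constrained minimiser is a free critical point) is legitimate and is in fact close in spirit to the truncation computations the paper performs later when proving that $u_{\la_0}$ is a local minimiser; what it buys is a variational characterisation (negative energy level), at the cost of having to push the truncation estimates through the Hardy and Gagliardo terms, which you correctly identify as the technical core. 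The sharper divergence is at $\la=\La$: the paper does \emph{not} obtain an energy bound there; it tests the equation with the first eigenfunction and with the torsion function $\psi_1$, uses boundary-behaviour/comparison results ($\inf(\varphi_1/\delta)>0$, $\psi_1\le C\delta$) to get a uniform $L^1$ bound on the increasing family of minimal solutions, and passes to the limit by monotone convergence to a weak solution in a correspondingly weak sense. Your endpoint argument, if completed, would give the stronger conclusion of an $\X(\om)$-bounded limit.

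There is, however, one genuine gap in your Step 4. You write $\Jep(u_{\la_n})-\tfrac12\langle(\Jep)'(u_{\la_n}),u_{\la_n}\rangle=c_{\la_n}<0$, i.e.\ you identify the energy of the \emph{minimal} solution with the constrained minimisation level of Step 2. But in Step 3 you redefined $u_\la$ as the pointwise infimum (equivalently the iteration limit), and that function is in general not the constrained minimiser of Step 2, so neither the identity $\Jep(u_{\la_n})=c_{\la_n}$ nor even $\Jep(u_{\la_n})\le 0$ is available without further argument; the whole $L^{2^*}$ and then $\X$ bound at the endpoint hinges on this sign. The gap is fixable: minimise $\Jep$ over the order interval $\{0\le u\le u_{\la_n}\}$, use your truncation argument and the strong maximum principle (Theorem \ref{thm:uniform_estimate_and_smp}) to see that the minimiser is a positive solution $\le u_{\la_n}$, hence equals $u_{\la_n}$ by minimality, which yields $\Jep(u_{\la_n})\le \Jep(t\,u_{\la_n})<0$ for small $t>0$ because $1<p<2$. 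Relatedly, the equivalence you assert between ``pointwise infimum of all positive solutions'' and ``limit of the monotone iteration from a small subsolution'' is exactly where the comparison work lies (one must check that the chosen subsolution sits below \emph{every} positive solution, and that each linear step preserves ordering, with the Hardy term either kept coercively on the left since $\mu<\bar\mu$ or monotonically on the right); this is the part the paper spells out via its iteration scheme, and your proposal should not treat it as automatic.
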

\begin{proof}
Let \(\tilde\lambda \in (0, \La)\) be fixed. By definition of $\La$, there exists \(\lambda_{*} \in (\tilde\lambda, \Lambda)\) such that \eqref{eq:main_problem_ep} with $\la = \la_*$ has a positive weak solution \(\bar u\).  
Then \(\bar  u\) is a supersolution of the problem \eqref{eq:main_problem_ep} with $\la = \tilde \la$. To obtain the subsolution, we define the following eigenvalue problem
\begin{equation}
\begin{cases}
-\Delta \varphi_1 + \varepsilon(-\Delta)^{s} \varphi_1 = \lambda_{\varepsilon}^1 \varphi_1 & \text{in } \Omega, \\[6pt]
\varphi_1 = 0 & \text{in } \mathbb{R}^N \setminus \Omega,
\end{cases}
\label{eq:P2}
\tag{P2}
\end{equation}
where \(\lambda_{\varepsilon}^1\) is the first eigenvalue of the mixed operator \(-\Delta + \varepsilon(-\Delta)^s\) with eigenfunction $\varphi_1 \in L^{\infty}(\om)$. Then \(\varphi_t = t \varphi_1\) also solves \eqref{eq:P2}.  
Choosing \(t > 0\) sufficiently small such that
\begin{equation*}
-\Delta \varphi_t + \varepsilon (-\Delta)^s \varphi_t = \lambda_{\varepsilon}^1 \varphi_t 
\leq \tilde\lambda \varphi_t^{p-1} \leq \tilde\lambda \varphi_t^{p-1} + \varphi_t^{2^*-1} + \mu \frac{\varphi_t}{|x|^2}.
% \label{eq:subsolution_estimate}
\end{equation*}
Hence, \(\varphi_t\) is a subsolution of \eqref{eq:main_problem_ep}. 
Next, taking \((\varphi_t - \bar u)^+\) as a test function, we obtain
\begin{equation*}
\begin{aligned}
0 &\leq \int_{\Omega} |\nabla (\varphi_t - \bar u)^+|^2 \, dx 
+  \varepsilon \big[ (\varphi_t - \bar u)^+ \big]^2_{s} \\
&\leq \int_{\Omega} \nabla (\varphi_t - \bar u)  \nabla (\varphi_t - \bar u)^+ \, dx 
+ \varepsilon \langle \varphi_t - \bar u, (\varphi_t - \bar u)^+ \rangle_{s} \\[6pt]
&= \lambda_{\varepsilon}^1 \int_{\Omega} \varphi_t (\varphi_t - \bar u)^+ \, dx
- \lambda_* \int_{\Omega} \bar u^{p-1} (\varphi_t - \bar u)^+ \, dx \\[6pt]
&\quad - \int_{\Omega} \bar u^{2^*-1} (\varphi_t - \bar u)^+ \, dx
- \mu \int_{\Omega} \bar u \, \frac{(\varphi_t - \bar u)^+}{|x|^2} \, dx\\
&\leq \lambda_* \int_{\Omega} \Big( \varphi_t^{p-1} - \bar u^{p-1} \Big) (\varphi_t - \bar u)^+ \, dx \leq 0,
\end{aligned}
% \label{eq:comparison_expansion}
\end{equation*}
which yields $\varphi_t \leq \bar u$. Now, let \(\{u_k\}\) denote the nonnegative sequence in \(\X(\Omega)\) of solutions to the iterated problem
\begin{equation*}
(P_k) \; \; \; 
\begin{cases}
-\Delta u_k + \varepsilon (-\Delta)^s u_k = \mu \dfrac{u_{k-1}}{|x|^2} + \tilde\lambda u_{k-1}^{p-1} + u_{k-1}^{2^*-1} & \text{in } \Omega, \\[8pt]
u_k = 0 & \text{in } \mathbb{R}^N \setminus \Omega
\end{cases}
% \label{eq:iterated_problem}
\end{equation*}
for $k \geq 1$ and $u_0 = \varphi_t$. Then we claim that
\[
\varphi_t \leq u_1 \leq u_2 \leq \cdots \leq \overline{u} .
\]
We prove this by principle of mathematical induction. Using the same idea as above, it is clear that \(\varphi_t \leq u_1\). Assume that the claim is true for $k$ and then we prove it for $k+1$, i.e., $u_{k} \leq u_{k+1}$ whenever $u_{k-1} \leq u_k$ for all $k \in \ntrl$. Take \((u_{k} - u_{k+1})^+\) as a test function in \((P_k)\) and \((P_{k+1})\) to get 
\begin{equation*}
\begin{aligned}
0 &\leq \int_{\Omega} |\nabla (u_{k} - u_{k+1})^+|^2 \, dx 
+ \varepsilon \big[ (u_{k} - u_{k+1})^+ \big]^2_{s} \\[6pt]
&\leq \mu \int_{\Omega} \frac{(u_{k-1} - u_{k})(u_{k} - u_{k+1})^+}{|x|^2} \, dx + \tilde\lambda \int_{\Omega} \big(u_{k-1}^{p-1} - u_{k}^{p-1}\big) (u_{k} - u_{k+1})^+ \, dx \\[6pt]
&\quad + \int_{\Omega} \big(u_{k-1}^{2^*-1} - u_{k}^{2^*-1}\big)(u_{k} - u_{k+1})^+ \, dx \leq 0,
\end{aligned}
% \label{eq:induction_step}
\end{equation*}
where the last inequality follows from induction hypothesis \(u_{k-1} \leq u_{k}\). Then it follows that $u_{k} \leq u_{k+1}$.
% Finally, consider the limit function \(\overline{u}\). We have
% \begin{equation*}
% \begin{aligned}
% 0 &\leq \int_{\Omega} |\nabla (u_{k} - \overline{u})^+|^2 \, dx 
% + \big[ (u_{k} - \overline{u})^+ \big]^2_{s} \\[6pt]
% &\leq \mu \int_{\Omega} \frac{(u_{k-1} - \overline{u})(u_{k} - \overline{u})^+}{|x|^2} \, dx + \lambda_* \int_{\Omega} \big(u_{k-1}^{p-1} - \overline{u}^{p-1}\big)(u_{k-1} - \overline{u})^+ \, dx \\
% &\quad + \int_{\Omega} \big(u_{k-1}^{2^*-1} - \overline{u}^{2^*-1}\big)(u_{k-1} - \overline{u})^+ \, dx \leq 0.
% \end{aligned}
% % \label{eq:upper_bound}
% \end{equation*}
Since \(u_{k} \leq \overline{u}\), it follows in same way that $u_{k+1} \leq \overline{u}$. Hence, our claim is proved. \\
Next, define
\[
u_{\tilde\lambda} = \lim_{k \to \infty} u_k \quad \text{in } L^1(\Omega).
\]
Then 

\begin{equation*}
\begin{aligned}
\|u_k\|^2_{\Xe} 
&= \mu \int_{\Omega} \frac{u_k u_{k-1}}{|x|^2} \, dx 
+ \int_{\Omega} u_k u_{k-1}^{2^*-1} \, dx 
+ \tilde\lambda \int_{\Omega} u_k u_{k-1}^{p-1} \, dx \\[6pt]
&\leq \mu \int_{\Omega} \frac{\overline{u}^2}{|x|^2} \, dx
+ \int_{\Omega} \overline{u}^{2^*} \, dx 
+ \tilde\lambda \int_{\Omega} \overline{u}^p \, dx \leq C.
\end{aligned}
% \label{eq:uk_energy_bound}
\end{equation*}
Thus, up to a subsequence, we know that \(u_k \rightharpoonup u_{\tilde\lambda}\) in \(\X(\Omega)\).  
Therefore, by passing to the limit in \((P_k)\), we conclude that \(u_{\tilde\lambda} \geq 0\) is the solution of the problem \eqref{eq:main_eq}. \\
For $\la = \La$, there exists a sequence of minimal solutions $\{ u _{\la_n}\}$ of the problem \eqref{eq:main_problem_ep}
 with $\la_n \uparrow \La$. By construction they are increasing with respect to $\la$. Let $\varphi_1$ be the solution of the problem \eqref{eq:P2}, then we have 
 \begin{equation}\label{eq:bound_minimal_seq}
 \begin{aligned}
 \mu \int_{\Omega} \frac{ u_{\la_n} \varphi_1}{|x|^2} \, dx 
+ \int_{\Omega} u_{\la_n}^{2^*-1} \varphi_1 \, dx 
+ \lambda_n \int_{\Omega} u_{\la_n}^{p-1} \varphi_1 \, dx = \la_{\varepsilon}^1 \int_{\om} u_{\la_n}\varphi_1  dx\leq \frac{1}{2} \int_{\Omega} u_{\la_n}^{2^*-1} \varphi_1 \, dx + C \int_{\om} \varphi_1 \, dx
\end{aligned}
 \end{equation}
 Let $\delta(x):= \text{dist}(x, \partial \Omega)$. Then by Lemma 2.2 of \cite{biswas2023boundary} there exists a bounded barrier function $\psi$ such that
\begin{equation*}
   \begin{cases}
       - \Delta \psi + \varepsilon (-\Delta)^s \psi \leq 0 &\text{ in } B_{4r} \setminus \bar{B}_r,\\
       0 \leq \psi \leq \tilde\ka r &\text{ in } B_r,\\
       \psi(x) \geq \frac{1}{\tilde \ka} \delta(x) & \text{ in } B_{4r} \setminus B_r,\\
       \psi \leq 0 & \text{ in } B_{4r}^C,
   \end{cases}
\end{equation*}
where $\tilde \ka$ is a constant independent of boundary of $B_{4r}$. 
 Using the comparison principle in \cite[Theorem 5.2]{biswas2025mixed} to compare $\varphi_1$ and $\psi$, we get ${\frac{\varphi_1}{\delta}(x_0)} \geq \frac{\psi}{\delta}(x_0)\geq \frac{1}{\tilde \ka}> 0$ at each $x_0 \in \partial \Omega$. But then Theorem 1.2 of \cite{biswas2023boundary} gives $\inf{\left( \frac{\varphi_1}{\delta}\right)} > 0$. So, using this and \eqref{eq:bound_minimal_seq}, we deduce that
 \begin{equation}\label{eq:min_seq_bd2}
 \begin{aligned}
     \inf{\left( \frac{\varphi_1}{\delta}\right)} &\left( \mu \int_{\Omega} \frac{ u_{\la_n} \delta}{|x|^2} \, dx + \int_{\Omega} u_{\la_n}^{2^*-1} \delta \, dx 
+ \lambda_n \int_{\Omega} u_{\la_n}^{p-1} \delta \, dx\right) \\
& \leq \mu \int_{\Omega} \frac{ u_{\la_n} \varphi_1}{|x|^2} \, dx 
+ \int_{\Omega} u_{\la_n}^{2^*-1} \varphi_1 \, dx 
+ \lambda_n \int_{\Omega} u_{\la_n}^{p-1} \varphi_1 \, dx \leq C .
\end{aligned}
 \end{equation}
 Let $\psi_1$ be the solution to the linear problem 
 \begin{equation*}
\begin{cases}
-\Delta \psi_1 + \varepsilon (-\Delta)^s \psi_1 = 1 & \text{in } \Omega, \\[8pt]
\psi_1 = 0 & \text{in } \mathbb{R}^N \setminus \Omega.
\end{cases}
\end{equation*}
Then taking $\psi_1$ as a test function in \eqref{eq:main_problem_ep} with $\la = \la_n$ and using Theorem $2.3$ of \cite{dhanya2026interior} and \eqref{eq:min_seq_bd2}, we get
\begin{align*}
    \int_{\om} u_{\la_n}dx & = \mu \int_{\Omega} \frac{ u_{\la_n} \psi_1}{|x|^2} \, dx 
+ \int_{\Omega} u_{\la_n}^{2^*-1} \psi_1 \, dx 
+ \lambda_n \int_{\Omega} u_{\la_n}^{p-1} \psi_1 \, dx \\
&  \leq C \left( \mu \int_{\Omega} \frac{ u_{\la_n} \delta}{|x|^2} \, dx + \int_{\Omega} u_{\la_n}^{2^*-1} \delta \, dx 
+ \lambda_n \int_{\Omega} u_{\la_n}^{p-1} \delta \, dx \right)\\
& \leq C.
\end{align*}
Hence the sequence $\{ u_{\la_n}\}$ is uniformly bounded in $L^1(\om)$ and converges to a function $u_{\La} \geq 0$. By monotone convergence theorem it follows that $u_{\La}$ is a weak solution.
 \end{proof}

\begin{lemma}
Let $\La$ be defined as in \eqref{eq:Lambda}. Then for $\la_0 \in (0,\La)$, either the problem \eqref{eq:main_problem_ep} has two distinct solutions or there exists a local minimum of functional $\Jepo$ in $\X(\om)$.
\end{lemma}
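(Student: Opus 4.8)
The plan is to produce a solution of \eqref{eq:main_problem_ep} as the global minimizer of a truncated functional, and then to split into two cases according to whether this minimizer is a local minimizer of $\Jepo$ in the strong topology of $\X(\om)$. Fix $\la_1\in(\la_0,\La)$ and let $\bar u=u_{\la_1}$ be the minimal positive solution of \eqref{eq:maineq_plus_epsilon} with parameter $\la_1$ furnished by Lemma~\ref{lem:existence_each_lambda}; since $\la_0<\la_1$, $\bar u$ is a weak supersolution of \eqref{eq:maineq_plus_epsilon} with parameter $\la_0$, and as in the proof of Lemma~\ref{lem:existence_each_lambda} there is a small subsolution $\varphi_t=t\varphi_1\le\bar u$. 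Define $\widetilde{\mathcal J}\colon\X(\om)\to\R$ by replacing, in $\Jepo$, the nonlinearity $\la_0(u^+)^{p-1}+(u^+)^{2^*-1}$ with its truncation at height $\bar u(x)$, i.e.\ with primitive $G(x,u)=\int_0^u g(x,\tau)\,d\tau$, where $g(x,\tau)=\la_0(\min\{\tau^+,\bar u(x)\})^{p-1}+(\min\{\tau^+,\bar u(x)\})^{2^*-1}$. Because $\mu<\bar\mu$, Hardy's inequality makes the quadratic part of $\widetilde{\mathcal J}$ equivalent to $\|\cdot\|_{\X,\varepsilon}^2$, while $u\mapsto\int_\om G(x,u)\,dx$ has at most linear growth in $u$ (its integrand is bounded by $C(x)(1+|u|)$ with $C\in L^{2^*/(2^*-1)}(\om)$, using $\bar u\in L^{2^*}(\om)$); hence $\widetilde{\mathcal J}$ is coercive and weakly lower semicontinuous on the reflexive space $\X(\om)$, its lower-order term is compact, and so $\widetilde{\mathcal J}$ satisfies $(PS)$ and attains its infimum at some $u_0\in\X(\om)$.

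Testing the Euler--Lagrange equation of $\widetilde{\mathcal J}$ with $u_0^-$ and with $(u_0-\bar u)^+$, and using Hardy's inequality together with the supersolution property of $\bar u$ exactly as in Theorem~\ref{thm:weak_maximum_principle} and Lemma~\ref{lem:existence_each_lambda}, yields $\varphi_t\le u_0\le\bar u$ a.e.\ in $\om$; in particular $u_0>0$ in $\om$ by the strong maximum principle (Theorem~\ref{thm:uniform_estimate_and_smp}). On the order interval $[0,\bar u]$ the truncation is inactive and $\widetilde{\mathcal J}\equiv\Jepo$, so $u_0$ solves \eqref{eq:maineq_plus_epsilon}, hence \eqref{eq:main_problem_ep}, with parameter $\la_0$. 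Moreover, by convexity of $t\mapsto t^{2^*}$ and $t\mapsto t^p$ on $[0,\infty)$ one checks $\widetilde{\mathcal J}(u)\ge\Jepo(u)$ for every $u\in\X(\om)$, with equality precisely when $0\le u\le\bar u$ a.e.; and the same two test functions show that every critical point of $\widetilde{\mathcal J}$ lies in $[0,\bar u]$ and thus solves \eqref{eq:main_problem_ep}.

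Now the dichotomy. If $u_0$ is a local minimizer of $\Jepo$ in $\X(\om)$, the second alternative of the statement holds and we are done. Otherwise, for every small $\rho>0$ one has $\inf_{\overline{B_\rho(u_0)}}\Jepo<\Jepo(u_0)$; as $\Jepo$ is weakly lower semicontinuous and bounded on $\overline{B_\rho(u_0)}$, this infimum is attained at some $w_\rho$. If $w_\rho$ lies in the open ball $B_\rho(u_0)$ for some $\rho$, then $w_\rho$ is a critical point of $\Jepo$ with $\Jepo(w_\rho)<\Jepo(u_0)$, hence a solution distinct from $u_0$ and we have two solutions. If instead every such minimizer lies on $\partial B_\rho(u_0)$, then $u_0$ is surrounded by strictly lower values of $\Jepo$; combining this with $\Jepo(Te)\to-\infty$ as $T\to\infty$ (for $e>0$), with $\inf_{\X}\Jepo<0=\Jepo(0)$ (valid since $p<2$), and with a minimax argument together with the $(PS)_c$ condition of Lemma~\ref{lem:ps_sublinear}, one produces a critical value $c\ne\Jepo(u_0)$, hence a second critical point of $\Jepo$; in either case \eqref{eq:main_problem_ep} has two distinct solutions.

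The main obstacle is precisely the last step: by Theorem~\ref{thm:asympototic_estimates} the minimal solution behaves like $|x|^{-(\sqrt{\bar\mu}-\sqrt{\bar\mu-\mu})}$ near the origin and is not of class $C^1$, so one cannot invoke the Brezis--Nirenberg equivalence of $C^1$-- and $\X$--local minimizers to conclude that $u_0$ is automatically a local minimizer of $\Jepo$ — this is exactly why the conclusion must be stated as a dichotomy rather than an unconditional two--solution result, and it forces a genuinely delicate analysis of the local behaviour of $\Jepo$ near $u_0$. A secondary technical point is that, whenever a mountain--pass argument intervenes, the minimax level must be kept strictly below the threshold $\tfrac1N S^{N/2}-|\om|\bigl(1-\tfrac p{2^*}\bigr)\bigl(\tfrac{pN}{2^*}\bigr)^{-p/(2^*-p)}\bigl[\la_0\bigl(\tfrac1p-\tfrac12\bigr)\bigr]^{2^*/(2^*-p)}$ of Lemma~\ref{lem:ps_sublinear}, which requires a careful choice of competitor built from the extremal family $u_\varepsilon$ and the estimates \eqref{eq:norm_upper_bound}--\eqref{eq:fractional_norm_bound}.
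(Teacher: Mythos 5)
There is a genuine gap, and it sits exactly where you yourself flag ``the main obstacle.'' Your dichotomy is ``either $u_0$ is a local minimum of $\Jepo$ in $\X(\om)$, or it is not,'' and in the second branch you try to manufacture two solutions by minimizing $\Jepo$ on closed balls $\overline{B_\rho(u_0)}$ and then running a minimax argument. The first step already fails: $\Jepo$ is \emph{not} weakly lower semicontinuous on $\X(\om)$, because of the critical term $-\tfrac{1}{2^*}\int_\om (u^+)^{2^*}$ (a concentrating bubble $u_n\rightharpoonup u$ can make $\int (u_n^+)^{2^*}$ jump above $\int(u^+)^{2^*}$), so the infimum over a closed ball need not be attained and your minimizers $w_\rho$ need not exist. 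The subsequent ``minimax argument together with the $(PS)_c$ condition of Lemma~\ref{lem:ps_sublinear}'' is only a sketch: the $(PS)_c$ condition holds only below an explicit threshold, and you provide no control of the putative minimax level (the bubble estimates \eqref{eq:norm_upper_bound}--\eqref{eq:fractional_norm_bound} enter only in the later proof of Theorem~\ref{thm:sublinear_two_solutions}, after the local-minimum structure is available). In effect your second branch asserts the full two-solution theorem without proof, which is precisely what this lemma is designed to avoid.

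The paper's dichotomy is different and is what makes the lemma cheap to close: it compares the constrained minimizer $u_0$ of $\Jepo$ over the order interval $Z=\{0\le u\le u_{\la_1}\}$ with the \emph{minimal} solution $u_{\la_0}$ at the same parameter $\la_0$ (which you never invoke). If $u_0\neq u_{\la_0}$ one already has two distinct solutions; if $u_0=u_{\la_0}$, the paper proves directly that $u_0$ \emph{is} a local minimum of $\Jepo$ in $\X(\om)$, by a contradiction argument: given $u_n\to u_0$ in $\X$ with $\Jepo(u_n)<\Jepo(u_0)$, one splits $u_n$ via $w_n=(u_n-u_{\la_1})^+$ and the truncation $v_n\in Z$, and, using that $u_{\la_1}$ is a supersolution at level $\la_0$ together with a Picone/Hardy comparison, obtains $\Jepo(u_n)\ge \Jepo(v_n)+(C-o_n(1))\lv w_n\rv_{\Xe}^2\ge \Jepo(u_0)$. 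This quantitative comparison is the real content of the lemma (it substitutes for the unavailable $C^1$-versus-$\X$ local minimizer equivalence) and is entirely absent from your proposal. Your truncated-functional construction of $u_0$ is a legitimate alternative to the paper's minimization over $Z$ (modulo a small fix: testing with $u_0^-$ only gives $u_0\ge 0$; nontriviality should come from $\Jepo(t\varphi_1)<0$ for small $t$, not from the asserted bound $\varphi_t\le u_0$), but the dichotomy step as you have set it up cannot be completed with the tools at hand.
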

\begin{proof}
Let \(\lambda_0 \in (0,\Lambda)\) and fix \(\lambda_1 \in (\lambda_0, \Lambda)\) be fixed. Then by Lemma \ref{lem:existence_each_lambda} we get minimal solutions $u_{\lambda_0}$ and $u_{\la_1}$ of \eqref{eq:main_problem_ep} with $\lambda_0 $ and $\la_1$ respectively such that $u_{\lambda_0} \leq u_{\lambda_1}$. \\
Define the set $Z = \{ u \in \X(\Omega) : 0 \leq u \leq u_{\lambda_1} \}$. Then \(Z\) is a bounded, closed, and convex subset of \(\X(\Omega)\). Thus there exists $u_0 
\in Z$ such that
\begin{equation*}
\Jepo(u_0) = \inf_{u \in Z} \Jepo (u).
% \label{eq:J_lambda_min}
\end{equation*}
Since \(1 < p < 2\), we have $\Jepo(t u_{\lambda_1}) < 0$ for some sufficiently small $t > 0$. Moreover \(t u_{\lambda_1} \in Z\), it follows that \(u_0 \neq 0\).\\
If $u_0 \neq u_{\la_0}$, then we get two distinct solutions of \eqref{eq:main_problem_ep}. Otherwise, if $u_0 =  u_{\la_0}$, then we claim that \(u_0\) is a local minimum of the functional \(\Jepo\) in $\X(\om)$. We prove this by contradiction. Suppose, on the contrary, that \(u_0\) is not a local minimum in $\X(\om)$. Then there exists a sequence \(\{u_n\} \subset \X(\om)\) such that
\begin{equation*}
\lim_{n \to \infty} \|u_n - u_0\|_{\Xe} = 0
\quad \text{and} \quad
\Jepo(u_n) < \Jepo(u_0).
% \label{eq:contradiction_sequence}
\end{equation*}
Now, consider the auxiliary functions $w_n \in \X(\om)$, and $v_n \in Z$ as
\begin{equation*}
w_n = (u_n - u_{\lambda_1})^+ \quad \text{and} \quad v_n(x) =
\begin{cases}
0, & u_n(x) \leq 0, \\[6pt]
u_n(x), & 0 \leq u_n(x) \leq u_{\lambda_1}(x), \\[6pt]
u_{\lambda_1}(x), & u_{\lambda_1}(x) \leq u_n(x).
\end{cases}
% \label{eq:w_n_def}
\end{equation*}
Also, take the sets
\begin{equation*}
A_n = \{ x : v_n(x) = u_n(x) \}, 
\qquad 
B_n = \{ x : u_n(x) \geq u_{\lambda_1}(x) \},
\qquad 
\widetilde{A}_n = A_n \cap \Omega, 
\qquad 
\widetilde{B}_n = B_n \cap \Omega.
% \label{eq:Tn_Sn_def}
\end{equation*}

Next, consider the function
\begin{equation*}
H_{\lambda_0}(t) = \frac{1}{2^*} t^{2^*}_+ + \frac{\lambda_0}{p} t^p_+.
% \label{eq:F_lambda_def}
\end{equation*}
Moreover, to simplify notation, we set
\begin{equation*}
U_n(x,y) = \frac{|u_n(x) - u_n(y)|^2}{|x-y|^{N+2s}},
\qquad
W_n(x,y) = \frac{|w_n(x) - w_n(y)|^2}{|x-y|^{N+2s}},
% \label{eq:U_V_def}
\end{equation*}
\begin{equation*}
U_n^{\pm}(x,y) = \frac{|(u_n)^\pm(x) - (u_n)^\pm(y)|^2}{|x-y|^{N+2s}},
\qquad V_n(x,y) = \frac{|v_n(x) - v_n(y)|^2}{|x-y|^{N+2s}}.
% \label{eq:Uplus_Vplus_def}
\end{equation*}
Then using $U_n(x,y) = U_n^+(x,y) + U_n^-(x,y) + 2 \frac{u_n^+(x) u_n^-(y)+ u_n^+(y) u_n^-(x)}{|x-y|^{N+2s}}$, we obtain
\begin{equation*}
\begin{aligned}
\Jepo(u_n) \;\geq\; & 
\frac{1}{2} \int_{\Omega} |\grad u_n^+|^2 \, dx 
+ \frac{1}{2} \int_{\Omega} |\nabla u_n^-|^2 \, dx
+ \frac{\varepsilon}{2} \iint_{\mathbb{R}^{2N}} U_n^{+}(x,y) \, dx \, dy  \\
& \quad + \frac{\varepsilon}{2} \iint_{\mathbb{R}^{2N}} U_n^-(x,y) \, dx \, dy
- \frac{\mu}{2} \int_{\Omega} \frac{(u_n^+)^2}{|x|^2} \, dx
- \int_{\widetilde{A}_n} H_{\lambda_0}(v_n) \, dx
- \int_{\widetilde{B}_n} H_{\lambda_0}(u_n) \, dx.\\
\geq&\; \frac{\varepsilon}{2}\,[u_n^- ]_{s}^2 
+ \Jepo(v_n) 
+ \frac{1}{2}\int_{\Omega} |\nabla u_n^-|^2 \, dx 
+ \frac{1}{2}\int_{\widetilde{B}_n} \big(|\nabla u_n^+|^2 - |\nabla v_n|^2\big) dx \\
&\quad + \frac{\varepsilon}{2} \iint_{B_n \times B_n} 
 \big[U_n^+(x,y)-V_n(x,y)\big] \, dx dy +  \varepsilon\int_{B_n} \int_{B_n^C} 
\big[U_n^+(x,y)-V_n(x,y)\big]  \, dy dx \\
&\quad - \frac{\mu}{2} \int_{\widetilde{B}_n} \frac{((u_n^+)^2 - v_n^2)}{|x|^2} \, dx
+ \int_{\widetilde{B}_n} \big( H_{\lambda_0}(v_n) - H_{\lambda_0}(u_n) \big) dx .
\end{aligned}
% \label{eq:J_lambda_bound}
\end{equation*}
Since $w_n(x) = u_n(x) - u_{\lambda_1}(x)$ whenever $x \in B_n$, using $(37)-(40)$ of Lemma $7$ from \cite{shen_multiplicity_fractional} we obtain
\begin{equation*}
\begin{aligned}
\Jepo(u_n) 
&\;\geq\; \frac{\varepsilon}{2}[u_n^-]_s^2 
+ \frac{1}{2}\int_{\Omega} |\nabla u_n^-|^2 \, dx 
+ \Jepo(v_n) 
+ \frac{\varepsilon}{2}[w_n]_s^2 + \frac{1}{2}\int_{\widetilde{B}_n} |\nabla w_n|^2 \, dx\\
&\quad 
+  \int_{\widetilde{B}_n} \nabla w_n  \nabla u_{\lambda_1} \, dx + \varepsilon\iint_{B_n \times B_n} (w_n(x)-w_n(y))(u_{\lambda_1}(x)-u_{\lambda_1}(y)) \, d\nu \\
&\quad - 2\varepsilon \iint_{B_n \times B_n^c}
w_n(y)\big(u_n^+(x)-u_{\lambda_1}(y)\big) \, d\nu  - \frac{\mu}{2} \int_{\widetilde{B}_n} \frac{w_n^2(x) + 2 w_n(x) u_{\lambda_1}(x)}{|x|^2} \, dx \\
&\quad + \int_{\widetilde{B}_n} \big( H_{\lambda_0}(v_n) - H_{\lambda_0}(u_n) \big) dx \\
&\;\geq\; \frac{\varepsilon}{2}[u_n^-]_s^2 
+ \frac{1}{2}\int_{\Omega} |\nabla u_n^-|^2 \, dx 
+ \Jepo(v_n) 
+ \frac{\varepsilon}{2}[w_n]_s^2 + \frac{1}{2}\int_{\widetilde{B}_n} |\nabla w_n|^2 \, dx\\
&\quad 
+  \int_{\widetilde{B}_n} \nabla w_n  \nabla u_{\lambda_1} \, dx + \varepsilon \langle u_{\la_1}, w_n \rangle_s  - \frac{\mu}{2} \int_{\widetilde{B}_n} \frac{w_n^2(x) + 2 w_n(x) u_{\lambda_1}(x)}{|x|^2} \, dx \\
&\quad + \int_{\widetilde{B}_n} \big( H_{\lambda_0}(v_n) - H_{\lambda_0}(u_n) \big) dx,
\end{aligned}
% \label{eq:J_lambda0_secondbound}
\end{equation*}
where the last inequality follows from the fact that $Supp (w_n) = B_n$ and $u_n^+(x) \leq u_{\la_1}(x)$ for $x \in B_n^c$.
Since $u_{\lambda_1}$ is a supersolution of $(P_{\mu, \lambda_0}^{\varepsilon})$, we deduce
\begin{equation*}
\int_{\Omega} \nabla u_{\lambda_1}  \nabla w_n \, dx 
+ \varepsilon \langle u_{\lambda_1}, w_n \rangle_s 
\;\geq\; 
\mu \int_{\Omega} \frac{u_{\lambda_1} w_n}{|x|^2} \, dx 
+ \int_{\widetilde{B}_n} w_n H'_{\lambda_0}(u_{\lambda_1}) \, dx .
% \label{eq:super_solution_condition}
\end{equation*}
Thus we have
\begin{align}
\Jepo(u_n) \;\geq\;& \Jepo(v_n) + \frac{\varepsilon}{2} [w_n]_{s}^{2} 
+ \frac{1}{2} \int_{\widetilde{B}_n} |\nabla w_n|^2 
 +\int_{\widetilde{B}_n} \Bigg[ \frac{1}{2^*} \Big( u_{\lambda_1}^{2^*} - (u_{\lambda_1}+w_n)^{2^*} \Big) 
+ w_n \, u_{\lambda_1}^{2^*-1} \Bigg] dx \notag \\
& \quad - \frac{\mu}{2} \int_{\widetilde{B}_n} \frac{w_n^2(x)}{|x|^2} \, dx  + \lambda_0 \int_{\widetilde{B}_n} 
\frac{u_{\lambda_1}^p - (u_{\lambda_1}+w_n)^p}{p} 
+ w_n \, u_{\lambda_1}^{p-1} \, dx .\label{eq:J_lambda_expanded} 
\end{align}
Using Talyor's expansion and the fact that \( p \in (1,2) \), we obtain
\begin{equation*}
0 \;\leq\; \frac{1}{p} \Big[ (u_{\lambda_1}+w_n)^p - u_{\lambda_1}^p \Big] - w_n u_{\lambda_1}^{p-1} 
\;\leq\; \frac{(p-1)}{2} \, \frac{w_n^2}{u_{\lambda_1}^{2-p}}.
% \label{eq:p_range_ineq}
\end{equation*}
Moreover, using AM-GM inequality
\begin{equation*}
\big(u_{\lambda_1}(x) - u_{\lambda_1}(y)\big) 
\left( \frac{w_n^2(x)}{u_{\lambda_1}(x)} - \frac{w_n^2(y)}{u_{\lambda_1}(y)} \right) \leq |w_n(x) - w_n(y)|^2 .
% \label{eq:diff_ineq}
\end{equation*}
Since \( u_{\lambda_1} \) is a solution of \eqref{eq:main_problem_ep} with $\la = \la_1$, using the above inequality together with Picone’s identity, we deduce
\begin{equation*}
\begin{aligned}
\lv w_n \rv_{\Xe}^2\;\geq\; \varepsilon \Big\langle u_{\lambda_1}, \frac{w_n^2}{u_{\lambda_1}} \Big\rangle_s 
+ \int_{\Omega} \nabla \!\left( \frac{w_n^2}{u_{\lambda_1}} \right) \nabla u_{\lambda_1} \, dx \;\geq\; \mu \int_{\Omega} \frac{w_n^2}{|x|^2} \, dx 
+ \lambda_0 \int_{\Omega} \frac{w_n^2}{u_{\lambda_1}^{2-p}} \, dx .
\end{aligned}
% \label{eq:picone_application}
\end{equation*}
Thus, we obtain
\begin{equation}
\begin{aligned}
\lv w_n \rv_{\Xe}^2 - \mu \int_{\Omega} \frac{w_n^2}{|x|^2} \geq \lambda_0 \int_{\Omega} \frac{w_n^2}{u_{\lambda_1}^{2-p}}  \;\geq\; \frac{2 \la_0}{p-1} 
\int_{\widetilde{B}_n} 
\left[ \frac{(u_{\lambda_1}+w_n)^p - u_{\lambda_1}^p}{p} 
- w_n u_{\lambda_1}^{p-1} \right].
\end{aligned}
\label{eq:ineq46}
\end{equation}
Furthermore, we have the following inequality
\begin{equation}
0 \;\leq\; \frac{1}{2^*} \Big[ (u_{\lambda_1}+w_n)^{2^*} - u_{\lambda_1}^{2^*} \Big] - w_n u_{\lambda_1}^{2^*-1}
\;\leq\; C \left( u_{\lambda_1}^{2^*-2} w_n^2 + w_n^{2^*} \right).
\label{eq:ineq47}
\end{equation}
Plugging \eqref{eq:ineq46} and \eqref{eq:ineq47} into \eqref{eq:J_lambda_expanded}, we deduce
\begin{equation*}
\begin{aligned}
\Jepo(u_n) 
&\geq\Jepo(v_n) 
+ \frac{2-p}{2} 
\left[ \varepsilon [w_n]_s^2 + \int_{\widetilde{B}_n} |\nabla w_n|^2  
- \mu \int_{\widetilde{B}_n} \frac{w_n^2(x)}{|x|^2} \right] - C \int_{\Omega} \Big( u_{\lambda_1}^{2^*-2} w_n^2 + w_n^{2^*} \Big)\\
& \geq \Jepo(v_n) + C \|w_n\|_{\Xe}^2 
- C \int_{\widetilde{B}_n} \Big( u_{\lambda_1}^{2^*-2} w_n^2 + w_n^{2^*} \Big) dx .
\end{aligned}
% \label{eq:J_lambda_lowerbound}
\end{equation*}
Moreover, using the idea of Lemma $7$ of \cite{shen_multiplicity_fractional} and $u_0 = u_{\la_0}$, we obtain
\begin{equation*}
\lim_{n \to \infty} |\widetilde{B}_n| = 0 \quad \text{and} \quad \int_{\widetilde{B}_n} \Big( u_{\lambda_1}^{2^*-2} w_n^2 + w_n^{2^*} \Big) dx \leq o_n(1) \lv w_n\rv_{\Xe}^2.
% \label{eq:limit_Sn}
\end{equation*}
Hence, we finally conclude
\begin{equation*}
\begin{aligned}
\Jepo(u_n) 
&\;\geq\; \Jepo(v_n) + \big(C - o_n(1)\big)\|w_n\|_{\Xe}^2 \geq  \Jepo(u_0),
\end{aligned}
% \label{eq:J_lambda_final}
\end{equation*}
for large $n$. This gives us the contradiction. Thus, $u_0$ is a local minimum for $\Jepo$.
\end{proof}
Next, we examine the existence of a second positive solution of the form
\begin{equation*}
u = u_{\lambda} + v.
\end{equation*}
The equation for $v$ becomes
\begin{equation}
\left\{
\begin{aligned}
- \Delta v + \varepsilon  (-\Delta)^s v - \mu \frac{v}{|x|^2} &= \lambda (u_{\lambda} + v)^{p-1} + (u_{\lambda} + v)^{2^* - 1} - \lambda u_{\lambda}^{p-1} - u_{\lambda}^{2^* - 1}, && \text{in } \Omega, \\
v &= 0, && \text{in } \mathbb{R}^N \setminus \Omega,
\end{aligned}
\right.
\label{eq:second_solution_equation}
\end{equation}
Define the function \( g(x,t) \) as
\begin{equation*}
g(x,t) = 
\begin{cases}
(u_{\lambda} + t)^{2^*-1} - u_{\lambda}^{2^*-1} + \lambda \left[ (u_{\lambda} + t)^{p-1} - u_{\lambda}^{p-1} \right], & \text{if } t > 0, \\
0, & \text{if } t \leq 0,
\end{cases}
\end{equation*}
and its primitive
\begin{equation*}
G(v) = \int_0^v g(x,t) \, dt.
\end{equation*}
The associated energy functional becomes
\begin{equation*}
\Ie(v) = \frac{1}{2} \left[ \int_{\Omega} |\nabla v|^2 + \varepsilon  [v]_{s}^2 - \mu \int_{\Omega} \frac{v^2}{|x|^2} \right] - \int_{\Omega} G(v) \, dx.
\end{equation*}
There is a one-to-one correspondence between the critical points of $\Ie$ in $\X$ and the weak solutions of problem \eqref{eq:second_solution_equation}.
We now prove the existence of a nontrivial solution \( v \) using a contradiction argument. Assume that \( v = 0 \) is the only solution of \eqref{eq:second_solution_equation}.

\begin{lemma}
The functional \( \Ie \) has a local minimum at $v =0$ in \( \X(\om) \).
\end{lemma}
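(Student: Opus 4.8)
The plan is to bound $\Ie$ from below near $v=0$ by the difference $\Jep(u_{\lambda}+v)-\Jep(u_{\lambda})$ and to invoke that $u_{\lambda}$ is a local minimum of $\Jep$ in $\X(\om)$. Recall that $u_{\lambda}$ is the minimal solution of \eqref{eq:maineq_plus_epsilon} produced above; if it is \emph{not} a local minimum of $\Jep$ in $\X(\om)$, then \eqref{eq:main_problem_ep} already possesses two distinct solutions and the enclosing argument is complete, so we may and do assume there is $\rho>0$ with $\Jep(u_{\lambda}+v)\ge\Jep(u_{\lambda})$ whenever $\lv v\rv_{\Xe}\le\rho$. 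Also $u_{\lambda}>0$ a.e. in $\Omega$ by Theorem \ref{thm:uniform_estimate_and_smp}, so $(u_{\lambda})^+=u_{\lambda}$ and, testing \eqref{eq:maineq_plus_epsilon} with $v\in\X(\om)$,
\begin{equation*}
\int_\Omega\nabla u_{\lambda}\,\nabla v\,dx+\varepsilon\langle u_{\lambda},v\rangle_s-\mu\int_\Omega\frac{u_{\lambda}v}{|x|^2}\,dx
=\int_\Omega u_{\lambda}^{2^*-1}v\,dx+\lambda\int_\Omega u_{\lambda}^{p-1}v\,dx .
\end{equation*}

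\textbf{Step 1 (an elementary inequality).} For every $q>1$ the function $t\mapsto t^q/q$ is convex on $[0,\infty)$, so for all $a>0$ and $b\in\mathbb R$,
\begin{equation*}
\Phi_q(a,b):=\tfrac1q\big((a+b)_+\big)^q-\tfrac1q a^q-b\,a^{q-1}\ \ge\ 0 ,
\end{equation*}
as one checks by distinguishing $a+b\ge 0$ (convexity of $t^q/q$) and $a+b<0$ (then $b<-a$, hence the left-hand side is $\ge(1-\tfrac1q)\,a^q\ge 0$). We will use this with $q=2^*$, with $q=p$, and with $a=u_{\lambda}(x)$, $b=v(x)$.

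\textbf{Step 2 (comparison of the two functionals).} Expanding the quadratic part of $\Jep(u_{\lambda}+v)$, substituting the identity displayed above, using that $(u_{\lambda}+v)^+$ equals $u_{\lambda}+v$ on $\{u_{\lambda}+v>0\}$ and vanishes elsewhere, and recalling the definition of $g$ and $G$, a direct computation gives, for \emph{every} $v\in\X(\om)$,
\begin{equation*}
\Ie(v)-\big(\Jep(u_{\lambda}+v)-\Jep(u_{\lambda})\big)=\int_{\{v\le 0\}}\Big[\Phi_{2^*}\big(u_{\lambda}(x),v(x)\big)+\lambda\,\Phi_p\big(u_{\lambda}(x),v(x)\big)\Big]\,dx .
\end{equation*}
By Step 1 the integrand is nonnegative, whence $\Ie(v)\ge\Jep(u_{\lambda}+v)-\Jep(u_{\lambda})$ for all $v\in\X(\om)$.

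\textbf{Step 3 (conclusion and the main difficulty).} For $\lv v\rv_{\Xe}\le\rho$ we have $\Jep(u_{\lambda}+v)-\Jep(u_{\lambda})\ge 0$; since $\Ie(0)=0$ (because $G(0)=0$), Step 2 yields $\Ie(v)\ge 0=\Ie(0)$ on that ball, i.e.\ $v=0$ is a local minimum of $\Ie$ in $\X(\om)$. The only delicate point is Step 2: smallness of $\lv v\rv_{\Xe}$ does not force $v$ to be pointwise small, so on the (possibly positive-measure) set where $u_{\lambda}+v$ becomes negative a naive Taylor expansion of the nonlinearity is not available; the convexity inequality of Step 1 is exactly what compensates for these regions with the correct sign, and it simultaneously accounts for the interplay with the truncation $(\cdot)^+$ built into $\Jep$ and into $g$. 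The bookkeeping of the integrals over $\{v>0\}$, $\{-u_{\lambda}<v\le 0\}$ and $\{v\le-u_{\lambda}\}$ is then routine once this is observed.
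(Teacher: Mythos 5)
Your proof is correct, and it takes a route that differs from the paper's in a meaningful way. The paper splits $w=w^{+}-w^{-}$, uses $\langle (\Je)'(u_{\lambda}),w^{+}\rangle=0$ to rewrite $\Ie(w)$ as $\Je(u_{\lambda}+w^{+})-\Je(u_{\lambda})$ plus leftover quadratic terms in $w^{-}$, and then disposes of those leftovers via Hardy's inequality together with the seminorm facts $[w^{+}]_s\le[w]_s$ and $\langle w^{+},w^{-}\rangle_s\le 0$; local minimality of $u_{\lambda}$ is invoked only along the perturbation $u_{\lambda}+w^{+}$, whose norm is controlled by that of $w$. You instead compare $\Ie(v)$ with $\Jep(u_{\lambda}+v)-\Jep(u_{\lambda})$ for the full perturbation $v$, exploiting that the truncations built into $\Jep$ and into $g$ make the difference a pointwise convexity defect $\Phi_{2^*}(u_{\lambda},v)+\lambda\,\Phi_{p}(u_{\lambda},v)$ supported on $\{v\le 0\}$, which your elementary inequality (checked in both cases $u_{\lambda}+v\ge 0$ and $u_{\lambda}+v<0$, using $u_{\lambda}>0$ a.e.\ from the strong maximum principle) shows is nonnegative; I verified the identity in your Step 2, including the cancellation on $\{v>0\}$ coming from the weak equation for $u_{\lambda}$. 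What each approach buys: yours avoids any use of the Hardy inequality and of the fractional-seminorm truncation inequalities, trading them for pointwise bookkeeping of the nonlinear terms on the negativity set, and it yields the clean lower bound $\Ie(v)\ge \Jep(u_{\lambda}+v)-\Jep(u_{\lambda})$ for all $v$; the paper's argument keeps the nonlinearity untouched (it only ever evaluates it at the nonnegative function $u_{\lambda}+w^{+}$) and instead pays with the quadratic-form estimates for the $w^{-}$ part. Both rest on the same two inputs — the weak equation for $u_{\lambda}$ and the dichotomy from the preceding lemma guaranteeing that $u_{\lambda}$ may be assumed to be a local minimizer of the modified functional — and your explicit statement of that dichotomy at the outset matches the paper's intended logic.
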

\begin{proof}
Let \( w \in \X (\om)\). Then using the fact that \( \langle (\Je)^{\prime}(u_{\lambda}), w^+ \rangle = 0 \) and rearranging terms, we deduce
\begin{equation*}
\begin{aligned}
\Ie(w) &= \frac{1}{2} \int_{\Omega} |\nabla w|^2 - \mu \int_{\Omega} \frac{w^2}{|x|^2} \, dx + \frac{\varepsilon }{2} [w]^2_s - \frac{1}{2^*} \int_{\Omega} \left[ (u_{\lambda} + w^+)^{2^*} - u_{\lambda}^{2^*} - 2^* u_{\lambda}^{2^* - 1} w^+ \right] \\
&\quad - \frac{\lambda}{ p} \int_{\Omega} \left[ (u_{\lambda} + w^+)^{p} - u_{\lambda}^{p} - p u_{\lambda}^{p-1} w^+ \right] \\
&  = \Je(u_{\lambda} + w^+) - \Je(u_{\lambda})  + \frac{1}{2} \int_{\Omega} |\nabla w^-|^2 - \mu \int_{\Omega} \frac{(w^-)^2}{|x|^2} + \frac{\varepsilon }{2} [w^-]_s^2 + \varepsilon  \langle w^+, w^- \rangle_s.
\end{aligned}
\end{equation*}
Since \( u_{\lambda} \) is a local minimizer and
\begin{equation*}
\frac{1}{2} [w^-]_s^2 + \langle w^+, w^- \rangle_s = \frac{1}{2} [w]_s^2 - \frac{1}{2} [w^+]_s^2 \geq 0,
\end{equation*}
we obtain the inequality
\begin{equation*}
\Ie(w) \geq \frac{1}{2} \left( \int_{\Omega} |\nabla w^-|^2 - \mu \int_{\Omega} \frac{(w^-)^2}{|x|^2} \right),
\end{equation*}
for \( \|w\|_{\X, \varepsilon} \) small enough. This concludes the proof.
\end{proof}

\begin{lemma}
If \( v = 0 \) is the only critical point of  \( \Ie \), then \( \Ie \) satisfies the Palais–Smale condition (PS)\(_c\) for any \( c < \frac{1}{N} S_{\mu}^{N/2} \).
\end{lemma}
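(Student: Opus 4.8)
The plan is to run the standard Brezis--Nirenberg-type compactness argument for the shifted functional $\Ie$, exploiting the hypothesis $v=0$ to avoid subtracting a nontrivial weak limit. Let $\{v_n\}\subset\X(\om)$ be a $(PS)_c$ sequence with $c<\frac1N S_\mu^{N/2}$, and set $A_n:=\|v_n\|_{\X,\varepsilon}^2-\mu\int_\om|x|^{-2}v_n^2\,dx$, which by Hardy's inequality satisfies $A_n\ge(1-\mu/\bar\mu)\|v_n\|_{\X,\varepsilon}^2$. \textbf{Boundedness.} Combining $\Ie(v_n)=c+o(1)$ with $\langle(\Ie)'(v_n),v_n\rangle=o(\|v_n\|_{\X,\varepsilon})$ through $\Ie(v_n)-\frac1{2^*}\langle(\Ie)'(v_n),v_n\rangle$ gives $\frac1N A_n=c+o(1)+o(\|v_n\|_{\X,\varepsilon})-\int_\om\bigl[\frac1{2^*}g(x,v_n)v_n-G(v_n)\bigr]dx$. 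Expanding $(u_\lambda+t)^{2^*}$ and $(u_\lambda+t)^p$ one checks that $\frac1{2^*}g(x,t)t-G(x,t)$ carries no pure $t^{2^*}$ term and is bounded in absolute value by $C\bigl(1+u_\lambda^{2^*}+u_\lambda^{2^*-1}t+u_\lambda\,t^{2^*-1}+t^p\bigr)$. Since $u_\lambda\in L^{2^*}(\om)$, the terms linear in $v_n$ and the $L^p$-term (recall $p<2$) are of lower order after H\"older, while $\int_\om u_\lambda v_n^{2^*-1}\,dx$ is absorbed into $A_n$ via Young's inequality together with the bound $\int_\om(v_n^+)^{2^*}\,dx\le C\int_\om g(x,v_n)v_n\,dx+C=CA_n+o(\|v_n\|_{\X,\varepsilon})+C$, which follows from $g(x,t)t\ge c_0 t^{2^*}-C(1+u_\lambda^{2^*})$. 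This yields $\sup_n A_n<\infty$, hence $\{v_n\}$ is bounded in $\X(\om)$.

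\textbf{Weak limit.} Up to a subsequence, $v_n\rightharpoonup v$ in $\X(\om)$, $v_n\to v$ in $L^r(\om)$ for all $r\in[1,2^*)$, and $v_n\to v$ a.e.; moreover $\{g(\cdot,v_n)\}$ is bounded in $L^{(2^*)'}(\om)$ (since $u_\lambda^{2^*-2}\in L^{N/2}(\om)$) and converges a.e. to $g(\cdot,v)$, hence weakly. Passing to the limit in $\langle(\Ie)'(v_n),\varphi\rangle$ for $\varphi\in\X(\om)$ — the Hardy term being continuous on $\X(\om)$ — shows $v$ is a critical point of $\Ie$, so by hypothesis $v=0$.

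\textbf{Splitting off the critical mass.} Since $v_n\rightharpoonup0$, apply the Brezis--Lieb lemma to $u_\lambda+v_n^+\to u_\lambda$ a.e. (bounded in $L^{2^*}$, and in $L^p$ for the $p$-power), and use a Vitali/uniform-integrability argument for the cross terms $\int_\om u_\lambda^{2^*-1}v_n^+\,dx$, $\int_\om u_\lambda^{p-1}v_n^+\,dx$ and $\int_\om[(u_\lambda+v_n^+)^{2^*-1}-u_\lambda^{2^*-1}-(v_n^+)^{2^*-1}]v_n^+\,dx$ (all of which tend to $0$, invoking $u_\lambda^{2^*-1}\in L^{(2^*)'}$ and $u_\lambda^{2^*-2}\in L^{N/2}$). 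This yields $\int_\om G(v_n)\,dx=\frac1{2^*}\int_\om(v_n^+)^{2^*}\,dx+o(1)$ and $\int_\om g(x,v_n)v_n\,dx=\int_\om(v_n^+)^{2^*}\,dx+o(1)$. Setting, along a further subsequence, $K:=\lim_n\int_\om(v_n^+)^{2^*}\,dx\ge0$ and $L:=\lim_n A_n\ge0$, the relation $\langle(\Ie)'(v_n),v_n\rangle\to0$ gives $L=K$, and $\Ie(v_n)\to c$ gives $\frac12L-\frac1{2^*}K=c$, hence $K=Nc$.

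\textbf{Concentration and conclusion.} Discarding $\varepsilon[v_n]_s^2\ge0$ and the (by Hardy) nonnegative contribution of $v_n^-$, we have $A_n\ge\int_\om|\nabla v_n^+|^2\,dx-\mu\int_\om|x|^{-2}(v_n^+)^2\,dx\ge S_\mu\bigl(\int_\om(v_n^+)^{2^*}\,dx\bigr)^{2/2^*}$, so $L\ge S_\mu K^{2/2^*}$. If $K>0$ this forces $K\ge S_\mu^{N/2}$, i.e. $Nc\ge S_\mu^{N/2}$, contradicting $c<\frac1N S_\mu^{N/2}$. Hence $K=0$, so $\int_\om(v_n^+)^{2^*}\,dx\to0$ and $A_n=L=K=0$; by Hardy's inequality $\|v_n\|_{\X,\varepsilon}\to0$, so $v_n\to0$ strongly in $\X(\om)$, establishing $(PS)_c$. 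I expect the main obstacle to be the splitting step: cleanly isolating the $(v_n^+)^{2^*}$ part of $g$ and $G$ and rigorously showing that every $u_\lambda$-weighted remainder vanishes, together with the careful bookkeeping in the boundedness step in low dimensions, where $2^*-1>2$.
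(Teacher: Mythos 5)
Your proposal is correct and follows essentially the same route as the paper: boundedness of the (PS)$_c$ sequence, identification of the weak limit as a critical point (hence zero by hypothesis), a Brezis--Lieb reduction of $\Ie(v_n)$ and $\langle(\Ie)'(v_n),\cdot\rangle$ to the pure $(v_n^+)^{2^*}$ mass, and the dichotomy $b\ge S_\mu b^{2/2^*}$ forcing $b\ge S_\mu^{N/2}$, contradicting $c<\frac1N S_\mu^{N/2}$. The only cosmetic difference is in the boundedness step, where the paper combines $2^*\Ie(v_n)-\langle(\Ie)'(v_n),u_\lambda+v_n\rangle$ and exploits $p<2$ directly, while you use the standard $\Ie(v_n)-\frac1{2^*}\langle(\Ie)'(v_n),v_n\rangle$ with a Young-inequality absorption; both are valid.
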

\begin{proof}
Let \( \{v_n\} \subset \X(\om) \) be a (PS)\(_c\) sequence. Then
\begin{equation*}
\Ie(v_n) \to c < \frac{1}{N} S_{\mu}^{N/2}, \quad  (\Ie)^{\prime}(v_n) \to 0 \quad \text{in } (\X)'.
\end{equation*}
Then using Hardy inequality and H\"{o}lder inequality, we compute
\begin{equation*}
\begin{aligned}
c + o_n(1)& \|u_{\lambda} + v_n\|_{\X, \varepsilon}  \geq 2^* \Ie(v_n) - \langle (\Ie)^{\prime}(v_n),u_{\la}+v_n\rangle\\
&= \left( \frac{2^*}{2} - 1 \right) \left[ \int_{\Omega} |\nabla v_n|^2 - \mu \int_{\Omega} \frac{v_n^2}{|x|^2} + \varepsilon [v_n]^2_s \right] + (2^* - 2) \int_{\Omega} u_{\lambda}^{2^* - 1} v_n^+   \\
&\quad +\int_{\Omega} u_{\lambda}^{2^* - 1} v_n^-+ \la\left( \frac{2^*}{ p} - 1 \right) \int_{\om} u_{\la}^{p} + \la(2^*-2) \int_{\Omega} u_{\lambda}^{p-1} v_n^+ \\
& \quad+ \lambda \left( \frac{p - 2^*}{ p} \right) \int_{\Omega} (u_{\lambda} + v_n^+)^{p} + \lambda \int_{\Omega} v_n^{p-1} v_n^-.\\
&\geq \left( \frac{2}{N - 2} \right) \left( 1 - \frac{\mu}{\bar{\mu}} \right) \left( \int_{\Omega} |\nabla v_n|^2 + \varepsilon  [v_n]_s^2 \right) 
+ C \lambda \frac{(p  - 2^*)}{ p} \|u_{\lambda} + v_n^+\|_{\X}^{p }.
\end{aligned}
\end{equation*}
Since \( 1 < p < 2 \), this implies that \( \{v_n\} \) is bounded in \( \X \). Thus, passing to a subsequence if necessary, we have 
% $v_n \rightharpoonup v_0$ weakly in $\X(\om)$, $v_n(x) \to v_0(x)$ pointwise a.e. in $\om$, and $v_n \to v_0 $ in $L^{t}(\om)$ for all $ 1 < t < 2^*$
\begin{equation}\label{convergence_properties}
\begin{aligned}
v_n &\rightharpoonup v_0 \quad \quad \;\text{in } \X,\\
v_n(x) &\to v_0(x) \quad \text{a.e. in } \Omega,\\
v_n &\to v_0 \quad \quad \;\text{in } L^t(\Omega) \text{ for all } 1 < t < 2^*.
\end{aligned}
\end{equation}
Clearly, \( v_0 \) is a critical point of \( \Ie \). But our assumption implies \( v_0 = 0 \).\\
Using the Brézis–Lieb Lemma and \eqref{convergence_properties}, we deduce
\begin{equation}\label{eq:reduced_I_functional}
\Ie(v_n) = \frac{1}{2} \int_{\Omega} |\nabla v_n|^2 - \frac{\mu}{2} \int_{\Omega} \frac{v_n^2}{|x|^2} + \frac{\varepsilon }{2} [v_n]_s^2 - \frac{1}{2^*} \int_{\Omega} (v_n^+)^{2^*} + o_n(1).
\end{equation}
Also, we compute
\begin{equation*} 
\left\langle (\Ie)^{\prime}(v_n), u_\la +v_n \right\rangle = \int_{\Omega} |\nabla v_n|^2 + \varepsilon  [v_n]_s^2 - \mu \int_{\Omega} \frac{v_n^2}{|x|^2} + \int_{\Omega} (v_n^+)^{2^*} + o_n(1).
\end{equation*}
Now, we can assume that
\begin{equation*}
\int_{\Omega} |\nabla v_n|^2 + \varepsilon  [v_n]_s^2 - \mu \int_{\Omega} \frac{v_n^2}{|x|^2} \to b,
\end{equation*}
and
\begin{equation*}
\int_{\Omega} (v_n^+)^{2^*} \to b, \quad \text{as } n \to \infty.
\end{equation*}

If \( b = 0 \), the proof is complete. If \( b \neq 0 \), then by the definition of the Sobolev constant \( S_{\mu} \), we have
\begin{equation*} 
b \geq S_{\mu} b^{2 / 2^*},
\end{equation*}
i.e., $b \geq S_{\mu}^{N/2}$. Thus, using \eqref{eq:reduced_I_functional} we obtain the inequality
\begin{equation*}
c = o_n(1) + \Ie(v_n) = \frac{b}{N} + o_n(1) \geq \frac{S_{\mu}^{N/2}}{N} + o_n(1),
\end{equation*}
which contradicts our earlier assumption on the energy level \( c \).
\end{proof}

\begin{proof}[Proof of Theorem \ref{thm:sublinear_two_solutions}]
We define the min–max value as
\begin{equation*}
c^*_{\lambda} = \inf_{h \in \Gamma} \max_{t \in [0,1]} \Ie(h(t)),
\end{equation*}
where $\Gamma = \left\{ h \in C([0,1], \X(\om)) : h(0) = 0,\, h(1) = t_0 u_{\varepsilon,\al} \right\}$, and \( t_0 \) is chosen, independent of $\varepsilon$, such that $\Ie(t_0 u_{\varepsilon,\al}) < 0$.

We aim to show the existence of a nontrivial critical point of \( \Ie \) using the Mountain Pass Theorem (see \cite[Theorem $1$]{ghoussoub_mountain_pass_theorem}). To accomplish this, it remains only to verify that
\begin{equation*}
c^*_{\lambda} < \frac{1}{N} S_{\mu}^{N/2}.
\end{equation*}
Using the standard inequality $(b + d)^n \geq b^n + d^n + n b^{n-1} d$, for $n > 1; \; b,d > 0$, 
% \begin{equation*}
% (b + d)^n \geq b^n + d^n + n b^{n-1} d, \quad \text{for } n > 1; \; b,d > 0,
% \end{equation*}
we deduce that
\begin{equation*}
g(x, a) \geq a^p + p  u_\lambda^{p-1} a.
\end{equation*}
Choosing \( \delta \) small enough such that
\[
u_{\lambda} \geq M_0 > 0 \quad \text{in } B_{2 \de} \setminus \{0\}
\]
for some $M_0>0$ independent of $\varepsilon$. Then using the functions $u_{\varepsilon,\al}$ defined in subsection \ref{sec:superlinear}, we obtain
\begin{equation*} 
\begin{aligned}
\Ie(t u_{\varepsilon,\al}) &\leq \frac{t^2}{2} \left( \int_{\Omega} |\nabla u_{\varepsilon,\al}|^2 - \mu \int_{\Omega} \frac{u_{\varepsilon,\al}^2}{|x|^2} + \varepsilon[u_{\varepsilon,\al}]_s^2 - M_0^{2^*-2}(2^*-1) \int_{\Omega} u_{\varepsilon,\al}^2 \, dx \right) 
- \frac{t^{2^*}}{2^*} \int_{\Omega} u_{\varepsilon,\al}^{2^*} \, dx\\
& = \frac{t^2}{2}A - \frac{t^{2^*}}{2^*}B\\
& \leq \frac{1}{N}   \frac{A^{2^* / (2^* - 2)}}{B^{2 / (2^* - 2)}}
\end{aligned}
\end{equation*}
Using \eqref{eq:norm_lower_bound}, \eqref{eq:norm_upper_bound}, \eqref{eq:fractional_norm_bound}, and \eqref{eq:lower_bound_norm_r} with $r = 2$, we have
\begin{equation*} 
\begin{aligned}
\Ie(t u_{\varepsilon,\al}) &\leq \frac{1}{N} \frac{\left( S_{\mu}^{N/2} + C \varepsilon^{\al (N - 2)} + C \varepsilon^{1+2 \al (1-s)\sqrt{\bar{\mu}}/\sqrt{\bar{\mu} - \mu}} - 
 C \varepsilon^{2 \al \sqrt{\bar{\mu}}/\sqrt{\bar{\mu} - \mu}}  \right)^{2^*/(2^*-2)}}{\left(S_{\mu}^{N/2}-C \varepsilon^{\al N}\right)^{2/(2^*-2)}}\\
& = \frac{1}{N} S_{\mu}^{N/2}  \frac{(1 - C\varepsilon^{2 \al \sqrt{\bar{\mu}}/\sqrt{\bar{\mu}-\mu}})^{N/2}}{(1 - C \varepsilon^{\al N})^{(N - 2)/2}} < \frac{1}{N} S_{\mu}^{N/2}.
\end{aligned}
\end{equation*}
provided that \( \mu < \bar{\mu} - 1 \) and we choose $\al$ such that $\min\{\al(N-2), 1+2 \al (1-s)\sqrt{\bar{\mu}}/\sqrt{\bar{\mu} - \mu}\} > 2 \al \sqrt{\bar{\mu}}/\sqrt{\bar{\mu} - \mu}$.
Therefore, we conclude
\begin{equation*} 
c^*_{\lambda} \leq \max_{t > 0} \Ie(t u_{\varepsilon,\al}) < \frac{1}{N} S_{\mu}^{N/2}. \qedhere
\end{equation*}
\end{proof}
\vspace{.2cm}
{\textbf{Acknowledgment:}}\\
The author, Shammi Malhotra, is supported by the Prime Minister’s Research Fellowship (PMRF ID - 1403226). The author, Sarika Goyal, would like to thank the Anusandhan National Research Foundation, Department of Science and Technology, Government of India for the financial support under the grant SPG/2022/002068.

\bibliographystyle{abbrv}
\bibliography{references}

\begin{thebibliography}{10}

\bibitem{peral2016CZ}
B.~Abdellaoui, M.~Medina, I.~Peral, and A.~Primo.
\newblock The effect of the {H}ardy potential in some {C}alder\'on-{Z}ygmund properties for the fractional {L}aplacian.
\newblock {\em J. Differential Equations}, 260(11):8160--8206, 2016.

\bibitem{adimurthi_hardy_inequality}
Adimurthi.
\newblock Hardy-{S}obolev inequality in {$H^1(\Omega)$} and its applications.
\newblock {\em Commun. Contemp. Math.}, 4(3):409--434, 2002.

\bibitem{alvino2017radial_polya}
A.~Alvino, F.~Brock, F.~Chiacchio, A.~Mercaldo, and M.~R. Posteraro.
\newblock Some isoperimetric inequalities on {$\mathbb{R}^N$} with respect to weights {$|x|^\alpha$}.
\newblock {\em J. Math. Anal. Appl.}, 451(1):280--318, 2017.

\bibitem{abc_laplacian}
A.~Ambrosetti, H.~Brezis, and G.~Cerami.
\newblock Combined effects of concave and convex nonlinearities in some elliptic problems.
\newblock {\em J. Funct. Anal.}, 122(2):519--543, 1994.

\bibitem{biagi2022brezis}
S.~Biagi, S.~Dipierro, E.~Valdinoci, and E.~Vecchi.
\newblock A {B}rezis-{N}irenberg type result for mixed local and nonlocal operators.
\newblock {\em NoDEA Nonlinear Differential Equations Appl.}, 32(4):Paper No. 62, 28, 2025.

\bibitem{biagi2024mixed_hardy}
S.~Biagi, F.~Esposito, L.~Montoro, and E.~Vecchi.
\newblock On mixed local–nonlocal problems with {H}ardy potential.
\newblock {\em Proceedings of the Royal Society of Edinburgh: Section A Mathematics}, page 1–34, 2025.

\bibitem{biswas2025mixed}
A.~Biswas and M.~Modasiya.
\newblock Mixed local-nonlocal operators: maximum principles, eigenvalue problems and their applications.
\newblock {\em Journal d'Analyse Math{\'e}matique}, pages 1--35, 2025.

\bibitem{biswas2023boundary}
A.~Biswas, M.~Modasiya, and A.~Sen.
\newblock Boundary regularity of mixed local-nonlocal operators and its application.
\newblock {\em Ann. Mat. Pura Appl. (4)}, 202(2):679--710, 2023.

\bibitem{brasco_second_eigenvalue}
L.~Brasco and E.~Parini.
\newblock The second eigenvalue of the fractional {$p$}-{L}aplacian.
\newblock {\em Adv. Calc. Var.}, 9(4):323--355, 2016.

\bibitem{brezis_1983}
H.~Br\'ezis and L.~Nirenberg.
\newblock Positive solutions of nonlinear elliptic equations involving critical {S}obolev exponents.
\newblock {\em Comm. Pure Appl. Math.}, 36(4):437--477, 1983.

\bibitem{caffarelli1984interpolation_sobolev}
L.~Caffarelli, R.~Kohn, and L.~Nirenberg.
\newblock First order interpolation inequalities with weights.
\newblock {\em Compositio Math.}, 53(3):259--275, 1984.

\bibitem{cao_peng2003signchanging}
D.~Cao and S.~Peng.
\newblock A note on the sign-changing solutions to elliptic problems with critical {S}obolev and {H}ardy terms.
\newblock {\em J. Differential Equations}, 193(2):424--434, 2003.

\bibitem{chen2003estimates}
J.~Chen.
\newblock Existence of solutions for a nonlinear {PDE} with an inverse square potential.
\newblock {\em J. Differential Equations}, 195(2):497--519, 2003.

\bibitem{chen2004multiple}
J.~Chen.
\newblock Multiple positive solutions for a class of nonlinear elliptic equations.
\newblock {\em J. Math. Anal. Appl.}, 295(2):341--354, 2004.

\bibitem{squassina_regularity}
W.~Chen, S.~Mosconi, and M.~Squassina.
\newblock Nonlocal problems with critical {H}ardy nonlinearity.
\newblock {\em J. Funct. Anal.}, 275(11):3065--3114, 2018.

\bibitem{cotsiolis2004best_fractional}
A.~Cotsiolis and N.~K. Tavoularis.
\newblock Best constants for {S}obolev inequalities for higher order fractional derivatives.
\newblock {\em J. Math. Anal. Appl.}, 295(1):225--236, 2004.

\bibitem{silva2024mixed}
J.~a.~V. da~Silva, A.~Fiscella, and V.~A.~B. Viloria.
\newblock Mixed local-nonlocal quasilinear problems with critical nonlinearities.
\newblock {\em J. Differential Equations}, 408:494--536, 2024.

\bibitem{dhanya2026interior}
R.~Dhanya, J.~Giacomoni, and R.~Jana.
\newblock Interior and boundary regularity of mixed local nonlocal problem with singular data and its applications.
\newblock {\em Nonlinear Anal.}, 262:Paper No. 113940, 2026.

\bibitem{castro2014nonlocal_harnack}
A.~Di~Castro, T.~Kuusi, and G.~Palatucci.
\newblock Nonlocal {H}arnack inequalities.
\newblock {\em J. Funct. Anal.}, 267(6):1807--1836, 2014.

\bibitem{palatucci_castro2016local}
A.~Di~Castro, T.~Kuusi, and G.~Palatucci.
\newblock Local behavior of fractional {$p$}-minimizers.
\newblock {\em Ann. Inst. H. Poincar\'e{} C Anal. Non Lin\'eaire}, 33(5):1279--1299, 2016.

\bibitem{dipierro2016fractionalwithHardy}
S.~Dipierro, L.~Montoro, I.~Peral, and B.~Sciunzi.
\newblock Qualitative properties of positive solutions to nonlocal critical problems involving the {H}ardy-{L}eray potential.
\newblock {\em Calc. Var. Partial Differential Equations}, 55(4):Art. 99, 29, 2016.

\bibitem{valdinoci_mixed_application}
S.~Dipierro and E.~Valdinoci.
\newblock Description of an ecological niche for a mixed local/nonlocal dispersal: an evolution equation and a new {N}eumann condition arising from the superposition of {B}rownian and {L}\'evy processes.
\newblock {\em Phys. A}, 575:Paper No. 126052, 20, 2021.

\bibitem{ferrero2001existence}
A.~Ferrero and F.~Gazzola.
\newblock Existence of solutions for singular critical growth semilinear elliptic equations.
\newblock {\em J. Differential Equations}, 177(2):494--522, 2001.

\bibitem{frank2008hardy}
R.~L. Frank, E.~H. Lieb, and R.~Seiringer.
\newblock Hardy-{L}ieb-{T}hirring inequalities for fractional {S}chr\"odinger operators.
\newblock {\em J. Amer. Math. Soc.}, 21(4):925--950, 2008.

\bibitem{frank_hardy_applications}
W.~M. Frank, D.~J. Land, and R.~M. Spector.
\newblock Singular potentials.
\newblock {\em Rev. Modern Phys.}, 43(1):36--98, 1971.

\bibitem{garain_higher_holder_regularity}
P.~Garain and J.~Kinnunen.
\newblock On the regularity theory for mixed local and nonlocal quasilinear elliptic equations.
\newblock {\em Trans. Amer. Math. Soc.}, 375(8):5393--5423, 2022.

\bibitem{ghoussoub_mountain_pass_theorem}
N.~Ghoussoub and D.~Preiss.
\newblock A general mountain pass principle for locating and classifying critical points.
\newblock {\em Ann. Inst. H. Poincar\'e{} C Anal. Non Lin\'eaire}, 6(5):321--330, 1989.

\bibitem{ghoussoub_fractional_hardy}
N.~Ghoussoub, F.~Robert, S.~Shakerian, and M.~Zhao.
\newblock Mass and asymptotics associated to fractional {H}ardy-{S}chr\"odinger operators in critical regimes.
\newblock {\em Comm. Partial Differential Equations}, 43(6):859--892, 2018.

\bibitem{giusti2003iteration_lemma}
E.~Giusti.
\newblock {\em Direct methods in the calculus of variations}.
\newblock World Scientific Publishing Co., Inc., River Edge, NJ, 2003.

\bibitem{pigong_local_p_hardy}
P.~Han.
\newblock Quasilinear elliptic problems with critical exponents and {H}ardy terms.
\newblock {\em Nonlinear Anal.}, 61(5):735--758, 2005.

\bibitem{heinonen_potential_theory}
J.~Heinonen, T.~Kilpel\"ainen, and O.~Martio.
\newblock {\em Nonlinear potential theory of degenerate elliptic equations}.
\newblock Oxford Mathematical Monographs. The Clarendon Press, Oxford University Press, New York, 1993.
\newblock Oxford Science Publications.

\bibitem{jannelli_hardy_starting}
E.~Jannelli.
\newblock The role played by space dimension in elliptic critical problems.
\newblock {\em J. Differential Equations}, 156(2):407--426, 1999.

\bibitem{giovanni_fractional_book}
G.~Leoni.
\newblock {\em A first course in fractional {S}obolev spaces}, volume 229 of {\em Graduate Studies in Mathematics}.
\newblock American Mathematical Society, Providence, RI, 2023.

\bibitem{liebloss2001analysis}
E.~H. Lieb and M.~Loss.
\newblock {\em Analysis}, volume~14 of {\em Graduate Studies in Mathematics}.
\newblock American Mathematical Society, Providence, RI, second edition, 2001.

\bibitem{malhotra2025eigenvalues}
S.~Malhotra, S.~Goyal, and K.~Sreenadh.
\newblock On the eigenvalues and {F}u{\v{c}}{\'\i}k spectrum of $p$-laplace local and nonlocal operator with mixed interpolated {H}ardy term.
\newblock {\em Asymptotic Analysis}, page 09217134251339280, 2025.

\bibitem{olivia_classfication_localp}
F.~Oliva, B.~Sciunzi, and G.~Vaira.
\newblock Radial symmetry for a quasilinear elliptic equation with a critical {S}obolev growth and {H}ardy potential.
\newblock {\em J. Math. Pures Appl. (9)}, 140:89--109, 2020.

\bibitem{peral_hardy_applications}
I.~P.~Alonso and F.~S.~de Diego.
\newblock {\em Elliptic and parabolic equations involving the {H}ardy-{L}eray potential}, volume~38 of {\em De Gruyter Series in Nonlinear Analysis and Applications}.
\newblock De Gruyter, Berlin, 2021.

\bibitem{shang_zhang_fractional_laplacian}
X.~Shang, J.~Zhang, and R.~Yin.
\newblock Existence of positive solutions to fractional elliptic problems with {H}ardy potential and critical growth.
\newblock {\em Math. Methods Appl. Sci.}, 42(1):115--136, 2019.

\bibitem{shen_multiplicity_fractional}
Y.~Shen.
\newblock Multiplicity of positive solutions to a critical fractional equation with {H}ardy potential and concave-convex nonlinearities.
\newblock {\em Complex Var. Elliptic Equ.}, 67(9):2152--2180, 2022.

\bibitem{stein_weiss1957fractional}
E.~M. Stein and G.~Weiss.
\newblock Fractional integrals on {$n$}-dimensional {E}uclidean space.
\newblock {\em J. Math. Mech.}, 7:503--514, 1958.

\bibitem{terracini_minimizers}
S.~Terracini.
\newblock On positive entire solutions to a class of equations with a singular coefficient and critical exponent.
\newblock {\em Adv. Differential Equations}, 1(2):241--264, 1996.

\bibitem{willem2012minimax}
M.~Willem.
\newblock {\em Minimax theorems}, volume~24 of {\em Progress in Nonlinear Differential Equations and their Applications}.
\newblock Birkh\"auser Boston, Inc., Boston, MA, 1996.

\end{thebibliography}
\end{document}